\documentclass[12pt]{amsart}
\usepackage{enumitem,amssymb,aliascnt,geometry,stmaryrd,mathtools,microtype,xltabular}
\usepackage{tikz-cd}
\usepackage[pdfusetitle]{hyperref}

\hypersetup{ pdfkeywords={Algebraic cobordism, group actions on varieties, fixed-point theorems},
hidelinks}

\usepackage{etoolbox}
\makeatletter
\pretocmd{\section}{\addtocontents{toc}{\protect\addvspace{6\p@}\bfseries}}{}{}
\pretocmd{\subsection}{\addtocontents{toc}{\protect\normalfont}}{}{}
\makeatother

\DeclareMathOperator{\Spec}{Spec}
\DeclareMathOperator{\im}{im}
\DeclareMathOperator{\Tor}{Tor}

\DeclareMathOperator{\coker}{coker}
\DeclareMathOperator{\CH}{CH}
\DeclareMathOperator{\Th}{Th}
\DeclareMathOperator{\MGL}{MGL}
\DeclareMathOperator{\Laz}{\mathbb{L}}
\DeclareMathOperator{\rank}{rank}
\DeclareMathOperator{\Hb}{HB}
\DeclareMathOperator{\Hz}{H\mathbb{Z}}

\DeclareMathOperator{\fgl}{[+]}
\DeclareMathOperator{\length}{\ell}

\newcommand{\eff}{\mathrm{eff}}
\newcommand{\character}[1]{\mathcal{L}_{#1}}
\newcommand{\T}{Q}
\newcommand{\Ipn}[1]{I_p({#1})}
\newcommand{\JK}{J}
\newcommand{\JKe}{J_{\eff}}
\newcommand{\Speck}{k}
\newcommand{\Oc}{\mathcal{O}}
\newcommand{\Pc}{\mathcal{P}}
\newcommand{\pp}{\mathfrak{p}}

\newcommand{\Mcc}{\mathcal{M}}
\newcommand{\Mcd}[1]{\Mcc^{\le {#1}}}
\newcommand{\bb}{\mathbf{b}}
\newcommand{\ag}{\mathbf{a}}

\newcommand{\Fc}{\mathcal{F}}
\newcommand{\Sy}[2]{\mathcal{S}_{{#1}}({#2})}
\newcommand{\lc}{\llbracket}
\newcommand{\rc}{\rrbracket}

\newcommand{\Zz}{\mathbb{Z}}
\newcommand{\Pp}{\mathbb{P}}
\newcommand{\La}{L}
\newcommand{\Nn}{\mathbb{N}}
\newcommand{\Qq}{\mathbb{Q}}
\newcommand{\Gm}{\mathbb{G}_m}
\newcommand{\Fp}{\mathbb{F}_p}
\newcommand{\Tan}{T}

\newcommand{\fglr}[1]{[{#1}]}
\newcommand{\su}[2]{#1_{(#2)}}
\newcommand{\wgt}[2]{{#1}(#2)}
\newcommand{\Apar}[1]{\mathcal{A}_{({#1})}}

\newtheorem{thm}{Theorem}

\newtheorem{cor}[thm]{Corollary}

\renewcommand{\thethm}{\arabic{thm}}

\swapnumbers

\newtheorem{theorem}{Theorem}[section]
\newaliascnt{proposition}{theorem}
\newtheorem{proposition}[proposition]{Proposition}
\newaliascnt{lemma}{theorem}
\newtheorem{lemma}[lemma]{Lemma}
\newaliascnt{corollary}{theorem}
\newtheorem{corollary}[corollary]{Corollary}

\theoremstyle{definition}
\newaliascnt{remark}{theorem}
\newtheorem{remark}[theorem]{Remark}
\newaliascnt{example}{theorem}
\newtheorem{example}[example]{Example}
\newaliascnt{definition}{theorem}
\newtheorem{definition}[definition]{Definition}

\newtheoremstyle{par}
{}
{}
{}
{}
{}
{.}
{ }
{}%
\theoremstyle{par}
\newtheorem{para}[theorem]{}

\numberwithin{equation}{theorem}

\begin{document}
\begin{abstract}
	We determine all restrictions on the dimension of the fixed locus of a diagonalizable group acting on a smooth projective variety that arise from the Chern numbers of the ambient variety.
	We reduce the problem to finding lower bounds for actions of \(p\)-groups, which we achieve by analyzing the equivariant cobordism ring with the help of the concentration theorem.
	To do so, we construct enough explicit examples of actions that realize the expected lower bound.
	We then prove that this family is maximal in the equivariant cobordism ring, in an appropriate sense.
\end{abstract}

\author{Olivier Haution}
\title{Dimension of fixed loci of diagonalizable groups via algebraic cobordism}
\email{olivier.haution at unimib.it}
\address{Dipartimento di Matematica e Applicazioni, Università degli Studi di Milano-Bicocca, via Roberto Cozzi 55, 20125 Milano, Italy}

\subjclass[2010]{14C25, 14L30, 55N22}

\keywords{Algebraic cobordism, group actions on varieties}
\date{\today}

\maketitle

\setcounter{tocdepth}{2}
\tableofcontents

\numberwithin{theorem}{subsection}
\numberwithin{lemma}{subsection}
\numberwithin{proposition}{subsection}
\numberwithin{corollary}{subsection}
\numberwithin{example}{subsection}
\numberwithin{definition}{subsection}
\numberwithin{remark}{subsection}

\section{Introduction}
\renewcommand{\theequation}{\thethm.{\alph{equation}}}

\subsection{Existence of fixed points}
\label{sect:fixed_points}
When does a group action on an algebraic variety admit fixed points?
This fundamental question is typically answered by means of \emph{fixed-point theorems}, which provide sufficient conditions for the existence of fixed points, based on nonequivariant information on the ambient variety.

In this paper, the information considered will be the collection of the Chern numbers of the variety, which are certain numerical invariants indexed by the partitions of its dimension.
These are constructed using intersection theory, and roughly speaking quantify the geometric complexity of the variety.
Let us fix a base field, assumed to be algebraically closed of characteristic zero for this introduction.
Identifying smooth projective varieties having the same Chern numbers yields a ring \(\Laz\) called the Lazard ring.
Thus a smooth projective variety \(X\) has a cobordism class \(\lc X \rc \in \Laz\), which exactly encodes its Chern numbers.

A complete determination of the fixed-point theorems involving Chern numbers is equivalent to the computation of the subgroup \(\JK(G) \subset \Laz\) generated by the cobordism classes of varieties admitting a fixed-point-free action of a given algebraic group \(G\).

In this paper we consider finite type diagonalizable groups \(G\), which means, given our assumptions on the field \(k\), that \(G\) is the product of a split torus and a (constant) finite abelian group.
Borel's fixed-point theorem implies that \(\JK(G \times \Gm)=\JK(G)\), so we are reduced to considering finite abelian groups \(G\).
For such groups, the only case of real interest is when \(G\) is a finite abelian \(p\)-group for some prime \(p\): indeed it turns out that otherwise \(\JK(G) = \Laz\) (see (\ref{prop:J_mu_pq})).

Our first result, proved as (\ref{th:J_Ipn}), identifies the ideal \(\JK(G)\) in this case, providing an algebraic analogue of a theorem of tom Dieck in topology \cite{tomDieck-Actions-finite-abelian}.
By the \emph{rank} of a finite abelian group, we mean its number of cyclic factors.
\begin{thm}
	\label{th:no_fixed}
	Let \(G\) be a finite abelian \(p\)-group of rank \(r\).
	Then \(\JK(G)\) is the subgroup \(\Ipn{r}\) of \(\Laz\) generated by the classes of smooth projective varieties of dimension \(< p^{r-1}\) having all Chern numbers divisible by \(p\).
\end{thm}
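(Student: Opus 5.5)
We prove the two inclusions \(\Ipn{r} \subset \JK(G)\) and \(\JK(G) \subset \Ipn{r}\) separately. The second is the one that reflects a genuine restriction, namely a fixed-point theorem. The first is a question of constructing enough examples.

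\emph{Upper bound \(\JK(G) \subset \Ipn{r}\).} Let \(X\) be a smooth projective variety with a fixed-point-free \(G\)-action. We work in the \(G\)-equivariant cobordism ring \(\Omega_G(\Speck)\), which receives a forgetful map to \(\Laz\). The pushforward class \(\lc X \rc_G \in \Omega_G(\Speck)\) maps to \(\lc X \rc\). Since \(X^G = \varnothing\), the concentration theorem says that \(\lc X \rc_G\) is killed after inverting the Euler classes \(e(\character{\chi})\) of the nontrivial characters \(\chi\) of \(G\). Concretely, some product \(\prod_i e(\character{\chi_i})\) annihilates \(\lc X \rc_G\).

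We then use the explicit description of \(\Omega_G(\Speck)\) for \(G=\prod_{j=1}^r \Zz/p^{n_j}\). It is \(\Laz[[t_1,\dots,t_r]]\) modulo the relations \([p^{n_j}]_F(t_j)=0\), where \(F\) is the universal formal group law, possibly after completing. The Euler classes are \(e(\character{\chi}) = [a_1]_F(t_1) +_F \cdots +_F [a_r]_F(t_r)\). The kernel of the augmentation to \(\Laz\) is where these Euler classes live. The constant term of an element killed by such a product must lie in the ideal generated by the coefficients \(p, v_1, \dots, v_{r-1}\) (the invariant prime ideal \(I(p,r)\) of Landweber). This is the algebraic counterpart of tom Dieck's topological computation, and the algebraic version should follow by reducing modulo \(p\) and passing successively to the subgroups \(\Zz/p\). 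One then still needs to identify \(I(p,r)\) with \(\Ipn{r}\), that is, with the group generated by classes of dimension \(<p^{r-1}\) whose Chern numbers are all divisible by \(p\). The inclusion \(I(p,r) \subset \Ipn{r}\) uses that \(v_i\) has degree \(p^i-1 < p^{r-1}\) for \(i\le r-1\), and that \(\Ipn{r}\) is an ideal. The reverse identification follows from the structure of \(\Laz\) and the fact that, in degree \(<p^{r-1}\), every element with all Chern numbers divisible by \(p\) lies in \((p,v_1,\dots,v_{r-1})\).

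\emph{Lower bound \(\Ipn{r} \subset \JK(G)\).} We argue by induction on \(r\). First we check that \(\JK(G)\) is an ideal: the product of a fixed-point-free \(X\) with any \(Y\) carrying the trivial action is fixed-point-free. It therefore suffices to realize generators. For \(r=1\), we need the elements with all Chern numbers divisible by \(p\), i.e.\ \(p\Laz\) together with the \(p\)-divisible classes in degree \(0\). A free \(\Zz/p\)-action on \(p\) points gives \(p\). For \(r\ge 2\), we use projective bundles \(\Pp(V)\) of suitable representations of a rank-one quotient, or Milnor hypersurfaces twisted by characters of \(G\), chosen so that the action has no fixed points and the class is a generator \(v_{i}\) modulo decomposables. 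For instance, if \(G\) surjects onto \((\Zz/p)^{r}\), take \(\Pp\) of the sum of \(p^{i}\) characters with a free-ish action, whose class is \(v_i\) up to units and \(I(p,i)\).

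The hard part is the upper bound, namely extracting the precise ideal from the concentration theorem. We would first treat \(G=(\Zz/p)^r\) and reduce the general \(p\)-group case to it: restricting to the \(p\)-torsion subgroup preserves fixed-point-freeness on the level of fixed loci, because \(X^G=\varnothing\) implies \(X^{G[p]}\) carries a fixed-point-free action of the quotient, allowing induction.
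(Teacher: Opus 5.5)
Your outline follows the paper's architecture. For \(\JK(G)\subset\Ipn{r}\) it uses the concentration theorem together with the structure of \(\Omega_G(\Speck)\). For the converse it uses that \(\JK(G)\) is an ideal, plus explicit fixed-point-free generators. However, neither key step is supplied, and your concrete suggestions for both fail.

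\emph{Upper bound.} The claim that an element annihilated by a product of Euler classes augments into \((p,v_1,\ldots,v_{r-1})\) \emph{is} the theorem, and ``reduce modulo \(p\) and pass to subgroups \(\Zz/p\)'' does not prove it. Your reduction to \(G[p]\) fails for two reasons:
\begin{enumerate}
\item \(X^G=\varnothing\) does not give \(X^{G[p]}=\varnothing\); take \(\mu_{p^2}\) acting on \(\mu_p\) through its quotient.
\item A fixed-point-free \(G/G[p]\)-action on \(X^{G[p]}\) constrains \(\lc X^{G[p]}\rc\), and this is not tied to \(\lc X\rc\) modulo \(\Ipn{r}\). Already for \(\mu_p\) acting linearly on \(\Pp^1\), \(\lc\Pp^1\rc\not\equiv 2\) modulo \(p\). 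The concentration theorem involves the whole normal bundle data, not just the class of the fixed locus.
\end{enumerate}
Here is what the paper actually does. It first identifies \(\Omega_G(\Speck)\) exactly with \(\La[[t_1,\ldots,t_r]]/(\fglr{q_1}(t_1),\ldots,\fglr{q_r}(t_r))\). This step already needs a \(\lim^1\) argument resting on Landweber's lemma that \(\fglr{n}(t)\neq 0\) modulo any finitely generated proper homogeneous ideal of the non-noetherian ring \(\La\). It then sets the variables to zero one at a time. The variable \(t_j\) is handled modulo \(\Ipn{r-j}\), over the ring localized in \(t_1,\ldots,t_{j-1}\). There, with \(n=r-j\), one has \(\fglr{q}(t)=s^{p^n}h_n\), where \(s=\fglr{q/p}(t)\) and \(h_n\equiv v_n \bmod t\). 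Inverting the new Euler classes amounts to inverting \(s\); this, not restricting actions to \(G[p]\), is the correct sense in which only \(p\)-torsion matters. The kernel of inverting \(s\) is generated by \(h_n\), which augments to \(v_n\). Landweber exactness is what makes \(s\) a nonzerodivisor on \(\su{R}{n}[[t]]/h_n\), and it must be shown to survive each localization so the process can be iterated. Each variable thus contributes exactly one of \(v_0,\ldots,v_{r-1}\).

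\emph{Lower bound.} A diagonalizable group acting on \(\Pp(V)\), with \(V\) a sum of characters, always has fixed points, namely the projectivized weight spaces. The same holds for Milnor hypersurfaces with such linear actions, so none of your suggested examples is fixed-point-free. Moreover, \(\Pp^{p^i-1}\) is useless for \(i\ge 2\) regardless of the action. Its Chern number is \(c_{(p^i-1)}(\Pp^{p^i-1})=-p^i\in p^2\Zz\), so its class is decomposable in \(\Laz/p\) and cannot be a unit multiple of \(v_i\) modulo \(\Ipn{i}\). (For \(i=1\) the paper notes that \(\Pp^{p-1}\) does work when \(k\) has a primitive \(p\)-th root of unity, but via an action not induced by a representation of \(G\).)

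The missing idea is to remove the fixed points by cutting with an invariant hypersurface:
\begin{enumerate}
\item Choose \(p^s+1\) distinct characters \(c\) with \(pc=0\). This is possible for \(s\le r-1\), since \(\widehat{G}\) has \(p^r\) such characters.
\item Let \(V=\bigoplus_c\character{c}\). Then \(\Pp(V)^G\) consists of the \(p^s+1\) coordinate points.
\item Take \(Y_s=\{\sum_c x_c^p=0\}\). It is \(G\)-invariant and smooth, and it avoids those points, so the action on it is fixed-point-free.
\end{enumerate}
As a degree-\(p\) hypersurface, \(Y_s\) has all Chern numbers divisible by \(p\). Also \(c_{(p^s-1)}(Y_s)=p(p^{p^s-1}-p^s-1)\in p\Zz\smallsetminus p^2\Zz\) for \(s\ge 1\), so \(\lc Y_s\rc\) is indecomposable in \(\Laz/p\), while \(\lc Y_0\rc=p\). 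Landweber's characterization then gives \(\Ipn{r}=(\lc Y_0\rc,\ldots,\lc Y_{r-1}\rc)\subset\JK(G)\).
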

The subgroup \(\Ipn{r}\) is the \(r\)-th \emph{Landweber ideal}, which plays a fundamental role in many aspects of cobordism theory.
As an ideal, it is generated by the classes of \(r\) explicit varieties, which turn out to admit fixed-point-free \(G\)-actions.
The first of these is the disjoint union of \(p\) points, and so \(\Ipn{1}=p\Laz\).
When \(r>1\), the ideal \(\Ipn{r}\) can be thought of as a higher version of the ideal \(p\Laz\).
For instance, the ideal \(I_2(2)\) is generated by \(2\) and \(\lc \Pp^1 \rc\),
and a fixed-point-free action of \(\Zz/2 \times \Zz/2\) on \(\Pp^1\) is given by the commuting involutions \(x \mapsto -x\) and \(x\mapsto x^{-1}\).

Concretely, this result allows for the detection of fixed points: any \(G\)-action on a variety whose cobordism class does not lie in \(\Ipn{r}\) must have a fixed point.
The theorem additionally expresses the fact that no other (nonequivariant) cobordism invariant can be used to detect fixed points.

Observe that Theorem~\ref{th:no_fixed} implies that the action of an abelian \(p\)-group on a variety having a Chern number prime to \(p\) must fix a point, a result previously obtained in \cite[(1.1.2)]{fpt}.
Note that the structure of the abelian \(p\)-group \(G\) (including its rank \(r\)) played no role in that particular result.
Theorem~\ref{th:no_fixed} also recovers, for \(r=1\), the fixed-point theorem for actions of cyclic \(p\)-groups involving the arithmetic genus proved in \cite[(1.2.2)]{fpt}.
Theorem~\ref{th:no_fixed} can thus be thought of as interpolating between those two extreme special cases.

Finally, note that Theorem~\ref{th:no_fixed} provides a new definition of the Landweber ideal \(\Ipn{r}\), in terms of fixed-point-free actions, instead of Chern numbers.

\subsection{Fixed locus dimension}
Beyond detecting fixed points, one may attempt to control the dimension of the fixed locus.
To this end, for each integer \(d\), we denote by \(\Delta^d(G)\subset \Laz\) the subgroup of the Lazard ring generated by the classes of varieties admitting a \(G\)-action with a fixed locus of dimension \(d\).
We observe that \(\Delta^d(G)\) contains the classes of all varieties admitting a \(G\)-action with a fixed locus of dimension \(\le d\) (see (\ref{lemm:only_lower_bounds})).
Consequently, the cobordism class of the ambient variety imposes only \emph{lower bounds} on the fixed locus dimension.

We prove in (\ref{prop:Delta_Gm}) that \(\Delta^d(G)=\Laz\) when \(G\) contains the multiplicative group \(\Gm\).
As \(\JK(G) \subset \Delta^d(G)\), the considerations of \S\ref{sect:fixed_points} imply that \(\Delta^d(G)=\Laz\) when \(G\) is not a finite abelian \(p\)-group for some prime \(p\) (as above, for \(G\) diagonalizable).

To complete the picture, let us therefore assume that \(G\) is a finite abelian \(p\)-group.
The next theorem, proved as (\ref{th:Delta^d}), determines the group \(\Delta^d(G)\) in terms of the cardinality \(q\) of \(G\), and its rank \(r\) (the number of cyclic factors in \(G\)).
For \(G=\Zz/2\) it was proved in \cite[(8.1.2)]{ciequ}.

\begin{thm}
	\label{th:main}
	Consider the filtration by subgroups
	\[
		F^0_q\Laz \subset \ldots \subset F^d_q\Laz \subset F^{d+1}_q\Laz \subset \ldots\subset \Laz
	\]
	generated by the following conditions:
	\begin{enumerate}[label=(\roman*), ref=\roman*]
		\item \((F^d_q\Laz) \cdot (F^e_q\Laz) \subset F^{d+e}_q\Laz\),
		\item \(F^{\lfloor i/q \rfloor}_q\Laz\) contains the classes of all \(i\)-dimensional varieties, for \(i\ge 0\).
	\end{enumerate}
	Then \(\Delta^d(G)=F^d_q\Laz+\Ipn{r}\) for every \(d\).
\end{thm}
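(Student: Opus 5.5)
We prove the two inclusions separately; the inclusion \(\Delta^d(G)\subset F^d_q\Laz+\Ipn{r}\) is the substantial one.

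\emph{The inclusion \(F^d_q\Laz+\Ipn{r}\subset\Delta^d(G)\).} Theorem~\ref{th:no_fixed} gives \(\Ipn{r}=\JK(G)\). A fixed-point-free variety has empty fixed locus, whose dimension is \(\le d\) for every \(d\); by (\ref{lemm:only_lower_bounds}) its class lies in \(\Delta^d(G)\). So \(\Ipn{r}\subset\Delta^d(G)\).

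For \(F^d_q\Laz\), we first check that \(\Delta^\bullet(G)\) is multiplicative: \(\Delta^d(G)\cdot\Delta^e(G)\subset\Delta^{d+e}(G)\). Indeed, if \(G\) acts on \(X\) and on \(Y\), it acts diagonally on \(X\times Y\), and \((X\times Y)^G=X^G\times Y^G\). It therefore suffices to show that \(\Delta^{\lfloor i/q\rfloor}(G)\) contains every \(i\)-dimensional class. Since \(\Laz\) is generated as a ring by such classes, we may use explicit generators together with the multiplicativity just established.

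Take a variety \(Y\) of dimension \(j\) and let \(G\) act on \(Y^{q}=Y^G\) by permuting the factors through the regular action. The fixed locus is the diagonal, of dimension \(j\). The class of \(Y^q\) is a product of classes of \(Y\), but this alone does not produce generators in every degree. We therefore need a standard trick: the classes of the symmetric-power-like varieties \(\Pp(\Oc\oplus\dots)\), or projective bundles \(\Pp(V\otimes k[G])\) over \(Y\), where \(k[G]\) is the regular representation. For \(\Pp(V\otimes k[G])\) with \(G\) acting linearly, the fixed locus is the disjoint union of the projectivizations of the isotypic pieces \(\Pp(V)\times\) (character). That locus has dimension \(\dim Y+\rank V-1\) inside a variety of dimension \(\dim Y+q\rank V-1\), so the ratio is roughly \(1/q\). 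Combining these with (\ref{lemm:only_lower_bounds}) and with \(p\)-typical or Milnor-hypersurface generators of \(\Laz\), we aim to show that each \(i\)-dimensional class lies in \(\Delta^{\lfloor i/q\rfloor}(G)\) modulo \(\Ipn{r}\) and modulo products of lower-degree classes. We then conclude by induction on \(i\).

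\emph{The inclusion \(\Delta^d(G)\subset F^d_q\Laz+\Ipn{r}\).} This is the main obstacle. Let \(X\) carry a \(G\)-action with \(\dim X^G=d\). We use \(G\)-equivariant cobordism and the concentration theorem. After inverting the Euler classes of the nontrivial characters, \(\lc X\rc\) equals the pushforward from \(X^G\) of \(1/e(N)\), where \(N\) is the normal bundle, decomposed according to characters. We then expand this expression in the equivariant cobordism of a point, \(\Laz[[t_1,\dots,t_r]]/([p^{a_i}]t_i)\), localized.

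The key steps are as follows. First, describe the equivariant cobordism ring of a point and the kernel of its map to \(\Laz\) modulo \(\Ipn{r}\); tom Dieck's argument identifies \(\Ipn{r}\) with the classes whose equivariant lift is killed by the Euler classes. Second, show that every element obtained by localization from a fixed locus of dimension \(d\), together with normal data, lands in \(F^d_q\Laz\) modulo \(\Ipn{r}\). The contributions are Chern numbers of \(X^G\) of dimension \(\le d\), twisted by the normal bundles. The intended mechanism is that each factor \(1/e(N_\chi)\) lowers the weight by at most the rank of \(N_\chi\) times \(1/q\), roughly by averaging over the \(q\) characters.

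The hardest point is showing that the examples constructed in the first inclusion are \emph{maximal}. We would try to define a filtration on the localized equivariant cobordism ring compatible with \(F_q\), and check that the nonequivariant specialization of elements of filtration \(\le d\) lies in \(F^d_q\Laz+\Ipn{r}\). We would first verify this on the generators given by the explicit examples, and then extend it using the \(\Laz\)-module structure.
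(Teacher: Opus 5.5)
\textbf{The easy inclusion is incomplete.} Your argument for \(\Ipn{r}\subset\Delta^d(G)\) and the multiplicativity of \(\Delta^\bullet(G)\) are fine. However, the constructions you offer for \(F^d_q\Laz\subset\Delta^d(G)\) do not yield a generating family.
\begin{itemize}
\item \(Y^q\) has decomposable class.
\item For \(\Pp(V\otimes k[G])\) over \(Y\) with trivial action on \(Y\), your own count gives a fixed locus of dimension \(\dim Y+\rank V-1\). This exceeds \(\lfloor(\dim Y+q\rank V-1)/q\rfloor\) once \(\dim Y\ge 2\).
\item Over points or curves you only reach dimensions \(\equiv 0,-1 \bmod q\), and too few classes to generate \(\Laz\) integrally. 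For instance \(c_{(i)}(\Pp^i)=-(i+1)\) is in general not of the form required in (\ref{p:pol_gen_laz}).
\end{itemize}
What is needed is (\ref{prop:action_on_Hnm}): an action on every Milnor hypersurface \(H_{m,n}\), with the characters spread as evenly as possible on both the base \(\Pp^m\) and the fibres. When \(q\mid m\) and \(q\mid n\), a separate count is required to reach \(\lfloor(m+n-1)/q\rfloor\) rather than \(m/q+n/q\). This is combined with the gcd-of-binomials fact that \(\Zz\)-combinations of the \(\lc H_{m,n}\rc\) give polynomial generators. This half is repairable along the lines you gesture at, and no reduction modulo \(\Ipn{r}\) is needed there.

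\textbf{The hard inclusion has a genuine gap.} For \(\Delta^d(G)\subset F^d_q\Laz+\Ipn{r}\), your sketch contains no working mechanism. The concentration formula only expresses \(\lc X\rc_G\) in \(S^{-1}\Omega_G(\Speck)\), whereas the passage to \(\su{\La}{r}\) exists only on the image \(C\) of \(\Omega_G(\Speck)\). Individual fixed-point contributions (the normal data of a single component of \(X^G\), or simply \(t_i^{-1}\)) have no nonequivariant value, so there is no ``weight'' that a factor \(1/e(N_\chi)\) could lower. The same obstruction defeats your plan of checking a filtration on the explicit examples and extending by linearity. Over \(\Mcc(0)\), the fixed data of those examples only span a submodule of \(\Mcd{d}\) of full rank. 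Expressing a general element of \(\Mcd{d}\cap C\) in terms of them requires clearing a denominator and using coefficients in \(\Mcc(0)\). These coefficients involve the nonintegral \(t_i^{-1}=p_{0,g_i}\), to which the specialization cannot be applied.

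The paper instead argues by contradiction, using three ingredients absent from your proposal.
\begin{enumerate}
\item The computation \(\Omega_G(\Speck)\simeq\La[[t_1,\ldots,t_r]]/(\fglr{q_1}(t_1),\ldots,\fglr{q_r}(t_r))\), together with the algebraic independence of suitable \(x_i\) and \(t_i^{-1}\) after localization. These give the injectivity theorem (\ref{th:inj_M}) and \(M_0=\su{\La}{r}(q-1)\).
\item The lifting of linear independence (\ref{lemm:lin_indep:base}) from \(\su{\La}{r}\) over \(\su{\La}{r}(q-1)\) to \(S^{-1}\Omega_G(\Speck)\) over \(\Mcc(0)\). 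It is proved by descending through the rings \(C_j,M_j\) and killing one \(t_j\) at a time via \(t_j\)-adic separatedness.
\item The rank count of (\ref{prop:main}): \(\Mcd{d}\) over \(\Mcc(0)\) and \(F^d_q\su{\La}{r}\) over \(F^0_q\su{\La}{r}\) are free of the same rank \(r(d)\).
\end{enumerate}
Consequently, if \(\lc X\rc\notin F^d_q\Laz+\Ipn{r}\), then \(\lc X\rc_G\) together with \(r(d)\) explicit classes in \(\Mcd{d}\cap C\) are \(r(d)+1\) elements independent over \(\Mcc(0)\). This forces \(\lc X\rc_G\notin\Mcd{d}\), hence \(\dim X^G>d\).
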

To illustrate this filtration, a typical element of \(F^d_2\Laz\) is the class of the variety \(P=\Pp^{n_1}\times \cdots \times \Pp^{n_s}\), where \(d=\lfloor n_1/2 \rfloor + \ldots + \lfloor n_s/2 \rfloor\).
The variety \(P\) indeed carries a \(\Zz/2\)-action with a \(d\)-dimensional fixed locus, given by the involution multiplying half of the coordinates of each projective space by \(-1\).\\

In Theorem~\ref{th:main}, the summand \(\Ipn{r}\) originates from Theorem~\ref{th:no_fixed}, and the first condition follows from the fact that fixed loci commute with products.
The necessity of the second condition can be observed by constructing explicit \(G\)-actions with low-dimensional fixed locus on varieties representing cobordism generators.
Theorem~\ref{th:main} establishes that these natural conditions are, in fact, sufficient.
The majority of this work is devoted to proving this result.

Recall that the Lazard ring \(\Laz\) is polynomial, with one generator in each dimension (noncanonically), so that the filtration of Theorem~\ref{th:main} can be explicitly determined.
Consequently, one obtains a lower bound for the fixed locus of a \(G\)-action on a variety \(X\) by considering the monomials present in the polynomial associated with its cobordism class \(\lc X \rc\).
More precisely, assume that \(G\) is nontrivial (\(r\ge 1\)).
Then the quotient \(\Laz/\Ipn{r}\) is a polynomial \(\Fp\)-algebra, with one generator in each dimension \(i\not \in \{p-1,\ldots,p^{r-1}-1\}\).
By assigning the degree \(\lfloor i/q \rfloor\) to the \(i\)-dimensional generator, we obtain a grading of that algebra (which depends on \(q\) and differs from the usual grading by dimension).
Theorem~\ref{th:main} asserts that the dimension of the fixed locus \(X^G\) is no less than the degree of the class of \(\lc X \rc\) modulo \(\Ipn{r}\) with respect to that grading.

Note that this bound depends only on the ambient variety \(X\) and not on the particular \(G\)-action, in the spirit of the fixed-point theorems of the previous section. Additionally, Theorem~\ref{th:main} asserts that no further conditions on the fixed locus dimension can be derived from the subgroup generated by the cobordism class of the ambient variety.\\

Let us illustrate Theorem~\ref{th:main} with a few concrete corollaries.
We fix a finite abelian \(p\)-group \(G\) of cardinality \(q\) and rank \(r\), and a smooth projective variety \(X\) with a \(G\)-action.
Some of the monomials appearing in a cobordism class can be detected by Chern numbers modulo \(p\), which yields the following corollary.
The same bound was obtained in topology by Kosniowski--Stong, for actions of the groups \(G=(\Zz/2)^r\) on manifolds (see \cite[p. 314]{KS} and \cite[p. 737]{Kosniowski-Stong-Z2k}).
\begin{cor}
	\label{cor:Chern_numbers}
	If \(\alpha=(\alpha_1,\ldots,\alpha_m)\) is a partition such that the Chern number \(c_\alpha(X)=\deg c_{\alpha}(-\Tan_X)\) is not divisible by \(p\), then
	\begin{equation}
		\label{eq:lower_bound_partition}
		\dim X^G \ge \big \lfloor \frac{\alpha_1}{q} \big \rfloor + \cdots + \big \lfloor \frac{\alpha_m}{q} \big \rfloor.
	\end{equation}
\end{cor}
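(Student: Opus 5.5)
Set \(d=\dim X^G\). The plan is to deduce the corollary from Theorem~\ref{th:main} by building a function on \(\Laz\) that measures \(\lfloor \alpha_1/q\rfloor+\cdots+\lfloor\alpha_m/q\rfloor\) and controlling it on \(F^d_q\Laz+\Ipn{r}\).

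By (\ref{lemm:only_lower_bounds}) and Theorem~\ref{th:main}, the class \(\lc X\rc\) lies in \(\Delta^d(G)=F^d_q\Laz+\Ipn{r}\). It therefore suffices to prove the following claim: if \(x\in F^d_q\Laz+\Ipn{r}\) and \(c_\alpha(x)\not\equiv 0 \bmod p\), then \(\sum_j\lfloor\alpha_j/q\rfloor\le d\).

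The classes in \(\Ipn{r}\) are harmless. If \(r\ge1\), then \(\Ipn{r}\subset\Ipn{1}=p\Laz\), which follows from its definition as generated by varieties with all Chern numbers divisible by \(p\). Hence every Chern number of an element of \(\Ipn{r}\) is divisible by \(p\). If \(r=0\), the group \(G\) is trivial, \(q=1\), and \(\dim X^G=\dim X=\sum\alpha_j\), so the statement is immediate.

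For the filtration, I would encode all Chern numbers at once through the total characteristic number. Take \(\phi\colon\Laz\to\Zz[b_1,b_2,\ldots]\) sending \(\lc Y\rc\) to \(\sum_\alpha c_\alpha(Y)\,b_\alpha\), where \(b_\alpha=b_{\alpha_1}\cdots b_{\alpha_m}\). Equivalently, \(\phi\) is the degree of the multiplicative characteristic class of \(-\Tan_Y\) whose value on a line bundle is \(1+\sum_i b_i c_1^i\). This is a ring homomorphism, because for a product \(Y\times Z\) the class of \(-\Tan_{Y\times Z}\) is the exterior product of the classes of \(-\Tan_Y\) and \(-\Tan_Z\).

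Give \(b_i\) the weight \(w(b_i)=\lfloor i/q\rfloor\), and let \(\Laz'\subset\Laz\) be the subgroup of elements \(x\) such that every monomial appearing in \(\phi(x)\) with coefficient prime to \(p\) has weight \(\le d\). I would then check that \(F^d_q\Laz\subset\Laz'\) by verifying that the family of these subgroups, indexed by \(d\), satisfies conditions (i) and (ii), and then invoking minimality of \(F^\bullet_q\Laz\).

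Condition (i) holds because \(w\) is additive on monomials. Reducing modulo \(p\), a monomial of \(\phi(xy)\bmod p\) arises from a product of monomials with nonzero coefficients in \(\phi(x)\bmod p\) and \(\phi(y)\bmod p\), and its weight is the sum of their weights.

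Condition (ii) is the key inequality. For an \(i\)-dimensional variety, every partition \(\alpha\) of \(i\) satisfies \(\sum_j\lfloor\alpha_j/q\rfloor\le\lfloor\sum_j\alpha_j/q\rfloor=\lfloor i/q\rfloor\), so the weight never exceeds \(\lfloor i/q\rfloor\).

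Finally, the coefficient of \(b_\alpha\) in \(\phi(\lc X\rc)\) is \(c_\alpha(X)\), and the contribution from \(\Ipn{r}\) is divisible by \(p\). So if \(p\nmid c_\alpha(X)\), the component of \(\lc X\rc\) in \(F^d_q\Laz\) has coefficient \(c_\alpha(X)\) modulo \(p\) on \(b_\alpha\), and hence \(w(b_\alpha)=\sum_j\lfloor\alpha_j/q\rfloor\le d\).

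The step needing the most care is the multiplicativity of \(\phi\), since one has to be precise about the sign convention \(c_\alpha(-\Tan_X)\). It is standard, and I expect no real obstacle.
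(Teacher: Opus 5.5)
Your proof is correct, but it handles the one substantive step differently from the paper.

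\textbf{Comparison with the paper.} Both arguments reduce the corollary to (\ref{th:Delta^d}) together with \(c_\alpha(\Ipn{r})\subset p\Zz\). What remains is to show that \(c_\alpha\) kills \(F^e_q\Laz\) whenever \(e<\pi_q(\alpha)=\sum_j\lfloor\alpha_j/q\rfloor\).

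The paper gets this in (\ref{lemm:c_F_pi}) as follows. It fixes polynomial generators \(\ell_i\) of \(\Laz\) and uses (\ref{prop:L_q_grading}) to see that \(F^e_q\Laz\) is spanned by the \(\ell_\beta\) with \(\pi_q(\beta)\le e\). It then combines the refinement lemma (\(c_\alpha(\ell_\beta)\ne 0\Rightarrow \alpha\succeq\beta\)) with the monotonicity of \(\pi_q\) under refinement.

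You instead give \(b_i\) the weight \(\lfloor i/q\rfloor\) inside \(\Zz[\bb]\supset\Laz\). This defines a multiplicative filtration in which \(\Zz[\bb]^{-i}\) has weight \(\le\lfloor i/q\rfloor\), by superadditivity of the floor. You then conclude by minimality of \(F^\bullet_q\). This bypasses polynomial generators and the refinement order entirely. Run integrally instead of modulo \(p\), it reproves (\ref{lemm:c_F_pi}) exactly as stated.

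What the paper's heavier bookkeeping buys is reuse. The same generator/refinement analysis yields (\ref{lemm:c_Ipn}) and the functionals \(d_\alpha\) of (\ref{lemm:d_alpha}). These give the sharper bound (\ref{cor:dim_pi}) via Chern numbers modulo higher powers of \(p\), and the sharpness remark. Your weight argument does not reach these on its own.

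\textbf{Small repairs.}

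The inclusion \(\Ipn{r}\subset\Ipn{1}=p\Laz\) is false for \(r\ge 2\). The Landweber ideals increase with \(r\); for instance \(v_1\in\Ipn{2}\) is nonzero in \(\Laz/p\) by (\ref{lemm:u_indec}). What you actually use, and correctly justify via (\ref{rem:Ipn_Chern_numbers}), is \(\Ipn{r}\subset\Ipn{\infty}=\ker(\Laz\to\Fp[\bb])\). That is, every Chern number of an element of \(\Ipn{r}\) is divisible by \(p\).

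The case \(X^G=\varnothing\) should be treated separately, since then \(\dim X^G\) is not a natural number and \(\Delta^{d}(G)\) does not apply as written. In that case \(\lc X\rc\in\JK(G)=\Ipn{r}\), which contradicts \(p\nmid c_\alpha(X)\) when \(r\ge 1\).

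For \(r=0\) you only get \(\dim X\ge|\alpha|\), not equality, but that is all that is needed.
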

For instance, we obtain that if \(X\) is a smooth hypersurface in \(\Pp^{n+1}\) of degree prime to \(p\), where \(n+2\) is divisible by \(p\), then \(\dim X^G \ge \lfloor n/q \rfloor\).

Note that the corollary involves only the cardinality \(q\) of the \(p\)-group \(G\), and not its rank \(r\).
This reinforces the observation made above that the group structure of \(G\) is irrelevant as far as Chern numbers modulo \(p\) are concerned.

But the cobordism class of the ambient variety in the Lazard ring carries substantially more \(p\)-primary information.
Indeed, it turns out that for certain partitions \(\alpha=(\alpha_1,\ldots,\alpha_m)\), the associated Chern number \(c_\alpha\) only ever takes values divisible by a certain power of \(p\);
for instance the Euler number of a smooth projective curve is always even.
Theorem~\ref{th:main} asserts that we may divide the Chern number by that power of \(p\), and use the result modulo \(p\) to obtain the lower bound \eqref{eq:lower_bound_partition}, as long as the partition \(\alpha\) is not a refinement of a partition containing one of the \(r-1\) integers \(p-1,\ldots,p^{r-1}-1\) (see (\ref{cor:dim_pi})).
This last condition is vacuous when \(r=1\);
in general it is satisfied for instance when \(\alpha_m \geq p^{r-1}\).\\

The Chern numbers associated with the partitions \(\alpha=(n)\), of length one, detect indecomposability in the Lazard ring, which yields the following corollary.
Note that when \(n+1\) is a power of \(p\), these numbers are always divisible by \(p\), and so Corollary~\ref{cor:Chern_numbers} does not apply.
\begin{cor}
	\label{cor:indec}
	If \(X\) has pure dimension \(n\not \in \{p-1,\ldots,p^{r-1}-1\}\) and its class in the graded ring \(\Laz/p\) is indecomposable, then \(\dim X^G \ge \lfloor n/q \rfloor\).
\end{cor}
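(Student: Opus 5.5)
We derive Corollary~\ref{cor:indec} from Theorem~\ref{th:main}. Write \(d=\dim X^G\). By the definition of \(\Delta^d(G)\), we have \(\lc X\rc \in \Delta^d(G) = F^d_q\Laz + \Ipn{r}\). Suppose, for contradiction, that \(d<\lfloor n/q\rfloor\). The goal is then to show that every degree-\(n\) element of \(F^d_q\Laz+\Ipn{r}\) is decomposable modulo \(p\). This contradicts the hypothesis on \(X\).

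First we describe \(F^d_q\Laz\) in dimension \(n\). The subgroups \(F^e_q\Laz\) are generated by the two conditions of Theorem~\ref{th:main}. Hence \(F^d_q\Laz\) is spanned by products \(\lc Y_1\rc\cdots\lc Y_s\rc\), where \(\sum_j \lfloor \dim Y_j/q\rfloor \le d\); I would check that the span of such products already satisfies both conditions, so that it is the filtration. Such a product is homogeneous in \(\Laz\), and we may project onto the dimension-\(n\) component, since \(\lc X\rc\) is homogeneous of degree \(n\) and both summands are spanned by homogeneous elements (\(\Ipn{r}\) is a homogeneous ideal). In dimension \(n\), any product with \(s\ge 2\) factors of positive dimension is decomposable. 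A product with a single factor of positive dimension \(n\), possibly multiplied by classes of points, that is by integers, has filtration degree \(\lfloor n/q\rfloor>d\). So no such term can occur, except through integer multiples that reduce to the same argument. Therefore the dimension-\(n\) part of \(F^d_q\Laz\) consists of decomposable elements.

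Second we treat \(\Ipn{r}\). Its generators are \(p=v_0,v_1,\ldots,v_{r-1}\), where \(v_i\) has dimension \(p^i-1\). The dimension-\(n\) part of \(\Ipn{r}\) is \(p\Laz_n+\sum_{i\ge1} v_i\Laz_{n-p^i+1}\). The term \(p\Laz_n\) vanishes modulo \(p\). Since \(n\notin\{p-1,\ldots,p^{r-1}-1\}\), each \(n-p^i+1\) with \(1\le i\le r-1\) is nonzero, so each term \(v_i\Laz_{n-p^i+1}\) is a product of two positive-degree elements, hence decomposable. (If \(n-p^i+1<0\), the term is zero.) This is exactly where the hypothesis on \(n\) enters.

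Combining the two steps, \(\lc X\rc\) modulo \(p\) lies in the square of the augmentation ideal of \(\Laz/p\), which contradicts indecomposability. Hence \(d\ge\lfloor n/q\rfloor\). The main point needing care is the first step: the explicit description of the generated filtration as spans of products, and the treatment of degree-zero factors. Including those factors only multiplies by integers and does not change the filtration degree.
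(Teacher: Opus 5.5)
Your proof is correct, but it takes a more direct route than the paper's. The paper deduces the statement from the Chern-number criterion (\ref{cor:dim_pi}) applied to the one-part partition \(\alpha=(n)\):
\begin{itemize}
\item indecomposability in \(\Laz/p\) is converted by (\ref{p:indec_cn}) into \(c_{(n)}(X)\notin p\,c_{(n)}(\Laz)\);
\item the hypothesis on \(n\) says exactly that \((n)\in\Apar{r}\);
\item (\ref{prop:dim_c}) then combines (\ref{lemm:c_F_pi}), namely that \(c_{(n)}\) vanishes on \(F^{d}_q\Laz\) for \(d<\lfloor n/q\rfloor\), with (\ref{lemm:c_Ipn}), namely that \(c_{(n)}(\Ipn{r})\subset p\,c_{(n)}(\Laz)\).
\end{itemize}

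You never introduce \(c_{(n)}\) and work with the ideal of decomposables directly. Your first step is the case \(\alpha=(n)\) of (\ref{lemm:c_F_pi}). Your second step, which uses the generators \(v_0,\ldots,v_{r-1}\) of (\ref{prop:coeff_I}), is the case \(\alpha=(n)\) of (\ref{lemm:c_Ipn}). By (\ref{p:cn_dec}), the kernel of \(c_{(n)}\) on \(\Laz^{-n}\) is precisely the decomposable elements, so the two arguments have the same substance. Yours is self-contained and avoids the refinement bookkeeping of (\ref{lemm:succ}). The paper's obtains the corollary for free from a criterion valid for every partition in \(\Apar{r}\).

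One small point to tidy up: if \(X^G=\varnothing\), there is no \(d\in\Nn\) to plug into \(\Delta^d(G)\). In that case you should invoke \(\JK(G)=\Ipn{r}\) (Theorem~(\ref{th:J_Ipn})), that is, use the convention \(F^{-\infty}_q\Laz=0\). Your second step alone then finishes this case. When \(n<q\), this case is in fact the entire content of the statement.
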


Another extreme case is the description of the ring \(F^0_q\Laz\):
\begin{cor}
	\label{cor:isolated}
	In the quotient ring \(\Laz/\Ipn{r}\), the set of classes of varieties admitting a \(G\)-action with isolated fixed points is the subring generated by the classes of varieties of dimension \(<q\).
\end{cor}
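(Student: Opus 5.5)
Corollary~\ref{cor:isolated} is the case \(d=0\) of Theorem~\ref{th:main}, provided two points are settled: that \(\Delta^0(G)\) is exactly the set of classes with isolated fixed points, and that \(F^0_q\Laz\) is the subring generated by varieties of dimension \(<q\).

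\emph{Step 1: \(\Delta^0(G)\) is the right set.} A \(G\)-action has isolated fixed points exactly when \(\dim X^G\le 0\), where an empty fixed locus is allowed. By (\ref{lemm:only_lower_bounds}), \(\Delta^0(G)\) contains the classes of all varieties admitting such an action. I then check that this set of classes is a subgroup of \(\Laz\), so that it equals \(\Delta^0(G)\). It is closed under sums, by taking disjoint unions of varieties with their actions. For negatives, I would use the standard fact that \(-\lc X\rc\) is represented by a variety with isolated fixed points. If that fact is awkward to establish, I would state the corollary only for the subgroup generated. Modulo \(\Ipn{r}\) the distinction disappears anyway, since \(\Ipn{r}\subset \JK(G)\subset \Delta^0(G)\). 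Consequently, the image of \(\Delta^0(G)\) in \(\Laz/\Ipn{r}\) is the set described in the corollary.

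\emph{Step 2: identify \(F^0_q\Laz\).} Let \(R\) be the subring generated by the classes of varieties of dimension \(<q\). I claim \(F^0_q\Laz = R\).
\begin{enumerate}[label=(\alph*)]
\item \(R\subset F^0_q\Laz\). Condition (ii) puts every class of dimension \(i<q\) into \(F^0_q\Laz\), because \(\lfloor i/q\rfloor=0\). Condition (i) with \(d=e=0\) makes \(F^0_q\Laz\) closed under products.
\item \(F^0_q\Laz\subset R\). Consider the filtration generated in the minimal way by (i) and (ii): \(F^d_q\Laz\) is spanned by products \(\lc X_1\rc\cdots\lc X_s\rc\) with \(\sum_j\lfloor \dim X_j/q\rfloor\le d\). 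Taking \(d=0\) forces \(\dim X_j<q\) for every factor, so each such product lies in \(R\).
\end{enumerate}
The filtration is nondecreasing, as the statement of Theorem~\ref{th:main} requires. For this I need the unit \(1=\lc \Speck\rc\), which has dimension \(0<q\), to lie in \(F^0_q\Laz\); it does by (ii).

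\emph{Step 3: conclude.} Theorem~\ref{th:main} gives \(\Delta^0(G)=F^0_q\Laz+\Ipn{r}=R+\Ipn{r}\). Its image in \(\Laz/\Ipn{r}\) is the image of \(R\), which is the subring generated by the classes of varieties of dimension \(<q\).

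The only subtle point is the minimal-filtration description in Step 2(b), in particular whether "generated by" means the smallest filtration satisfying (i) and (ii). I expect the paper's definition to give exactly that reading.
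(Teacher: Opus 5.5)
This is essentially the paper's argument. The paper takes \(d=0\) in Theorem~\ref{th:Delta^d} and uses (\ref{p:F0_L_q-1}), which says \(F^0_q\Laz\) is the subring generated by homogeneous elements of degree \(>-q\); your Step~2 re-derives this in terms of classes of varieties.

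One correction to Step~1. The ``standard fact'' fails in degree zero: \(-1\) is not the class of any variety, so the set of such classes is not a subgroup of \(\Laz\). What makes this set fill out the image of \(\Delta^0(G)\) in \(\Laz/\Ipn{r}\) is not the inclusion \(\Ipn{r}\subset\Delta^0(G)\). It is the fact that \(p\in\Ipn{r}\) for \(r\ge 1\), so \(-\lc X\rc\equiv(p-1)\lc X\rc\), which is represented by \(p-1\) disjoint copies of \(X\). For \(r=0\) the literal set statement fails (\(\Nn\) versus \(\Zz\)) and must be read for the generated subgroup.
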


Tom Dieck proved in \cite[Satz~3~(a)]{tomDieck-Periodische} a weaker, ``multiplicative'' variant of Theorem~\ref{th:main} (in the special case \(r=1\), i.e.\ when \(G\) is a cyclic \(p\)-group).
It does not exactly describe the topological version of the subgroup \(F^d_q\Laz\), but rather the subring generated by it.
This suffices to derive the analogue of Corollary~\ref{cor:indec} (in \cite[Satz~3~(b)]{tomDieck-Periodische}), but in general yields a much coarser dimensional bound than Theorem~\ref{th:main} does:
for instance in Corollary~\ref{cor:Chern_numbers} the lower bound \(\lfloor \alpha_1/q \rfloor + \cdots + \lfloor \alpha_n/q \rfloor\) would be weakened to \(\lfloor \alpha_1/q \rfloor\).
We are not aware of any analogue of Theorem~\ref{th:main} in topology, although such a result likely holds.

\subsection{Remarks}
\begin{para}
	We emphasize that the above results describe all the possible restrictions on the dimension of the fixed locus arising linearly from the Chern numbers of the ambient variety, for the action of a diagonalizable group \(G\). Explicitly, setting \(\Delta^{-\infty}_q(G)=\JK(G)\),
	\[
		\Delta^d(G) =
		\left\{
			\begin{array}{@{}l@{\quad}p{0.55\textwidth}@{}}
				\Ipn{r} & if \(d=-\infty\), and \(G\) is the product of a torus and an abelian \(p\)-group of rank \(r\),\\[3ex]
				F^p_q\Laz + \Ipn{r} & if \(G\) is an abelian \(p\)-group of rank \(r\) and order \(q\), \\[1ex]
				\Laz & otherwise.
			\end{array}
		\right.
	\]
	It is worth noting that all such restrictions turn out to come from relatively simple examples, via three basic constructions:
	\begin{itemize}
		\item[---]
			trivial actions,
		\item[---]
			linear actions on projective spaces or on projective bundles over them,
		\item[---]
			restrictions of those to degree \(p\) hypersurfaces in projective spaces.
	\end{itemize}
\end{para}
\begin{para}
	The above results are ``additive'' in nature: the conditions derived apply to the subgroup generated by the cobordism class of the ambient variety rather than the class itself.
	The difference between these two types of conditions is investigated in Appendix~\ref{app:eff}, where we discuss effective versions of the groups \(\JK(G)\) and \(\Delta^d(G)\), namely the subsets \(\JKe(G)\) and \(\Delta^d_{\eff}(G)\) of \(\Laz\), consisting of the classes of varieties admitting a \(G\)-action without fixed points, and with a fixed locus of dimension \(d\).
	We are able to determine completely the set \(\JKe(G)\) for all finite type diagonalizable groups \(G\), as well as the set \(\Delta^d_{\eff}(G)\) when \(G\) is finite abelian.
	Those sets essentially agree with their noneffective counterparts, up to their components of dimension zero, which are easily determined.
	By contrast, we observe that \(\Delta^0_{\eff}(\Gm)\) differs significantly from \(\Delta^0(\Gm)\).
\end{para}

\begin{para}
	As mentioned above, the special case \(G=\Zz/2\) in Theorem~\ref{th:main} was considered in \cite{ciequ}.
	That approach relied fundamentally on the exceptional properties of \(\Zz/2\)-actions, i.e.\ involutions, and does not extend even to the prime cyclic case \(G=\Zz/p\) for \(p\) odd.
	One crucial feature of the case \(G=\Zz/2\) is that this group always acts through the same character on the normal bundle to the fixed locus.
	For instance, this property enabled the geometric construction of Vishik's symmetric operations for the prime \(2\) in \cite{Vishik-Sym}, while other primes were treated only much later in \cite{Vishik-Sym-p} using a completely different method.

	The structure of the equivariant cobordism ring appears to be substantially more intricate when \(|G|>2\),
	and it is rather unclear whether the structural results of \cite{ciequ} for \(G=\Zz/2\) have direct analogues for other abelian \(p\)-groups, or if those are purely specific to involutions.
\end{para}

\begin{para}
	Our results are obtained using algebraic cobordism as a cohomology theory of algebraic varieties.
	The coefficient ring of this theory is precisely the Lazard ring, making it a natural choice for the study of Chern numbers.
	Other cohomology theories, such as Chow groups or \(K\)-theory, have been used to obtain fixed-point theorems and related results involving Chern numbers \cite{fpt,inv,ciequ}.
	The coefficient rings of those theories turn out to be too small for the purposes of this paper.
	For instance, while \(K\)-theory is suitable to study the actions of cyclic \(p\)-groups \cite[\S2.2]{ciequ}, it probably is not for \(p\)-groups of higher rank, as suggested by results in topology \cite{Bix-tomDieck}.
	By contrast, a key property of the Lazard ring for our inductive computation of the equivariant ring of the point is the fact that it is \emph{not} a noetherian ring.
\end{para}

\begin{para}
	We in fact prove all results over arbitrary fields of characteristic different from \(p\).
	In this situation, finite abelian groups are naturally replaced by finite diagonalizable groups, which are isomorphic to products of the groups \(\mu_n\), and the presence of roots of unity in the base field plays no role.

	Working over fields of positive characteristic forces us to use Voevodsky's cobordism spectrum \(\MGL\) instead of Levine--Morel's version of algebraic cobordism.
	This leads us to localize all cobordism rings away from the characteristic, which does not affect \(p\)-primary properties in the Lazard ring, as long as the prime \(p\) is distinct from the characteristic.

	The case of base fields of characteristic \(p\) remains largely open due to several obstacles:
	\begin{itemize}
		\item[---] there is currently no well-behaved \(p\)-primary theory of algebraic cobordism available,
		\item[---] the usual equivariant localization techniques (concentration theorem) are not available for constant finite abelian \(p\)-groups \(G\), as those are not linearly reductive,
		\item[---] the classifying space \(BG\) is \(\mathbb{A}^1\)-contractible, so no nonequivariant information will be carried by the equivariant Chow/\(K\)-theory/cobordism ring of the point.
	\end{itemize}
\end{para}

\subsection{Strategy of the proof}

Our starting point is the concentration theorem \cite[(4.5.4)]{conc} applied to the algebraic cobordism theory of Levine and Morel \cite{LM-Al-07}.
This result identifies the equivariant cobordism of a variety with a group action with that of its fixed locus, upon localization with respect to a certain multiplicative subset \(S\) of the equivariant cobordism of the point.

Assume that \(G\) is a diagonalizable group whose character group is a \(p\)-group (such groups are referred to as \emph{diagonalizable \(p\)-groups} in this paper).
We compute in \S\ref{sect:omega_G} the \(G\)-equivariant cobordism ring of the point \(\Omega_G(\Speck)\).
We then describe the kernel of the localization \(\Omega_G(\Speck) \to S^{-1}\Omega_G(\Speck)\), an ideal of \(\Omega_G(\Speck)\) which turns out to be mapped in the Lazard ring to the ideal \(\Ipn{r}\), under the forgetful morphism \(\Omega_G(\Speck) \to \Omega(\Speck)=\Laz\).
Consequently, the quotient \(\Laz/\Ipn{r}\) captures the nonequivariant trace of the concentration theorem.
This implies that the existence of fixed points is detected by the cobordism class modulo \(\Ipn{r}\), which is one inclusion in Theorem~\ref{th:no_fixed}.
The other inclusion is proved in \S\ref{sect:optimal} by exhibiting explicit fixed-point-free actions on varieties whose cobordism classes generate the ideal \(\Ipn{r}\).\\

The concentration theorem is somewhat more precise.
Within the localization \(S^{-1}\Omega_G(\Speck)\) of the equivariant cobordism ring of the point, it permits us to express the class of a variety with a \(G\)-action in terms of the normal bundle to its fixed locus, to which we will refer as the \emph{fixed data} for brevity.
Accordingly, we introduce in \S\ref{sect:Mcc} the cobordism ring \(\Mcc\) of vector bundles graded by the nontrivial characters of \(G\), where the class of the fixed data naturally resides.\\

In \S\ref{sect:optimal}, we proceed to endow a family of generators of the Lazard ring with \(G\)-actions, in such a way as to minimize the dimension of the fixed locus. This allows us in particular to verify that the group \(F^d\) of Theorem~\ref{th:main} is generated by a set of classes of varieties carrying a \(G\)-action with a fixed locus of dimension \(\le d\).\\

In \S\ref{sect:alg_indep}, we describe a method to produce algebraically independent elements in the ring \(S^{-1}\Omega_G(\Speck)\), starting from elements of \(\Omega_G(\Speck)\) whose nonequivariant images in \(\Laz/\Ipn{r}\) are algebraically independent over \(\Fp\) (when \(r \ge 1\)).\\

We then define in \S\ref{sect:Mcc_e} a morphism of \(\Laz\)-algebras \(\Mcc \to S^{-1}\Omega_G(\Speck)\), which sends the class of the fixed data to the equivariant class of the variety.
We prove in (\ref{th:inj_M}) that this morphism is injective; note that its source is a polynomial algebra in infinitely many variables, and its target the localization of a quotient of a power series algebra in \(r\) variables.
This theorem implies in particular a form of converse to the concentration theorem: the class of the fixed data in \(\Mcc\) is determined by the equivariant class of the variety.

We prove that the ring \(\Mcc\) is algebraic over the subring generated by the classes of fixed data, and \(r-1\) additional explicit elements.
In fact it suffices to use the fixed data for the explicit actions constructed in \S\ref{sect:optimal}.
This observation leads to the proof of the injectivity theorem.\\

The next step is to consider, for a given \(d \in \Nn\), the subgroup \(\Mcd{d}\) generated in \(\Mcc\) by classes of graded vector bundles over bases of dimension \(\leq d\), and to view it as a module over the ring \(\Mcd{0}\).
As such it is free of finite rank.
The explicit actions on varieties representing cobordism generators constructed in \S\ref{sect:optimal} allow us to build an explicit family \(\Fc\) of varieties carrying a \(G\)-action with a fixed locus of dimension \(\le d\). The classes of the corresponding fixed data form a linearly independent family of elements in the \(\Mcd{0}\)-module \(\Mcd{d}\), which is maximal, as its cardinality is precisely the rank of the module.

We then prove in (\ref{lemm:lin_indep:base}) that relations of linear dependence over \(\Mcd{0}\) between classes of fixed data in \(\Mcc\) imply relations of linear dependence in \(\Laz/\Ipn{r}\) for the nonequivariant classes of the corresponding varieties, over the subring generated by elements of dimension \(1, \ldots, q-1\).
The nonequivariant cobordism classes of our explicit varieties in \(\Fc\) generate, as a module over that subring, a direct summand of \(\Laz/\Ipn{r}\), which is precisely the image modulo \(\Ipn{r}\) of the group \(F^d_q\Laz\) of Theorem~\ref{th:main}.
Combining these facts, we deduce the following in (\ref{prop:main}): if a variety carries a \(G\)-action, and its nonequivariant cobordism class in the Lazard ring does not belong to \(F^d_q\Laz + \Ipn{r}\), then the class of its fixed data in \(\Mcc\) is linearly independent from the classes of the fixed data of our explicit family \(\Fc\).
By the aforementioned maximality property, this class cannot belong to \(\Mcd{d}\).
This implies that the fixed locus of the variety must have dimension \(>d\), and concludes the proof of Theorem~\ref{th:main}.

\subsection{Organization of the paper}
In \S\ref{sect:Lazard} we fix the notation, and recall known facts used throughout the paper.

In \S\ref{sect:optimal} we perform explicit constructions of actions with low-dimensional fixed locus on varieties whose classes generate the Lazard ring.
We also introduce the ``\(q\)-filtration'', which is used in the statement of the main theorem.

In \S\ref{sect:oct}, we recall the required notions on algebraic cobordism, and introduce the cobordism ring \(\Mcc\) of graded vector bundles.

In \S\ref{sect:omega_G}, we compute the \(G\)-equivariant cobordism ring of the point, when the group \(G\) is diagonalizable.

In \S\ref{sect:alg_indep}, that ring is investigated from a purely algebraic point of view.
We give a method to produce algebraically independent elements in it, and describe which nonequivariant information is lost in the concentration theorem.

In \S\ref{sect:fixed_locus} we combine the results of \S\S\ref{sect:oct}--\ref{sect:alg_indep} to prove the main theorems on actions of diagonalizable \(p\)-groups.

In \S\ref{sect:conclusion}, we conclude by stating our results for general diagonalizable groups, present concrete consequences in terms of Chern numbers, and provide a few examples.

In Appendix~\ref{app:eff} we discuss effectivity aspects, including ``nonadditive'' versions of our main results.

In Appendix~\ref{app:glossary}, to help the reader navigate the paper, we provide a list of the symbols used in the paper, focusing on those used across sections, together with references and brief descriptions.

\section{The Lazard ring}
\renewcommand{\theequation}{\thetheorem.{\alph{equation}}}
\label{sect:Lazard}
All rings are assumed to be commutative, unital, and associative.\\

In this section we fix the notation and recall a few well-known facts that will be used throughout the paper.

\subsection{The algebraic Hurewicz morphism}

\begin{para}
	\label{p:graded_power_series}
	When \(R\) is a graded (i.e.\ \(\Zz\)-graded) ring, we denote by \(R[[t]]\) the \emph{graded power series algebra}, defined as the limit of the graded \(R\)-algebras \(R[t]/t^m\) for \(m\in \Nn\), where \(t\) is homogeneous of degree \(1\).
	Thus \(R[[t]]\) is additively generated by the power series of the form \(\sum_{j \in \Nn} r_{i-j} t^j\) for \(i\in \Zz\), where each \(r_k \in R\) is homogeneous of degree \(k\in \Zz\) (it is a subset of the usual power series ring).
	More generally we consider the graded power series algebra in \(r\) variables \(R[[x_1,\ldots,x_r]]\), or when \(E\) is a set, the graded power series algebra \(R[[E]]\) in the variables \(X_e\) for \(e\in E\), where the variables are homogeneous of degree \(1\).
\end{para}

\begin{para}
	\label{p:Laz}
	We denote by \(\Laz\) the Lazard ring, defined as the coefficient ring of the universal commutative one-dimensional formal group law \cite[I, \S5]{Adams-Stable}.
	It is a graded ring;
	we denote by \(\Laz^n\) its homogeneous component of degree \(n \in \Zz\), which vanishes when \(n>0\).
\end{para}

\begin{para}
	\label{p:fgl}
	The formal group law of \(\Laz\) is denoted by
	\begin{equation}
		\label{eq:fgl}
		x \fgl y \in \Laz[[x,y]].
	\end{equation}
	Formal multiplication by an integer \(n\in \Zz\) is denoted by
	\begin{equation}
		\label{eq:fgl_n}
		\fglr{n}(t) \in \Laz[[t]].
	\end{equation}
	For \(n\ge 0\), this is defined by setting \(\fglr{0}(t)=0\) and recursively \(\fglr{n+1}(t)=(\fglr{n}(t)) \fgl t\).
	We let \(\fglr{-1}(t)\) denote the formal inverse, and for \(n <0\) we set \(\fglr{n}(t)=\fglr{-1}(\fglr{-n}(t))\).

	The power series \eqref{eq:fgl} and \eqref{eq:fgl_n} are homogeneous of degree \(1\).
	If \(R\) is a \(\Laz\)-algebra which is complete with respect to the ideal generated by some elements \(a,b\in R\), we denote by \(a \fgl b\) the image of \(x \fgl y\) under the morphism \(\Laz[[x,y]] \to R\) given by \(x \mapsto a, y \mapsto b\).
	Similarly if \(R\) is complete with respect to the ideal generated by \(s\), we denote by \(\fglr{n}(s)\) the image of \(\fglr{n}(t)\) under the morphism \(\Laz[[t]] \to R\) given by \(t \mapsto s\).
\end{para}

\begin{para}
	\label{p:L_in_Zb}
	When \(R\) is a graded ring, we consider the graded polynomial \(R\)-algebra
	\[
		R[\bb]=R[b_i, i \in \Nn\smallsetminus \{0\}],
	\]
	where \(b_i\) is homogeneous of degree \(-i\). We also set \(b_0=1\).
	The power series
	\begin{equation}
		\label{eq:exp}
		\exp(t)=\sum_{i\in \Nn} b_i t^{i+1} \in R[\bb][[t]]
	\end{equation}
	admits a compositional inverse \(\exp^{-1}(t)\).
	We consider the ring morphism
	\begin{equation}
		\label{eq:Laz_Zb}
		\Laz \to \Zz[\bb]
	\end{equation}
	classifying the formal group law
	\begin{equation}
		\label{eq:fglr_zb}
		(x,y) \mapsto \exp(\exp^{-1}(x)+\exp^{-1}(y)).
	\end{equation}
	This morphism is injective \cite[II, Theorem~7.8]{Adams-Stable}.
\end{para}

\begin{para}
	\label{p:flgr_exp}
	For every \(n\in \Nn\), the morphism \(\Laz[[t]] \to (\Zz[\bb])[[t]]\) induced by \eqref{eq:Laz_Zb} sends the power series \(\fglr{n}(t)\) to \(\exp(n \exp^{-1}(t))\).
\end{para}

\begin{para}
	\label{p:partitions}
	A \emph{partition} \(\alpha\) is a (possibly empty) sequence of integers \((\alpha_1,\ldots, \alpha_n)\) verifying \(\alpha_1 \ge \cdots \ge \alpha_n >0\).
	The integer \(n \in \Nn\) is the \emph{length} of the partition \(\alpha\), denoted by \(\ell(\alpha)\in \Nn\).
	The \emph{weight} of \(\alpha\) is the integer \(|\alpha|=\alpha_1+\ldots+\alpha_n\).
	To the partition \(\alpha\) corresponds the monomial \(b_{\alpha}=b_{\alpha_1}\cdots b_{\alpha_n}\in \Zz[\bb]\).
	Applying the morphism \eqref{eq:Laz_Zb} and taking the \(b_{\alpha}\)-coefficient yields a morphism of abelian groups:
	\begin{equation}
		\label{eq:c_alpha}
		c_{\alpha} \colon \Laz \to \Zz.
	\end{equation}
\end{para}

\begin{para}
	If \(\alpha=(\alpha_1,\dots,\alpha_n)\) and \(\beta=(\beta_1,\dots,\beta_m)\) are partitions, we denote by \(\alpha \cup \beta\) the partition obtained by reordering the tuple \((\alpha_1,\dots,\alpha_n,\beta_1,\dots,\beta_m)\).
\end{para}

\begin{para}
	\label{p:c_product}
	If \(\alpha\) is a partition, we have for every \(x,y \in \Laz\)
	\[
		c_\alpha(xy) = \sum_{\beta \cup \gamma = \alpha} c_\beta(x) c_\gamma(y).
	\]
\end{para}

\begin{para}
	\label{p:decomposable}
	An element of a graded ring is called \emph{decomposable} if it belongs to \(I^2\), where \(I\) is the ideal generated by homogeneous elements of nonzero degree.
	An element which is not decomposable is called \emph{indecomposable}.
\end{para}

\begin{para}
	\label{p:decomp_zb}
	In the graded ring \(\Zz[\bb]\), the ideal of decomposable elements is generated by the elements \(b_ib_j\) for \(i,j>0\).
	It follows that, for every \(i\in \Nn\), the map \(c_{(i)}\colon \Laz \to \Zz\) vanishes on elements having decomposable image in \(\Zz[\bb]\).
\end{para}

\begin{para}[{See \cite[II, \S 7.8]{Adams-Stable}}]
	\label{p:pol_gen_laz}
	The \(\Zz\)-algebra \(\Laz\) is polynomial, with one generator in each degree \(-i\), for \(i \in \Nn \smallsetminus \{0\}\).
	A family \(\ell_i \in \Laz^{-i}\) for \(i \in \Nn \smallsetminus \{0\}\) is a set of polynomial generators if and only if for every \(i\) the integer \(c_{(i)}(\ell_i)\) generates the group \(c_{(i)}(\Laz^{-i})\), or equivalently
	\begin{equation}
		\label{eq:ci_li}
		c_{(i)}(\ell_i)=
		\begin{cases}
			\pm 1 & \text{if \(i+1\) is not a power of a prime number}, \\
			\pm p & \text{if \(i+1\) is a power of the prime number \(p\)}.
		\end{cases}
	\end{equation}
	Given a partition \(\alpha=(\alpha_1,\ldots,\alpha_n)\), we write \(\ell_\alpha = \ell_{\alpha_1}\cdots \ell_{\alpha_n}\).
\end{para}

\begin{para}
	\label{p:Ld_pol}
	For \(d\in \Nn\), denote by \(\Laz(d)\) the subring of \(\Laz\) generated by the homogeneous elements of degree \(\ge -d\). If the elements \(\ell_i\in \Laz^{-i}\) form a family of polynomial generators of the \(\Zz\)-algebra \(\Laz\), then the \(\Zz\)-algebra \(\Laz(d)\) is polynomial in the elements \(\ell_1,\ldots,\ell_d\).
\end{para}

\begin{para}
	\label{p:cn_dec}
	Let \(y \in \Laz^{-n}\) with \(n >0\). It follow from (\ref{p:pol_gen_laz}) that \(y\) is decomposable in \(\Laz\) if and only if \(c_{(n)}(y)=0\).
\end{para}

\begin{para}
	\label{p:gen_indec}
	Let \(p\) be a prime number.
	Then a family \(y_i \in \Laz^{-i}/p\) is a set of polynomial generators of the \(\Fp\)-algebra \(\Laz/p\) if and only if each \(y_i\) is indecomposable in the graded ring \(\Laz/p\).
\end{para}

\begin{para}
	\label{p:indec_cn}
	Let \(y \in \Laz^{-n}\) with \(n >0\), and let \(p\) be a prime number.
	It follows from (\ref{p:pol_gen_laz}) that an element \(y\) reduces modulo \(p\) to an indecomposable element of \(\Laz/p\) if and only if the integer \(c_{(n)}(y)\) is not divisible by \(p\) when \(n+1\) is not a power of \(p\), and by \(p^2\) when \(n+1\) is a power of \(p\) (see also \cite[(7.3.2.iii)]{inv}).
\end{para}

\begin{para}
	\label{p:def_succ}
	Let \(\alpha,\beta\) be partitions.
	Write \(\beta = (\beta_1,\ldots,\beta_m)\).
	We say that \(\alpha\) is a refinement of \(\beta\), and write \(\alpha \succeq \beta\), when there exist partitions \(\alpha^1,\ldots,\alpha^m\) such that \(\beta_i = |\alpha^i|\) for all \(i=1,\ldots,m\) and \(\alpha = \alpha^1 \cup \cdots \cup \alpha^m\).
\end{para}

\begin{para}
	\label{p:succ_length}
	If \(\alpha \succeq \beta\), then \(\ell(\alpha) \ge \ell(\beta)\), with equality only when \(\alpha=\beta\).
\end{para}

\begin{lemma}
	\label{lemm:succ}
	Let \(\ell_i \in \Laz^{-i}\) be a family of polynomial generators of \(\Laz\). If \(\alpha,\beta\) are partitions such that \(c_\alpha(\ell_\beta) \neq 0\) then \(\alpha \succeq \beta\).
\end{lemma}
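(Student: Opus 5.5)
The plan is to work inside \(\Zz[\bb]\) through the injection \eqref{eq:Laz_Zb}, and to reduce to the case of a single generator using the product formula (\ref{p:c_product}). The single-generator case is a degree argument: \(\ell_i\) has degree \(-i\), and the morphism \eqref{eq:Laz_Zb} preserves degrees. So the image of \(\ell_i\) in \(\Zz[\bb]\) is a \(\Zz\)-linear combination of monomials \(b_\gamma\) with \(|\gamma| = i\). Hence \(c_\gamma(\ell_i) \neq 0\) forces \(|\gamma| = i\). In particular \(\gamma\) is a refinement of the one-part partition \((i)\): take \(m=1\) and \(\alpha^1=\gamma\) in (\ref{p:def_succ}).

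For general \(\beta=(\beta_1,\ldots,\beta_m)\) we have \(\ell_\beta=\ell_{\beta_1}\cdots\ell_{\beta_m}\). Iterating (\ref{p:c_product}) by induction on \(m\) gives
\[
c_\alpha(\ell_\beta)=\sum_{\alpha^1\cup\cdots\cup\alpha^m=\alpha} c_{\alpha^1}(\ell_{\beta_1})\cdots c_{\alpha^m}(\ell_{\beta_m}),
\]
where the sum runs over ordered \(m\)-tuples of partitions (possibly empty) whose union is \(\alpha\). Equivalently, the coefficient of \(b_\alpha\) in a product of elements of \(\Zz[\bb]\) is obtained by distributing the factors of the monomial \(b_\alpha\) among the factors of the product. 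If \(c_\alpha(\ell_\beta)\neq 0\), some summand is nonzero. For that summand, each factor \(c_{\alpha^j}(\ell_{\beta_j})\) is nonzero, so \(|\alpha^j|=\beta_j\) by the single-generator case. Since also \(\alpha=\alpha^1\cup\cdots\cup\alpha^m\), this is exactly the statement \(\alpha\succeq\beta\).

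Two small points need care. First, the empty partition must be allowed among the \(\alpha^j\). This causes no trouble: \(|\alpha^j|=\beta_j>0\) forces every \(\alpha^j\) to be nonempty, and \(b_\emptyset=1\) has coefficient zero in \(\ell_{\beta_j}\) for degree reasons. Second, the case \(\beta=\emptyset\) is separate: then \(\ell_\beta=1\), and \(c_\alpha(1)\neq 0\) forces \(\alpha=\emptyset\), which refines \(\emptyset\). There is no real obstacle. The only step needing attention is checking that (\ref{p:c_product}) iterates correctly to ordered \(m\)-fold decompositions, which follows by induction on \(m\).
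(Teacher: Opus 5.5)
Your proof is correct and is essentially the paper's argument: iterate the product formula (\ref{p:c_product}) over $\ell_\beta = \ell_{\beta_1}\cdots\ell_{\beta_m}$, pick a nonzero summand, and use degree reasons to get $|\alpha^i| = \beta_i$. Your extra remarks on empty partitions and the case $\beta = \varnothing$ are harmless refinements that the paper leaves implicit.
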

\begin{proof}
	Write \(\beta = (\beta_1,\dots,\beta_m)\). Then by (\ref{p:c_product}) we have
	\[
		c_\alpha(\ell_\beta) = c_\alpha(\ell_{\beta_1} \cdots \ell_{\beta_m}) = \sum_{\alpha^1 \cup \dots \cup \alpha^m=\alpha} c_{\alpha^1}(\ell_{\beta_1}) \cdots c_{\alpha^m}(\ell_{\beta_m}).
	\]
	Thus there exist partitions \(\alpha^1,\dots,\alpha^m\) such that \(\alpha^1 \cup \dots \cup \alpha^m=\alpha\) and \(c_{\alpha^i}(\ell_{\beta_i}) \neq 0\) for \(i=1,\dots,m\). By degree reasons, we must have \(|\alpha^i| = \beta_i\) for \(i=1,\dots,m\), which proves that \(\alpha \succeq \beta\).
\end{proof}

\subsection{Cobordism classes of varieties}
In this section, we fix a base field \(k\), and recall how the Lazard ring can be interpreted in terms of Chern numbers of smooth projective \(k\)-varieties.
\begin{para}
	\label{p:CF}
	(See e.g.\ \cite[(3.1.2), (3.1.4)]{inv}.)
	Every line bundle \(L\) over a smooth \(k\)-variety \(X\) admits a first Chern class \(c_1(L) \in \CH(X)\) with values in the Chow ring, which is nilpotent.
	There is a unique way to define for every smooth projective \(k\)-variety \(X\) a map \(P \colon K_0(X) \to \CH(X)[\bb]\) such that (here \(K_0\) refers to the Grothendieck group of vector bundles on \(X\)):
	\begin{enumerate}[label=(\roman*), ref=\roman*]
		\item \(f^*P(E) = P(f^*E)\) for any morphism \(f\colon Y \to X\) and \(E\in K_0(X)\),

		\item \(P(L) = \displaystyle{\sum_{i\in \Nn} c_1(L)^i b_i}\) when \(L\) is the class of a line bundle over \(X\),
		\item \(P(E + F) = P(E) P(F)\) for any \(E,F \in K_0(X)\).
	\end{enumerate}
	Given \(E \in K_0(X)\), the \emph{Conner--Floyd Chern classes} \(c_\alpha(E) \in \CH(X)\), where \(\alpha\) runs over the partitions, are defined by the formula
	\begin{equation}
		\label{eq:P_CF}
		P(E) = \sum_\alpha c_\alpha(E) b_\alpha \in \CH(X)[\bb].
	\end{equation}
	Note that \(c_{\alpha}(E)\) is nonzero only for finitely many partitions \(\alpha\).
\end{para}

\begin{definition}
	\label{def:lc_rc}
	Let \(X\) be a smooth projective \(k\)-variety.
	Using (\ref{p:CF}), to a partition \(\alpha\) one associates the \emph{Chern number}
	\[
		c_\alpha(X) = \deg c_{\alpha}(-\Tan_X) \in \Zz.
	\]
	We set
	\begin{equation}
		\label{eq:Chern_numbers}
		\lc X \rc = \sum_{\alpha} c_\alpha(X) b_\alpha \in \Zz[\bb],
	\end{equation}
	where \(\alpha\) runs over the partitions. In other words, using the notation of (\ref{eq:c_alpha}) we have \(c_{\alpha}(\lc X \rc)= c_\alpha(X)\) for every partition \(\alpha\).	
\end{definition}

\begin{para}
	\label{p:L_Zb}
	Recall from (\ref{p:L_in_Zb}) that we may view \(\Laz\) as a graded subring of \(\Zz[\bb]\).
	It is proved in \cite[Theorem~8.2]{Mer-Ori} that the subgroup of \(\Zz[\bb]\) generated by the classes \(\lc X \rc\), where \(X\) runs over the smooth projective \(k\)-varieties, is \(\Laz\).
\end{para}

\begin{lemma}
		\label{lemm:c_n_hyp}
	Let \(X\) be a smooth hypersurface of degree \(d\) in \(\Pp^{n+1}\), with \(n\ge 1\).
	Then
	\[
		c_{(n)}(X) = d(d^n -n -2).
	\]
\end{lemma}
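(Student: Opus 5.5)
We compute \(c_{(n)}(X)\) from the definition via the power-sum description of the length-one Conner--Floyd class. By (\ref{p:CF}), if a class \(E\in K_0(X)\) has Chern roots \(x_1,\dots,x_m\) (formally, using the splitting principle), then \(P(E)=\prod_j \sum_i x_j^i b_i\). Hence the coefficient of \(b_n\) is \(c_{(n)}(E)=\sum_j x_j^n\), the \(n\)-th power sum of the roots. This expression is additive in \(E\), so \(c_{(n)}(-E)=-c_{(n)}(E)\).

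We then set up the tangent bundle. Let \(h\) be the hyperplane class on \(\Pp^{n+1}\) and \(i\colon X\to \Pp^{n+1}\) the inclusion. In \(K_0(X)\), the Euler sequence and the normal bundle sequence give
\[
\Tan_X = (n+2)\,i^*\Oc(1) - 1 - i^*\Oc(d).
\]
Therefore
\[
-\Tan_X = 1 + i^*\Oc(d) - (n+2)\,i^*\Oc(1).
\]

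Next we apply \(c_{(n)}\). The trivial summand contributes nothing, since its root is zero and \(n\ge 1\). Using additivity,
\[
c_{(n)}(-\Tan_X) = (d\,i^*h)^n - (n+2)(i^*h)^n = (d^n-n-2)\,(i^*h)^n.
\]

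Finally we take the degree. Since \(\deg (i^*h)^n = \deg\big(h^n\cdot[X]\big) = \deg(d\,h^{n+1}) = d\), we obtain
\[
c_{(n)}(X) = d(d^n-n-2).
\]

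The only point needing care is the identification of \(c_{(n)}\) with the power sum, and its resulting additivity, including on negative classes in \(K_0\). Everything else is the standard computation of \(\Tan_X\) in \(K_0\). This identification follows directly from the multiplicativity in (\ref{p:CF})(iii): in the product \(\prod_j(1+\sum_{i\ge1} x_j^i b_i)\), the \(b_n\)-coefficient is \(\sum_j x_j^n\). Consequently \(E\mapsto c_{(n)}(E)\) is a group homomorphism \(K_0(X)\to \CH(X)\).
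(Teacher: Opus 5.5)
Your proof is correct and follows essentially the same argument as the paper: write \(\Tan_X\) in \(K_0(X)\) as the restriction of \((n+2)\,\Oc(1)-1-\Oc(d)\), apply the additivity of \(c_{(n)}\), and take degrees via the projection formula with \(i_*(1)=d\cdot c_1(\Oc(1))\). Your derivation of that additivity from the multiplicativity of \(P\) in (\ref{p:CF}) is a detail the paper leaves implicit.
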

\begin{proof}
	Let \(i\colon X \to \Pp^{n+1}\) be the closed immersion.
	The class of the tangent bundle of \(X\) in \(K_0(X)\) is the restriction of \(-\Oc(d) + \Oc(1)^{\oplus (n+2)}-1 \in K_0(\Pp^{n+1})\).
	Using the additivity of the Conner--Floyd Chern class \(c_{(n)}\), we compute in \(\CH_0(X)\)
	\[	
		c_{(n)}(-\Tan_X) = i^*\big( c_1(\Oc(d))^n - (n+2)c_1(\Oc(1))\big) = (d^n- n-2) \cdot i^*c_1(\Oc(1))^n.
	\]
	We conclude by taking the degree, using the projection formula and the fact that \(i_*(1)= d\cdot c_1(\Oc(1))\).
\end{proof}

\begin{example}
	\label{ex:c_n_hypersurface}
	Let \(X\) be a smooth hypersurface of degree \(d\) in \(\Pp^{n+1}\), with \(n\ge 1\).
	If \(p\) is a prime number, it follows from (\ref{p:indec_cn}) and (\ref{lemm:c_n_hyp}) that the class \(\lc X \rc\) is indecomposable in the graded ring \(\Laz/p\) under any of the following assumptions:
	\begin{enumerate}[label=(\roman*), ref=\roman*]
		\item
			\(n=p-1\), and \(d\) prime to \(p\),
		\item
			\label{ex:c_n_hypersurface:2}
			\(n+1\) a power of \(p\), and \(d\in p \Zz\smallsetminus p^2 \Zz\),
		\item
			\(n = -2 \mod p\), and \(d\) prime to \(p\).
	\end{enumerate}
\end{example}

\subsection{Landweber ideals}
In this section, we fix a prime number \(p\).
\begin{definition}
	\label{def:Ipn}
	Denote by \(\Ipn{\infty} \subset \Laz\) the kernel of the composite
	\[
		\Laz \to \Zz[\bb] \to \Fp[\bb],
	\]
	where the first map is \eqref{eq:Laz_Zb}, and the second is induced by the ring morphism \(\Zz\to \Fp\).
	This is a homogeneous ideal of \(\Laz\).
	For \(n\in \Nn \smallsetminus \{0\}\), we denote by \(\Ipn{n}\) the ideal of \(\Laz\) generated by the homogeneous elements in \(\Ipn{\infty}\) of degree \(\ge 1-p^{n-1}\).
	So, in particular \(\Ipn{1} = p \Laz\).
	We also set \(\Ipn{0} = 0 \subset \Laz\).
\end{definition}

\begin{para}
	\label{rem:Ipn_Chern_numbers}
	As mentioned in (\ref{def:lc_rc}) and (\ref{p:L_Zb}), fixing a base field \(k\), every smooth projective \(k\)-variety has a class \(\lc X \rc \in \Laz\).
	Then \(\Ipn{n}\) is the subgroup of \(\Laz\) generated by the classes \(\lc X \rc\), where \(X\) runs over the smooth projective \(k\)-varieties of dimension \(\le p^{n-1}-1\) having all Chern numbers divisible by \(p\).
\end{para}

\begin{definition}
	\label{def:u_n}
	Consider the elements \(u_m \in \Laz^{-m}\) for \(m\in \Nn\) such that
	\[
		\fglr{p}(t) = \sum_{m \in \Nn} u_m t^{m+1} \in \Laz[[t]].
	\]
	For \(n\in \Nn\), set \(v_n = u_{p^n-1} \in \Laz^{1-p^n}\).
\end{definition}

\begin{para}
	Note that \(v_0=p\).
\end{para}

\begin{lemma}
	\label{lemm:u_in_I}
	We have \(u_m \in \Ipn{\infty}\) for every \(m\in \Nn\).
\end{lemma}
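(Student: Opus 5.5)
We need to show that each \(u_m\) maps to zero in \(\Fp[\bb]\). We work entirely inside \(\Zz[\bb]\) via the injective morphism \eqref{eq:Laz_Zb}, where \(\Ipn{\infty}\) is by definition the kernel of reduction modulo \(p\).

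By (\ref{p:flgr_exp}), the image of \(\fglr{p}(t)\) in \(\Zz[\bb][[t]]\) is \(\exp(p\exp^{-1}(t))\). So the image of \(u_m\) is the coefficient of \(t^{m+1}\) in \(\exp(p\exp^{-1}(t))\), and the claim becomes: every coefficient of this power series is divisible by \(p\) in \(\Zz[\bb]\). Equivalently, its image in \(\Fp[\bb][[t]]\) vanishes.

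Write \(g(t)=\exp^{-1}(t)\). Since \(\exp(t)=t+\sum_{i\ge1}b_it^{i+1}\) has integral coefficients and leading coefficient \(1\), its compositional inverse \(g(t)\) also lies in \(\Zz[\bb][[t]]\), with leading term \(t\). We then compute
\[
	\exp(p\,g(t))=\sum_{i\in\Nn} b_i\,p^{i+1}g(t)^{i+1}.
\]
Every term is \(p^{i+1}\) times a power series with coefficients in \(\Zz[\bb]\). Hence each coefficient of \(t^{m+1}\) lies in \(p\Zz[\bb]\), and the image of \(u_m\) in \(\Fp[\bb]\) is zero.

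Composition is legitimate here because \(p\,g(t)\) has zero constant term. The only point requiring care is the integrality of \(\exp^{-1}\). This follows from the standard recursive computation of the inverse of a power series whose linear coefficient is a unit, here \(1\).
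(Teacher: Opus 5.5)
Your proof is correct and follows the same route as the paper, which simply notes via (\ref{p:flgr_exp}) that \(\fglr{p}(t)\) maps to \(\exp(p\exp^{-1}(t))\), and this has vanishing image in \(\Fp[\bb][[t]]\). You have just written out the term-by-term divisibility by \(p\), and the integrality of \(\exp^{-1}\), which the paper leaves implicit.
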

\begin{proof}
	It follows from (\ref{p:flgr_exp}) that \(\fglr{p}(t) \in \Laz[[t]]\) has vanishing image in \(\Fp[\bb][[t]]\).
\end{proof}

\begin{lemma}
	\label{lemm:u_indec}
	For \(n\ge 1\), the element \(v_n\) has indecomposable image in \(\Laz/p\).
\end{lemma}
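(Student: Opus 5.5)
The plan is to reduce indecomposability to a divisibility statement about a single Chern number, and then compute that number directly from the formal group law.

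\textbf{Reduction.} Set \(m=p^n-1\), so that \(v_n=u_m\in\Laz^{-m}\) with \(m>0\). Here \(m+1=p^n\) is a power of \(p\). So by (\ref{p:indec_cn}), it suffices to show that the integer \(c_{(m)}(u_m)\) is not divisible by \(p^2\). By (\ref{p:decomp_zb}), \(c_{(m)}(u_m)\) is the coefficient of the linear monomial \(b_m\) in the image of \(u_m\) in \(\Zz[\bb]\). We may therefore compute modulo the ideal \(D\subset\Zz[\bb]\) generated by the products \(b_ib_j\) with \(i,j>0\).

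\textbf{Computation modulo \(D\).} By (\ref{p:flgr_exp}), the image of \(\fglr{p}(t)\) in \(\Zz[\bb][[t]]\) is \(\exp(p\exp^{-1}(t))\). Write \(\exp(t)=t+\sum_{i>0}b_it^{i+1}\). Since every \(b_i\) with \(i>0\) lies in the augmentation ideal, and \(D\) is the square of that ideal, we get
\[
\exp^{-1}(t)\equiv t-\sum_{i>0}b_it^{i+1} \pmod{D[[t]]}.
\]
To check this, substitute it into \(\exp\): every cross term involves a product of at least two \(b\)'s. Next, substitute \(s=p\exp^{-1}(t)\) into \(\exp(s)=s+\sum_{i>0} b_is^{i+1}\). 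In the terms \(b_is^{i+1}\), only the part \(s\equiv pt\) survives modulo \(D\). This gives
\[
\exp(p\exp^{-1}(t))\equiv pt-p\sum_{i>0}b_it^{i+1}+\sum_{i>0}p^{i+1}b_it^{i+1}\pmod{D[[t]]}.
\]
Comparing coefficients of \(t^{m+1}\) with Definition~\ref{def:u_n}, we obtain \(c_{(m)}(u_m)=p^{m+1}-p\).

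\textbf{Conclusion.} For \(m=p^n-1\) with \(n\ge1\), this gives \(c_{(m)}(v_n)=p^{p^n}-p=p\,(p^{p^n-1}-1)\). Since \(p^n-1\ge1\), the factor \(p^{p^n-1}-1\) is prime to \(p\). Hence \(c_{(m)}(v_n)\) is divisible by \(p\), consistent with (\ref{lemm:u_in_I}), but not by \(p^2\). So \(v_n\) is indecomposable in \(\Laz/p\) by (\ref{p:indec_cn}).

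The only delicate point is justifying that the computation can be carried out modulo \(D\): composition and inversion of power series must be compatible with this reduction. This holds because \(D[[t]]\) is an ideal of \(\Zz[\bb][[t]]\). It also uses that reduction modulo \(D\) commutes with substituting a series whose constant term vanishes, and this is a routine check.
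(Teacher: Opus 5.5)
Your proposal is correct and follows the paper's proof. Both compute \(\exp(p\exp^{-1}(t))\) modulo the ideal \(D\) of decomposables, obtain \(c_{(m)}(u_m)=p(p^m-1)\), and conclude via (\ref{p:indec_cn}). The only cosmetic difference is that you write down \(\exp^{-1}(t)\) explicitly modulo \(D\), whereas the paper uses the identity \(t b_j \equiv \exp^{-1}(t)\, b_j\) modulo \(D\).
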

\begin{proof}
	Denote by \(D\subset \Zz[\bb]\) the ideal of decomposable elements (in \(\Zz[\bb]\)).
	Let \(\alpha=\exp^{-1}(t) \in \Laz[[t]]\) (see (\ref{eq:exp})).
	Then in \((\Zz[\bb]/D)[[t]]\), we have for any \(j>0\)
	\begin{equation}
		\label{eq:tb_ab}
		tb_j=\exp(\alpha) b_j = \alpha b_j + \sum_{i >0} \alpha^{i+1} b_i b_j = \alpha b_j,
	\end{equation}
	since \(b_ib_j\in D\).
	Therefore, denoting by \(\delta\colon \Laz \to \Zz[\bb]/D\) the natural morphism (as well as the morphism induced on the graded power series rings), we have in \((\Zz[\bb]/D)[[t]]\)
	\begin{align*}
		\delta(\fglr{p}(t))=\exp(p\alpha)
		& =\sum_{i\ge 0} p^{i+1}\alpha^{i+1}b_i
		& \text{by \eqref{p:flgr_exp}},\\
		& =p\sum_{i\ge 0} \alpha^{i+1}b_i+ \sum_{i>0} p(p^i-1)\alpha^{i+1}b_i\\
		& =pt + \sum_{i>0} p(p^i-1)\alpha^{i+1}b_i\\
		& =pt + \sum_{i>0} p(p^i-1)t^{i+1}b_i
		& \text{by \eqref{eq:tb_ab}}.
	\end{align*}
	This means that, for each \(i>0\) we have \(\delta(u_i)=p(p^i-1)b_i\) in \(\Zz[\bb]/D\).
	By (\ref{p:decomp_zb}), we deduce that \(c_{(i)}(u_i)=p(p^i-1)\).
	Taking \(i=p^n-1\), we conclude using (\ref{p:indec_cn}).
\end{proof}

\begin{lemma}
	\label{lemm:w_I}
	Let \(w_j\in \Laz^{1-p^j}\) for \(j\in \Nn \smallsetminus \{0\}\), and set \(w_0=p\).
	For each \(j >0\), assume that \(w_j\) belongs to \(\Ipn{\infty}\) and has indecomposable image in \(\Laz/p\).

	Then for each \(n\in \Nn\), the ideal \(\Ipn{n} \subset \Laz\) is generated by the elements \(w_0,\ldots,w_{n-1}\).
	In addition, this ideal is prime and does not contain \(w_n\).
\end{lemma}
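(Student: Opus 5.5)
The plan is to pass to \(\Laz/p\) and exploit (\ref{p:gen_indec}). I would not argue by induction on \(n\); the argument treats all \(n\) at once. Set \(J_n=(w_0,\ldots,w_{n-1})\subset \Laz\). Since \(w_0=p\), we have \(\Laz/J_n=(\Laz/p)/(\bar w_1,\ldots,\bar w_{n-1})\), where \(\bar w_j\) is the image of \(w_j\) in \(\Laz/p\).

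\textbf{Step 1: primality and \(w_n\notin J_n\).} By hypothesis each \(\bar w_j\) with \(j\ge1\) is indecomposable in \(\Laz/p\), and it is homogeneous of degree \(1-p^j\). By (\ref{p:gen_indec}) we may therefore choose polynomial generators \(y_i\in\Laz^{-i}/p\) of the \(\Fp\)-algebra \(\Laz/p\) with \(y_{p^j-1}=\bar w_j\) for all \(j\ge1\). For the remaining degrees we take \(y_i\) to be the image of polynomial generators \(\ell_i\) of \(\Laz\) as in (\ref{p:pol_gen_laz}). These are indecomposable mod \(p\) by (\ref{p:indec_cn}) when \(i+1\) is not a power of \(p\), since then \(c_{(i)}(\ell_i)\) is \(\pm1\) or \(\pm\) a prime different from \(p\).

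It follows that \(\Laz/J_n\) is the polynomial \(\Fp\)-algebra on the \(y_i\) with \(i\notin\{p-1,\ldots,p^{n-1}-1\}\). This ring is a domain, so \(J_n\) is prime. Moreover \(w_n\) maps to the surviving variable \(y_{p^n-1}\), which is nonzero, so \(w_n\notin J_n\). The case \(n=0\) is trivial, since \(J_0=0\), \(\Laz\) is a domain, and \(w_0=p\neq0\).

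\textbf{Step 2: \(J_n\subset\Ipn{n}\).} For \(j\ge1\) we have \(w_j\in\Ipn{\infty}\) by hypothesis, and \(w_0=p\) clearly lies in \(\Ipn{\infty}\). For \(j\le n-1\) the degree \(1-p^j\) is \(\ge 1-p^{n-1}\), so these \(w_j\) are among the generators of \(\Ipn{n}\) in (\ref{def:Ipn}).

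\textbf{Step 3: \(\Ipn{n}\subset J_n\).} Let \(x\in\Ipn{\infty}\) be homogeneous of degree \(-i\) with \(i\le p^{n-1}-1\); it suffices to show \(x\in J_n\). The map \(\Laz\to\Fp[\bb]\) factors through \(\Laz/J_n\), since \(J_n\subset\Ipn{\infty}\) by Step 2. So it suffices to show that \(\Laz/J_n\to\Fp[\bb]\) is injective in degrees \(\ge 1-p^{n-1}\).

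In those degrees, \(\Laz/J_n\) has an \(\Fp\)-basis consisting of monomials in the \(y_m\) with \(m\le i\), \(m\notin\{p-1,\ldots,p^{n-1}-1\}\). Every such \(m\) satisfies \(m<p^{n-1}-1\), so \(m+1\) is not a power of \(p\); hence these \(y_m\) are images of the \(\ell_m\). Thus it is enough to show that the images in \(\Fp[\bb]\) of the \(\ell_m\) with \(m+1\) not a power of \(p\) are algebraically independent over \(\Fp\).

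\textbf{Main obstacle: algebraic independence in \(\Fp[\bb]\).} By (\ref{p:decomp_zb}) and the choice in Step 1, the image of \(\ell_m\) in \(\Fp[\bb]\) equals \(c\, b_m+(\text{decomposable})\) with \(c=c_{(m)}(\ell_m)\) a unit mod \(p\). The decomposable part is a polynomial in the \(b_k\) with \(k<m\).

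Consider the \(\Fp\)-algebra endomorphism of \(\Fp[\bb]\) sending \(b_m\) to this image for each \(m\) with \(m+1\) not a power of \(p\), and fixing the other \(b_k\). It is triangular with respect to degree and has unit leading coefficients. Hence it is an automorphism, because one can solve recursively for each \(b_m\). Consequently the chosen images are algebraically independent, which completes the proof.
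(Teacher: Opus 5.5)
Your proof is correct and follows essentially the paper's route. Like the paper, you choose polynomial generators of \(\Laz/p\) with \(\ell_{p^j-1}=w_j\), read off primality from the polynomial quotient \(\Laz/J_n\), and obtain \(\Ipn{n}\subset J_n\) from the algebraic independence in \(\Fp[\bb]\) of the images of the \(\ell_m\) with \(m+1\) not a power of \(p\), via their leading terms \(c_{(m)}(\ell_m)\,b_m\). The differences are cosmetic. The paper proves injectivity of \(\Laz/(w_j,\ j\in\Nn)\to\Fp[\bb]\) in all degrees, so that \(\Ipn{\infty}=(w_j,\ j\in\Nn)\), then truncates by degree, and it excludes \(w_n\) by a degree/indecomposability count. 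You instead check injectivity of \(\Laz/J_n\to\Fp[\bb]\) only in degrees \(\ge 1-p^{n-1}\), read \(w_n\notin J_n\) off the surviving variable \(y_{p^n-1}\), and make the independence step explicit with your triangular automorphism.
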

\begin{proof}	
	The case \(n=0\) is clear, as \(\Ipn{0}=0\), and \(\Laz\) is a domain where \(p\ne 0\) (the \(\Zz\)-algebra \(\Laz\) being polynomial).

	So we assume that \(n>0\).
	Let \(J(\infty)\) be the ideal of \(\Laz\) generated by the elements \(w_j\) for \(j\in \Nn\), and let \(J(n)\subset \Laz\) be the ideal generated by the homogeneous elements of degree \(\ge 1-p^{n-1}\) of \(J(\infty)\).
	As \(\Laz\) has no nonzero homogeneous elements of positive degree, it follows that \(J(n)\) is generated by \(w_0,\ldots,w_{n-1}\).

	Let \(\ell_i \in \Laz^{-i}\) for \(i\in \Nn\smallsetminus \{0\}\) be elements reducing modulo \(p\) to polynomial generators of the \(\Fp\)-algebra \(\Laz/p\).
	As the elements \(w_i\) are indecomposable in \(\Laz/p\), by (\ref{p:gen_indec}) we may assume that \(\ell_{p^j-1}=w_j\) for each \(j \in \Nn\smallsetminus \{0\}\).
	Then \(\Laz/J(n)\), resp.\ \(\Laz/J(\infty)\), is a polynomial \(\Fp\)-algebra in the variables \(\ell_i\) for \(i \ne p^s-1 \) for all \(s\in \{0,\ldots,n-1\}\), resp.\ \(s\in \Nn\).
	In particular the ideal \(J(n)\) of \(\Laz\) is prime.

	Since \(J(\infty)\subset \Ipn{\infty}\) by assumption, we have an induced morphism \(\psi \colon \Laz/J(\infty) \to \Fp[\bb]\), which by \eqref{eq:ci_li} and (\ref{p:cn_dec}) maps \(\ell_i\) to a nonzero multiple of \(b_i\) modulo decomposable elements, when \(i+1\) is not a power of \(p\).
	Therefore the elements \(\psi(\ell_i) \in \Fp[\bb]\), for \(i\in \Nn\smallsetminus \{0\}\) such that \(i+1\) is not a power of \(p\), are algebraically independent over \(\Fp\).
	Given the above description of \(\Laz/J(\infty)\), this implies that the morphism \(\psi\) is injective, which means that \(J(\infty)=\Ipn{\infty }\).
	Therefore \(\Ipn{n}=J(n)\) is generated by \(w_0,\ldots,w_{n-1}\).

	Finally, the ideal \(\Ipn{n}\) is generated by \(w_0=p\), together with elements which are homogeneous of degree \(k\) with \(1-p^n <k <0\).
	Therefore \(\Ipn{n}\) contains no homogeneous element of degree \(p^n-1\) having indecomposable image in \(\Laz/p\), and so in particular it does not contain \(w_n\).
\end{proof}

\begin{proposition}
	Let \(n\in \Nn\). Then the ideal \(\Ipn{n}\) of \(\Laz\)
	\label{prop:coeff_I}
	\begin{enumerate}[label=(\roman*), ref=\roman*]
		\item \label{prop:coeff_I:prime}
			is prime,
		\item \label{prop:coeff_I:gen}
			is generated by the elements \(v_i\) for \(i \in \{0,\ldots,n-1\}\),

		\item
			\label{prop:coeff_I:v_n}
			does not contain \(v_n\),
		\item
			\label{prop:coeff_I:u_m}
			contains \(u_m\) for \(m<p^n-1\).
	\end{enumerate}
\end{proposition}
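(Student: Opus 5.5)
The plan is to apply (\ref{lemm:w_I}) with the choice \(w_j = v_j\) for all \(j\in \Nn\). Since \(v_0=p\), the convention \(w_0=p\) in that lemma is satisfied. For \(j>0\) we have \(v_j = u_{p^j-1}\in \Laz^{1-p^j}\). This element lies in \(\Ipn{\infty}\) by (\ref{lemm:u_in_I}), and it has indecomposable image in \(\Laz/p\) by (\ref{lemm:u_indec}). Hence all hypotheses of (\ref{lemm:w_I}) hold. That lemma then directly gives (\ref{prop:coeff_I:prime}), (\ref{prop:coeff_I:gen}) and (\ref{prop:coeff_I:v_n}): the ideal \(\Ipn{n}\) is prime, is generated by \(v_0,\ldots,v_{n-1}\), and does not contain \(v_n\).

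For (\ref{prop:coeff_I:u_m}), I would argue directly from (\ref{def:Ipn}). The case \(n=0\) is vacuous, since there is no \(m\in\Nn\) with \(m<p^0-1=0\). So assume \(n\ge 1\) and \(m<p^n-1\). By (\ref{lemm:u_in_I}), \(u_m\) is a homogeneous element of \(\Ipn{\infty}\) of degree \(-m\).

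If \(-m\ge 1-p^{n-1}\), then \(u_m\) is one of the defining generators of \(\Ipn{n}\), so it lies in \(\Ipn{n}\). This happens whenever \(m\le p^{n-1}-1\). The delicate range is \(p^{n-1}-1<m<p^n-1\), where the degree of \(u_m\) is too negative for the definition to apply directly.

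This intermediate range is the main obstacle, and I would handle it with a standard formal-group-law argument. Work modulo \(\Ipn{n}\), and set \(h=n-1\). The ideal \(\Ipn{h}\subset\Ipn{n}\) contains all \(u_i\) for \(i<p^{h}-1\), which I would establish by induction on \(n\). So modulo \(\Ipn{n}\), the series \(\fglr{p}(t)\) has lowest term \(v_{h}t^{p^{h}}\), and \(v_h\in\Ipn{n}\) as well.

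The claim then amounts to saying that, over the quotient \(\Laz/\Ipn{n}\), the series \(\fglr{p}(t)\) vanishes up to order \(t^{p^n}\). The key input is the classical fact that over an \(\Fp\)-algebra, \(\fglr{p}(t)\) is a power series in \(t^{p^{h'}}\), where \(h'\) is the height. I would prove this via the logarithm: reduce to the universal case and write \(\fglr{p}(t)=g(t^{p^{h}})\) modulo \(\Ipn{h}\) using Frobenius-linearity of the endomorphism \(\fglr{p}\). Then \(u_m\) for \(p^{h}-1<m<p^{h+1}-1\) is congruent to an element of the ideal generated by \(v_h\) and \(\Ipn{h}\), since the next possibly nonzero coefficient beyond \(v_h\) occurs at \(t^{p^{h}\cdot 2}\) and is a multiple of \(v_h\) as long as \(2p^h\le p^{h+1}-1\).

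If this route proves too heavy, there is a simpler alternative: show directly that \(u_m\in\Ipn{\infty}\) has degree \(-m>1-p^n\), and use the structure from the proof of (\ref{lemm:w_I}). In that proof, \(\Laz/\Ipn{\infty}\) embeds into \(\Fp[\bb]\), and \(\Ipn{\infty}\) is generated by the \(w_j\). So any element of \(\Ipn{\infty}\) of degree \(>1-p^n\) lies in the ideal generated by \(w_0,\ldots,w_{n-1}\), because each \(w_j\) with \(j\ge n\) has degree \(\le 1-p^n\). This degree argument settles (\ref{prop:coeff_I:u_m}) cleanly, and it is the version I would actually write.
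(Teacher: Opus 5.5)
Your proposal is correct, and the version you say you would actually write matches the paper's proof. Items (i)--(iii) come from (\ref{lemm:w_I}) with \(w_j=v_j\), using (\ref{lemm:u_in_I}) and (\ref{lemm:u_indec}). Item (iv) follows because \(\Ipn{\infty}\) is generated by the \(v_j\), and a degree count (\(\Laz\) has no nonzero homogeneous elements of positive degree) puts \(u_m\) in the ideal generated by \(v_0,\ldots,v_{n-1}\).

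The only cosmetic difference is how you get generation of \(\Ipn{\infty}\) by the \(v_j\). The paper deduces it from (ii), since \(\Ipn{\infty}\) is the union of the \(\Ipn{n}\); you instead quote \(J(\infty)=\Ipn{\infty}\) from inside the proof of the lemma. The formal-group/Frobenius route you sketch first is unnecessary. As written, its claim that the \(t^{2p^h}\) coefficient is a multiple of \(v_h\) is not justified, so drop it.
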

\begin{proof}
	The statements \eqref{prop:coeff_I:prime}, \eqref{prop:coeff_I:gen}, \eqref{prop:coeff_I:v_n} follow from (\ref{lemm:w_I}), given (\ref{lemm:u_in_I}) and (\ref{lemm:u_indec}).

	Now \eqref{prop:coeff_I:gen} implies that \(\Ipn{\infty}\) is generated by the elements \(v_i\) for \(i\in \Nn\), and thus by (\ref{lemm:u_in_I}), for each \(m\in \Nn\) the element \(u_m\) is a \(\Laz\)-linear combination of the elements \(v_i\).
	Since \(\Laz\) contains no nonzero homogeneous elements of positive degree, such combination only involves the \(v_i\)'s for \(p^i-1 \le m\).
	Together with \eqref{prop:coeff_I:gen}, this implies \eqref{prop:coeff_I:u_m}.
\end{proof}

\begin{para}
	\label{p:su_n}
	When \(M\) is an \(\Laz\)-module, we write for every \(n\in \Nn\)
	\[
		\su{M}{n}=M/(\Ipn{n}M)=M \otimes_{\Laz} (\Laz/\Ipn{n}).
	\]
	We will often use the same notation for an element of \(M\) and its image in the quotient \(\su{M}{n}\).
\end{para}

\begin{para}
	\label{p:Laz_La}
	Let \(\La=\Laz[c^{-1}]\) for some \(c\in \Zz\smallsetminus p \Zz\), and \(n\in \Nn\smallsetminus \{0\}\).
	As \(p=v_0 \in \Ipn{n}\), it follows that the morphism \(\su{\Laz}{n} \to \su{\La}{n}\) is bijective.
\end{para}

\begin{para}
	\label{p:Laz_n_d}
	For \(n,d \in \Nn\), we denote by \(\su{\Laz}{n}(d)\) the subring of \(\su{\Laz}{n}\) generated by the homogeneous elements of degrees \(\ge -d\).
\end{para}

\begin{para}
	\label{p:N}
	For \(n \in \Nn\), we consider the subset of \(\Nn\smallsetminus \{0\}\)
	\[
		\su{N}{n}=\{\text{\(i\in \Nn \smallsetminus \{0\}\) such that \(i \ne p^m -1 \) for \(m=1,\ldots ,n-1\)}\}.
	\]
	Note that \(\su{N}{0}=\su{N}{1}=\Nn\smallsetminus \{0\}\).
	If \(d\in \Nn\), we set
	\[
		\su{N}{n}(d)=\{\text{\(i\in \{1,\ldots , d\} \) such that \(i \ne p^m -1 \) for \(m=1,\ldots ,n-1\)} \}.
	\]
\end{para}

\begin{para}
	\label{p:gen_in_I}
	Let \(\ell_i \in \Laz^{-i}\) be a family of polynomial generators of \(\Laz\).
	Let \(n \in \Nn\smallsetminus \{0\}\).
	Then it follows from (\ref{lemm:u_indec}) that for each \(i=1,\ldots,n-1\) there exists \(a_i \in \Laz\) such that \(v_i + pa_i -\ell_{p^i-1}\) is decomposable in \(\Laz\).
	By (\ref{prop:coeff_I}.\ref{prop:coeff_I:gen}), replacing \(\ell_{p^i-1}\) with \(v_i + pa_i\) (which is still in \(\Ipn{n}\)), we may thus arrange that the ideal \(\Ipn{n}\) is generated by \(p\) and the elements \(\ell_{p^i-1}\) for \(i=1,\ldots,n-1\).
\end{para}

\begin{lemma}
	\label{lemm:pol_gen_L}
	Let \(\ell_i \in \Laz^{-i}\), for \(i\in \Nn\smallsetminus \{0\}\), be a family of polynomial generators of the ring \(\Laz\).
	Let \(n\in \Nn\smallsetminus \{0\}\).
	Then the \(\Fp\)-algebra \(\su{\Laz}{n}\) (resp.\ \(\su{\Laz}{n}(d)\) for \(d \in \Nn\)) is polynomial in the elements \(\ell_j\) for \(j\in \su{N}{n}\) (resp.\ \(j\in \su{N}{n}(d)\)).
\end{lemma}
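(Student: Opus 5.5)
The plan is to compare the given family \(\ell_i\) with a second family of polynomial generators adapted to \(\Ipn{n}\), and to exploit the fact that modding out by \(\Ipn{n}\) only kills a generator in each degree \(p^m-1\), \(m=1,\ldots,n-1\), together with \(p\).

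First step: construct a family \(\ell'_i\) as in (\ref{p:gen_in_I}). We take \(\ell'_i=\ell_i\) for \(i\ne p^m-1\) with \(1\le m\le n-1\), and \(\ell'_{p^m-1}=v_m+pa_m\) for \(m=1,\ldots,n-1\). This is again a family of polynomial generators of \(\Laz\), by (\ref{p:pol_gen_laz}), since \(c_{(i)}\) is unchanged modulo decomposables; here we use (\ref{p:cn_dec}) and the fact that \(v_m+pa_m-\ell_{p^m-1}\) is decomposable. By (\ref{prop:coeff_I}) and (\ref{p:gen_in_I}), \(\Ipn{n}\) is generated by \(p\) and the \(\ell'_{p^m-1}\) for \(1\le m\le n-1\). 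Since \(\Laz=\Zz[\ell'_i]\), the quotient \(\su{\Laz}{n}\) is the polynomial \(\Fp\)-algebra in the \(\ell'_j\) with \(j\in\su{N}{n}\). Because \(\ell'_j=\ell_j\) for those \(j\), this proves the claim for \(\su{\Laz}{n}\). The case \(n=1\) is immediate, as \(\Ipn{1}=p\Laz\).

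Second step, the truncated version. The subring \(\su{\Laz}{n}(d)\) is generated by the images of homogeneous elements of degree \(\ge -d\). Every such element is a polynomial in \(\ell_1,\ldots,\ell_d\), by (\ref{p:Ld_pol}) or directly by degree reasons, because each \(\ell_i\) has degree \(-i\) and \(\Laz\) has nothing in positive degree. Hence \(\su{\Laz}{n}(d)\) is generated by the images of \(\ell_1,\ldots,\ell_d\).

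It remains to remove the \(\ell_{p^m-1}\) with \(p^m-1\le d\) and to check independence. Modulo decomposables, \(\ell_{p^m-1}\) is congruent to \(\ell'_{p^m-1}\), which vanishes in \(\su{\Laz}{n}\). More precisely, \(\ell_{p^m-1}-\ell'_{p^m-1}\) is a decomposable element of degree \(1-p^m\), hence a polynomial in the \(\ell_i\) of lower degree index \(i<p^m-1\le d\). Thus, by induction on degree, every \(\ell_{p^m-1}\) lies in the subalgebra generated by the \(\ell_j\) with \(j\in\su{N}{n}(d)\). Algebraic independence of these elements follows from the first step, since they form a subset of the free polynomial generators of \(\su{\Laz}{n}\).

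The only delicate point is the change-of-generators step: checking that the replacement \(\ell'\) is still a generating family, and that each discarded \(\ell_{p^m-1}\) is expressible in lower generators.
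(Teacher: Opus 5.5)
Your proposal is correct and is essentially the paper's proof. The paper likewise uses (\ref{p:gen_in_I}) to arrange that \(\Ipn{n}\) is generated by \(p\) and the \(\ell_j\) with \(j\notin\su{N}{n}\), which is legitimate since those generators do not appear in the statement. It then reads off both claims from (\ref{p:Ld_pol}), whereas your inductive removal of the \(\ell_{p^m-1}\) in the truncated case is a slightly longer route to the same conclusion.

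One small caveat, inherited from (\ref{p:gen_in_I}): for odd \(p\) one has \(c_{(p^m-1)}(v_m)\equiv -p \bmod p^2\), so \(\ell_{p^m-1}\) agrees with \(v_m+pa_m\) modulo decomposables only up to sign. This is harmless for your argument.
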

\begin{proof}
	By (\ref{p:gen_in_I}), we may assume that the ideal \(\Ipn{n}\) is generated by \(p\) and \(\ell_j\) for \(j \not \in \su{N}{n}\).
	The statement then follows from (\ref{p:Ld_pol}).
\end{proof}

\begin{definition}
	\label{def:Landweber_exact}
	An \(\Laz\)-module \(M\) is called \emph{Landweber exact} (at the prime \(p\)) if for every \(n\in \Nn\) the element \(v_n\in \Laz\) is a nonzerodivisor in \(\su{M}{n}\).
\end{definition}

\begin{para}
	\label{p:Laz_exact}
	It follows from (\ref{prop:coeff_I}.\ref{prop:coeff_I:prime}) and (\ref{prop:coeff_I}.\ref{prop:coeff_I:v_n}) that the \(\Laz\)-module \(\Laz\) is Landweber exact, hence so is any localization of this module.
\end{para}

\section{Actions on cobordism generators}
\label{sect:optimal}
In this section we perform explicit constructions of varieties equipped with actions with ``minimal'' fixed loci.
This will serve a double objective: besides directly establishing the optimality of our lower bounds for the fixed locus dimension, this construction is essential to our method of obtaining those bounds in the sequel.\\

The letter \(k\) denotes the base field, and we will usually write again \(\Speck\) instead of \(\Spec k\).
By a \emph{\(k\)-variety} we mean a quasi-projective scheme over \(k\).

\subsection{Group actions on varieties}
We fix an algebraic group \(G\) over \(k\).
We refer to e.g.\ \cite[\S1.f]{Milne-AG} for the notion of \(G\)-action on a \(k\)-variety, and related basic concepts.
In particular, the variety \(\Speck\) always carries the trivial \(G\)-action.

\begin{para}
	\label{p:fixed_locus}
	Let \(X\) be a \(k\)-variety with a \(G\)-action.
	The \emph{fixed locus} \(X^G\) is a closed subscheme of \(X\) characterized by the following property.
	For any \(k\)-variety \(T\), the subset \(X^G(T)\subset X(T)\) consists of those morphisms \(T\to X\) which are \(G\)-equivariant with respect to the trivial \(G\)-action on \(T\).
	Its existence is proved e.g.\ in \cite[\S7.b]{Milne-AG}.

	We say that the \(G\)-action on \(X\) is \emph{fixed-point-free} when \(X^G=\varnothing\).
\end{para}

\begin{para}
	Let \(X\) be a \(k\)-variety with a \(G\)-action.
	A \emph{\(G\)-equivariant vector bundle} is a vector bundle \(E\to X\) equipped with a \(G\)-equivariant structure on its \(\Oc_X\)-module of sections, in the sense of \cite[I, \S6.5]{SGA3-1}.
	Such a structure induces a \(G\)-action on the \(k\)-variety \(E\) such that the morphism \(E\to X\) is \(G\)-equivariant, and in addition the action is ``linear'' on the fibers, see \cite[I, Remarque~6.5.3]{SGA3-1}.
\end{para}

\begin{proposition}
	\label{prop:fixed_smooth}
	Let \(X\) be a smooth \(k\)-variety equipped with an action of a diagonalizable group \(G\).
	Then the \(k\)-variety \(X^G\) is smooth.
	In addition the normal bundle \(N\) to immersion \(X^G \to X\) is naturally \(G\)-equivariant, and \(N^G=0\).
\end{proposition}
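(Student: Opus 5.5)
The plan is to check smoothness with the infinitesimal lifting criterion. The key input is that a diagonalizable group \(G\) is linearly reductive: its representations on \(k\)-vector spaces (and on quasi-coherent modules over an affine base with \(G\) acting trivially) decompose as direct sums of isotypic components indexed by characters of \(G\). Taking invariants is then an exact functor, namely projection onto the trivial-character component.

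First I reduce to the affine case. Since \(X\) is quasi-projective and \(G\) is diagonalizable, I expect every point of \(X^G\) to have a \(G\)-stable affine open neighborhood in \(X\). For finite \(G\), take the intersection of the translates of an affine open; for the torus part, use Sumihiro's theorem or an equivariant projective embedding. Formal smoothness is local, so it suffices to treat \(X=\Spec A\) with \(A\) carrying a comodule algebra structure, i.e.\ a grading of \(A\) by the character group \(\widehat G\). Then \(X^G=\Spec A/I\), where \(I\) is the ideal generated by the homogeneous components of nonzero degree.

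Next I verify the lifting property. Let \(T_0\subset T\) be a square-zero thickening of affine \(k\)-schemes with ideal \(J\), both with trivial \(G\)-action, and let \(T_0\to X^G\) be a morphism. Since \(X\) is smooth, it lifts to some \(f\colon T\to X\), but \(f\) need not be equivariant. The set of lifts is a torsor under \(\mathrm{Hom}(f_0^*\Omega_X,J)\), which carries a \(G\)-action through the action on \(\Omega_X\) restricted to the fixed locus. Translating \(f\) by \(g\in G\) gives a \(1\)-cocycle of \(G\) with values in this module. The main obstacle is making this cocycle argument work scheme-theoretically, with \(G\) a group scheme rather than a finite group; I would phrase it via comodules. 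Because \(G\) is diagonalizable, \(H^1(G,M)=0\) for every such comodule \(M\), which is exactness of invariants. So the cocycle is a coboundary, and adjusting \(f\) produces a \(G\)-equivariant lift, i.e.\ a lift \(T\to X^G\). Hence \(X^G\) is formally smooth, and being of finite type it is smooth.

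For the normal bundle, the restriction \(\Tan_X|_{X^G}\) is a \(G\)-equivariant vector bundle on \(X^G\), where \(G\) acts trivially on the base. It therefore splits into isotypic subbundles \(\bigoplus_\chi (\Tan_X|_{X^G})_\chi\). The computation for an equivariant lift shows \(\Tan_{X^G}=(\Tan_X|_{X^G})^G\): tangent vectors to \(X^G\) are exactly the \(G\)-invariant ones, since \(X^G(k[\epsilon])\) consists of the equivariant maps. The normal bundle \(N=\Tan_X|_{X^G}/\Tan_{X^G}\) inherits an equivariant structure from \(\Tan_X\), and it is isomorphic to \(\bigoplus_{\chi\neq 1}(\Tan_X|_{X^G})_\chi\). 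By exactness of invariants, \(N^G=0\).
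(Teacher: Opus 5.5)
The paper gives no argument of its own here; it only refers to \cite[(3.5.2),(3.5.3)]{isol} and \cite[(1.4.7)]{conc}. Your proposal is a self-contained proof, and it is correct. It is the standard argument for fixed-point schemes of groups of multiplicative type: the infinitesimal lifting criterion combined with vanishing of Hochschild \(H^1\) (cf.\ Conrad--Gabber--Prasad, \emph{Pseudo-reductive groups}, Prop.~A.8.10(2)). Compared with the citation, your route also gives \(\Tan_{X^G}=(\Tan_X|_{X^G})^G\) directly. From that identity, the equivariant structure on \(N\), the weight decomposition \(N\simeq\bigoplus_{\chi\neq 0}(\Tan_X|_{X^G})_\chi\), and \(N^G=0\) all follow at once by exactness of invariants.

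A few points should be tightened.

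\emph{The affine reduction.} Your reduction to affine \(X\) is unnecessary, and as phrased it is the one unjustified step. For a non-constant finite diagonalizable group, such as \(\mu_3\) over \(\Qq\) or \(\mu_p\) in characteristic \(p\), we have \(G(k)=1\), so ``intersecting translates'' of an affine open has no literal meaning. But you never use affineness of \(X\). The lifting criterion only involves affine test schemes \(T=\Spec R\), and a lift \(f\colon T\to X\) of \(T_0\to X^G\to X\) exists for any smooth \(X\). So you can simply drop this step.

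\emph{The scheme-theoretic cocycle.} Compare the two morphisms \(a\circ(\mathrm{id}_G\times f)\) and \(f\circ\mathrm{pr}\colon G\times T\to X\). They agree on \(G\times T_0\) because \(T_0\to X\) factors through \(X^G\). Hence their difference is an element of \(k[G]\otimes_k\mathrm{Hom}_{R_0}(f_0^*\Omega^1_{X/k},J)\), where \(R_0=R/J\); this uses that \(f_0^*\Omega^1_{X/k}\) is finite projective. Associativity of the action makes this a Hochschild \(1\)-cocycle for the \(G\)-module \(\mathrm{Hom}_{R_0}(f_0^*\Omega^1_{X/k},J)\). Since that cohomology vanishes for diagonalizable \(G\), modifying \(f\) by a suitable element of the module gives a lift through \(X^G\).

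\emph{The tangent bundle identity.} To get \(\Tan_{X^G}=(\Tan_X|_{X^G})^G\) as subbundles, and not only at rational points, run your \(k[\epsilon]\) argument with \(T=U\times\Spec k[\epsilon]\) for affine opens \(U\subset X^G\).
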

\begin{proof}
	See \cite[(3.5.2),(3.5.3)]{isol} or \cite[(1.4.7)]{conc}.
\end{proof}

\begin{para}
	\label{p:L_g}
	We denote by \(\widehat{G}\) the group of characters of \(G\).
	Each character \(g\in \widehat{G}\) corresponds to a \(G\)-equivariant line bundle over \(\Speck\) that we denote by \(\character{g}\).
	We will use the same notation for its pull-back to any \(k\)-variety with trivial \(G\)-action.
\end{para}

\begin{para}
	\label{p:character}
	Assume that the algebraic group \(G\) is diagonalizable.
	Let \(X\) be a \(k\)-variety with trivial \(G\)-action, and let \(E\to X\) be a \(G\)-equivariant vector bundle.
	There are unique \(G\)-equivariant vector bundles \(\wgt{E}{g} \to X\) with trivial \(G\)-action, for \(g\in \widehat{G}\), such that
	\begin{equation}
		\label{eq:decomp_weights}
		E = \bigoplus_{g \in \widehat{G}} \wgt{E}{g} \otimes \character{g}.
	\end{equation}
	This is the weight decomposition with respect to the \(G\)-action.
	Note that \(\wgt{E}{g}\) is nonzero only for finitely many characters \(g\in \widehat{G}\), and that \(E^G= \wgt{E}{0}\).
\end{para}

\subsection{Generators of the Landweber ideals}
The next proposition is an algebraic version of a result of Floyd \cite[Theorem~2.1]{Floyd-stationary} in topology.

\begin{proposition}
	\label{prop:no_fixed_on_gen}
	Let \(p\) be a prime number different from the characteristic of \(k\).
	Then there exist smooth projective \(k\)-varieties \(Y_s\) of pure dimension \(p^s-1\), for all \(s \in \Nn\), satisfying the following conditions.
	\begin{enumerate}[label=(\roman*), ref=\roman*]
		\item
			For every \(n\in \Nn\), the ideal \(\Ipn{n}\) is generated by the classes \(\lc Y_s \rc\) for \(s \in \{0,\ldots,n-1\}\).
		\item
			\label{prop:no_fixed_on_gen:2} Let \(s \in \Nn\).
			Let \(G\) be an algebraic group over \(k\) whose character group \(\widehat{G}\) contains at least \(p^s+1\) elements of order dividing \(p\).
			Then the variety \(Y_s\) admits a fixed-point-free \(G\)-action.
	\end{enumerate}
\end{proposition}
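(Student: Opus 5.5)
We take for \(Y_s\) the Fermat hypersurface of degree \(p\)
\[
	Y_s=\{x_0^p+x_1^p+\cdots+x_{p^s}^p=0\}\subset \Pp^{p^s}, \qquad s\in \Nn.
\]
Since \(p\) is invertible in \(k\), the partial derivatives \(px_i^{p-1}\) vanish simultaneously only at the origin, so \(Y_s\) is smooth projective of pure dimension \(p^s-1\). For \(s=0\), \(Y_0\) is the finite étale \(k\)-scheme of degree \(p\) cut out by \(x_0^p+x_1^p\) in \(\Pp^1\). Its class is therefore \(\lc Y_0 \rc = p = v_0\), since only the empty partition contributes and \(c_\varnothing\) is the degree. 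Treating \(s=0\) this way lets us run the rest of the argument uniformly in \(s\).

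For (i), we apply (\ref{lemm:w_I}) with \(w_j=\lc Y_j\rc\), which lies in \(\Laz^{1-p^j}\). Two hypotheses must be checked for \(j>0\).

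\begin{itemize}
	\item[---] \emph{Indecomposability in \(\Laz/p\).} This is exactly (\ref{ex:c_n_hypersurface}.\ref{ex:c_n_hypersurface:2}), with \(n=p^j-1\) and \(d=p\).
	\item[---] \emph{Membership in \(\Ipn{\infty}\).} We must show all Chern numbers of \(Y_j\) are divisible by \(p\). Let \(i\colon Y_j\to\Pp^{p^j}\) be the immersion. Then \(-\Tan_{Y_j}\) is the restriction of a class in \(K_0(\Pp^{p^j})\), as in the proof of (\ref{lemm:c_n_hyp}). Hence every \(c_\alpha(-\Tan_{Y_j})\) is \(i^*\) of a polynomial in \(h=c_1(\Oc(1))\), and so equals \(m\cdot i^*h^{p^j-1}\) for some integer \(m\). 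By the projection formula and \(i_*1=p h\), its degree is \(pm\).
\end{itemize}

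Then (\ref{lemm:w_I}) gives that \(\Ipn{n}\) is generated by \(w_0,\dots,w_{n-1}\), which is (i).

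For (ii), fix \(s\) and choose distinct characters \(g_0,\dots,g_{p^s}\in\widehat G\), each of order dividing \(p\). Let \(G\) act linearly on \(\Pp^{p^s}=\Pp(\character{g_0}\oplus\cdots\oplus\character{g_{p^s}})\), with \(x_i\) scaled through \(g_i\). The monomial \(x_i^p\) is scaled by \(g_i^p=1\), that is by the trivial character. Hence the Fermat equation is invariant and \(Y_s\) is a \(G\)-stable closed subscheme.

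By the functorial definition (\ref{p:fixed_locus}), we have \(Y_s^G=Y_s\cap(\Pp^{p^s})^G\). For a diagonalizable group, the fixed locus of \(\Pp(V)\) is the disjoint union of the \(\Pp(\wgt{V}{g})\), obtained from the weight decomposition (\ref{p:character}). Because the characters \(g_i\) are distinct, every weight space is a line, so \((\Pp^{p^s})^G\) is the finite set of coordinate points \(e_i\). At \(e_i\) the Fermat polynomial evaluates to \(1\neq0\), so no \(e_i\) lies on \(Y_s\), and therefore \(Y_s^G=\varnothing\).

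The step needing the most care is the identification of \((\Pp(V))^G\) as a scheme, not merely on \(\bar k\)-points. We would justify it by a direct computation on the standard affine charts. On the chart \(x_j\neq0\), the coordinate \(x_i/x_j\) has weight \(g_i-g_j\neq0\) for \(i\neq j\). The fixed subscheme of a linear action on affine space is the zero weight space, which here is \(\{0\}\). We would also note that the statement only needs a homomorphism \(G\to\Gm^{p^s+1}\) given by the \(g_i\), so no further assumption on \(G\) is required.
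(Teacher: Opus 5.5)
Your proposal is correct and follows the paper's proof. Both use the degree \(p\) Fermat hypersurface in \(\Pp(\bigoplus_{c}\character{c})\), whose ambient fixed points are the \(p^s+1\) coordinate points, none of which lies on \(Y_s\); both show all Chern numbers are divisible by \(p\) via the projection formula, get indecomposability from (\ref{ex:c_n_hypersurface}.\ref{ex:c_n_hypersurface:2}) and \(\lc Y_0\rc=p\), and conclude with (\ref{lemm:w_I}). Two small refinements of points the paper leaves implicit are welcome: you define \(Y_s\) independently of \(G\), and your affine-chart computation of the fixed locus as a scheme works for an arbitrary algebraic group \(G\), not only diagonalizable ones.
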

\begin{proof}
	Let \(s\in \Nn\).
	Pick a subset \(C\subset \widehat{G}\) such that \(|C|=p^s+1\), and \(pC=0\).
	Using the notation of (\ref{p:L_g}), let
	\[
		V= \bigoplus_{c\in C} \character{c}.
	\]
	The group \(G\) acts on the projective space \(\Pp(V)\), with \(\Pp(V)^G\) consisting of the \(p^s+1\) rational points \(\Pp(\character{c})\) for \(c\in C\). For each \(c\in C\), the projection \(V\to \character{c}\) induces a \(G\)-equivariant section \(x_c\) of the line bundle \(\Oc(1)\otimes \character{c}\) over \(\Pp(V)\). Using the canonical isomorphisms \((\character{c})^{\otimes p}\simeq \character{pc}=\character{0}\simeq 1\), we construct a \(G\)-equivariant section
	\[
		\sum_{c\in C} (x_c)^{\otimes p}
	\]
	of the \(G\)-equivariant line bundle \(\Oc(p)\) over \(\Pp(V)\).
	It is transverse to the zero section, and its zero locus is a \(G\)-invariant closed subscheme \(Y_s \subset \Pp(V)\), of pure dimension \(p^s-1\), which is smooth over \(k\) by the characteristic assumption.
	In addition \(Y_s\) contains none of the points \(\Pp(\character{c})\) for \(c\in C\), and thus \((Y_s)^G=\varnothing\), proving \eqref{prop:no_fixed_on_gen:2}.

	As the variety \(Y_s\) is a degree \(p\) hypersurface in \(\Pp^{p^s}\), it follows easily that every Chern number of \(Y_s\) is divisible by \(p\) (as \(\Tan_{Y_s}\) is the restriction of a virtual vector bundle on \(\Pp(V)\), each of its Conner--Floyd Chern classes is the pullback of an element of \(\CH(\Pp(V))\), hence has degree divisible by \(p\), by the projection formula).
	Therefore \(\lc Y_s\rc\in \Ipn{s+1}\) by (\ref{rem:Ipn_Chern_numbers}).
	Since \(Y_0\) is a degree \(p\) hypersurface in \(\Pp^1\) (explicitly \(Y_0 \simeq \mu_p\)), it follows that \(\lc Y_0 \rc = p \in \Laz\).
	If \(s\ge 1\) then by (\ref{ex:c_n_hypersurface}.\ref{ex:c_n_hypersurface:2}) the class \(\lc Y_s \rc\) is indecomposable in \(\Laz/p\).
	The statement then follows from (\ref{lemm:w_I}).
\end{proof}

\begin{example}
	The condition on \(G\) in (\ref{prop:no_fixed_on_gen:2}) is verified when \(G=(\mu_p)^{s+1}\), or more generally when \(G\) is a diagonalizable \(p\)-group of rank \(r\), with \(r\ge s+1\) (see (\ref{p:p_rank}) below).
\end{example}

\begin{remark}
	Assume that the field \(k\) contains a root of unity of order \(p\).
	\begin{enumerate}[label=(\roman*), ref=\roman*]
		\item
			The variety \(Y_0\) constructed in the proof of (\ref{prop:no_fixed_on_gen}) is the union of \(p\) rational points.
		\item
			For \(s=1\), another possible choice would have been \(Y_1 = \Pp^{p-1}\) (see \cite[(5.1.5)]{fpt}).
	\end{enumerate}
\end{remark}

\subsection{Milnor hypersurfaces}
In this section we endow varieties whose classes form a set of polynomial generators of the Lazard ring with an action of an algebraic group possessing a given number of characters, in such a way as to minimize the dimension of the fixed locus.

We fix an algebraic group \(G\) over \(k\), and a finite set of characters \(Q\subset \widehat{G}\) containing the trivial character \(0\). We set \(q=|Q|\).
\begin{para}
	\label{p:Milnor}
	Let \(0 \leq m \leq n\) be integers such that \(n\geq 1\).
	Let us consider the vector bundle \(U\) over \(\Pp^m\) defined as the kernel of the epimorphism \(1^{\oplus n+1} \to 1^{\oplus m+1} \to \Oc(1)\), where the first map is given by forgetting the \(n-m\) last coordinates, and the second is the canonical epimorphism.
	We define the variety \(H_{m,n} = \Pp_{\Pp^m}(U)\), a smooth projective connected \(k\)-variety of dimension \(m+n-1\).

	The inclusion \(U \subset 1^{\oplus n+1}\) makes \(H_{m,n}\) a hypersurface in \(\Pp_{\Pp^m}(1^{\oplus n+1})=\Pp^m \times \Pp^n\), the \emph{Milnor hypersurface}.
	Observe that \(H_{0,n} = \Pp^{n-1}\).
\end{para}

\begin{para}
	\label{p:c_H_m_n}
	Let \(0\leq m \leq n\) with \(n\ge 1\).
	Then using the notation of (\ref{def:lc_rc}), we have (see e.g.\ \cite[(5.1.2),(5.1.5)]{ciequ})
	\[
		c_{(m+n-1)}(H_{m,n}) =
		\begin{cases}
			-n & \text{ if \(m=0\)},\\
			\displaystyle{\binom{m+n}{m}} & \text{ if \(m \geq 2\)}. \\
		\end{cases}
	\]
\end{para}

\begin{proposition}
	\label{prop:action_on_Hnm}
	Let \(0\leq m \leq n\) with \(n\ge 1\).
	The \(k\)-variety \(H_{m,n}\) admits a \(G\)-action with a fixed locus of dimension \(d \leq \lfloor (m+n-1)/q \rfloor\). More precisely
	\[
		d=
		\begin{cases}
			\lfloor (m+n-1)/q \rfloor & \text{if \(m\) and \(n\) are both divisible by \(q\)}, \\
			\lfloor m/q \rfloor + \lfloor n/q \rfloor & \text{otherwise},
		\end{cases}
	\]
\end{proposition}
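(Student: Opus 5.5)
Choose the characters of \(Q\) as weights. Write \(Q=\{g_1=0,g_2,\ldots,g_q\}\) and distribute the \(n+1\) coordinates of \(\Pp^n\) among the characters of \(Q\). Let \(V=\bigoplus_{j=0}^{n} \character{h_j}\), where \(h_j\in Q\) is chosen so that each character occurs about equally often. Let \(W=\bigoplus_{j=0}^{m}\character{h_j^*}\) on \(\Pp^m\), with \(h_j^*=-h_j\), so that the pairing \(W\otimes V\to \character{0}\) given by the first \(m+1\) coordinates is \(G\)-invariant. Then the Milnor hypersurface \(H_{m,n}=\{\sum_{j\le m} x_jy_j=0\}\subset \Pp(W)\times\Pp(V)\) is \(G\)-invariant, since each term \(x_jy_j\) is a section of \(\Oc(1,1)\) of weight \(0\). (If \(-Q\neq Q\), I would instead let \(G\) act on \(\Pp(W)\) through characters \(-h_j\) translated by a common character; projective actions only see differences, so I would phrase everything in terms of differences \(h_i-h_j\), which lie in \(Q-Q\). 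To avoid this issue, it is cleaner to use the description \(H_{m,n}=\Pp_{\Pp^m}(U)\) with \(\Pp^m=\Pp(V_{\le m})\) and \(U\subset V\otimes\Oc(?)\) the equivariant kernel, which is automatically equivariant.)

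Next, compute the fixed locus. On \(\Pp(W)\times \Pp(V)\), the fixed locus is the disjoint union over pairs \((a,b)\in Q^2\) of \(\Pp(W_a)\times\Pp(V_b)\), where \(W_a,V_b\) are the isotypic components. Since characters in \(Q\) are distinct, the fixed locus of \(\Pp(V)\) is \(\bigsqcup_b \Pp(V_b)\), and similarly for \(\Pp(W)\). Intersecting with \(H_{m,n}\), the equation restricted to \(\Pp(W_a)\times\Pp(V_b)\) is the pairing between the coordinates of weight \(a\) in \(W\) and of weight \(b\) in \(V\). By construction this pairing is nonzero only when \(a=-b\) (that is, on matched coordinates). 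So for \(a\neq -b\) the component is \(\Pp(W_a)\times \Pp(V_b)\), and for \(a=-b\) it is a Milnor hypersurface of smaller size. Thus
\[
\dim H_{m,n}^G=\max\Big(\max_{a\ne -b}(\dim\Pp(W_a)+\dim \Pp(V_b)),\ \max_b (\dim W_{-b}+\dim V_b-3)\Big).
\]

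Finally, optimize the distribution. Assign the \(m+1\) coordinates \(x_0,\ldots,x_m\) to the weights \(0,1,\ldots\) cyclically in \(Q\), and continue cyclically for \(y_{m+1},\ldots,y_n\). Then \(\dim W_a+1\) and \(\dim V_b+1\) are as balanced as possible: \(\dim W_a\le\lceil (m+1)/q\rceil\), so \(\dim\Pp(W_a)\le\lfloor m/q\rfloor\), and similarly \(\dim\Pp(V_b)\le \lfloor n/q\rfloor\). The cross terms give at most \(\lfloor m/q\rfloor+\lfloor n/q\rfloor\). The diagonal terms give \(\dim W_{-b}+\dim V_b-3\), which is at most the same bound minus one. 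I then check the arithmetic: when \(q\mid m\) and \(q\mid n\), \(\lfloor m/q\rfloor+\lfloor n/q\rfloor=(m+n)/q\) exceeds \(\lfloor(m+n-1)/q\rfloor\) by one. In that case the maximum dimensions \(\lfloor m/q\rfloor\) and \(\lfloor n/q\rfloor\) are attained by a unique weight each, namely the first one used. By cycling \(V\) so that this weight in \(V\) is exactly the one paired with it, the only large pair is forced onto the diagonal case, which loses one and gives \(\lfloor(m+n-1)/q\rfloor\). In the other cases, \(\lfloor m/q\rfloor+\lfloor n/q\rfloor\le\lfloor(m+n-1)/q\rfloor\) holds automatically. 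The main obstacle is this bookkeeping: arranging the cyclic assignments compatibly with the pairing, so that the unique maximal isotypic components of \(W\) and \(V\) pair with each other and every other cross pair is small. I would first try \(V\)-weights \(h_j\) given by \(j \bmod q\) and \(W\)-weights equal to those of the first \(m+1\) coordinates (dualized), and verify the claimed equality case by case.
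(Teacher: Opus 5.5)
Your proposal is correct and is essentially the paper's construction. Both balance the weights over the coordinates, take the \(\Pp^m\)-weights to be (up to a global sign) a sub-collection of the \(\Pp^n\)-weights, and observe that each fixed component is either a full product \(\Pp(W_a)\times\Pp(V_b)\) or a smaller Milnor hypersurface. The paper phrases this through \(H=\Pp_P(U)\) and the ranks of the weight spaces of \(U|_{P_g}\) rather than through the bilinear equation. Your cyclic assignment automatically produces its compatible choices of \(R\), \(S\) and \(M\subset N\). Your worry about \(-Q\neq Q\) is unfounded, since only the distinctness of the weights is used.

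The one check you omit is attainment, which the exact formula for \(d\) requires when \(q\nmid m\) or \(q\nmid n\). In that case one side has at least two weights of maximal multiplicity. So some pair of maximal components has \(a\neq -b\), and the full product \(\Pp(W_a)\times\Pp(V_b)\), of dimension \(\lfloor m/q\rfloor+\lfloor n/q\rfloor\), lies in \(H^G\).
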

\begin{proof}
	The rough idea is to distribute the characters of \(G\) as evenly as possible in each of the projective spaces \(\Pp^m,\Pp^n\), in order to control the dimension of the fixed locus.

	Write \(m+1=qa +r\), with \(a,r \in \Nn\) and \(1 \le r \le q\).
	Pick a subset \(R \subset Q\) such that \(|R|=r\) and \(0 \in R\).
	Consider the \(G\)-equivariant vector bundle over \(\Speck\) (we use the notation of (\ref{p:L_g}))
	\[
		M = \Big(\bigoplus_{g \in R} (\character{g})^{\oplus a+1}\Big) \oplus \Big(\bigoplus_{g \in Q \smallsetminus R} (\character{g})^{\oplus a}\Big),
	\]
	so that, using the notation of (\ref{p:character})
	\begin{equation}
		\label{eq:M_g}
		\rank(\wgt{M}{g}) =
		\begin{cases}
			a+1 & \text{ if \(g \in R\)},\\
			a & \text{ if \(g \not \in R\)}.
		\end{cases}
	\end{equation}
	The scheme underlying \(P=\Pp(M)\) is \(\Pp^m\).
	Writing \(P_g = \Pp(\wgt{M}{g})\), we have
	\begin{equation}
		\label{eq:P^G}
		P^G = \coprod_{ g \in Q} P_g.
	\end{equation}

	Next, write \(n+1=qb +s\) with \(b,s \in \Nn\) and \(1 \le s \leq q\).
	Pick a subset \(S \subset Q\) such that \(|S|=s\) and \(0\in S\).
	In case \(r \le s\), we additionally arrange that \(R \subset S\).
	Consider the \(G\)-equivariant vector bundle over \(\Speck\)
	\[
		N = \Big(\bigoplus_{g \in S} (\character{g})^{\oplus b+1}\Big) \oplus \Big(\bigoplus_{g \in Q \smallsetminus S} (\character{g})^{\oplus b}\Big),
	\]
	so that
	\begin{equation}
		\label{eq:N_g}
		\rank(\wgt{N}{g}) =
		\begin{cases}
			b+1 & \text{ if \(g \in S\)}, \\
			b & \text{ if \(g \not \in S\)}.
		\end{cases}
	\end{equation}
	We fix an inclusion \(M\subset N\) (our choice of \(S\) made this possible; note that if \(r>s\) then we must have \(a+1 \le b\)).
	We denote again by \(M,N\) the pull-backs of the \(G\)-equivariant vector bundles \(M,N\) to any \(k\)-variety with a \(G\)-action.

	Let \(U\) be the kernel of the composite epimorphism of \(G\)-equivariant vector bundles \(N^{\vee} \to M^{\vee} \to \Oc(1)\) over \(P\), and set \(H = \Pp_P(U)\). The scheme underlying \(H\) is \(H_{m,n}\).
	Then by \eqref{eq:P^G} we have
	\begin{equation}
		\label{eq:H_G}
		H^G = \big(\Pp_{P^G}(U|_{P^G})\big)^G = \coprod_{g \in Q} \big(\Pp_{P_g}(U|_{P_g})\big)^G= \coprod_{g,h\in Q} \Pp_{P_g}(\wgt{U|_{P_g}}{h}).
	\end{equation}
	Let \(g \in Q\).
	As \(G\) acts via the character \(-g\) on the restriction of \(\Oc(1)\) to \(P_g\), we have, for every \(h\in Q\)
	\begin{equation}
		\label{eq:rank_Q+U}
		\rank (\wgt{U|_{P_g}}{h}) = \begin{cases}
			\rank (\wgt{N}{h}) & \text{if \(h\neq -g\)},\\
			\rank (\wgt{N}{h})-1 & \text{if \(h=-g\)}.
		\end{cases}
	\end{equation}

	Let us first consider the case when \(r >1\) or \(s>1\).
	Then by \eqref{eq:rank_Q+U} and \eqref{eq:N_g}, for any \(g,h \in Q\), we have
	\[
		\rank (\wgt{U|_{P_g}}{h}) \leq \rank (\wgt{N}{h}) \leq b+1,
	\]
	with equality if \(h \neq -g\) and \(h \in S\).
	On the other hand (\ref{eq:M_g}) implies that \(\dim P_g \leq a\), with equality if \(g \in R\).
	Thus it follows from \eqref{eq:H_G} that \(\dim H^G \leq a+b\).
	As \(r>1\) or \(s>1\), we can find \(g \in R\) and \(h \in S\) such that \(h \neq -g\), so that
	\[
		\dim H^G = a+b = \lfloor m/q \rfloor + \lfloor n/q \rfloor,
	\]
	which concludes the proof in this case.\\

	Now assume instead that \(r=1\) and \(s=1\) (i.e.\ that \(m\) and \(n\) are divisible by \(q\)).
	Then \(R=S=\{0\}\).
	We have \(\dim P_0 = a\) by (\ref{eq:M_g}).
	By \eqref{eq:rank_Q+U} and \eqref{eq:N_g} we have
	\[
		\rank (\wgt{U|_{P_0}}{0}) = \rank (\wgt{N}{0}) -1 = b.
	\]
	For \(h \in Q \smallsetminus \{0\}\), we have by \eqref{eq:rank_Q+U} and \eqref{eq:N_g}
	\[
		\rank (\wgt{U|_{P_0}}{h}) = \rank (\wgt{N}{h}) = b.
	\]
	Thus \(\dim \Pp_{P_0}(\wgt{U|_{P_0}}{h}) = a+b-1\) for any \(h \in Q\).

	On the other hand, for \(g \in Q \smallsetminus \{0\}\) we have \(\dim P_g = a-1\) by (\ref{eq:M_g}), and for any \(h \in Q\) we have, by \eqref{eq:rank_Q+U} and \eqref{eq:N_g},
	\[
		\rank (\wgt{U|_{P_g}}{h}) \leq \rank(\wgt{N}{h}) \leq b+1.
	\]
	Therefore \(\dim \Pp_{P_g}(\wgt{U|_{P_g}}{h}) \le a+b-1\) for any \(g,h \in Q\) with \(g\ne 0\).
	Thus, in view of \eqref{eq:H_G}, we conclude that
	\[
		\dim H^G = a+b-1 = \Big\lfloor \frac{qa+qb-1}{q} \Big\rfloor = \Big\lfloor \frac{m+n-1}{q} \Big\rfloor.\qedhere
	\]
\end{proof}

\begin{proposition}
	\label{prop:actions}
	There exist, for \(i \in \Nn\smallsetminus \{0\}\), smooth projective \(k\)-varieties \(X_i^+,X_i^-\) of pure dimension \(i\) equipped with a \(G\)-action, and such that:
	\begin{enumerate}[label=(\roman*), ref=\roman*]
		\item
			\label{prop:actions:1}
			the elements \(\lc X_i^+ \rc - \lc X_i^- \rc\) are polynomial generators of the ring \(\Laz\),
		\item
			\label{prop:actions:2}
			\(\dim (X_i^+)^G \leq \lfloor i/q \rfloor\),
		\item
			\label{prop:actions:3}
			\(\dim (X_i^-)^G \leq \lfloor i/q \rfloor\).
	\end{enumerate}
\end{proposition}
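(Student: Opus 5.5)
The plan is to assemble \(X_i^+\) and \(X_i^-\) as disjoint unions of Milnor hypersurfaces \(H_{m,n}\) with \(m+n-1=i\), each carrying the \(G\)-action of (\ref{prop:action_on_Hnm}). That proposition gives, for each \(0\le m\le n\) with \(m+n=i+1\), a \(G\)-action on \(H_{m,n}\) whose fixed locus has dimension \(\le \lfloor i/q\rfloor\). Since the fixed locus of a disjoint union of \(G\)-varieties is the disjoint union of the fixed loci, any finite disjoint union of such \(G\)-varieties again satisfies the bound in (\ref{prop:actions:2}), resp.\ (\ref{prop:actions:3}). Each \(H_{m,n}\) with \(m+n=i+1\) is of pure dimension \(i\). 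If one of the two unions would be empty, we may add the same copy of some \(H_{m,n}\) to both \(X_i^+\) and \(X_i^-\); this does not change the difference of classes. So only (\ref{prop:actions:1}) requires work.

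For (\ref{prop:actions:1}), we use the criterion (\ref{p:pol_gen_laz}). It suffices to find, for each \(i\ge 1\), integers \(\lambda_m\) (for \(0\le m\le (i+1)/2\), \(m\ne 1\)) such that the integer
\[
\sum_m \lambda_m\, c_{(i)}\big(H_{m,i+1-m}\big)
\]
equals \(\pm1\) if \(i+1\) is not a prime power, and \(\pm p\) if \(i+1\) is a power of the prime \(p\). We then let \(X_i^+\) be the disjoint union of \(\lambda_m\) copies of \(H_{m,i+1-m}\) over those \(m\) with \(\lambda_m>0\), and \(X_i^-\) the disjoint union of \(-\lambda_m\) copies over those \(m\) with \(\lambda_m<0\). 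Since \(c_{(i)}\colon\Laz\to\Zz\) is additive and \(\lc-\rc\) is additive on disjoint unions, this gives the required value of \(c_{(i)}(\lc X_i^+\rc-\lc X_i^-\rc)\).

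By (\ref{p:c_H_m_n}) the available values of \(c_{(i)}\) are \(-(i+1)\) (from \(m=0\)) and \(\binom{i+1}{m}\) for \(2\le m\le (i+1)/2\). The symmetry \(\binom{i+1}{m}=\binom{i+1}{i+1-m}\) and the fact that \(\binom{i+1}{1}=i+1\) show that these values generate the same subgroup of \(\Zz\) as all \(\binom{i+1}{m}\) with \(1\le m\le i\). It remains to invoke the classical fact that \(\gcd_{1\le m\le i}\binom{i+1}{m}\) equals \(p\) if \(i+1\) is a power of the prime \(p\), and \(1\) otherwise. The small case \(i=1\) is covered directly: \(m=0\) gives \(c_{(1)}(H_{0,2})=-2\), which is the required \(\pm p\) for \(p=2\).

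The main (mild) obstacle is this gcd computation. When \(i+1=p^k\), Kummer's theorem gives \(p\mid\binom{p^k}{m}\) for \(0<m<p^k\), and \(\binom{p^k}{p^{k-1}}\) has \(p\)-adic valuation exactly \(1\). For a prime \(\ell\ne p\), the term \(\binom{p^k}{1}=p^k\) is prime to \(\ell\). When \(i+1\) is not a prime power, for each prime \(\ell\mid i+1\) write \(\ell^e\) for the exact power of \(\ell\) dividing \(i+1\). Since \(i+1\ne \ell^e\) we have \(0<\ell^e<i+1\), and Kummer's theorem shows that \(\binom{i+1}{\ell^e}\) is prime to \(\ell\). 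Primes \(\ell\nmid i+1\) already do not divide \(\binom{i+1}{1}\), so the gcd is \(1\).
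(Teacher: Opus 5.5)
Your proposal is correct and follows the paper's proof essentially verbatim. You take disjoint unions of Milnor hypersurfaces \(H_{m,i+1-m}\) with \(m\ne 1\), equip them with the actions of (\ref{prop:action_on_Hnm}), and then apply the criterion (\ref{p:pol_gen_laz}) via the identification of the required integer with \(\gcd_{j}\binom{i+1}{j}\). Your Kummer-type verification of that gcd, and your remark about padding to avoid empty unions, just spell out what the paper leaves as an ``elementary computation.''
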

\begin{proof}
	Let \(i \in \Nn\smallsetminus \{0\}\).
	An elementary computation shows that the integer of \eqref{eq:ci_li} is equal to
	\[
		\gcd_{1 \leq j\leq \lfloor (i+1)/2 \rfloor} \binom{i+1}{j}.
	\]
	The formula of (\ref{p:c_H_m_n}) shows that this integer is the greatest common divisor, and in particular a \(\Zz\)-linear combination, of the integers \(c_{(i)}(H_{m,n})\) for \(0\leq m \leq n\) and \(m+n-1=i\) with \(m\ne 1\).
	By (\ref{p:pol_gen_laz}) it follows that the ring \(\Laz\) is polynomial in elements \(\ell_i \in \Laz^{-i}\), for \(i\in \Nn\smallsetminus \{0\}\), which are \(\Zz\)-linear combinations of the classes \(\lc H_{m,n} \rc\) for \(0 \leq m \leq n\) and \(m+n-1=i\).
	Separating the negative and positive coefficients, we may write \(\ell_i = \lc X_i^+\rc - \lc X_i^- \rc\), where \(X_i^+,X_i^-\) are disjoint unions of varieties \(H_{m,n}\), for some \(m,n\) such that \(0 \leq m \leq n\) and \(m+n-1=i\). This proves \eqref{prop:actions:1}. The other statements follow from (\ref{prop:action_on_Hnm}).
\end{proof}

\subsection{The \texorpdfstring{\(q\)}{q}-filtration}

In this section we fix an integer \(q \in \Nn \smallsetminus \{0\}\).
We introduce a filtration of the Lazard ring which will play an important role.

\begin{definition}
	\label{def:F_q_L}
	Let \(R\) be a graded ring such that \(R^i=0\) for \(i>0\).
	We consider the filtration
	\[
		F_q^0 R\subset \cdots \subset F_q^iR \subset F_q^{i+1}R\subset \cdots \subset R
	\]
	generated by the conditions, for every \(i,j \in \Nn\)
	\begin{enumerate}[label=(\roman*), ref=\roman*]
		\item
			\(F_q^i R\) is a subgroup of \(R\),
		\item
			\label{def:F_q_L:1}
			for every \(j\) we have \(R^{-j} \subset F_q^{\lfloor j/q \rfloor} R\),
		\item \(F^i_q R \cdot F^j_q R \subset F^{i+j}_q R\).
	\end{enumerate}
\end{definition}

\begin{para}
	If \(R \to R'\) is a surjective graded ring morphism, then \(F_q^dR \to F_q^dR'\) is surjective for every \(d\).
\end{para}

\begin{para}
	\label{p:F_loc}
	If \(S\subset R\) is a multiplicatively closed subset consisting of homogeneous elements of degree \(0\), then the morphism \(S^{-1}(F^d_qR) \to F^d_q(S^{-1}R)\) is bijective for every \(d\).
\end{para}

\begin{para}
	\label{p:F0_L_q-1}
	Note that \(F^0_qR\) is the subring of \(R\) generated by the homogeneous elements of degree \(>-q\).
\end{para}

\begin{para}
	\label{p:grading_pol}
	Let \(A\) be a ring, and let \(N\subset \Nn\smallsetminus \{0\}\) be a subset.
	Consider the polynomial ring \(A[T_i, i \in N]\).
	We let the variable \(T_i\) be homogeneous of degree \(i\) (resp.\ \(\lfloor i/q \rfloor\)), and denote by \(\deg P\) (resp.\ \(\deg_q P\)) the degree of a polynomial \(P \) with respect to that grading.
\end{para}

\begin{para}
	\label{p:deg_q_deg}
	From the formula \(\lfloor a/q \rfloor + \lfloor b/q \rfloor \le \lfloor (a+b)/q \rfloor\)
	we deduce that \(\deg_q P \le \lfloor(\deg P)/q\rfloor\), for any polynomial \(P\) as in (\ref{p:grading_pol}).
\end{para}

\begin{proposition}
	\label{prop:L_q_grading}
	Let \(n,d\in \Nn\), and let \(\ell_i \in \Laz^{-i}\) be polynomial generators of \(\Laz\).
	Then letting the image of \(\ell_i\) in \(\su{\Laz}{n}\) be homogeneous of degree \(\lfloor i/q \rfloor\), for \(i\in \su{N}{n}\), induces a (new) grading of the ring \(\su{\Laz}{n}\), with respect to which the subgroup generated by the homogeneous elements of degree \(\le d\) is \(F_q^d\su{\Laz}{n}\).

	In other words, setting \(A=\Zz\) if \(n=0\), and \(A=\Fp\) if \(n>0\), we have
	\[
		F_q^d\su{\Laz}{n} = \{ P(\ell_1,\ldots) \text{ where \(P \in A[T_i, i \in \su{N}{n}]\) and \(\deg_q P \leq d\)}\}.
	\]
\end{proposition}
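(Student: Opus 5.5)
By (\ref{lemm:pol_gen_L}) when \(n>0\), and by (\ref{p:pol_gen_laz}) when \(n=0\) (where \(\su{N}{0}=\Nn\smallsetminus\{0\}\) and \(\su{\Laz}{0}=\Laz\)), the ring \(\su{\Laz}{n}\) is the polynomial \(A\)-algebra in the images of \(\ell_i\) for \(i\in \su{N}{n}\). Hence assigning \(\ell_i\) the degree \(\lfloor i/q\rfloor\) defines a grading of \(\su{\Laz}{n}\). Let \(G^d\) denote the set of elements \(P(\ell_1,\ldots)\) with \(P\in A[T_i, i\in\su{N}{n}]\) and \(\deg_q P\le d\). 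This is the subgroup generated by homogeneous elements of new degree \(\le d\). I will show \(G^d=F^d_q\su{\Laz}{n}\) by proving both inclusions.

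For \(G^d\subset F^d_q\su{\Laz}{n}\), it suffices to treat monomials, since \(F^d_q\) is a subgroup. A monomial \(\ell_{i_1}\cdots\ell_{i_s}\) of \(q\)-degree \(\sum_j\lfloor i_j/q\rfloor\le d\) has each factor \(\ell_{i_j}\) of usual degree \(-i_j\). Condition (\ref{def:F_q_L:1}) gives \(\ell_{i_j}\in F_q^{\lfloor i_j/q\rfloor}\), and multiplicativity places the product in \(F_q^{\sum\lfloor i_j/q\rfloor}\subset F^d_q\), using that the filtration is increasing.

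For the reverse inclusion, it suffices to check that the family \((G^d)_d\) satisfies the three generating conditions of (\ref{def:F_q_L}), since \(F_q\) is the smallest filtration satisfying them. Note first that \((G^d)_d\) is increasing and each \(G^d\) is a subgroup. It is multiplicative because \(\deg_q(PQ)=\deg_q P+\deg_q Q\): products of monomials add \(q\)-degrees, and any polynomial of \(q\)-degree \(\le d\) is a sum of monomials of \(q\)-degree \(\le d\). For condition (\ref{def:F_q_L:1}), let \(x\in\su{\Laz}{n}^{-j}\). Write \(x=P(\ell)\) where \(P\) is homogeneous of usual degree \(j\) with \(T_i\) of degree \(i\); this is possible because the usual grading on \(\su{\Laz}{n}\) is the one in which \(\ell_i\) has degree \(-i\), and we may keep only the degree-\(j\) component of \(P\). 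Then (\ref{p:deg_q_deg}) gives \(\deg_q P\le\lfloor j/q\rfloor\), so \(x\in G^{\lfloor j/q\rfloor}\).

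The only delicate point is the well-definedness of the new grading, which relies on polynomiality over \(A\). Polynomiality in turn requires choosing generators with \(\Ipn{n}\) generated by \(p\) and the \(\ell_{p^i-1}\), as in (\ref{p:gen_in_I}). However, (\ref{lemm:pol_gen_L}) is stated for an arbitrary family of polynomial generators, so this choice is already handled there, and the proof reduces to the formal verification above.
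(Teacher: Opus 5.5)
Your proof is correct and is essentially the paper's argument. The inclusion of the polynomial description into \(F^d_q\su{\Laz}{n}\) is identical; for the converse you show that the subgroups \(G^d\) satisfy the defining conditions of (\ref{def:F_q_L}) and invoke minimality. The paper instead writes \(F^d_q\su{\Laz}{n}\) as generated by products \(y_1\cdots y_k\) of homogeneous elements with \(\sum_j\lfloor d_j/q\rfloor\le d\) and applies (\ref{p:deg_q_deg}) to each factor, which is the same computation packaged slightly differently.
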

\begin{proof}
	Recall from (\ref{lemm:pol_gen_L}) and (\ref{p:Laz_La}) that the \(A\)-algebra \(\su{\Laz}{n}\) is polynomial in the variables \(\ell_i\) for \(i\in \su{N}{n}\).
	Since \(\ell_i \in F^{\lfloor i/q \rfloor}_q\su{\Laz}{n}\) for every \(i \in \su{N}{n}\), and since \(F^{\bullet}_q\su{\Laz}{n}\) is a ring filtration, it follows that any polynomial \(P \in A[T_i, i \in \su{N}{n}]\) verifies \(P(\ell_1,\ldots) \in F^{\deg_qP}_q\su{\Laz}{n}\).

	Conversely the group \(F^d_q\su{\Laz}{n}\) is generated by the products \(y_1 \cdots y_k\) with \(y_j \in \su{\Laz}{n}^{-d_j}\) for each \(j\), and such that \(\lfloor d_1/q \rfloor + \cdots + \lfloor d_k/q \rfloor \le d\).
Each \(y_j\) is of the form \(Q_j(\ell_1,\ldots)\), where \(Q_j \in A[T_i,i\in \su{N}{n}\}]\) is a polynomial such that \(\deg Q_j=d_j\).
Thus \(y_1 \cdots y_k = Q(\ell_1,\ldots)\), where \(Q=Q_1\cdots Q_k\) verifies, by (\ref{p:deg_q_deg}),
\[
	\deg_q Q = \deg_q Q_1 +\cdots +\deg_q Q_k \le \lfloor d_1/q \rfloor + \cdots + \lfloor d_k/q \rfloor \le d.\qedhere
\]
\end{proof}

\begin{remark}
	The grading of \(\su{\Laz}{n}\) in (\ref{prop:L_q_grading}) depends on the choice of the generators \(\ell_i\) (when \(q>1\)), while the filtration \(F^\bullet_q\su{\Laz}{n}\) does not.
\end{remark}

\begin{proposition}
	\label{prop:fixed_Fd}
	Let \(G\) be an algebraic group over \(k\), and \(q\) an integer verifying \(1 \leq q \leq |\widehat{G}|\).
	Then the group \(F^d_q\Laz\) is generated by a set of classes of smooth projective \(k\)-varieties \(X\) admitting a \(G\)-action such that \(\dim X^G \le d\).
\end{proposition}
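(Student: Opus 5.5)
The plan is to build \(F^d_q\Laz\) from products of the varieties of (\ref{prop:actions}), using that fixed loci are compatible with products.

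First, choose a subset \(Q\subset \widehat{G}\) with \(0\in Q\) and \(|Q|=q\); this is possible since \(1\le q\le|\widehat{G}|\). Applying (\ref{prop:actions}) to this \(Q\) gives varieties \(X_i^\pm\) of pure dimension \(i\) with \(G\)-actions. The elements \(\ell_i=\lc X_i^+\rc-\lc X_i^-\rc\) are polynomial generators of \(\Laz\), and \(\dim (X_i^\pm)^G\le\lfloor i/q\rfloor\).

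Next, apply (\ref{prop:L_q_grading}) with \(n=0\) and \(A=\Zz\). It shows that \(F^d_q\Laz\) is the subgroup generated by the monomials \(\ell_\alpha=\ell_{\alpha_1}\cdots\ell_{\alpha_m}\) over partitions \(\alpha\) with \(\sum_j\lfloor\alpha_j/q\rfloor\le d\). It therefore suffices to express each such \(\ell_\alpha\) as a \(\Zz\)-combination of classes of varieties with suitably small fixed locus.

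Expand the product:
\[
	\ell_\alpha=\sum_{\epsilon\in\{\pm\}^m}\Big(\prod_j \epsilon_j\Big)\,\lc X^{\epsilon_1}_{\alpha_1}\times\cdots\times X^{\epsilon_m}_{\alpha_m}\rc .
\]
Here the product of classes is the class of the product variety, since \(\lc-\rc\) is multiplicative: the tangent bundle of a product is the sum of the pulled-back tangent bundles, and \(P\) is multiplicative by (\ref{p:CF}). Let \(G\) act diagonally on each product \(Y=\prod_j X^{\epsilon_j}_{\alpha_j}\). By the functorial description (\ref{p:fixed_locus}), a \(T\)-point of \(Y\) is \(G\)-equivariant exactly when each of its components is, so \(Y^G=\prod_j (X^{\epsilon_j}_{\alpha_j})^G\). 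Hence \(\dim Y^G\le\sum_j\lfloor\alpha_j/q\rfloor\le d\). If some factor has empty fixed locus, the product has empty fixed locus, which still satisfies the bound. The empty partition contributes \(1=\lc\Speck\rc\), with trivial action and fixed locus of dimension \(0\).

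No step here is a genuine obstacle. The only points needing care are the multiplicativity of \(\lc-\rc\), the compatibility of fixed loci with products, and the choice of \(Q\) inside \(\widehat{G}\); all three are routine given the earlier results.
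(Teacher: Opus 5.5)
Your proposal is correct and is essentially the paper's proof. The paper likewise takes \(\ell_i=\lc X_i^+\rc-\lc X_i^-\rc\) in (\ref{prop:L_q_grading}), writes \(F^d_q\Laz\) as generated by classes of products \(Z_1\times\cdots\times Z_m\) with \(Z_j\in\{X_{i_j}^+,X_{i_j}^-\}\) and \(\sum_j\lfloor i_j/q\rfloor\le d\), and adds up the bounds \(\dim Z_j^G\le\lfloor i_j/q\rfloor\). Your extra points only make explicit what the paper leaves implicit: the choice of \(Q\ni 0\) with \(|Q|=q\) inside \(\widehat{G}\), multiplicativity of \(\lc-\rc\), compatibility of fixed loci with products, and the empty monomial.
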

\begin{proof}
	Let us consider the varieties \(X_i^+,X_i^-\) of (\ref{prop:actions}).
	By (\ref{prop:actions}.\ref{prop:actions:1}), we may take \(\ell_i = \lc X_i^+ \rc - \lc X_i^- \rc\) in (\ref{prop:L_q_grading}).
	We deduce that the group \(F^d_q\Laz\) is generated by the classes \(\lc X \rc\), where \(X=Z_1 \times \cdots \times Z_m\) with \(i_1, \ldots, i_m \in \Nn\smallsetminus \{0\}\) such that \(\lfloor i_1/q \rfloor + \cdots + \lfloor i_m/q \rfloor\le d\) and \(Z_j \in \{X_{i_j}^+,X_{i_j}^-\}\) for each \(j\in \{1, \ldots, m\}\).
	Now it follows from (\ref{prop:actions}.\ref{prop:actions:2}) and (\ref{prop:actions}.\ref{prop:actions:3}) that each \(Z_j\) admits a \(G\)-action such that \(\dim (Z_j)^G \le \lfloor i_j/q \rfloor\), and thus the variety \(X\) admits a \(G\)-action such that
	\[
		\dim X^G =\dim (Z_1)^G + \cdots + \dim (Z_m)^G \le \lfloor i_1/q \rfloor + \cdots + \lfloor i_m/q \rfloor\le d.\qedhere
	\]
\end{proof}

\section{Algebraic cobordism}
\label{sect:oct}

\subsection{Cobordism as a cohomology theory}
In this paper we will need to use a theory of algebraic cobordism.
When the base field \(k\) has characteristic zero, we could use the theory \(\Omega\) constructed by Levine and Morel in \cite{LM-Al-07}.
In order to allow for base fields of positive characteristic, we will use as a substitute the pure part of the cohomology theory represented by Voevodsky's motivic spectrum \(\MGL\) (introduced in \cite{Voe-A1}), with the characteristic exponent of \(k\) inverted.
In this section, we briefly review the facts that we will need.

\begin{para}
	As explained in \cite[2.9.7]{Panin-Oriented_II}, the spectrum \(\MGL\) induces a ``ring cohomology theory'', which admits a canonical orientation.
	This permits us to define the Chern classes \(c^{\MGL}_i(E) \in \MGL^{2i,i}(X)\) of a vector bundle \(E\to X\), when \(X\) is smooth \(k\)-variety, as well as push-forward maps \(f_*^{\MGL} \colon \MGL^{2*,*}(X) \to \MGL^{2*,*}(Y)\) for each morphism \(f\colon X \to Y\) between smooth \(k\)-varieties, where
	\[
		\MGL^{2*,*}(X) = \bigoplus_{n\in \Zz} \MGL^{2n,n}(X).
	\]
	The functor \(\MGL^{2*,*}\) satisfies the axioms of an oriented cohomology theory in the sense of \cite[Definition~1.1.2]{LM-Al-07} (which do not include the localization sequence).
\end{para}

\begin{para}
	\label{p:Omega}
	Let \(c\) be the characteristic exponent of \(k\).
	When \(X\) is a smooth \(k\)-variety, we define
	\[
		\Omega(X) = \MGL^{2*,*}(X)[c^{-1}].
	\]
	We will simply denote by \(c_i\) and \(f_*\) the localizations of \(c_i^{\MGL}\) and \(f_*^{\MGL}\).
\end{para}

\begin{proposition}	
	\label{prop:Chern_numbers}
	The morphism \(\Laz \to \Omega(\Speck)\) is injective,
	and maps the class \(\lc X \rc \in \Laz\) of (\ref{def:lc_rc}) to the class \(p_*(1) \in \Omega(\Speck)\), for every smooth projective \(k\)-variety \(X\) with structural morphism \(p\colon X\to \Speck\).
\end{proposition}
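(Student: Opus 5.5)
The plan is to reduce both assertions to a comparison with the Chow ring through the multiplicative structure of oriented theories. First, the map \(\Laz \to \Omega(\Speck)\) is the one classifying the formal group law of the oriented theory \(\Omega\): \(\MGL\) is oriented with \(c_1(L\otimes M)=c_1(L)\fgl c_1(M)\) for the formal group law of \(\MGL^{2*,*}(\Speck)\), and the Lazard ring maps to it by universality. Inverting \(c\) yields \(\Laz \to \Omega(\Speck)\). Injectivity will follow once we show that its composite with a suitable map \(\Omega(\Speck)\to \Zz[c^{-1}][\bb]\) equals \eqref{eq:Laz_Zb} followed by the injection \(\Zz[\bb]\to\Zz[c^{-1}][\bb]\); since \(\Laz\) is torsion-free, \eqref{eq:Laz_Zb} stays injective after tensoring with \(\Zz[c^{-1}]\).

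To build this map, I will use the universal property of \(\MGL\) among oriented theories (Panin--Smirnov, or the Riemann--Roch theorem for oriented theories). The theory \(X\mapsto \CH(X)[c^{-1}][\bb]\), with orientation twisted by \(\exp\), so that \(c_1^{\mathrm{new}}(L)=\sum_i c_1(L)^{i+1}b_i\) (the total Conner--Floyd class of (\ref{p:CF}) multiplied by \(c_1\)), is an oriented cohomology theory. Its formal group law is exactly \eqref{eq:fglr_zb}, because the twisted \(c_1\) is \(\exp\) applied to the additive Chow \(c_1\). By universality of \(\MGL\) as an oriented ring spectrum, there is a morphism of oriented theories \(\vartheta\colon\Omega\to \CH[c^{-1}][\bb]\), compatible with Chern classes and hence with formal group laws. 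So \(\vartheta\circ(\Laz\to\Omega(\Speck))\) classifies \eqref{eq:fglr_zb}, and therefore agrees with \eqref{eq:Laz_Zb}. This proves injectivity.

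For the second claim, I will use that \(\vartheta\) commutes with Chern classes but not with push-forwards; the defect is corrected by a Todd-type class via the Riemann--Roch formula \(\vartheta(f_*x)=f_*(\vartheta(x)\cdot \mathrm{Td}(-\Tan_f))\). With the orientation twisted by \(\exp\), the Todd class is precisely the multiplicative class \(P\) of (\ref{p:CF}) evaluated at \(-\Tan_X\). So \(\vartheta(p_*1)=\deg P(-\Tan_X)=\sum_\alpha c_\alpha(X)b_\alpha=\lc X\rc\) by (\ref{def:lc_rc}). On the other hand, \(\lc X\rc\in\Laz\) by (\ref{p:L_Zb}), and its image also maps to \(\lc X\rc\) under \(\vartheta\). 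Injectivity of \(\vartheta\) on the image of \(\Laz\) alone is not enough here, since \(p_*(1)\) is a priori just some element of \(\Omega(\Speck)\). I will therefore need that \(\Laz[c^{-1}]\to\Omega(\Speck)\) is surjective, which is the Hopkins--Morel--Hoyois theorem \(\MGL^{2*,*}(k)[c^{-1}]\simeq\Laz[c^{-1}]\). Granting this, \(p_*(1)\) lies in the image of \(\Laz\), and the two elements coincide because \(\vartheta\) is injective there.

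The main obstacle is the Riemann--Roch identification of the Todd class with \(P(-\Tan_X)\), including the sign and normalization conventions of Conner--Floyd classes against the twist by \(\exp\). The first thing I will try is to check the identification on \(\Pp^n\) via the projective bundle formula, and then invoke multiplicativity and the splitting principle.
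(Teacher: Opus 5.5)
Your proposal is correct and is essentially the paper's proof. The paper also takes the morphism of ring spectra \(\MGL \to \Hb\) corresponding to the orientation \(\exp(c_1^{\CH})\), notes that its formal group law is \eqref{eq:fglr_zb} so that \eqref{eq:Laz_Zb} factors through \(\Omega(\Speck)\) (giving injectivity), and then compares push-forwards for the twisted and untwisted orientations on \(\Hb\) via the change-of-orientation formula of Panin/Merkurjev to obtain \(\deg P(-\Tan_X)\). Your explicit appeal to the isomorphism \(\La \simeq \Omega(\Speck)\), used to transport the equality back from \(\Zz[c^{-1}][\bb]\), is a genuinely needed step; the paper's ``it will suffice'' leaves it implicit, and only quotes that result in (\ref{p:Laz_Omega_k}) just afterwards.
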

\begin{proof}
	Consider the Eilenberg--Mac Lane spectrum \(\Hb\) representing motivic cohomology with coefficients in \(\Zz[\bb]\).
	There is a unique morphism of ring spectra \(\varphi\colon\MGL \to \Hb\) mapping the first Chern class \(c_1^{\MGL}(L)\in \MGL^{2,1}(Y)\) of each line bundle \(L\) over a smooth \(k\)-variety \(Y\) to (the notation \(\exp\) is defined in \eqref{eq:exp})
	\[
		\exp(c_1^{\CH}(L))\in \CH^*(Y)[\bb]=\Hb^{2*,*}(Y).
	\]
	This yields an orientation of the ring cohomology theory induced by the spectrum \(\Hb\).
	Its formal group law is \eqref{eq:fglr_zb}, which implies that the morphism \eqref{eq:Laz_Zb} factors as
	\[
		\Laz \to \MGL^{2*,*}(\Speck) \xrightarrow{\varphi_*} \Hb^{2*,*}(\Speck) = \Zz[\bb].
	\]
	Since the morphism \eqref{eq:Laz_Zb} is injective, so is \(\Laz \to \MGL^{2*,*}(\Speck)\), which yields the stated injectivity.
	Thus it will suffice to verify that, for any smooth projective \(k\)-variety \(X\), the class \(p^{\Hb,\varphi}_*(1)\) is given in \(\Hb^{*,*}(\Speck) = \Zz[\bb]\) by the formula \eqref{eq:Chern_numbers},
	where \(p\colon X \to \Speck\) is the structural morphism, and \(p^{\Hb,\varphi}_* \colon \Hb^{2*,*}(X) \to \Hb^{2*,*}(\Speck)\) denotes the push-forward corresponding to the orientation \(\varphi\).

	To do so, consider the Eilenberg--Mac Lane spectrum \(\Hz\) representing motivic cohomology with coefficients in \(\Zz\), and the morphism of ring spectra \(\MGL \to \Hz\) mapping \(c_1^{\MGL}(L)\) to \(c_1^{\CH}(L)\), for every line bundle \(L\) over a smooth \(k\)-variety.
	The corresponding push-forward \(p_* \colon \Hz^{2*,*}(X) \to \Hz^{2*,*}(\Speck)\) is the degree map \(\deg \colon \CH(X) \to \Zz\).

	Next consider the composite \(\psi \colon \MGL\to \Hz \to \Hb\), where the second map is induced by the inclusion \(\Zz\subset \Zz[\bb]\), and denote by \(p^{\Hb,\psi}_* \colon \Hb^{2*,*}(X) \to \Hb^{2*,*}(\Speck)\) the push-forward corresponding to the orientation \(\psi\).
	It follows from \cite[Remark~2.16]{Panin-Oriented_II} (see also \cite[\S4]{Mer-Ori}) that
	\[
		p^{\Hb,\varphi}_*(1) = p^{\Hb,\psi}_*(P(-\Tan_X)),
	\]
	where \(P\) is the class defined in (\ref{p:CF}).
	We conclude using the formula \eqref{eq:P_CF}.
\end{proof}

\begin{para}
	\label{p:Laz_Omega_k}
	We write
	\[
		\La=\Laz[c^{-1}].
	\]
	It is proved in \cite[Proposition~8.2]{Hoyois-cobordism} that the canonical morphism \(\La \to \Omega(\Speck)\) is an isomorphism.
\end{para}

\begin{proposition}
	\label{prop:loc_seq}
	Let \(X\) be a smooth \(k\)-variety.
	Let \(Z\) be a closed subscheme of \(X\), and \(U\) its open complement.
	Assume that the \(k\)-variety \(Z\) is smooth.
	Denote by \(i\colon Z \to X\) and \(j\colon U \to X\) the immersions.
	Then we have an exact sequence of \(L\)-modules
	\[
		\Omega(Z) \xrightarrow{i_*} \Omega(X) \xrightarrow{j^*} \Omega(U) \to 0.
	\]
\end{proposition}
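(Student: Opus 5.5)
The plan is to deduce this localization sequence from the localization long exact sequence in \(\MGL\)-cohomology with supports, together with purity for the smooth pair \((X,Z)\). Since \(Z\) is a smooth closed subscheme of the smooth variety \(X\), we may work one connected component of \(Z\) at a time, as \(\Omega(Z)\) is the product over components. We may therefore assume that \(Z\) has pure codimension \(c_Z\) in \(X\), with normal bundle \(N\).

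First, the cofiber sequence of motivic spaces \(U \to X \to X/U\) gives, for each bidegree, an exact sequence
\[
	\MGL^{2n,n}_Z(X) \to \MGL^{2n,n}(X) \xrightarrow{j^*} \MGL^{2n,n}(U) \to \MGL^{2n+1,n}_Z(X).
\]
Second, homotopy purity (Morel--Voevodsky) identifies \(X/U \simeq \Th(N)\). The Thom isomorphism for the oriented theory \(\MGL\) then gives
\[
	\MGL^{a,b}_Z(X)\simeq \MGL^{a-2c_Z,\,b-c_Z}(Z).
\]
With this identification, the map \(\MGL^{2n-2c_Z,n-c_Z}(Z) \to \MGL^{2n,n}(X)\) is exactly the push-forward \(i_*\). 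This is essentially how push-forwards along closed immersions are defined in Panin's framework \cite{Panin-Oriented_II}, so I would cite that construction.

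The main point is surjectivity of \(j^*\). This is equivalent to showing that \(\MGL^{2n+1,n}_Z(X)\to \MGL^{2n+1,n}(X)\) is injective. A sufficient approach is to show that the next term in the sequence, \(\MGL^{2n+1,n}_Z(X)\simeq \MGL^{2(n-c_Z)+1,\,n-c_Z}(Z)\), vanishes after inverting the characteristic exponent. For a smooth variety \(Y\), the group \(\MGL^{2m+1,m}(Y)\) lies above the ``Chow line'': its bidegree \((a,b)\) satisfies \(a>2b\). The vanishing of \(\MGL^{a,b}(Y)\) for \(a>2b\), together with \(\dim Y<b\) type bounds, follows from the Hopkins--Morel--Hoyois theorem: \(\MGL[c^{-1}]\) has a slice/cellular filtration whose graded pieces are motivic cohomology twisted by \(\La\) in nonpositive weights, and motivic cohomology \(H^{a,b}(Y)\) vanishes for \(a>2b\) (and also for \(a>b+\dim Y\)). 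A convergence argument then gives vanishing for \(\MGL[c^{-1}]\). This is the step I expect to be the main obstacle, because of the convergence of the slice tower. I would cite Levine's or Hoyois's result, namely \(\MGL^{2n+1,n}(Y)[c^{-1}]=0\) for smooth \(Y\) (cf.\ \cite{Hoyois-cobordism}, where the isomorphism \(\MGL^{2*,*}\simeq \Omega^*\) with Levine--Morel cobordism in characteristic zero is also proved).

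Alternatively, in characteristic zero one can use Levine--Morel's localization sequence directly \cite{LM-Al-07}, via the isomorphism between \(\MGL^{2*,*}\) and \(\Omega\). In general, Elmanto--Hoyois type results give the identification \(\Omega(Y)\simeq \MGL^{2*,*}(Y)[c^{-1}]\) of the theories, and hence the sequence. Finally, all maps are \(\La\)-linear and localization is exact, so inverting \(c\) preserves exactness.
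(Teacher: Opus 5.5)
Your argument is correct and is essentially the paper's proof. The cofiber sequence \(U \to X \to \Th(N)\) (homotopy purity) together with the Thom isomorphism gives the long exact sequence, and surjectivity of \(j^*\) follows from the vanishing of \(\MGL^{2n+1,n}(Z)[c^{-1}]\), which the paper deduces exactly as you sketch: from the strongly convergent Hopkins--Morel--Hoyois spectral sequence \(L^q\otimes H^{p-q,n-q}(Z,\Zz)\Rightarrow \MGL^{p+q,n}(Z)[c^{-1}]\) and the vanishing \(H^{2m+1,m}(Z,\Zz)=0\). The differences are cosmetic: the paper uses an ungraded Thom isomorphism instead of reducing to components of constant codimension, and your alternative via Levine--Morel is unnecessary, since here \(\Omega\) is \emph{defined} as \(\MGL^{2*,*}[c^{-1}]\) (and that route would not cover positive characteristic anyway).
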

\begin{proof}
	The cofiber sequence of pointed motivic spaces
	\[
		\Sigma^{\infty}_+U \to \Sigma^{\infty}_+X \to \Sigma^{\infty}\Th(N),
	\]
	where \(\Th(N)\) refers to the Thom space of the normal bundle \(N\) to \(i\), together with the (nongraded) isomorphism \(\MGL^{2*,*}(\Th(N)) \simeq \MGL^{2*,*}(Z)\) induced by the orientation of \(\MGL\) yields an exact sequence
	\[
		\MGL^{2*,*}(Z) \xrightarrow{i_*} \MGL^{2*,*}(X) \xrightarrow{j^*} \MGL^{2*,*}(U) \to \MGL^{2*+1,*}(Z) .
	\]
	Here we have written, for \(Y\) a smooth projective \(k\)-variety,
	\[
		\MGL^{2*,*}(Y) = \bigoplus_{n \in \Zz}\MGL^{2n,n}(Y) \quad \text{ and } \quad \MGL^{2*+1,*}(Y) = \bigoplus_{n \in \Zz}\MGL^{2n+1,n}(Y).
	\]
	So to conclude it will suffice to verify that, for every \(n\in \Zz\)
	\begin{equation}
		\label{eq:mgl_2n+1_n}
		\MGL^{2n+1,n}(Z)[c^{-1}] =0.
	\end{equation}
	To do so we use the strongly convergent Hopkins--Hoyois--Morel spectral sequence \cite[(8.14)]{Hoyois-cobordism}, for fixed \(n\),
	\[
		L^q \otimes_{\Zz} H^{p-q,n-q}(Z,\Zz) \Rightarrow \MGL^{p+q,n}(Z)[c^{-1}]
	\]
	where \(H^{*,*}(-,\Zz)\) refers to motivic cohomology.
	For \(p+q=2n+1\) we have
	\[
		H^{p-q,n-q}(Z,\Zz) = H^{2(n-q)+1,n-q}(Z,\Zz) = 0,
	\]
	as follows from the identification with higher Chow groups \cite{Voevodsky-higher}.
	Thus the spectral sequence yields \eqref{eq:mgl_2n+1_n}, concluding the proof.
\end{proof}

\subsection{Cobordism of graded vector bundles}
\label{sect:Mcc}

In this section, we fix a diagonalizable group \(G\) over the field \(k\).

\begin{para}
	\label{p:[ag]}
	When \(R\) is a ring, we consider the polynomial \(R\)-algebra \(R[\ag]\) in the variables \(a_{i,g}\), for \(g\in \widehat{G}\smallsetminus \{0\}\) and \(i\in \Nn\).
\end{para}

\begin{para}
	\label{p:def_Q}
	For each \(G\)-equivariant vector bundle \(E \to X\), where \(X\) is a smooth \(k\)-variety with trivial \(G\)-action and \(E^G=0\), there exists a class \(\T(E) \in \Omega(X)[\ag]\) compatible with pull-backs, and such that:
	\begin{enumerate}[label=(\roman*), ref=\roman*]
		\item
			If \(0 \to E' \to E \to E'' \to 0\) is an exact sequence of \(G\)-equivariant vector bundles over \(X\) such that \(E^G=0\), then \(\T(E) =\T(E') \T(E'')\).

		\item
			\label{p:def_Q:2}
			If \(L \to X\) is a line bundle with trivial \(G\)-action and \(g \in \widehat{G}\smallsetminus \{0\}\), then
			\[
				\T(L \otimes \character{g}) = \sum_{i\in \Nn} c_1(L)^i a_{i,g}.
			\]
	\end{enumerate}
	This is proved with the splitting principle, similarly to (\ref{p:CF}), using additionally \eqref{eq:decomp_weights} to argue characterwise, and the fact that the first Chern classes of line bundles are nilpotent \cite[Lemma~1.1.3]{LM-Al-07}.
\end{para}

\begin{definition}
	\label{def:Mcc}
	Recall that \(c\) is the characteristic exponent of the field \(k\).
	We set
	\[
		\Mcc=\La[\ag]=(\Laz[c^{-1}])[\ag]=\Omega(\Speck)[\ag].
	\]
	When \(X\) is a smooth projective \(k\)-variety with trivial \(G\)-action, and \(E\to X\) a \(G\)-equivariant vector bundle such that \(E^G=0\), we define a class
	\[
		\lc E \to X \rc = p_*(\T(E)) \in \Mcc,
	\]
	where \(p \colon X \to \Speck\) is the structural morphism, and \(p_* \colon \Omega(X)[\ag] \to \La[\ag]=\Mcc\) is the map induced by push-forward morphism \(\Omega(X) \to \Omega(\Speck)=\La\) along \(p\).
\end{definition}

\begin{example}
	For every \(g\in \widehat{G}\smallsetminus \{0\}\) we have \(\lc \character{g} \to \Speck \rc = a_{0,g}\).
\end{example}

\begin{para}
	\label{p:grading_Mcc}
	The \(\La\)-algebra \(\Mcc=\La[\ag]\) is graded, by letting \(a_{i,g}\) be of degree \(-i\).
	Note that the class \(\lc E\to X\rc\) is homogeneous of degree \(-d\) when \(X\) has pure dimension \(d\).

	For \(d\in \Nn\), we denote by \(\Mcc(d)\) the subring of \(\Mcc\) generated by the homogeneous elements of degree \(\ge -d\).
\end{para}

\begin{para}
	\label{p:Mcc_1}
	If \(G=1\), then \(\Mcc=\La\), as graded \(\La\)-algebras.
\end{para}

\begin{para}
	\label{p:p_i_g}
	For \(g\in \widehat{G}\smallsetminus \{0\}\) and \(i \in \Nn\), we write
	\begin{equation}
		\label{eq:def_p_ig}
		p_{i,g} = \lc \Oc(1) \otimes \character{g} \to \Pp^i \rc \in \Mcc,
	\end{equation}
	where \(G\) acts trivially on \(\Oc(1)\).
\end{para}

\begin{proposition}
	\label{prop:M_is_poly}
	The \(\La\)-algebra \(\Mcc\) is polynomial in the elements \(p_{i,g}\) for \(i \in \Nn\) and \(g\in \widehat{G}\smallsetminus \{0\}\).
\end{proposition}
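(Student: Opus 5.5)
We compute \(p_{i,g}\) explicitly and show it has the form \(a_{i,g}\) plus lower-order terms, so that the change of variables from \(a_{i,g}\) to \(p_{i,g}\) is triangular with unit diagonal.

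Fix \(g\in\widehat{G}\smallsetminus\{0\}\) and \(i\in\Nn\). By (\ref{p:def_Q}.\ref{p:def_Q:2}),
\[
\T(\Oc(1)\otimes\character{g})=\sum_{j\in\Nn} c_1(\Oc(1))^j a_{j,g}\in\Omega(\Pp^i)[\ag].
\]
Pushing forward along \(\Pp^i\to\Speck\) and using the projection formula with coefficients \(a_{j,g}\in\La[\ag]\), we get
\[
p_{i,g}=\sum_{j=0}^{i}\pi_*\big(c_1(\Oc(1))^j\big)\,a_{j,g}.
\]
The power \(c_1(\Oc(1))^j\) is the class of a linear subspace \(\Pp^{i-j}\subset\Pp^i\), since \(\Oc(1)\) has transverse sections cutting out hyperplanes. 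Hence \(\pi_*(c_1(\Oc(1))^j)=\lc\Pp^{i-j}\rc\in\La^{j-i}\), and for \(j=i\) this equals \(\lc\Pp^0\rc=1\). Therefore
\[
p_{i,g}=a_{i,g}+\sum_{j<i}\lc\Pp^{i-j}\rc\, a_{j,g}.
\]

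Consequently the \(\La\)-algebra endomorphism \(\phi\) of \(\La[\ag]\) defined by \(a_{i,g}\mapsto p_{i,g}\) is unitriangular: for each fixed \(g\), it sends \(a_{i,g}\) to \(a_{i,g}\) plus an \(\La\)-linear combination of the \(a_{j,g}\) with \(j<i\). Arguing by induction on \(i\), each \(a_{i,g}\) is a polynomial, in fact a linear combination, in \(p_{0,g},\dots,p_{i,g}\) with coefficients in \(\La\). So \(\phi\) is surjective, and likewise so is its restriction to the finitely many variables \(a_{j,g}\) with \(j\le i\). A surjective endomorphism of a polynomial ring in finitely many variables over \(\La\) is an isomorphism, since it is given by an invertible triangular linear substitution with explicit inverse. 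Passing to the union over finite sets of variables shows that \(\phi\) is an isomorphism, so the \(p_{i,g}\) are polynomial generators.

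The only step requiring care is the identification \(\pi_*(c_1(\Oc(1))^j)=\lc\Pp^{i-j}\rc\). For this we would use that in an oriented cohomology theory \(c_1(\Oc(1))\) equals \(h_*(1)\) for a hyperplane \(h\), together with transversality, so that \(c_1(\Oc(1))^j\) is the push-forward of \(1\) from \(\Pp^{i-j}\); then (\ref{prop:Chern_numbers}) identifies the resulting class. In fact only the leading coefficient \(\pi_*(c_1(\Oc(1))^i)=1\) (the class of a rational point) and the degrees of the other coefficients are actually needed.
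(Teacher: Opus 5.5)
Your proposal is correct and follows the paper's proof. The paper makes the same computation, \(p_{i,g}=\sum_{j=0}^{i}\lc \Pp^{i-j}\rc\, a_{j,g}\) via \((q_i)_*(c_1(\Oc(1))^j)=(q_{i-j})_*(1)\), and concludes directly from the unitriangular form of this substitution.
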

\begin{proof}
	For every \(n\in \Nn\), let \(q_n\colon \Pp^n \to \Speck\) be the structural morphism.
	Then we have, for every \(i\in \Nn\) and \(g\in \widehat{G}\smallsetminus \{0\}\)
	\[
		p_{i,g}= \sum_{j\in \Nn } (q_i)_*\big(c_1(\Oc(1))^j\big) a_{j,g}=\sum_{j=0}^i (q_{i-j})_*(1) a_{j,g}=\sum_{j=0}^i \lc \Pp^{i-j} \rc a_{j,g},
	\]
	in other words \(p_{i,g}=a_{i,g} + \lc \Pp^1 \rc a_{i-1,g}+ \cdots + \lc \Pp^i \rc a_{0,g}\).
	The implies that the elements \(p_{i,g}\) are polynomial generators of the \(\La\)-algebra \(\Mcc\).
\end{proof}

\begin{remark}
	One may deduce from (\ref{prop:M_is_poly}) that the \(\Zz[c^{-1}]\)-module \(\Mcc\) is generated by the elements \(\lc E\to X\rc\), where \(X\) is a smooth projective \(k\)-variety with trivial \(G\)-action, and \(E\to X\) a \(G\)-equivariant vector bundle such that \(E^G=0\).
\end{remark}

\begin{proposition}
	\label{prop:ell_pol}
	Let \(d\in \Nn\).
	Let \(\ell_i \in \Laz^{-i}\), for \(i\in \Nn\smallsetminus \{0\}\), be a family of polynomial generators of the \(\Zz\)-algebra \(\Laz\).
	Then the \(\Zz[c^{-1}]\)-algebra \(\Mcc(d)\) is polynomial in the elements
	\[
		\ell_1,\ldots, \ell_d,p_{0,g},\ldots , p_{d,g} \text{ for \(g\in \widehat{G}\smallsetminus \{0\}\)}.
	\]
\end{proposition}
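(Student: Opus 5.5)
The argument has two steps. First we identify \(\Mcc(d)\) in terms of the variables \(a_{i,g}\). Then we replace those variables by the \(p_{i,g}\) using the triangular formula from the proof of (\ref{prop:M_is_poly}).

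\textbf{Step 1.} By (\ref{p:Laz_Omega_k}) we have \(\Mcc=\La[\ag]\) with \(\La=\Laz[c^{-1}]\), where \(a_{i,g}\) has degree \(-i\) and \(\La\) is concentrated in nonpositive degrees. Since \(\Laz\) is polynomial in the \(\ell_i\) by (\ref{p:pol_gen_laz}), \(\Mcc\) is a polynomial \(\Zz[c^{-1}]\)-algebra in the variables \(\ell_i\) for \(i\ge 1\) and \(a_{i,g}\) for \(i\in\Nn\), \(g\ne 0\). Each of these variables is homogeneous of strictly nonpositive degree, except \(a_{0,g}\), which has degree \(0\).

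Take a homogeneous element of degree \(\ge -d\) and write it as a \(\Zz[c^{-1}]\)-linear combination of monomials in these variables. Each monomial has degree equal to the sum of the degrees of its factors, all of which are \(\le 0\). So every factor \(\ell_i\) has \(i\le d\), and every factor \(a_{i,g}\) has \(i\le d\).

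Conversely, the elements \(\ell_1,\ldots,\ell_d\) and \(a_{i,g}\) with \(i\le d\) all have degree \(\ge -d\). Hence \(\Mcc(d)\) is exactly the \(\Zz[c^{-1}]\)-subalgebra generated by
\[
\ell_1,\ldots,\ell_d \quad\text{and}\quad a_{0,g},\ldots,a_{d,g} \text{ for } g\in\widehat{G}\smallsetminus\{0\}.
\]
It is polynomial in these elements, because they form a subset of a family of polynomial generators of \(\Mcc\). This is the analogue of (\ref{p:Ld_pol}).

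\textbf{Step 2.} From the proof of (\ref{prop:M_is_poly}),
\[
p_{i,g}=a_{i,g}+\lc\Pp^1\rc a_{i-1,g}+\cdots+\lc\Pp^i\rc a_{0,g}.
\]
Each \(\lc\Pp^j\rc\) lies in \(\Laz^{-j}\), which is a polynomial in \(\ell_1,\ldots,\ell_j\), and \(j\le i\le d\). So for \(i\le d\), \(p_{i,g}\) lies in \(\Mcc(d)\). Moreover, for each fixed \(g\), the change of variables \((a_{0,g},\ldots,a_{d,g})\mapsto(p_{0,g},\ldots,p_{d,g})\) is unitriangular with coefficients in \(\Zz[c^{-1}][\ell_1,\ldots,\ell_d]\).

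Inverting this triangular system recursively expresses each \(a_{i,g}\) with \(i\le d\) as a polynomial in \(p_{0,g},\ldots,p_{i,g}\) and \(\ell_1,\ldots,\ell_d\). Therefore the substitution defines an automorphism of the polynomial ring
\[
\Zz[c^{-1}][\ell_1,\ldots,\ell_d,\,a_{i,g}:\ i\le d,\ g\ne 0].
\]
It follows that \(\Mcc(d)\) is polynomial in \(\ell_1,\ldots,\ell_d\) together with the \(p_{i,g}\) for \(i\le d\) and \(g\ne 0\). When \(\widehat{G}\) is infinite this involves infinitely many variables, but the argument is unaffected since it is characterwise.

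Nothing here is a serious obstacle. The one point needing care is the degree argument in Step 1: it relies on \(\La\) having no elements of positive degree and on every variable having degree \(\le 0\). That is what allows us to read off the generators of \(\Mcc(d)\) monomial by monomial.
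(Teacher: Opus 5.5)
Your proof is correct and uses the same two ingredients as the paper: the unitriangular relation between the \(p_{i,g}\) and the \(a_{i,g}\) from (\ref{prop:M_is_poly}), and the fact that all polynomial generators have nonpositive degree, so that \(\Mcc(d)\) is determined by degrees. The only difference is the order: the paper first uses (\ref{prop:M_is_poly}) to make \(\Mcc\) polynomial over \(\Zz[c^{-1}]\) in the \(\ell_n\) and \(p_{i,g}\), then reads off \(\Mcc(d)\) by degrees, which saves your separate inversion of the triangular system inside \(\Mcc(d)\).
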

\begin{proof}
	By (\ref{prop:M_is_poly}) the \(\Zz[c^{-1}]\)-algebra \(\Mcc\) is polynomial in the elements \(\ell_n\) for \(n \in \Nn\smallsetminus \{0\}\), and \(p_{i,g}\) for \(i \in \Nn\) and \(g\in \widehat{G}\smallsetminus \{0\}\).
	As \(\ell_n\), resp.\ \(p_{i,g}\), is homogeneous of degree \(-n\), resp.\ \(-i\), the statement follows.
\end{proof}

\section{Cobordism of diagonalizable groups}
\label{sect:omega_G}
In this section, we compute the \(G\)-equivariant cobordism ring of the point \(\Omega_G(\Speck)\), for \(G\) a diagonalizable group.
A similar computation was done in topology by Landweber in \cite{Landweber-coherence}, which inspired some of the arguments of this section.\\

We fix an element \(c \in \Zz\smallsetminus \{0\}\), and write \(\La=\Laz[c^{-1}]\).

\subsection{Algebraic preliminaries}
We begin by establishing the algebraic result (\ref{prop:limit_fglr}) that will be required for the determination of the ring \(\Omega_G(\Speck)\) in \S\ref{sect:omega_G:diag}.
It is the algebraic input used in the computation of \(\Omega_{G\times \mu_n}(\Speck)\) from \(\Omega_G(\Speck)\), allowing us to proceed inductively.

This result uses crucially the fact the Lazard ring is \emph{not} a noetherian ring.
It is the algebraic reason for using cobordism as a cohomology theory, as opposed to theories such as Chow theory or \(K\)-theory, which have noetherian coefficient rings.

\begin{lemma}[{\cite[Lemma 5]{Landweber-coherence}}]
	\label{lemm:fg_ideal_fgl}
	Let \(I\) be a finitely generated homogeneous ideal of \(\La\) such that \(I \ne \La\), and let \(m \in \Zz \smallsetminus \{0\} \).
	Then \(\fglr{m}(t) \neq 0\) in \(\La[[t]]/I\).
\end{lemma}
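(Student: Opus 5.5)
The plan is to reduce modulo a suitable prime ideal of \(\La\) that contains \(I\) and in which we can see the leading term of \(\fglr{m}(t)\). The only inputs are the structure of the Landweber ideals (\ref{prop:coeff_I}) and the polynomial structure of \(\Laz/p\) (\ref{p:gen_indec}), (\ref{lemm:pol_gen_L}).

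\emph{Choosing the prime \(p\) and the integer \(N\).} Since \(I\) is finitely generated by homogeneous elements and \(\La\) vanishes in positive degrees, there is an integer \(N\) such that \(I\) is generated by homogeneous elements of degrees in \([-N,0]\). The degree-zero part \(I\cap \La^0\) is an ideal of \(\Zz[c^{-1}]\). It is proper, since otherwise \(1\in I\). So it is contained in \(p\Zz[c^{-1}]\) for some prime \(p\) not dividing \(c\). Note that \(\La/p=\Laz/p\).

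\emph{Choosing generators and the ideal \(J\).} Choose \(\ell_i\in\Laz^{-i}\) reducing to polynomial generators of \(\Laz/p\), with \(\ell_{p^j-1}=v_j\) for all \(j\ge 1\); this is allowed by (\ref{lemm:u_indec}) and (\ref{p:gen_indec}). Let \(J\subset \La\) be the ideal generated by \(p\) and \(\ell_1,\ldots,\ell_N\). Every homogeneous element of degree in \([-N,0]\) lying in \(I\) is, modulo \(p\), a polynomial in \(\ell_1,\dots,\ell_N\) without constant term, because its degree-zero component is in \(p\Zz[c^{-1}]\). Hence \(I\subset J\). Moreover \(\La/J=\Fp[\ell_i,\ i>N]\) is a domain. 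It therefore suffices to show that \(\fglr{m}(t)\notin J[[t]]\).

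\emph{The \(p\)-series modulo \(J\).} Let \(n\ge1\) be minimal with \(p^n-1>N\), so that \(p^{n-1}-1\le N\). Then \(v_0,\dots,v_{n-1}\in J\), so \(\Ipn{n}\subset J\) by (\ref{prop:coeff_I}). By (\ref{prop:coeff_I}.\ref{prop:coeff_I:u_m}) we get
\[
	\fglr{p}(t)\equiv v_n t^{p^n}+(\text{higher order terms}) \pmod{J},
\]
and \(v_n=\ell_{p^n-1}\) is nonzero in \(\La/J\) since \(p^n-1>N\).

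\emph{Iterating and concluding.} Write \(m=\pm p^k m'\) with \(m'\) prime to \(p\). Then \(\fglr{m}(t)=\fglr{\pm m'}(\fglr{p^k}(t))\). Iterating the congruence, \(\fglr{p^k}(t)\equiv v_n^{e}t^{p^{nk}}+\cdots\) modulo \(J\), where \(e=1+p^n+\cdots+p^{n(k-1)}\). The leading coefficient \(v_n^{e}\) is nonzero because \(\La/J\) is a domain. Next, \(\fglr{\pm m'}(s)=\pm m's+O(s^2)\), and \(\pm m'\) is a unit in \(\Fp\). Substituting, the leading coefficient of \(\fglr{m}(t)\) modulo \(J\) is \(\pm m'v_n^{e}\ne0\). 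Hence \(\fglr{m}(t)\ne0\) in \(\La[[t]]/J\supset\)-quotient of \(\La[[t]]/I\), which proves the claim.

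The step needing most care is the choice of \(n\). It must be chosen so that \(J\) contains \(\Ipn{n}\), which kills all coefficients of \(\fglr{p}(t)\) before \(t^{p^n}\), while \(J\) still does not contain \(v_n\). This is exactly where finite generation of \(I\) is used.
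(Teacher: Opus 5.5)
Your proof is correct. Its first half is the paper's: there too \(I\) is enlarged to the prime ideal generated by a suitable prime and all homogeneous elements of degrees \(-1,\ldots,-s\), which is your \(J=(p,\ell_1,\ldots,\ell_N)\).

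The second half takes a different route. The paper uses \(\fglr{ab}(t)=\fglr{a}(\fglr{b}(t))\) in the domain \(\La/J\) to reduce to \(m=-1\) or \(m\) a prime \(p\). It then argues by contradiction: if \(\fglr{p}(t)\) vanished modulo \(J\), every \(v_n\) would lie in \(J\), hence be decomposable modulo \(p\) for \(n\) large, contradicting (\ref{lemm:u_indec}). You instead use the structure of the Landweber ideals, namely \(\Ipn{n}=(v_0,\ldots,v_{n-1})\) and (\ref{prop:coeff_I}.\ref{prop:coeff_I:u_m}). This identifies the lowest-order term \(v_nt^{p^n}\) of \(\fglr{p}(t)\) modulo \(J\). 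You then iterate exactly as in (\ref{lemm:fglr_powers}) and absorb the prime-to-\(p\) part of \(m\) as a unit.

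The paper's argument needs only the indecomposability of the \(v_n\) and never identifies a leading coefficient. Yours draws on the stronger Proposition (\ref{prop:coeff_I}), which is established earlier and independently, so there is no circularity. In exchange it yields the explicit leading term \(\pm m'v_n^{e}t^{p^{nk}}\), hence the exact \(t\)-adic order of \(\fglr{m}(t)\) modulo \(J\). That is more than (\ref{lemm:lim_0_1}) actually needs.

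One small fix in the write-up: the final sentence should say that \(\fglr{m}(t)\) is nonzero in \((\La/J)[[t]]\), which is a quotient of \(\La[[t]]/I\).
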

\begin{proof}
	The ideal \(I\cap \Zz[c^{-1}] \subset \Zz[c^{-1}]\) is proper, hence is contained in the ideal \(\ell \Zz[c^{-1}]\) for some prime number \(\ell\in \Nn\).
	Let \(s \in \Nn\) be such that \(I\) is generated by homogeneous elements of degrees \(\ge -s\).
	Then \(I\) is contained in the ideal \(J \subset \La\) generated by \(\ell\) and the homogeneous elements of degrees \(-1,\ldots ,-s\) in \(\La\).
	Note that the \(\mathbb{F}_{\ell}\)-algebra \(\La/J\) is polynomial by (\ref{p:pol_gen_laz}) (with one generator in each degree \(< -s\)), and therefore the ideal \(J\) is prime.
	Replacing \(I\) with \(J\), we may thus additionally assume that the ideal \(I\) is prime.

	Now if \(a,b \in \Nn\smallsetminus \{0\}\), then \(\fglr{ab}(t) = \fglr{a}(\fglr{b}(t))\) in \(\La[[t]]/I \simeq (\La/I)[[t]]\).
	Therefore if the lemma is valid for \(m=a\) and for \(m=b\), it is also valid for \(m=ab\) (here we use the fact that \(\La/I\) is a domain).
	Also, the lemma is true for \(m=-1\), since
	\[
		\fglr{-1}\big(\fglr{-1}(t)\big)=t \ne 0 \in (\La/I)[[t]].
	\]
	Thus, we are reduced to assuming that \(m=p\) is a prime number.
	As \(\fglr{p}(t) = 0\) in \((\La/I)[[t]]\), we have \(v_n \in I\) for every \(n \in \Nn\) (see (\ref{def:u_n})).
	In particular, when \(p^n >s\) the element \(v_n\) must be decomposable in \(\La\) (in the sense of (\ref{p:decomposable})).
	As \(p=v_0\) belongs to the proper ideal \(I\) of \(\La\), the number \(c\) is not divisible by \(p\).
	Thus \(\Laz \to \Laz/p\) factors through \(\La\).
	It follows that \(v_n\) is decomposable in \(\Laz/p\), which contradicts (\ref{lemm:u_indec}).
\end{proof}

\begin{para}
	Let us fix an integer \(n \in \Zz \smallsetminus \{0\} \).
	Let \(M\) be a finitely presented graded \(\La\)-module.
	For \(d \in \Nn\), we consider the graded \(\La[[t]]\)-module
	\[
		M_d = M \otimes_\La (\La[[t]]/t^d),
	\]
	as well as the graded \(\La\)-modules \(I_d\) and \(Q_{d+1}\) fitting into the exact sequence
	\begin{equation}
		\label{eq:I_M_M_Q}
		0 \to I_d \to M_d \xrightarrow{\cdot\fglr{n}(t)} M_{d+1} \to Q_{d+1} \to 0.
	\end{equation}
\end{para}

\begin{lemma}
	\label{lemm:lim_0_1}
	There exists \(m\in \Nn\) such that the natural map \(I_{d+m} \to I_d\) is zero for all \(d \in \Nn\).
	In particular,
	\[
		\lim_d I_d = 0 \quad \text{and} \quad {\lim_d}^1 I_d =0.
	\]
\end{lemma}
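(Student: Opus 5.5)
The strategy is a dévissage that reduces to cyclic modules \(\La/\mathfrak{P}\) with \(\La/\mathfrak{P}\) a domain, where (\ref{lemm:fg_ideal_fgl}) makes \(\fglr{n}(t)\) a unit times a power of \(t\). The second assertion is formal once the first holds: if every map \(I_{d+m}\to I_d\) is zero, then the inverse system \((I_d)\) is pro-zero, so both \(\lim_d\) and \({\lim_d}^1\) vanish. So the work is in finding \(m\), which I aim to make depend only on \(M\).

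\emph{Step 1: exactness.} For a short exact sequence \(0\to M'\to M\to M''\to 0\) of graded \(\La\)-modules, the sequences \(0\to M'_d\to M_d\to M''_d\to 0\) stay exact, because \(\La[[t]]/t^d\) is a free \(\La\)-module. The kernels \(I'_d, I_d, I''_d\) of multiplication by \(\fglr{n}(t)\) therefore form exact sequences \(0\to I'_d\to I_d\to I''_d\). I will do a diagram chase:
\begin{itemize}
\item Suppose \(I'_{d+m'}\to I'_d\) and \(I''_{d+m''}\to I''_d\) vanish for all \(d\).
\item Then \(I_{d+m'+m''}\to I_d\) vanishes. Indeed, an element of \(I_{d+m'+m''}\) maps to zero in \(I''_{d+m'}\), hence comes from \(I'_{d+m'}\), and is then killed in \(I'_d\subset I_d\).
\end{itemize}
So the property is stable under extensions. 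It is also stable under flat base change, in the following sense. Let \(A\to\La\) be a flat graded ring map and \(M=M_0\otimes_A\La\). Then \(I_d\) is computed from the analogous sequence over \(A\), after replacing \(\fglr{n}(t)\) by its image.

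\emph{Step 2: reduction to a noetherian subring.} \(M\) is finitely presented, so its presentation matrix involves only finitely many homogeneous elements of \(\La\). Hence there is \(s\) with \(M\simeq M_0\otimes_{A}\La\), where \(A=\Zz[c^{-1}][\ell_1,\ldots,\ell_s]\) as in (\ref{p:Ld_pol}) and \(M_0\) is a finitely generated graded \(A\)-module. The ring \(A\) is noetherian, and \(\La\) is a polynomial ring over \(A\), hence flat (indeed faithfully flat) over it. Take a graded prime filtration of \(M_0\) with quotients \(A/P_j(\text{shift})\), \(P_j\) homogeneous primes. Tensoring with \(\La\) gives a finite filtration of \(M\) with quotients \(\La/P_j\La\). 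Each \(P_j\La\) is finitely generated and homogeneous, and \(\La/P_j\La=(A/P_j)[\ell_{s+1},\ldots]\) is a domain. The ideal \(P_j\La\) is proper since \(P_j\) is. By Step 1, it suffices to treat \(M=R:=\La/\mathfrak{P}\), with \(\mathfrak{P}\) finitely generated, homogeneous, proper, and \(R\) a domain.

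\emph{Step 3: the domain case.} Write \(\fglr{n}(t)=\sum_i u_i t^{i+1}\) with image in \(R[[t]]\). By (\ref{lemm:fg_ideal_fgl}) this image is nonzero. Let \(j\) be minimal with \(u_j\ne 0\) in \(R\). Then \(\fglr{n}(t)=t^{j+1}w(t)\) with \(w(0)=u_j\ne 0\).

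Now suppose \(x\in R[[t]]/t^d\) satisfies \(t^{j+1}w(t)x=0\) in \(R[[t]]/t^{d+1}\). Lift \(x\) to a polynomial of degree \(<d\). I claim its coefficients of \(t^k\) vanish for \(k<d-j\). By induction on \(k\), the coefficient of \(t^{k+j+1}\) in \(t^{j+1}wx\) equals \(u_j x_k\) plus terms involving \(x_{k'}\) with \(k'<k\), which are already zero; so \(u_jx_k=0\), and since \(R\) is a domain, \(x_k=0\). Hence \(x\in t^{d-j}R[[t]]/t^d\), and \(x\) maps to zero in \(R[[t]]/t^{d-j}\). Thus \(m=j\) works for all \(d\), which finishes this case.

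The main obstacle is Step 2. Over the non-noetherian ring \(\La\) there is no prime filtration, and \(\La/I\) need not be a domain, so the first nonvanishing coefficient of \(\fglr{n}(t)\) could be a zero divisor. Descending to the noetherian subring \(A\) and using flatness of \(\La\) over \(A\) is what circumvents this. I must check carefully that the primes obtained this way stay proper and finitely generated after extension to \(\La\), so that (\ref{lemm:fg_ideal_fgl}) applies.
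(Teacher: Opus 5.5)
Your proof is correct and follows the paper's argument essentially step for step. You descend the presentation to the noetherian polynomial subring \(\La(s)\), extend a graded prime filtration flatly to get quotients \(\La/\mathfrak{p}\) with \(\mathfrak{p}\) finitely generated, homogeneous, proper and prime, treat that case via (\ref{lemm:fg_ideal_fgl}) by showing \(I_d \subset t^{d-j}M_d\), and glue with the diagram chase giving \(m=m'+m''\). The only difference is cosmetic: the paper identifies \(I_d\) with \(\Tor_1^{\La}(M,T_d)\) and uses the Tor long exact sequence, where you use left exactness of kernels directly. Your aside on ``flat base change'' is unneeded and not quite meaningful, since \([n](t)\) has coefficients outside \(A\), but nothing in your argument relies on it.
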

\begin{proof}
	As \(\fglr{n}(t)=nt\mod t^2\) in \(\La[[t]]\), and \(n\) is a nonzerodivisor in \(\La\), there exists for each \(d \in \Nn\smallsetminus \{0\}\) an \(\La\)-module \(T_d\) fitting into the exact sequence of \(\La\)-modules
	\[
		0 \to \La[[t]]/t^d \xrightarrow{\cdot\fglr{n}(t)} \La[[t]]/t^{d+1} \to T_d \to 0.
	\]
	Observe that \(I_d = \Tor_1^{\La}(M,T_d)\).

	Let us first consider the special case \(M= \La/\mathfrak{p}\), where \(\mathfrak{p}\) is a finitely generated homogeneous prime ideal of \(\La\).
	Then by (\ref{lemm:fg_ideal_fgl}) we have \(\fglr{n}(t) = at^m \mod t^{m+1}\) in \((\La/\mathfrak{p})[[t]]\) for some \(m \in \Nn \smallsetminus \{0\}\) and \(a\) nonzero in the domain \(\La/\pp\).
	This implies that \(I_d = t^{d+1-m} M_d\) for all \(d \in \Nn\).
	Then \(I_{d+m} \to I_d\) is zero, as required.

	Let us come back to the general case.
	For \(s\in \Nn\), write \(\La(s)=\Laz(s)[c^{-1}]\) (see (\ref{p:Laz_n_d})).
	We claim that there exists an integer \(s\in \Nn\) and a finitely generated graded \(L(s)\)-module \(N\) such that \(M=N\otimes_{\La(s)} \La\).
	Indeed, let us write \(M\) as the cokernel of a morphism of graded \(\La\)-modules \(\varphi \colon \La^{\oplus a} \to \La^{\oplus b}\), where \(a,b \in \Nn\).
	Such a morphism is given by a \((b \times a)\)-matrix with coefficients in \(\La\); this matrix has coefficients in the subring \(\La(s)\) for some \(s \in \Nn\).
	This gives a morphism of graded \(\La\)-modules \(\psi \colon L(s)^{\oplus a} \to L(s)^{\oplus b}\) such that \(\varphi = \psi \otimes_{\La(s)} \La\).
	Setting \(N = \coker \psi\) proves the claim.

	Now, as the ring \(\La(s)\) is noetherian, the graded module \(N\) admits a finite filtration with successive quotients of the form \(\La(s)/P\), where \(P\) is a homogeneous prime ideal of \(\La(s)\) (see e.g.\ \cite[IV, \S3, Proposition 2]{Bou-AC-1-4}).
	Since \(\La\) is a polynomial ring over \(\La(s)\), for \(P\) as above the ring \((\La(s)/P) \otimes_{\La(s)}\La \simeq \La/(P\La)\) is polynomial over \(\La(s)/P\), and in particular the ideal \(P\La\) of \(\La\) is prime.
	We thus obtain a finite filtration of \(M\) by finitely presented submodules, with successive quotients of the form \(\La/\mathfrak{p}\), where \(\mathfrak{p}\) is a finitely generated homogeneous prime ideal of \(\La\).

	Proceeding by induction on the length of the filtration, we may assume that we have an exact sequence of graded \(\La\)-modules
	\[
		0 \to M' \to M \to \La/\mathfrak{p} \to 0
	\]
	for some finitely generated homogeneous prime ideal \(\mathfrak{p}\) of \(\La\), and an integer \(r\) such that \(\Tor_1^{\La}(M',T_{d+r}) \to \Tor_1^{\La}(M',T_d)\) is zero for all \(d\).
	By the special case treated above, we may find an integer \(s\) such that \(\Tor_1^{\La}(\La/\mathfrak{p},T_{d+s}) \to \Tor_1^{\La}(\La/\mathfrak{p},T_d)\) is zero for all \(d\).
	In the commutative diagram with exact rows
	\[
		\begin{tikzcd}
			\Tor_1^{\La}(M',T_{d+r+s}) \ar[r] \ar[d] &
			\Tor_1^{\La}(M,T_{d+r+s}) \ar[r] \ar[d] &
			\Tor_1^{\La}(\La/\mathfrak{p},T_{d+r+s}) \ar[d, "0"] \\
			\Tor_1^{\La}(M',T_{d+r}) \ar[r] \ar[d, "0"'] &
			\Tor_1^{\La}(M,T_{d+r}) \ar[r] \ar[d] &
			\Tor_1^{\La}(\La/\mathfrak{p},T_{d+r}) \ar[d] \\
			\Tor_1^{\La}(M',T_d) \ar[r] &
			\Tor_1^{\La}(M,T_d) \ar[r] &
			\Tor_1^{\La}(\La/\mathfrak{p},T_d)
		\end{tikzcd}
	\]
	the middle vertical composite thus vanishes, so that we may take \(m=r+s\) to conclude the proof.
\end{proof}

\begin{para}
	The morphisms \(M_{d+1}/I_{d+1} \to M_d /I_d\) are surjective, so that \(\lim^1_d M_d/I_d=0\).
	Therefore the exact sequences \eqref{eq:I_M_M_Q}
	\[
		0 \to M_d/I_d \xrightarrow{\cdot\fglr{n}(t)} M_{d+1} \to Q_{d+1} \to 0
	\]
	yield an exact sequence
	\[
		0 \to \lim_d M_d/I_d \xrightarrow{\cdot\fglr{n}(t)} \lim_d M_d \to \lim_d Q_d \to 0
	\]
	Now, from the exact sequences
	\[
		0 \to I_d \to M_d \to M_d/I_d \to 0
	\]
	for \(d\in \Nn\), and from (\ref{lemm:lim_0_1}), we see that the morphism \(\lim_d M_d \to \lim_d M_d /I_d\) is bijective.
	We thus have an exact sequence
	\begin{equation}
		\label{eq:M_M_Q}
		0 \to \lim_d M_d \xrightarrow{\cdot\fglr{n}(t)} \lim_d M_d \to \lim_d Q_d \to 0.
	\end{equation}
\end{para}

\begin{proposition}
	\label{prop:limit_fglr}
	Let \(n \in \Zz\smallsetminus \{0\} \).
	Let \(R\) be a graded \(\La\)-algebra, finitely presented as a graded \(\La\)-module.
	Then we have an exact sequence of graded \(\La\)-modules
	\[
		0 \to R[[t]] \xrightarrow{\cdot \fglr{n}(t)} R[[t]] \to \lim_d \Big(R[[t]]/\big(t^d,\fglr{n}(t)\big)\Big) \to 0.
	\]
\end{proposition}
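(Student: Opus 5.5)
This follows by taking \(M=R\) in the exact sequence \eqref{eq:M_M_Q}. All that remains is to identify its three terms with those in the statement. \(R\) is finitely presented as a graded \(\La\)-module, so the construction preceding the statement applies to \(M=R\). For every \(d\) we have \(M_d=R\otimes_\La \La[[t]]/t^d \simeq R[t]/t^d\), and under this identification multiplication by \(\fglr{n}(t)\) is the map \(M_d\to M_{d+1}\) of \eqref{eq:I_M_M_Q}.

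For the middle and left terms, the graded power series algebra \(R[[t]]\) is by definition (\ref{p:graded_power_series}) the limit of the graded algebras \(R[t]/t^d\). Hence \(\lim_d M_d = R[[t]]\), and the map \(\lim_d M_d \to \lim_d M_d\) in \eqref{eq:M_M_Q} is multiplication by \(\fglr{n}(t)\) on \(R[[t]]\). Here one should check that the limit of the maps \(M_d\to M_{d+1}\) is indeed this multiplication. The shift of index is harmless: \(M_d \to M_{d+1}\) composed with the projection \(M_{d+1}\to M_d\) is multiplication by \(\fglr{n}(t)\) on \(M_d\). So the induced map on limits is multiplication by the power series \(\fglr{n}(t)\), which makes sense in \(R[[t]]\) because \(\fglr{n}(t)\) is homogeneous of degree \(1\).

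For the right term, \eqref{eq:I_M_M_Q} gives \(Q_{d+1} = M_{d+1}/\fglr{n}(t)M_d\). Since \(t^{d+1}\) kills \(M_{d+1}\), this equals \(R[t]/(t^{d+1},\fglr{n}(t)) \simeq R[[t]]/(t^{d+1},\fglr{n}(t))\), and the transition maps are the natural projections. Thus \(\lim_d Q_d\) is the limit appearing in the statement, and \eqref{eq:M_M_Q} becomes the claimed exact sequence.

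The only point needing care is that these identifications hold as graded \(\La\)-modules. All limits are taken in the graded category, that is, degreewise, which is consistent with (\ref{p:graded_power_series}). The substantive inputs, namely the vanishing of \(\lim\) and \(\lim^1\) of \(I_d\) (\ref{lemm:lim_0_1}) and the exactness of \eqref{eq:M_M_Q}, have already been established, so I expect no real obstacle beyond this bookkeeping.
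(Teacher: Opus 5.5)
Your proposal is correct and is exactly the paper's argument: the paper's proof consists solely of taking \(M=R\) in \eqref{eq:M_M_Q}. Your identifications \(\lim_d M_d = R[[t]]\), of the limit map with multiplication by \(\fglr{n}(t)\), and of \(Q_{d+1}\) with \(R[[t]]/(t^{d+1},\fglr{n}(t))\) simply make explicit the bookkeeping the paper leaves to the reader.
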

\begin{proof}
	Take \(M=R\) in \eqref{eq:M_M_Q}.
\end{proof}

\subsection{Equivariant cobordism}
\label{sect:omega_G:diag}
The purpose of this section is to compute the ring \(\Omega_G(\Speck)\) when \(G\) is a diagonalizable group of finite type.
In this section we work over a base field \(k\), and set \(\La=\Laz[c^{-1}]\), where \(c\) is the characteristic exponent of \(k\).

We first recall briefly the definition of the equivariant cobordism from \cite{Heller-Malagon-Lopez}, based on Borel's construction from \cite{Totaro-CHBG}.
We fix an affine algebraic group \(G\) over \(k\).

\begin{para}
	\label{p:good_system}
	A \emph{good system of representations} for \(G\) is a collection of triples \((V_m,U_m,W_m)\) indexed by \(m\in \Nn\) where
	\begin{itemize}
		\item[---] \(\cdots \subset V_m \subset V_{m+1} \subset \cdots\) is a chain of \(G\)-equivariant vector subbundles over \(k\),
		\item[---] \(U_m\) a \(G\)-invariant open subscheme of \(V_m\) for each \(m \in \Nn\),
		\item[---] \(W_m \subset V_m\) is a \(G\)-equivariant subbundle for each \(m \in \Nn\),
	\end{itemize}
	satisfying the following conditions:
	\begin{enumerate}[label=(\roman*), ref=\roman*]
		\item the \(G\)-action on \(U_m\) is free and \(U_m/G\) is a smooth \(k\)-variety,

		\item we have \(V_{m+1} = V_m \oplus W_m\),

		\item
			\label{def:good_system:3}
			the subscheme \(U_{m+1} \subset V_{m+1}\) contains \(U_m \times W_m\),

		\item the codimension of the complement of \(U_m\) in \(V_m\) is a strictly increasing function of \(m \in \Nn\).
	\end{enumerate}
\end{para}

\begin{para}
	It follows from \cite[Remark~1.4]{Totaro-CHBG} that such a system always exists.
\end{para}

\begin{definition}
	\label{def:Omega_G}
	Given a good system \((V_m,U_m,W_m)\) of representations for \(G\), the equivariant cobordism ring of a smooth \(k\)-variety \(X\) with a \(G\)-action is defined as
	\[
		\Omega_G(X) = \lim_m \Omega((X \times U_m)/G),
	\]
	where the limit is taken in the category of graded rings.
	As explained in \cite[\S3.1.1]{Heller-Malagon-Lopez} (the ``No-name Lemma'', which does not use the assumption that \(k\) has characteristic zero) this graded ring does not depend on the choice of the good system of representations for \(G\).

	This yields a functor \(\Omega_G\) from the category of smooth \(k\)-varieties with a \(G\)-action to the category of graded \(\La\)-modules.
	In addition, in the theory \(\Omega_G\) there are push-forwards along \(G\)-equivariant projective morphisms, and \(G\)-equivariant vector bundles admit Chern classes with values in that theory (see for instance \cite[\S4]{Heller-Malagon-Lopez}).
\end{definition}

\begin{definition}
	Let \(R\) be a graded \(\Laz\)-algebra, and \(H\) be an abelian group.
	We consider the graded power series algebra \(R[[H]]\) in the variables \(X_h\) for \(h \in H\) (see (\ref{p:graded_power_series})).
	We define a graded \(R\)-algebra \(\Sy{R}{H} = R[[H]]/I\), where \(I\) is the homogeneous ideal generated by the elements (we use the notation of (\ref{p:fgl}))
	\[
		X_{a+b} - (X_a\fgl X_b) \quad \text{for \(a,b \in H\)}.
	\]
\end{definition}

We will prove the following:
\begin{theorem}
	\label{th:Omega_diag}
	Let \(G\) be a diagonalizable group of finite type over \(\Speck\), with character group \(\widehat{G}\).
	Then mapping a character of \(G\) to the first Chern class of the associated \(G\)-equivariant line bundle over \(\Speck\) induces an isomorphism of graded \(\La\)-algebras
	\[
		\Sy{\La}{\widehat{G}} \simeq \Omega_G(\Speck).
	\]
\end{theorem}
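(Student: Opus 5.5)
We will prove the statement by induction on the number of factors of \(G\), writing a diagonalizable group of finite type as \(G \simeq \Gm^{a}\times \mu_{n_1}\times\cdots\times\mu_{n_b}\), so that \(\widehat{G}\simeq \Zz^a\oplus \bigoplus_i \Zz/n_i\). For the base case \(G=1\) we have \(\Sy{\La}{0}=\La\), since the relation \(X_0 = X_0\fgl X_0\) forces \(X_0=0\). The target is \(\Omega(\Speck)=\La\) by (\ref{p:Laz_Omega_k}). For the inductive step we assume the result for \(G\) and treat \(G\times \Gm\) and \(G\times \mu_n\) separately. In each case we first check that the map \(\Sy{\La}{\widehat{G}}\to\Omega_G(\Speck)\) is well defined. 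This holds because \(\Omega_G(\Speck)\) is complete with respect to the first Chern classes, which are nilpotent at each finite stage \(U_m/G\). The relation \(c_1(\character{a+b})=c_1(\character a)\fgl c_1(\character b)\) is then the formal group law of \(\Omega\), and the map respects the gradings.

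For \(G\times\Gm\), we choose the good system for \(G\times\Gm\) as the product of one for \(G\) with \((\mathbb{A}^{m+1}, \mathbb{A}^{m+1}\smallsetminus 0)\). Then \((U_m\times(\mathbb{A}^{m+1}\smallsetminus0))/(G\times \Gm)\) is a \(\Pp^m\)-bundle \(\Pp(\character{0}^{\oplus m+1})\) over \(U_m/G\), with tautological class \(t=c_1(\Oc(1))\) identified with the Chern class of the new character. The projective bundle formula gives \(\Omega(Y\times\Pp^m)=\Omega(Y)[t]/t^{m+1}\). Passing to the limit over \(m\) and using the No-name Lemma yields \(\Omega_{G\times\Gm}(\Speck)=\Omega_G(\Speck)[[t]]\). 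We then check the purely algebraic identity \(\Sy{\La}{H\oplus\Zz}=\Sy{\La}{H}[[t]]\), with \(X_{(h,m)}=X_h\fgl \fglr{m}(t)\). One point needs care: the limits of the finite stages must be exchanged, which requires the \(\lim^1\) terms to vanish. This holds because all transition maps are surjective.

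For \(G\times\mu_n\), we view \(\mu_n\subset\Gm\) and use the fibration \((U\times(\mathbb{A}^{m+1}\smallsetminus0))/\mu_n\to \Pp^m\)-bundle. This space is the complement of the zero section in the line bundle \(\Oc(-n)\) (equivalently \(\Oc(n)\)) over \(Y\times\Pp^m\). The localization sequence (\ref{prop:loc_seq}), with \(Z\) the zero section and the self-intersection formula \(i^*i_*=c_1(\Oc(\pm n))=\fglr{\pm n}(t)\), gives an exact sequence
\[
\Omega(Y)[t]/t^{m+1}\xrightarrow{\cdot\fglr{n}(t)}\Omega(Y)[t]/t^{m+1}\to \Omega(\text{complement})\to 0 .
\]
Taking the limit over \(m\) is where (\ref{prop:limit_fglr}) enters. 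Write \(R=\Omega_G(\Speck)\cong\Sy{\La}{\widehat{G}}\). By induction \(R\) is a quotient of \(\La[[x_1,\dots]]\) by finitely many relations, and in fact we need \(R\) finitely presented as a graded \(\La\)-module degreewise. We then obtain \(\Omega_{G\times\mu_n}(\Speck)=R[[t]]/\fglr{n}(t)\), matching \(\Sy{\La}{\widehat{G}\oplus\Zz/n}\) algebraically. The limits over the two indices (the \(G\)-system and \(m\)) must be handled jointly. We do this by taking a diagonal good system and arguing as in the \(\Gm\) case.

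The main obstacle is this limit step for \(\mu_n\). We must show that \(\lim_m\) of the cokernels of multiplication by \(\fglr{n}(t)\) equals the cokernel on \(R[[t]]\), and the Mittag-Leffler and \(\lim^1\) issues there are exactly what (\ref{lemm:lim_0_1}) and (\ref{prop:limit_fglr}) address. For this we need \(R\) to be finitely presented as a graded \(\La\)-module. We will ensure this by taking the inductive hypothesis to include this property. For a torus factor it fails as stated, since power series are involved. We therefore order the induction so that the finite factors \(\mu_{n_i}\) are added first, starting from \(R=\La\). Each step then yields \(\La[[t]]/\fglr{n}(t)\)-type rings, whose finite presentation we verify using the Weierstrass-like structure given by \(\fglr{n}(t)=nt+\cdots\). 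The torus factors are added last via the power-series step, which needs no finiteness.
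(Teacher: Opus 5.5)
There is a genuine gap in the \(\mu_n\) step: the finiteness hypothesis you propose to carry through the induction is false.

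You apply (\ref{prop:limit_fglr}) with \(R=\Omega_G(\Speck)\), and you want ``\(\Omega_G(\Speck)\) is a finitely presented graded \(\La\)-module'' to be part of the inductive hypothesis. But \(\La\) is concentrated in degrees \(\le 0\), so a finitely generated graded \(\La\)-module vanishes in all sufficiently large degrees. Now take \(p\) a prime different from the characteristic. In \(\Omega_{\mu_p}(\Speck)=\La[[t]]/\fglr{p}(t)\), the element \(t^d\) has degree \(d\) and is nonzero for every \(d\). Indeed, all coefficients of \(\fglr{p}(t)\) lie in \(\Ipn{\infty}\) by (\ref{lemm:u_in_I}), so \(t^d\) has nonzero image in \((\Laz/\Ipn{\infty})[[t]]\).

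Your ``Weierstrass-like'' argument cannot rescue this. Division by \(\fglr{n}(t)=nt+\cdots\) would require \(n\) to be a unit in \(\La\), which is exactly what fails. Ordering the finite factors first therefore only gets you past the first factor, where \(R=\La\). The passage from \(\mu_{n_1}\) to \(\mu_{n_1}\times\mu_{n_2}\) already breaks, and rank \(\ge 2\) is precisely the case the rest of the paper needs. Note also that being a quotient of \(\La[[x_1,\ldots]]\) by finitely many relations is not the same as being finitely presented over \(\La\).

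The missing idea, which is how (\ref{lemm:Omega_diag_induc}) proceeds, is to put the finiteness on the finite approximations rather than on the limit.
\begin{itemize}
\item Assume a good system \((V'_m,U'_m,W'_m)\) for \(G'\) with each \(\Omega(U'_m/G')\) finitely presented.
\item Propagate this to \(G'\times\mu_q\) through the exact sequence \(\Omega(U'_m/G')[[t]]/t^d\xrightarrow{\cdot\fglr{q}(t)}\Omega(U'_m/G')[[t]]/t^d\to\Omega(U_{m,d}/G)\to 0\). This exhibits \(\Omega(U_{m,d}/G)\) as a cokernel of a map between finitely presented modules.
\end{itemize}

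The limits must then be taken in a definite order, not ``as in the \(\Gm\) case''.
\begin{itemize}
\item For each fixed \(m\), apply (\ref{prop:limit_fglr}) to \(R=\Omega(U'_m/G')\) to compute \(\lim_d\). This is where the kernels of \(\cdot\fglr{q}(t)\) on the truncations are handled: no surjectivity argument controls them, and (\ref{lemm:lim_0_1}) shows they are pro-zero.
\item Only then pass to \(\lim_m\). Here \(\lim^1\) vanishes because \(\Omega(U'_{m+1}/G')\to\Omega(U'_m/G')\) is surjective, by condition~(\ref{def:good_system:3}) of (\ref{p:good_system}) together with homotopy invariance and localization.
\item Conclude using \(U_{m,m}=U_m\) and (\ref{lemm:ps_commute}).
\end{itemize}

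With this change, the rest of your outline goes through: the well-definedness of the map, the finite-level localization sequence, and the \(\Gm\) step via the projective bundle theorem.
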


Observe that, for \(n\in \Nn\) and abelian groups \(A,B\) we have
\[
	\Sy{R}{\Zz/n} = R[[t]]/\fglr{n}(t), \quad \Sy{R}{A \times B} = \Sy{\Sy{R}{A}}{B}.
\]
Therefore (\ref{th:Omega_diag}) can be restated as:
\begin{theorem}
	\label{cor:Omega_diag}
	Let \(q_1,\ldots,q_r \in \Nn\) and \(G = \mu_{q_1} \times \cdots \times \mu_{q_r}\), where we have written \(\mu_0 =\Gm\).
	Then mapping \(t_i\) to the first Chern class of the \(G\)-equivariant line bundle over \(\Speck\) associated with the character \(G \to \mu_{q_i} \subset \Gm\) yields an isomorphism of graded \(\La\)-algebras
	\[
		\La[[t_1,\ldots,t_r]]/(\fglr{q_1}(t_1),\ldots,\fglr{q_r}(t_r)) \simeq \Omega_G(\Speck).
	\]
\end{theorem}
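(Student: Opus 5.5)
We argue by induction on \(r\), the case \(r=0\) being \(\Omega(\Speck)=\La\) (\ref{p:Laz_Omega_k}). Write \(G=G'\times \mu_n\) with \(G'=\mu_{q_1}\times\cdots\times\mu_{q_{r-1}}\) and \(n=q_r\). By induction, \(R=\Omega_{G'}(\Speck)\) is identified with \(\La[[t_1,\ldots,t_{r-1}]]/(\fglr{q_1}(t_1),\ldots,\fglr{q_{r-1}}(t_{r-1}))\), and the claim becomes \(\Omega_G(\Speck)\simeq R[[t]]/\fglr{n}(t)\), using \(\Sy{R}{A\times B}=\Sy{\Sy{R}{A}}{B}\).

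\emph{Step 1: the case \(\mu_n=\Gm\).} For \(n=0\) we use the good system \(V_m=\character{1}^{\oplus m}\), \(U_m=V_m\smallsetminus 0\). For any \(G'\)-variety \(X\), \((X\times U_m)/\Gm\) is a \(\Pp^{m-1}\)-bundle over \(X\) (in the Borel construction for \(G'\)), and the projective bundle formula gives \(\Omega_{G'}(X)[t]/t^m\) with \(t=c_1(\Oc(1))\), the first Chern class of the tautological character. Taking limits over \(m\), which commute with the \(G'\)-Borel limit because these systems are Mittag-Leffler (surjective transition maps), yields \(\Omega_{G'\times\Gm}(\Speck)=R[[t]]\).

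\emph{Step 2: the case \(n\neq0\).} We realize \(\mu_n\) as the stabilizer: \(G'\times\mu_n\) acts on \(\Gm\) via \(\Gm/\mu_n\simeq\Gm\), i.e.\ consider the \(G'\times\Gm\)-equivariant line bundle \(\character{n}\) over \(\Speck\), its complement of zero section \(\character{n}\smallsetminus0\simeq (G'\times\Gm)/(G'\times\mu_n)\). Then \(\Omega_{G'\times\mu_n}(\Speck)=\Omega_{G'\times\Gm}(\character{n}\smallsetminus 0)\). At each finite level of the Borel construction, apply the localization sequence (\ref{prop:loc_seq}) to the zero section of the line bundle \(\character{n}\) over the smooth approximation of \(B(G'\times\Gm)\): the composite of push-forward and pull-back along the zero section is multiplication by the Euler class \(c_1(\character{n})=\fglr{n}(t)\). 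This yields exact sequences
\[
	R_m[t]/t^{m} \xrightarrow{\cdot\fglr{n}(t)} R_m[t]/t^{m} \to \Omega\big((\text{level }m\text{ of } \character{n}\smallsetminus0)\big)\to 0,
\]
with \(R_m\) the level-\(m\) approximation of \(R\), up to an index shift for \(t\) which is harmless. Passing to the limit, we must show the limit of cokernels equals \(R[[t]]/\fglr{n}(t)\). This is exactly Proposition~(\ref{prop:limit_fglr}), provided \(R\) is a graded \(\La\)-algebra finitely presented as a graded \(\La\)-module in each relevant sense. Here \(R\) is replaced by its degreewise-finite truncations, or we argue that the limit over the \(G'\)-direction and over \(m\) can be taken separately with vanishing \(\lim^1\) by Lemma~(\ref{lemm:lim_0_1}). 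Finally we check that the map \(\Sy{\La}{\widehat G}\to\Omega_G(\Speck)\) sends \(t_i\) to the Chern classes of the character line bundles, which holds by construction since restriction from \(G'\times\Gm\) to \(G\) preserves these classes.

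\emph{Main obstacle.} The delicate step is the limit interchange. Proposition~(\ref{prop:limit_fglr}) requires finite presentation, whereas \(R\) itself is a quotient of a power series ring and is not finitely presented over \(\La\). I would first try to perform the induction at finite levels: keep each previous variable truncated at \(t_i^{d_i}\), so that \(\La[t_1,\ldots]/(t_i^{d_i},\fglr{q_i}(t_i))\) is finitely presented, apply (\ref{prop:limit_fglr}) or Lemma~(\ref{lemm:lim_0_1}) one variable at a time, and use that \(\lim^1\) vanishes (via the uniform nilpotence of Lemma~(\ref{lemm:lim_0_1})) to commute the iterated limits.
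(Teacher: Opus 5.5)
Your route is the paper's: the paper also inducts on \(r\) (Lemma~(\ref{lemm:Omega_diag_induc})), uses the localization sequence for the complement of the zero section of \(\Oc(q)\) over \(\Pp^{d-1}\), and passes to the limit with Proposition~(\ref{prop:limit_fglr}). Your \(\character{n}\smallsetminus 0\) detour through \(G'\times\Gm\) is just a repackaging of that geometry. The obstacle you flag is real, and your last paragraph points the right way. However, the limit step is not actually carried out, and the justification you give for commuting the limits is the wrong one.

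The paper closes this step in three moves.

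\textbf{Finite presentation in the induction.} Finite presentation is made part of the induction hypothesis. One keeps a good system \((V'_m,U'_m,W'_m)\) for \(G'\) such that each finite-level ring \(R_m=\Omega(U'_m/G')\) is finitely presented over \(\La\). For the product system these are rings of the form \(\La[t_1,\ldots,t_{r-1}]/(t_i^{m},\fglr{q_i}(t_i))\), i.e.\ your truncations. The exact sequence itself then propagates this property to \(G\).

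\textbf{Two independent indices.} One uses \(U_{m,d}=U'_m\times(C^{\oplus d}\smallsetminus 0)\) rather than your single-index \(R_m[t]/t^m\). This lets Proposition~(\ref{prop:limit_fglr}) be applied with \(R=R_m\) fixed, giving \(0\to R_m[[t]]\xrightarrow{\cdot\fglr{n}(t)}R_m[[t]]\to\lim_d\Omega(U_{m,d}/G)\to 0\).

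\textbf{Passing to \(\lim_m\).} This requires \({\lim_m}^1 R_m[[t]]=0\). That vanishing comes from the surjectivity of \(R_{m+1}\to R_m\), which follows from homotopy invariance and the localization sequence via condition (iii) of a good system; it is Mittag--Leffler, as in your Step~1. It does not come from Lemma~(\ref{lemm:lim_0_1}): that lemma's uniform nilpotence concerns only the kernels of \(\cdot\fglr{n}(t)\) in the \(t\)-direction, and it is already used up inside Proposition~(\ref{prop:limit_fglr}).

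The proof is finished by \(\lim_m(R_m[[t]])=(\lim_m R_m)[[t]]\) (Lemma~(\ref{lemm:ps_commute})) and by cofinality of the diagonal, which gives \(\lim_m\lim_d\Omega(U_{m,d}/G)=\lim_m\Omega(U_m/G)\).
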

\begin{proof}
	This follows by induction on \(r\) from (\ref{lemm:Omega_diag_induc}) below.
\end{proof}

\begin{lemma}
	\label{lemm:Omega_diag_induc}
	Let \(G'\) be an affine algebraic group over \(k\), and let \(q \in \Nn\).
	Let \(G=G' \times \mu_q\) and denote by \(g \colon G \to \mu_q \subset \Gm\) the induced character.
	Assume that there exists a good system of representations \((V'_m,U'_m,W'_m)\) for \(G'\) such that for each \(m \in \Nn\), the graded \(\La\)-module \(\Omega(U'_m/G)\) is finitely presented.
	Then
	\begin{enumerate}[label=(\roman*), ref=\roman*]
		\item
			\label{lemm:Omega_diag_induc:1}
			There exists a good system of representations \((V_m,U_m,W_m)\) for \(G\) such that for each \(m \in \Nn\), the graded \(\La\)-module \(\Omega(U_m/G)\) is finitely presented.

		\item
			\label{lemm:Omega_diag_induc:2}
			The mapping \(t \mapsto c_1(\character{g})\) induces an isomorphism of graded \(\La\)-algebras
			\[
				\Omega_{G'}(\Speck)[[t]]/\fglr{q}(t) \simeq \Omega_G(\Speck).
			\]
	\end{enumerate}
\end{lemma}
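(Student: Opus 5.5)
Both parts rest on one construction. We build a good system for \(G=G'\times\mu_q\) out of the given one for \(G'\) together with a standard system for \(\mu_q\). For \(\mu_q\) (with \(\mu_0=\Gm\)), take \(V''_m=(\character{g})^{\oplus (m+1)}\) and let \(U''_m\) be the complement of the origin. Then \(U''_m/\mu_q\) is the total space of the line bundle \(\Oc(-q)\) over \(\Pp^m\) with the zero section removed; when \(q=0\) it is \(\Pp^m\) itself. Set \(V_m=V'_m\oplus V''_m\), \(U_m=U'_m\times U''_m\) and \(W_m=W'_m\oplus W''_m\). The axioms of (\ref{p:good_system}) are checked directly: the action on \(U_m\) is free, the quotient is smooth, condition (\ref{def:good_system:3}) holds factorwise, and the codimension of the complement is the minimum of the two codimensions, so it is strictly increasing once we reindex suitably.

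The key geometric observation is this. Put \(Y_m=U'_m/G'\) and let \(\Lambda\) be the line bundle on \(Y_m\times\Pp^m\) given by \(\character{g}\) twisted by \(\Oc(-1)\). Then \(U_m/G\) is the complement of the zero section in the total space of \(\Lambda^{\otimes q}\) over \(Y_m\times \Pp^m\) (for \(q=0\) it is simply \(Y_m\times\Pp^m\)).

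We compute its cobordism with the localization sequence (\ref{prop:loc_seq}), applied to the zero section \(Z\) of the line bundle \(E=\Lambda^{\otimes q}\) over \(B=Y_m\times\Pp^m\). By homotopy invariance \(\Omega(E)=\Omega(B)\), and the composite \(i^*i_*\) is multiplication by \(c_1(E)=\fglr{q}(c_1(\Lambda))\). This gives
\[
	\Omega(U_m/G)\simeq \Omega(B)/\fglr{q}(c_1(\Lambda)).
\]
By the projective bundle formula, \(\Omega(B)=\Omega(Y_m)[t]/t^{m+1}\), up to a change of variable expressing \(c_1(\Lambda)\) in terms of \(c_1(\Oc(1))\) and \(c_1(\character{g})\). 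Since \(\Omega(Y_m)\) is finitely presented by hypothesis, this quotient is finitely presented, which proves (\ref{lemm:Omega_diag_induc:1}).

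For (\ref{lemm:Omega_diag_induc:2}), we take the limit over \(m\) of
\[
	\Omega(U_m/G)\simeq \Omega(U'_m/G')[[t]]/\big(t^{m+1},\fglr{q}(t)\big).
\]
Here we use the variable \(t=c_1(\character{g})\) pulled back along the \(\mu_q\)-factor; a formal change of variables identifies \(c_1(\Oc(-1))\) with \(t\) inside the quotient. This reduces (\ref{lemm:Omega_diag_induc:2}) to a statement about limits, and controlling those limits is the main obstacle, because both \(\lim\) and \(\lim^1\) interfere. We handle this with (\ref{prop:limit_fglr}), applied to \(R_m=\Omega(U'_m/G')\). It identifies \(\lim_d R_m[[t]]/(t^d,\fglr{q}(t))\) with \(R_m[[t]]/\fglr{q}(t)\). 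The proposition requires finite presentation, which holds by hypothesis; for \(q=0\) we have \(\fglr{0}=0\) and nothing needs to be done.

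It then remains to pass to the double limit over \(m\). The transition maps \(R_{m+1}\to R_m\) are surjective, so in each degree they form an eventually constant tower, by the codimension condition. The double limit can therefore be computed diagonally, giving
\[
	\lim_m R_m[[t]]/\fglr{q}(t)=\Omega_{G'}(\Speck)[[t]]/\fglr{q}(t).
\]
To exchange the limit with the quotient by \(\fglr{q}\), we use that \(\cdot\fglr{q}(t)\) is injective on each \(R_m[[t]]\) (exactness in (\ref{prop:limit_fglr})), so that the \(\lim^1\) term vanishes. Finally, the \(\La\)-algebra map sending \(t\mapsto c_1(\character{g})\) is compatible with all these identifications, which gives the stated isomorphism.
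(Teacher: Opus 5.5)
Your argument is essentially the paper's proof: the same product good system, the same localization, homotopy-invariance and projective-bundle computation presenting \(\Omega(U_m/G)\) as \(\Omega(U'_m/G')[t]/t^{m+1}\) modulo \(\fglr{q}(t)\), then (\ref{prop:limit_fglr}) in the projective direction and surjectivity of \(\Omega(U'_{m+1}/G')\to\Omega(U'_m/G')\) in the \(m\)-direction. One correction: these towers are not eventually constant degreewise for cobordism (for \(G'=\Gm\) the degree-\(0\) part of \(\Omega(\Pp^m)=\La[s]/s^{m+1}\) keeps growing with \(m\)). You do not need that claim, because replacing the diagonal limit by the iterated one is pure cofinality (the paper makes this explicit via the varieties \(U'_m\times((\character{g})^{\oplus d}\smallsetminus 0)\)), and \(\lim^1_m \Omega(U'_m/G')[[t]]=0\) comes from surjectivity of the transition maps (which you should derive from condition~(\ref{def:good_system:3}) of (\ref{p:good_system}), homotopy invariance and (\ref{prop:loc_seq})), not from injectivity of multiplication by \(\fglr{q}(t)\).
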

\begin{proof}
	Write \(C=\character{g}\).
	Set \(V_m = V'_m \oplus C^{\oplus m}\) and \(U_m = U'_m \times (C^{\oplus m} \smallsetminus 0)\).
	Let \(W_m = W'_m \oplus C \subset V_{m+1}\), where \(C \subset C^{\oplus (m+1)}\) is the inclusion of the last summand.
	Then \((V_m,U_m,W_m)\) is a good system of representations for \(G\).

	Write \(B_d= (C^{\oplus d}\smallsetminus 0)/\mu_q\) and \(U_{m,d} = U'_m \times (C^{\oplus d}\smallsetminus 0)\).
	Then
	\begin{equation}
		\label{eq:U_m_d}
		U_{m,d}/G=(U'_m/G') \times B_d.
	\end{equation}
	In addition \(U_{m,m}=U_m\) for every \(m\), and so
	\begin{equation}
		\label{eq:lim_lim}
		\lim_m \Omega(U_m/G) = \lim_m \lim_d \Omega(U_{m,d}/G).
	\end{equation}
	The variety \(B_d\) is isomorphic to the open complement of the zero section in the line bundle \(\Oc(q)\) over \(\Pp^{d-1}\) when \(q\ne 0\), and to \(\Pp^{d-1}\) when \(q=0\).
	From now on, we will assume that \(q\ne 0\), the case \(q=0\) being similar but substantially easier, and not used in the rest of the paper.

	The localization sequence (\ref{prop:loc_seq}) yields an exact sequence, taking \eqref{eq:U_m_d} into account,
	\[
		\Omega((U'_m/G') \times \Pp^{d-1}) \to \Omega((U'_m/G') \times \Oc(q)) \to \Omega(U_{m,d}/G) \to 0
	\]
	The middle term can be identified with \(\Omega((U'_m/G') \times \Pp^{d-1})\) by homotopy invariance, with the left arrow becoming multiplication by \(c_1(\Oc(q))=\fglr{q}(c_1(\Oc(1))\) under that identification.
	Recall that the projective bundle theorem \cite[Theorem~3.9]{Panin-Oriented_I} asserts that we have an isomorphism of \(\Omega(U'_m/G')\)-algebras
	\[
		\Omega(U'_m/G')[[t]]/t^d \xrightarrow{\sim} \Omega((U'_m/G') \times \Pp^{d-1}), \quad t \mapsto c_1(\Oc(1)).
	\]
	Now, the line bundle \(((C^{\otimes d} \smallsetminus 0) \times C)/\mu_q\) over \(B_d\) is the pull-back of \(\Oc(1)\) under the morphism \(B_d \to \Pp^{d-1}\).
	This yields an exact sequence, for each \(d\in \Nn\),
	\begin{equation}
		\label{eq:omega_m_d}
		\Omega(U'_m/G')[[t]]/t^d \xrightarrow{\cdot\fglr{q}(t)} \Omega(U'_m/G')[[t]]/t^d \to \Omega(U_{m,d}/G) \to 0
	\end{equation}
	where \(t\) is mapped to the first Chern class of the line bundle
	\[
		(U'_m/G') \times ((C^{\otimes d} \smallsetminus 0) \times C)/\mu_q= (U_{m,d} \times C)/G
	\]
	over \((U'_m/G')\times B_d=U_{m,d}/G\).
	Since the \(\La\)-module \(\Omega(U'_m/G')\) is finitely presented by assumption, so is \(\Omega(U'_m/G')[[t]]/t^d\), and the exact sequence above implies that the same is true for \(\Omega(U_{m,d}/G)\).
	In particular \(\Omega(U_m/G) = \Omega(U_{m,m}/G)\) is finitely presented, proving \eqref{lemm:Omega_diag_induc:1}.

	By (\ref{prop:limit_fglr}), the sequences (\ref{eq:omega_m_d}) yield an exact sequence of \(\La\)-modules
	\[
		0 \to \Omega(U'_m/G')[[t]] \xrightarrow{\cdot\fglr{q}(t)} \Omega(U'_m/G')[[t]] \to \lim_d \Omega(U_{m,d}/G) \to 0.
	\]
	Now, by homotopy invariance and the localization sequence (\ref{prop:loc_seq}), the condition \eqref{def:good_system:3} of (\ref{p:good_system}) implies that the morphisms \(\Omega(U'_{m+1}/G') \to \Omega(U'_m/G')\) are surjective, so that \(\displaystyle{{\lim_m}^1 \Omega(U'_m/G')[[t]]=0}\), hence we obtain an exact sequence
	\[
		0 \to \lim_m \big(\Omega(U'_m/G')[[t]]\big) \xrightarrow{\cdot\fglr{q}(t)} \lim_m \big(\Omega(U'_m/G')[[t]]\big) \to \lim_m \lim_d \Omega(U_{m,d}/G) \to 0,
	\]
	where \(t\) is mapped to the first Chern class of the \(G\)-equivariant line bundle \(C\) over \(\Speck\).
	By (\ref{eq:lim_lim}) and (\ref{lemm:ps_commute}) below, this yields the isomorphism \eqref{lemm:Omega_diag_induc:2}.
\end{proof}

\begin{lemma}
	\label{lemm:ps_commute}
	Let \(R_m\) be a direct system of graded \(\Laz\)-algebras.
	Then the morphism
	\[
		\big(\lim_m R_m\big)[[t]] \to \lim_m \big(R_m[[t]]\big)
	\]
	is bijective.
\end{lemma}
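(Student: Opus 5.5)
The plan is to reduce the statement to the formal fact that limits commute with limits, working degreewise throughout. Here ``direct system'' is read as a projective (inverse) system, since the morphism in the statement goes from the limit. By (\ref{p:graded_power_series}), for a graded \(\Laz\)-algebra \(R\) we have \(R[[t]]=\lim_d R[t]/t^d\), the limit being taken in graded rings (equivalently in graded abelian groups). Limits in graded abelian groups are computed degreewise, so the homogeneous component of degree \(i\) is
\[
	\big(R[[t]]\big)^i=\lim_d \bigoplus_{j=0}^{d-1} R^{i-j}t^j=\prod_{j\in\Nn} R^{i-j}t^j .
\]

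First I would check the finite-level statement: for each fixed \(d\), the natural map \(\big(\lim_m R_m\big)[t]/t^d \to \lim_m \big(R_m[t]/t^d\big)\) is bijective. Additively, and degreewise, \(R[t]/t^d\) is the finite direct sum \(\bigoplus_{j=0}^{d-1}R^{i-j}t^j\) in degree \(i\). Finite direct sums are finite products, and these commute with limits. Since both sides are graded rings and the map is a ring morphism, it is an isomorphism of graded rings.

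Then I would compute
\[
	\big(\lim_m R_m\big)[[t]]=\lim_d\big(\lim_m R_m\big)[t]/t^d\simeq\lim_d\lim_m \big(R_m[t]/t^d\big)\simeq \lim_m\lim_d \big(R_m[t]/t^d\big)=\lim_m\big(R_m[[t]]\big).
\]
The middle isomorphism is the interchange of two limits, valid in any category; here it is applied degreewise in abelian groups, then upgraded to graded rings. I would then check that this composite agrees with the natural map of the statement, which follows from compatibility with the projections to each \(R_m[t]/t^d\). Explicitly, in degree \(i\) both sides identify with \(\prod_j \lim_m R_m^{i-j}\).

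The only point requiring care, rather than a real obstacle, is to keep track of gradings. The graded power series ring is not the usual power series ring, and limits must be taken in the graded category, that is, degreewise. Once everything is written degreewise as products \(\prod_j R^{i-j}\), the bijectivity is immediate from the commutation of products with limits.
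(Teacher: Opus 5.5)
Your argument is correct and is essentially the paper's, which simply observes that \(B[[t]]\) is a product of copies of \(B\) indexed by \(\Nn\) and that products commute with limits. Your degreewise identification of both sides with \(\prod_{j\in\Nn}\lim_m R_m^{i-j}\) is the same idea, stated with more care for the graded power series ring, where the degree-\(i\) component is \(\prod_{j}B^{i-j}\) rather than literally a product of copies of \(B\).
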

\begin{proof}
	When \(B\) is a ring, the \(B\)-module \(B[[t]]\) is isomorphic to a product of copies of \(B\) indexed by \(\Nn\).
	Thus the lemma follows from the fact that products and limits commute.
\end{proof}

\section{Algebraic independence}
\label{sect:alg_indep}
This purely algebraic section is motivated by the geometric computation of \S\ref{sect:omega_G} but is logically independent of it.
It will serve two purposes:
\begin{itemize}
	\item[---] describe which nonequivariant information is lost by performing the localization required by the concentration theorem.
	\item[---] give a method to produce algebraically independent elements in the equivariant cobordism ring of a diagonalizable \(p\)-group,
\end{itemize}

The inductive process is described in \S\ref{sect:formal_p}, which contains most of the technical arguments.
We use it in \S\ref{sect:H_j} to produce statements that will directly apply to the equivariant cobordism ring.

\subsection{Formal multiplication by a prime power}
\label{sect:formal_p}
Let us fix a prime number \(p\) and a power \(q>1\) of \(p\).
Recall that we defined elements \(v_n \in \Laz\) in (\ref{def:u_n}), and rings \(\su{\Laz}{n}\) in (\ref{p:Laz_n_d}).

\begin{lemma}
	\label{lemm:fglr_powers}
	For any \(n \in \Nn\), we have in \(\su{\Laz}{n}[[t]]\)
	\[
		\fglr{q}(t)= (v_n)^{\frac{q^n-1}{p^n-1}} t^{q^n} \mod t^{q^n+1}.
	\]
\end{lemma}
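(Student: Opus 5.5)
Write \(q=p^e\) with \(e\ge 1\). The plan is to treat the case \(q=p\) first and then iterate, using \(\fglr{p^e}(t)=\fglr{p}(\fglr{p^{e-1}}(t))\).

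\textbf{Case \(q=p\).} By (\ref{def:u_n}) we have \(\fglr{p}(t)=\sum_m u_m t^{m+1}\). By (\ref{prop:coeff_I}.\ref{prop:coeff_I:u_m}), each \(u_m\) with \(m<p^n-1\) lies in \(\Ipn{n}\), so it vanishes in \(\su{\Laz}{n}\). Hence in \(\su{\Laz}{n}[[t]]\)
\[
	\fglr{p}(t)=u_{p^n-1}t^{p^n}=v_n t^{p^n} \mod t^{p^n+1}.
\]
For \(n=0\), the ring \(\su{\Laz}{0}=\Laz\) and the statement is just \(\fglr{p}(t)=pt \mod t^2\), which is consistent since \(\tfrac{q^0-1}{p^0-1}\) must be interpreted as \(1\). (For \(n=0\) the exponent expression is \(0/0\); I read it as \(e\) in general, via the limit \(\tfrac{q^n-1}{p^n-1}=1+p^n+\cdots+p^{n(e-1)}\). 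This equals \(e\) at \(n=0\), which is correct because \(\fglr{p^e}(t)=p^et\).) So I use the formula \(N_e:=\sum_{j=0}^{e-1}p^{nj}\) for the exponent.

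\textbf{Induction on \(e\).} Suppose \(\fglr{p^{e-1}}(t)=v_n^{N_{e-1}}t^{p^{n(e-1)}}+(\text{higher order terms})\). Then \(\fglr{p^e}(t)=\fglr{p}(s)\) with \(s=\fglr{p^{e-1}}(t)\), and \(s\in t^{p^{n(e-1)}}\su{\Laz}{n}[[t]]\). Apply the case \(q=p\):
\[
	\fglr{p}(s)=v_n s^{p^n}+s^{p^n+1}\cdot(\ldots).
\]
The second term lies in \(t^{p^{n(e-1)}(p^n+1)}\), which has order \(>p^{ne}\) because \(p^{n(e-1)}\ge 1\). For the first term, \(s^{p^n}=v_n^{p^nN_{e-1}}t^{p^{ne}}\) modulo higher powers of \(t\): expanding \(s=at^k(1+O(t))\) gives \(s^{p^n}=a^{p^n}t^{kp^n}(1+O(t))\). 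So
\[
	\fglr{p^e}(t)=v_n^{1+p^nN_{e-1}}t^{p^{ne}} \mod t^{p^{ne}+1},
\]
and \(1+p^nN_{e-1}=N_e\) and \(p^{ne}=q^n\), as required. This computation uses only substitution of power series in the graded power series ring, which is legitimate because \(s\) has no constant term.

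\textbf{Expected obstacle.} The base case rests entirely on (\ref{prop:coeff_I}.\ref{prop:coeff_I:u_m}) and is immediate. The only real care needed is bookkeeping of the leading term under composition, together with the degenerate case \(n=0\). No cancellation can occur, since each step tracks only a single leading coefficient, which is a power of \(v_n\).
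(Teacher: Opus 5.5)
Your proof is correct and is essentially the paper's argument: induction on \(e\), with the case \(q=p\) read off from (\ref{prop:coeff_I}.\ref{prop:coeff_I:u_m}) and the leading coefficient tracked through composition. The paper composes as \(\fglr{p^{e-1}}(\fglr{p}(t))\) rather than \(\fglr{p}(\fglr{p^{e-1}}(t))\), which makes no difference, and your reading of the exponent as \(1+p^n+\cdots+p^{n(e-1)}\) is the right one. The only slip is writing \(s=at^k(1+O(t))\), which presumes that \(a=v_n^{N_{e-1}}\) is a unit; write instead \(s=at^k+t^{k+1}w\) and expand \((at^k+t^{k+1}w)^{p^n}\) to get \(s^{p^n}\equiv a^{p^n}t^{kp^n} \bmod t^{kp^n+1}\).
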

\begin{proof}
	Write \(v=v_n\), and \(q=p^a\) with \(a \in \Nn\smallsetminus \{0\} \).
	We proceed by induction on \(a\).
	The case \(a=1\) follows from (\ref{prop:coeff_I}.\ref{prop:coeff_I:u_m}), and implies that there exists \(y\in \su{\Laz}{n}[[t]]\) such that
	\begin{equation}
		\label{eq:fglr_u_y}
		\fglr{p}(t)=vt^{p^n} +t^{p^n+1}y \in \su{\Laz}{n}[[t]].
	\end{equation}
	If \(a \geq 2\), then using induction and \eqref{eq:fglr_u_y}, we obtain in \(\su{\Laz}{n}[[t]]\)
	\begin{align*}
		\fglr{p^a}(t) & = \fglr{p^{a-1}}(\fglr{p}(t)) & \\
									& = v^{\frac{p^{n(a-1)}-1}{p^n-1}} (vt^{p^n}+t^{p^n+1}y)^{p^{n(a-1)}} \mod (vt^{p^n} +t^{p^n+1}y)^{p^{n(a-1)}+1} & \\
									& = v^{\frac{p^{na}-1}{p^n-1}} t^{p^{na}} \mod t^{p^{na}+1}.\qedhere
	\end{align*}
\end{proof}

\begin{para}
	We let \(T\) be the multiplicative subset of \(\Laz[[t]]\) generated by the power series \(\fglr{m}(t)\), where \(m \in \Zz \smallsetminus q\Zz\).
\end{para}

\begin{para}
	\label{p:def_s}
	We consider the power series \(s= \fglr{q/p}(t) \in \Laz[[t]]\).
\end{para}

\begin{lemma}
	\label{lemm:invert_s}
	Let \(Q= \Laz[[t]]/\fglr{q}(t)\).
	Then the morphism \(Q[s^{-1}] \to T^{-1}Q\) is an isomorphism.
\end{lemma}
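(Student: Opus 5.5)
To prove the lemma I need two things. First, the image of \(s\) in \(T^{-1}Q\) is invertible, so the morphism \(Q[s^{-1}] \to T^{-1}Q\) is well defined. Second, every element of \(T\) becomes invertible in \(Q[s^{-1}]\), which gives the inverse morphism. The first point is immediate, since \(s=\fglr{q/p}(t)\) and \(q/p \in \Zz\smallsetminus q\Zz\), so \(s \in T\). For the second, it suffices to show that for every \(m\in \Zz\smallsetminus q\Zz\) the power series \(\fglr{m}(t)\) divides a power of \(s\) in \(Q\). Equivalently, \(\fglr{m}(t)\) should become a unit in \(Q[s^{-1}]\).

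Write \(m = p^b m'\) with \(m'\) prime to \(p\) and \(p^b < q\), so \(p^b\) divides \(q/p\). Then \(\fglr{m}(t)=\fglr{m'}(\fglr{p^b}(t))\). The factor \(\fglr{m'}(u)\), evaluated at \(u=\fglr{p^b}(t)\), should be handled as follows.

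\begin{itemize}
\item Choose integers \(x,y\) with \(xm'+yq=p^b\).
\item In \(Q\) we have \(\fglr{q}(t)=0\), so \(\fglr{p^b}(t)=\fglr{x}(\fglr{m'}(\fglr{p^b}\ldots))\). It is cleaner to write \(\fglr{p^b}(t) = \fglr{xm'}(t) \fgl \fglr{yq}(t) = \fglr{xm'}(t) = \fglr{x}(\fglr{m'}(t))\).
\item Since \(\fglr{x}(w)\) is divisible by \(w\) as a power series, this shows that \(\fglr{m'}(t)\) divides \(\fglr{p^b}(t)\) in \(Q\).
\end{itemize}

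Applying the same argument with \(t\) replaced by \(\fglr{p^b}(t)\) gives the divisibility needed for \(\fglr{m}(t)\). The identity \(xm' \equiv p^b \bmod q\) still works, because \(\fglr{m}(t) = \fglr{p^b m'}(t)\) and \(\fglr{p^b x m'}(t) = \fglr{p^{2b}}(t)+\fglr{p^b yq}(t)\). I will sidestep this bookkeeping by choosing \(x\) with \(xm \equiv q/p \pmod q\). This congruence is solvable because \(p^b\) divides \(q/p\) and \(m'\) is invertible modulo \(q\).

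With that choice, \(\fglr{x}(\fglr{m}(t)) = \fglr{xm}(t) = \fglr{q/p}(t)\fgl \fglr{kq}(t) = s\) in \(Q\), using \(\fglr{kq}(t)=\fglr{k}(\fglr{q}(t))=0\). Since \(\fglr{x}(w)=w\cdot h(w)\) for some power series \(h\), this yields \(s=\fglr{m}(t)\cdot h(\fglr{m}(t))\) in \(Q\). Hence \(\fglr{m}(t)\) becomes invertible in \(Q[s^{-1}]\).

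The only point requiring care is that \(Q\) is complete in \(t\), so that substitutions such as \(\fglr{x}(\fglr{m}(t))\) make sense and satisfy \(\fglr{a}\circ\fglr{b}=\fglr{ab}\) and \(\fglr{a+b}=\fglr{a}\fgl\fglr{b}\) in the graded setting. This holds because \(\fglr{m}(t)\) lies in \(t\Laz[[t]]\), which gives the required completeness, and \(Q\) is a quotient of \(\Laz[[t]]\). Given this, the two maps are mutually inverse by the universal properties of localization, and the main obstacle is just the choice of \(x\) above.
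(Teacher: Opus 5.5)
Your proposal is correct and follows the paper's proof: the paper likewise writes \(m=p^b r\) with \(b<a\) and takes \(w=up^{a-1-b}\), where \(ur+vp=1\), so that \(mw=p^{a-1}-vq\) and hence \(\fglr{w}(\fglr{m}(t))=s\) in \(Q\); this is exactly your choice of \(x\) with \(xm\equiv q/p \pmod q\). The bulleted detour through \(xm'+yq=p^b\) is not needed and can be deleted, since your final argument is complete without it.
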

\begin{proof}
	Write \(q=p^a\) with \(a \in \Nn\smallsetminus \{0\} \).
	Let \(m \in \Zz\smallsetminus q\Zz\), and write \(m = p^br\) where \(b,r \in \Nn\), with \(r\) prime to \(p\).
	Note that \(b<a\).
	Pick \(u,v \in \Zz\) such that \(ur+vp=1\), and set \(w=up^{a-1-b}\).
	Then we have in \(Q\)
	\begin{align*}
		\fglr{mw}(t)
		& = \fglr{up^{a-1}r}(t) = \fglr{p^{a-1}-vq}(t) \\
		& = \big(\fglr{p^{a-1}}(t)\big) \fgl \big(\fglr{-v}(\fglr{q}(t))\big) \\
		& =\fglr{p^{a-1}}(t)= s.
	\end{align*}
	Since \(\fglr{mw}(t)=\fglr{w}(\fglr{m}(t))\) is divisible by \(\fglr{m}(t)\), it follows that \(\fglr{m}(t)\) is invertible in \(Q[s^{-1}]\), proving the statement.
\end{proof}

\begin{para}
	By (\ref{prop:coeff_I}.\ref{prop:coeff_I:u_m}) there exists a unique power series \(\pi_n \in \su{\Laz}{n}[[t]]\) such that \(\fglr{p}(t) = t^{p^n} \pi_n\), and in addition we have \(\pi_n = v_n \mod t\).
	Using the power series \(s\) of (\ref{p:def_s}), let us define the power series
	\[
		h_n=\pi_n(s) \in \su{\Laz}{n}[[t]].
	\]
	In other words, we have
	\begin{equation}
		\label{eq:s_h_q}
		s^{p^n}h_n=\fglr{q}(t) \in \su{\Laz}{n}[[t]].
	\end{equation}
	Let us record that
	\begin{equation}
		\label{eq:h_n_dom}
		h_n=v_n \mod t.
	\end{equation}
\end{para}

\begin{para}
	Let \(R\) be a graded \(\Laz\)-algebra.
	We assume that \(R\) is Landweber exact as an \(\Laz\)-module (see (\ref{def:Landweber_exact})).
	We set
	\[
		A=R[[t]]/\fglr{q}(t).
	\]
	Using the notation of (\ref{p:su_n}), we thus have graded \(\Laz\)-algebras \(\su{R}{n},\su{A}{n}\) for \(n\in \Nn\).
	In addition, we consider the graded \(\Laz\)-algebra, for each \(n\in \Nn\),
	\begin{equation}
		\label{eq:def_B}
		B_n=\su{R}{n}[[t]]/h_n.
	\end{equation}
\end{para}

\begin{lemma}
	\label{lemm:B_complete}
	For any \(n \in \Nn\) we have
	\[
		\bigcap_{k\in \Nn} t^k B_n=0.
	\]
\end{lemma}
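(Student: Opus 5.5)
The plan is to exploit \eqref{eq:h_n_dom}: \(h_n = v_n \mod t\), and \(v_n\) is a nonzerodivisor on \(\su{R}{n}\) by the Landweber exactness of \(R\). I will show that an element of \(\bigcap_k t^k B_n\) lifts to a power series that is divisible by arbitrarily high powers of \(t\) modulo \(h_n\), and then use a Weierstrass-type division by \(h_n\) (whose constant term is a nonzerodivisor) to force it to vanish.

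Let \(x \in \su{R}{n}[[t]]\) have image in \(\bigcap_k t^k B_n\). For each \(k\) write \(x = t^k y_k + h_n z_k\) with \(y_k, z_k \in \su{R}{n}[[t]]\); by homogeneity we may take everything homogeneous in the graded power series ring. The goal is to show \(x \in h_n \su{R}{n}[[t]]\).

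\textbf{Step 1: \(h_n\) is a nonzerodivisor on \(\su{R}{n}[[t]]\), and the quotient is \(t\)-torsion free.} Suppose \(h_n f = t g\). Reducing modulo \(t\) gives \(v_n f(0) = 0\), hence \(f(0)=0\) because \(v_n\) is a nonzerodivisor. Therefore \(f = t f'\), and since \(t\) is a nonzerodivisor we get \(h_n f' = g\). Thus \(t g \in (h_n)\) implies \(g \in (h_n)\), so \(t\) is a nonzerodivisor on \(B_n\). Taking \(g=0\) in the same argument shows that \(h_n\) is a nonzerodivisor as well: \(h_n f=0\) forces \(f(0)=0\), so \(f=tf'\) with \(h_n f'=0\), and by induction \(f \in \bigcap_k t^k \su{R}{n}[[t]] = 0\).

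\textbf{Step 2: uniqueness of the decompositions.} From \(t^k y_k + h_n z_k = t^{k+1} y_{k+1} + h_n z_{k+1}\) we get \(h_n(z_k - z_{k+1}) \in t^k \su{R}{n}[[t]]\). The argument of Step 1 (peeling off one power of \(t\) at a time) then gives \(z_k - z_{k+1} \in t^k \su{R}{n}[[t]]\). Hence the \(z_k\) converge \(t\)-adically to some \(z\). This limit is a legitimate element of the graded power series ring, because each homogeneous component of \(z_k\) in a fixed degree stabilises coefficientwise.

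\textbf{Step 3: conclusion.} From \(x - h_n z = t^k y_k + h_n(z_k - z)\), with \(z_k - z \in t^k \su{R}{n}[[t]]\), we see that \(x - h_n z \in \bigcap_k t^k \su{R}{n}[[t]] = 0\). So \(x = h_n z\), i.e.\ \(x\) vanishes in \(B_n\).

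The main obstacle is the bookkeeping in Step 2: one must check that the coefficients of \(y_k\) and \(z_k\) can be taken homogeneous, so that the limit \(z\) really lies in the graded power series algebra of (\ref{p:graded_power_series}) rather than only in the ordinary one. Reducing to a single homogeneous component of \(x\) handles this.
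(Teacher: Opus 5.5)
Your proof is correct and follows the paper's argument: the paper first shows that \(h_n\) is a nonzerodivisor on \(\su{R}{n}[[t]]/t^k\) by the same peeling induction as your Step 2, then writes \(f \equiv g_k h_n \bmod t^k\), deduces \(g_{k+1} \equiv g_k \bmod t^k\), and takes the \(t\)-adic limit \(g\) exactly as in your Step 3. The claims in your Step 1 that \(t\) is a nonzerodivisor on \(B_n\) and that \(h_n\) is a nonzerodivisor on all of \(\su{R}{n}[[t]]\) are not needed; your reduction to homogeneous \(x\) spells out why the limit lies in the graded power series ring, which the paper leaves implicit.
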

\begin{proof}
	Let us first observe that \(h_n\) is a nonzerodivisor in \(\su{R}{n}[[t]]/t^k\) for every \(k \ge 1\).
	Indeed for \(k=1\) this follows from \eqref{eq:h_n_dom}, as \(v_n\) is a nonzerodivisor in \(\su{R}{n}\) by the assumption that \(R\) is Landweber exact.
	Assume that \(k>1\), and let \(a,b \in \su{R}{n}[[t]]\) be such that \(h_na=t^kb\).
	By induction on \(k\), we may write \(a = t^{k-1}a'\) with \(a' \in \su{R}{n}[[t]]\), and thus \(h_na'=tb\).
	By the case \(k=1\), we deduce that \(a' \in t\su{R}{n}[[t]]\), and finally \(a\in t^k\su{R}{n}[[t]]\).
	This proves the claim.

	Now let \(f\in \su{R}{n}[[t]]\) map to \(\bigcap_{k \in \Nn} t^k B_n \subset B_n\).
	Then for each \(k \geq 1\) we may find \(g_k\in \su{R}{n}[[t]]\) such that \(f=g_k h_n \mod t^k\).
	Thus \(g_{k+1} h_n =g_k h_n \mod t^k\) for all \(k \geq 1\), hence \(g_{k+1} =g_k \mod t^k\) by the observation above.
	Therefore there exists \(g \in \su{R}{n}[[t]]\) such that \(g = g_k \mod t^k\) for all \(k \geq 1\).
	Then \(f-gh_n \in \bigcap_{k \in \Nn} t^k(\su{R}{n}[[t]]) =0\), hence \(f\) maps to zero in \(B_n\).
\end{proof}

\begin{lemma}
	\label{lemm:B_un_nzd}
	For every \(n\in \Nn\), the element \(v_n\) is a nonzerodivisor in \(B_n\).
\end{lemma}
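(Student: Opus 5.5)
The plan is a direct cancellation argument: reduce modulo \(v_n\), and use that \(h_n\) becomes a nonzerodivisor there. Suppose \(f,g\in\su{R}{n}[[t]]\) satisfy \(v_nf=h_ng\). We must show \(f\in h_n\su{R}{n}[[t]]\).

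First, two preliminary facts.

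\begin{enumerate}[label=(\alph*), ref=\alph*]
	\item \label{step:coeffwise} Since \(R\) is Landweber exact, \(v_n\) is a nonzerodivisor in \(\su{R}{n}\). Hence it is a nonzerodivisor in \(\su{R}{n}[[t]]\), arguing coefficientwise.
	\item \label{step:quotient} By (\ref{prop:coeff_I}.\ref{prop:coeff_I:gen}) we have \(\Ipn{n+1}=\Ipn{n}+v_n\Laz\), so \(\su{R}{n}/v_n=\su{R}{n+1}\). Since cokernels commute with products, this gives \(\su{R}{n}[[t]]/v_n=\su{R}{n+1}[[t]]\).
\end{enumerate}

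The key step is to show that the image \(\bar h_n\) of \(h_n\) in \(\su{R}{n+1}[[t]]\) is a nonzerodivisor. Modulo \(\Ipn{n+1}\), both \(\fglr{p}(t)=t^{p^n}\pi_n\) and \(\fglr{p}(t)=t^{p^{n+1}}\pi_{n+1}\) hold. Since \(t\) is a nonzerodivisor in the power series ring, this gives \(\pi_n=t^{p^{n+1}-p^n}\pi_{n+1}\) modulo \(\Ipn{n+1}\). Substituting \(s\), we obtain
\[
	\bar h_n=s^{p^{n+1}-p^n}h_{n+1}\in\su{\Laz}{n+1}[[t]].
\]
By \eqref{eq:h_n_dom}, \(h_{n+1}\equiv v_{n+1}\bmod t\). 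For \(s=\fglr{q/p}(t)\), the lemma (\ref{lemm:fglr_powers}) gives the following (trivially \(s=t\) when \(q=p\)):
\[
	s=v_{n+1}^{e}\,t^{(q/p)^{n+1}}\bmod t^{(q/p)^{n+1}+1}
\]
for a suitable exponent \(e\ge 0\). Hence the lowest-order coefficient of \(\bar h_n\) is a power of \(v_{n+1}\). By Landweber exactness this is a nonzerodivisor in \(\su{R}{n+1}\).

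The standard lowest-coefficient argument then shows \(\bar h_n\) is a nonzerodivisor in \(\su{R}{n+1}[[t]]\). Concretely, if \(\bar h_n\cdot x=0\) with \(x\neq 0\), compare the lowest nonzero terms. This argument works equally in the graded power series ring, since it is a subring of the usual one.

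Now conclude. Reducing \(v_nf=h_ng\) modulo \(v_n\) via \eqref{step:quotient} gives \(\bar h_n\bar g=0\). Hence \(\bar g=0\), that is, \(g=v_ng'\) for some \(g'\in\su{R}{n}[[t]]\). Then \(v_nf=v_nh_ng'\), and cancelling \(v_n\) by \eqref{step:coeffwise} yields \(f=h_ng'\). So \(v_n\) is a nonzerodivisor in \(B_n\).

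The main obstacle I anticipate is the identity \(\bar h_n=s^{p^{n+1}-p^n}h_{n+1}\) together with the leading term of \(s\) modulo \(\Ipn{n+1}\). This requires checking carefully that the definitions of \(\pi_n\) and \(\pi_{n+1}\), via (\ref{prop:coeff_I}.\ref{prop:coeff_I:u_m}), are compatible under reduction.
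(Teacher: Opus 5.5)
Your proof is correct and follows the paper's proof in structure: reduce \(v_nf=h_ng\) modulo \(v_n\), show that \(g\) vanishes in \(\su{R}{n+1}[[t]]\) by a lowest-coefficient argument (the lowest coefficient being a power of \(v_{n+1}\), which is regular by Landweber exactness), then write \(g=v_ng'\) and cancel \(v_n\). The one difference is that the paper sidesteps your anticipated obstacle: rather than proving \(\bar h_n=s^{p^{n+1}-p^n}h_{n+1}\) and finding the leading term of \(s\), it multiplies \(h_ng=0\) by \(s^{p^n}\) and uses \eqref{eq:s_h_q} to get \(\fglr{q}(t)\cdot g=0\) in \(\su{R}{n+1}[[t]]\), so that (\ref{lemm:fglr_powers}) applies directly to \(\fglr{q}(t)\).
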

\begin{proof}
	Assume that \(v_nf = h_n g\) in \(\su{R}{n}[[t]]\), for some \(f,g \in \su{R}{n}[[t]]\).
	Then \(h_n g =0\) in \(\su{R}{n+1}[[t]]\).
	Multiplying by \(s^{p^n}\) we obtain, by \eqref{eq:s_h_q},
	\begin{equation}
		\label{eq:q_g_0}
		0 = \fglr{q}(t)\cdot g \in \su{R}{n+1}[[t]].
	\end{equation}
	As the leading coefficient of \(\fglr{q}(t)\) in \(\su{R}{n+1}[[t]]\) is a power of \(v_{n+1}\) by (\ref{lemm:fglr_powers}), it is a nonzerodivisor in \(\su{R}{n+1}\) (because \(R\) is Landweber exact), which implies that the power series \(\fglr{q}(t)\) is a nonzerodivisor in \(\su{R}{n+1}[[t]]\).
	By \eqref{eq:q_g_0} we deduce that \(g=0\) in \(\su{R}{n+1}[[t]]\).
	Thus \(g=v_ng'\) in \(\su{R}{n}[[t]]\) for some \(g'\in \su{R}{n}[[t]]\), and so \(v_nf = v_nh_n g'\) in \(\su{R}{n}[[t]]\).
	As \(v_n\) is a nonzerodivisor in \(\su{R}{n}\) (because \(R\) is Landweber exact), we deduce that \(f=h_ng'\) in \(\su{R}{n}[[t]]\), hence \(f\) has vanishing image in \(B_n\).
\end{proof}

\begin{lemma}
	\label{lemm:s_nzd_B}
	For every \(n\in \Nn \), the element \(s\) is a nonzerodivisor in \(B_n\).
\end{lemma}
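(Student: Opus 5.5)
The plan is to treat \(h_n, s\) as a regular sequence in \(\su{R}{n}[[t]]\), in the spirit of the proof of (\ref{lemm:B_un_nzd}), and swap the order. Concretely, suppose \(sf=h_ng\) in \(\su{R}{n}[[t]]\) for some \(f,g\in \su{R}{n}[[t]]\). We must show that \(f\in h_n\su{R}{n}[[t]]\). The argument uses three ingredients.

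\emph{(a) \(s\) is a nonzerodivisor in \(\su{R}{m}[[t]]\) for every \(m\).} We apply (\ref{lemm:fglr_powers}) to \(q/p\) in place of \(q\) and \(m\) in place of \(n\).
\begin{itemize}
\item If \(q/p=1\), then \(s=t\) and the claim is clear.
\item If \(q/p>1\) and \(m\ge 1\), the lowest coefficient of \(s=\fglr{q/p}(t)\) in \(\su{R}{m}[[t]]\) is a power of \(v_m\).
\item If \(m=0\), the lowest coefficient is \(q/p\), a power of \(p=v_0\).
\end{itemize}
In all cases this coefficient is a nonzerodivisor in \(\su{R}{m}\) by Landweber exactness of \(R\). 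A power series whose lowest coefficient is a nonzerodivisor is a nonzerodivisor, by the usual leading-term argument.

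\emph{(b) \(v_n\) is a nonzerodivisor in \(\su{R}{n}[[t]]/s\).} Let \(v_na=sb\) in \(\su{R}{n}[[t]]\).
\begin{itemize}
\item Reducing modulo \(v_n\) gives \(sb=0\) in \(\su{R}{n+1}[[t]]\), since \(\Ipn{n+1}=\Ipn{n}+v_n\Laz\) by (\ref{prop:coeff_I}.\ref{prop:coeff_I:gen}).
\item By (a) with \(m=n+1\), we get \(b=0\) in \(\su{R}{n+1}[[t]]\). So \(b=v_nb'\) with \(b'\in\su{R}{n}[[t]]\).
\item Then \(v_n(a-sb')=0\). Since \(v_n\) is a nonzerodivisor in \(\su{R}{n}\), it is one coefficientwise in \(\su{R}{n}[[t]]\), so \(a=sb'\).
\end{itemize}

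\emph{(c) \(h_n\equiv v_n \bmod s\) in \(\su{R}{n}[[t]]\).} Since \(\pi_n\equiv v_n \bmod t\), write \(\pi_n(t)=v_n+t\rho(t)\). Then \(h_n=\pi_n(s)=v_n+s\rho(s)\). Substituting \(s\) is legitimate because \(s\in t\Laz[[t]]\).

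Now conclude. Reducing \(sf=h_ng\) modulo \(s\) and using (c) gives \(v_ng\in s\,\su{R}{n}[[t]]\). By (b) applied with \(a=g\), this yields \(g=sg'\) for some \(g'\). Hence \(s(f-h_ng')=0\). By (a) with \(m=n\), we obtain \(f=h_ng'\), so \(f\) vanishes in \(B_n\).

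The main point requiring care is step (a) in the edge cases. First, (\ref{lemm:fglr_powers}) is stated for a prime power \(q>1\), so \(q/p=1\) must be handled separately as above. Second, the formula there is degenerate for \(n=0\), where one should read the leading coefficient directly as \(q/p\). Neither case poses a real difficulty.
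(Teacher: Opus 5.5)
Your proof is correct, but it takes a different route from the paper.

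\textbf{The paper's route.} The paper's proof is a two-line deduction from (\ref{lemm:B_un_nzd}). Since \(h_n=v_n+s\rho(s)\) and \(h_n=0\) in \(B_n\), the image of \(v_n\) in \(B_n\) equals \(-s\rho(s)\). So any element of \(B_n\) killed by \(s\) is also killed by \(v_n\), and hence is zero, because \(v_n\) is a nonzerodivisor in \(B_n\).

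\textbf{Your route.} You use the same key congruence (c), but you never pass through regularity of \(v_n\) on \(B_n=\su{R}{n}[[t]]/h_n\). Instead you show three things: \(s\) is regular on \(\su{R}{n}[[t]]\); \(v_n\) is regular modulo \(s\), and hence so is \(h_n\). You then use the elementary swap: if \(s\) is regular on a ring and \(h\) is regular modulo \(s\), then \(s\) is regular modulo \(h\).

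\textbf{Comparison.} Your step (b) is essentially the proof of (\ref{lemm:B_un_nzd}) with \(s\) in place of \(h_n\). It is slightly simpler, since it avoids multiplying by \(s^{p^n}\) to convert \(h_n\) into \(\fglr{q}(t)\). The price is that you need \(s=\fglr{q/p}(t)\) to be a nonzerodivisor in \(\su{R}{m}[[t]]\) for both \(m=n\) and \(m=n+1\). This forces you to apply (\ref{lemm:fglr_powers}) to \(q/p\) and to treat the degenerate cases \(q=p\) and \(m=0\) by hand, which you do correctly.

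Your version makes the lemma independent of (\ref{lemm:B_un_nzd}). In the paper's economy, though, the short deduction is preferable: (\ref{lemm:B_un_nzd}) is needed anyway, for the Landweber exactness statement in (\ref{prop:kernel_loc_induct}).
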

\begin{proof}
	As \(h_n = v_n \mod s\) in \(\su{\Laz}{n}[[t]]\), the image of \(v_n\) in \(B_n\) is divisible by \(s\). Thus the statement follows from (\ref{lemm:B_un_nzd}).
\end{proof}

We recall that \(T\) denotes the multiplicative subset of \(\Laz[[t]]\) generated by the power series \(\fglr{m}(t)\), where \(m \in \Zz \smallsetminus q\Zz\).

\begin{proposition}
	\label{prop:kernel_loc_induct}
	\begin{enumerate}[label=(\roman*), ref=\roman*]
		\item \label{prop:kernel_loc_induct:1}
			The \(\Laz\)-module \(T^{-1}A\) is Landweber exact.

		\item \label{prop:kernel_loc_induct:2}
			For \(n \in \Nn\), the kernel of the morphism \(\su{A}{n} \to T^{-1}\su{A}{n}\) maps to zero under the composite \(\su{A}{n} \xrightarrow{t\mapsto 0}\su{R}{n}\to \su{R}{n+1}\).
		\item \label{prop:kernel_loc_induct:3}
			For \(n \in \Nn\), the image of the morphism \(\su{A}{n} \to T^{-1}\su{A}{n}\) is isomorphic to \(B_n\).
	\end{enumerate}
\end{proposition}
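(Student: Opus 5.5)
The plan is to reduce all three statements to the rings \(B_n\), using (\ref{lemm:invert_s}) together with the factorization \eqref{eq:s_h_q}. Fix \(n\in\Nn\).

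\textbf{Step 1: identify \(\su{A}{n}\) and its localization.} First I identify \(\su{A}{n}\) with \(\su{R}{n}[[t]]/\fglr{q}(t)\). This requires \((R[[t]])\otimes_{\Laz}\su{\Laz}{n}\simeq \su{R}{n}[[t]]\). It holds because \(\Ipn{n}\) is finitely generated, by \(v_0,\ldots,v_{n-1}\) from (\ref{prop:coeff_I}.\ref{prop:coeff_I:gen}). Hence \(\Ipn{n}R[[t]]\) consists exactly of the graded power series with coefficients in \(\Ipn{n}R\): write each coefficient as \(\sum_i v_i r_{i,j}\) and regroup, which is possible in the graded power series ring since the \(v_i\) are homogeneous.

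Localization commutes with \(-\otimes_{\Laz}\su{\Laz}{n}\), so \(T^{-1}\su{A}{n}=\su{(T^{-1}A)}{n}\). By (\ref{lemm:invert_s}), the same argument applied to \(A\), or directly to its quotient \(\su{A}{n}\), gives \(T^{-1}\su{A}{n}=\su{A}{n}[s^{-1}]\). The proof of (\ref{lemm:invert_s}) only used the relation \(\fglr{q}(t)=0\), so it applies to any algebra in which this relation holds.

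\textbf{Step 2: compare with \(B_n\).} By \eqref{eq:s_h_q}, in \(\su{A}{n}\) we have \(s^{p^n}h_n=\fglr{q}(t)=0\). So \(h_n\) becomes zero once \(s\) is inverted. Conversely, \(\fglr{q}(t)\) is a multiple of \(h_n\). Together these give
\[
\su{A}{n}[s^{-1}] \simeq \big(\su{R}{n}[[t]]/h_n\big)[s^{-1}] = B_n[s^{-1}].
\]
Under this identification, the map \(\su{A}{n}\to T^{-1}\su{A}{n}\) factors as the surjection \(\su{A}{n}\to B_n\) followed by \(B_n\to B_n[s^{-1}]\). The latter is injective by (\ref{lemm:s_nzd_B}). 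This proves \eqref{prop:kernel_loc_induct:3}.

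\textbf{Step 3: the kernel.} It follows that the kernel of \(\su{A}{n}\to T^{-1}\su{A}{n}\) is the image of \(h_n\su{R}{n}[[t]]\). Under \(t\mapsto 0\) this maps into \(v_n\su{R}{n}\), by \eqref{eq:h_n_dom}, using \(s\in t\Laz[[t]]\). Since \(v_n\in\Ipn{n+1}\), its image vanishes in \(\su{R}{n+1}\), which gives \eqref{prop:kernel_loc_induct:2}.

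\textbf{Step 4: Landweber exactness.} For \eqref{prop:kernel_loc_induct:1}, I must show that \(v_n\) is a nonzerodivisor in \(\su{(T^{-1}A)}{n}=B_n[s^{-1}]\). By (\ref{lemm:B_un_nzd}), \(v_n\) is a nonzerodivisor in \(B_n\). Localization is exact, so it remains a nonzerodivisor in \(B_n[s^{-1}]\).

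The only delicate point is Step 1: checking that reduction modulo \(\Ipn{n}\) commutes with forming graded power series and with localizing at \(T\), and that (\ref{lemm:invert_s}) transfers from \(\Laz\) to \(A\). Everything else is formal once these identifications hold.
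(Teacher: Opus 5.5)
Your proposal is correct and follows the paper's proof essentially step by step: the isomorphism \(\su{A}{n}[s^{-1}]\simeq B_n[s^{-1}]\) from \eqref{eq:s_h_q}, injectivity of \(B_n\to B_n[s^{-1}]\) by (\ref{lemm:s_nzd_B}), passage from \(s\) to \(T\) by (\ref{lemm:invert_s}), the kernel via \eqref{eq:h_n_dom}, and Landweber exactness from (\ref{lemm:B_un_nzd}) together with exactness of localization. Your Step 1 only makes explicit the identifications \(\su{A}{n}\simeq \su{R}{n}[[t]]/\fglr{q}(t)\) and \(T^{-1}\su{A}{n}=\su{(T^{-1}A)}{n}\), which the paper uses without comment.
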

\begin{proof}
	Let \(n\in \Nn\).
	The natural morphism \(\su{A}{n} \to B_n\) is surjective, and its kernel is the ideal \(H\) generated by \(h_n\).
	Since \(s^{p^n}h_n=\fglr{q}(t)\) vanishes in \(\su{A}{n}\), the morphism \(\su{A}{n} \to B_n\) induces an isomorphism \(\su{A}{n}[s^{-1}] \simeq B_n[s^{-1}]\).
	As \(B_n \to B_n[s^{-1}]\) is injective by (\ref{lemm:s_nzd_B}), we deduce that the kernel of the morphism \(\su{A}{n} \to \su{A}{n}[s^{-1}]\) is \(H\), and that its image is isomorphic to \(B_n\).
	By (\ref{eq:h_n_dom}) the element \(h_n\) is mapped to \(v_n\) under the map \(\su{A}{n} \to \su{R}{n}\), hence has vanishing image in \(\su{R}{n+1}\).
	Thus \(H\) is mapped to zero in \(\su{R}{n+1}\).
	Since \(\su{A}{n}[s^{-1}] \to T^{-1}\su{A}{n}\) is an isomorphism by (\ref{lemm:invert_s}), we obtain \eqref{prop:kernel_loc_induct:2} and \eqref{prop:kernel_loc_induct:3}.
	We also deduce that \(T^{-1}\su{A}{n}=\su{(T^{-1}A)}{n}\) is isomorphic to \(B_n[s^{-1}]\), so that (\ref{lemm:B_un_nzd}) implies \eqref{prop:kernel_loc_induct:1}.
\end{proof}

\begin{lemma}
	\label{lemm:alg_indep_induct}
	Let \(n\in \Nn\), and let \(y_1,\ldots,y_m\) be elements of \(\su{A}{n}\) whose images under \(\su{A}{n} \xrightarrow{t\mapsto 0}\su{R}{n}\to \su{R}{n+1}\) are algebraically independent over \(\Fp\).
	\begin{enumerate}[label=(\roman*), ref=\roman*]
		\item \label{lemm:alg_indep_induct:1}
			If \(n \ne 0\) then \(y_1,\ldots,y_m,t^{-1} \in T^{-1}\su{A}{n}\) are algebraically independent over \(\Fp\).

		\item \label{lemm:alg_indep_induct:2}
			If \(n = 0\) then \(y_1,\ldots,y_m \in T^{-1}A\) are algebraically independent over \(\Zz\).
	\end{enumerate}
\end{lemma}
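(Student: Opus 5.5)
The plan is to use the description of the localization obtained in the proof of (\ref{prop:kernel_loc_induct}): the map \(\su{A}{n}\to T^{-1}\su{A}{n}\) has image \(B_n=\su{R}{n}[[t]]/h_n\), and \(T^{-1}\su{A}{n}\simeq B_n[s^{-1}]\). Two further observations are needed.

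First, \(t\) divides \(s=\fglr{q/p}(t)\) in \(\Laz[[t]]\). Hence \(t\) is a nonzerodivisor in \(B_n\) by (\ref{lemm:s_nzd_B}), since \(tx=0\) implies \(sx=0\). For the same reason \(t\) is invertible in \(B_n[s^{-1}]\), so \(t^{-1}\) makes sense in \(T^{-1}\su{A}{n}\).

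Second, since \(h_n=v_n \mod t\) by \eqref{eq:h_n_dom}, and \(\Ipn{n+1}=\Ipn{n}+v_n\Laz\) by (\ref{prop:coeff_I}), we get
\[
	B_n/tB_n=\su{R}{n}/v_n=\su{R}{n+1}.
\]
Under this identification, the composite \(\su{A}{n}\to B_n\to B_n/t\) is the map \(\su{A}{n}\xrightarrow{t\mapsto 0}\su{R}{n}\to \su{R}{n+1}\) of the statement.

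For (\ref{lemm:alg_indep_induct:1}), assume \(n\ge 1\), so that \(p\in\Ipn{n}\) and all rings in play are \(\Fp\)-algebras. Suppose \(P(y_1,\ldots,y_m,t^{-1})=0\) in \(T^{-1}\su{A}{n}\) for some nonzero \(P\in\Fp[Y_1,\ldots,Y_m,Z]\). Multiplying by \(t^N\), with \(N\) the \(Z\)-degree of \(P\), we obtain
\[
	\sum_{j=0}^N t^j Q_j(y_1,\ldots,y_m)=0 \in B_n[s^{-1}],
\]
where the \(Q_j\in\Fp[Y_1,\ldots,Y_m]\) are not all zero. Because \(B_n\to B_n[s^{-1}]\) is injective by (\ref{lemm:s_nzd_B}), this relation already holds in \(B_n\). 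Let \(j_0\) be minimal with \(Q_{j_0}\neq 0\). Since \(t\) is a nonzerodivisor in \(B_n\), we may cancel \(t^{j_0}\). Reducing modulo \(t\) then gives \(Q_{j_0}(\bar y_1,\ldots,\bar y_m)=0\) in \(\su{R}{n+1}\), where \(\bar y_i\) denotes the image of \(y_i\). This contradicts the algebraic independence of the \(\bar y_i\) over \(\Fp\).

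For (\ref{lemm:alg_indep_induct:2}), take \(n=0\), so \(B_0=R[[t]]/h_0\). Let \(P\in\Zz[Y_1,\ldots,Y_m]\) be nonzero with \(P(y)=0\) in \(T^{-1}A\). As above, the relation holds in \(B_0\). Write \(P=p^eP'\) with \(P'\) not divisible by \(p\). Since \(v_0=p\) is a nonzerodivisor in \(B_0\) by (\ref{lemm:B_un_nzd}), we get \(P'(y)=0\) in \(B_0\). Reducing modulo \(t\), into \(B_0/t=R/p=\su{R}{1}\), gives a nonzero \(\Fp\)-polynomial relation among the images of the \(y_i\), again a contradiction.

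The main point to check carefully is the identification \(B_n/t=\su{R}{n+1}\) compatibly with the map in the statement, together with the injectivity of \(B_n\to T^{-1}\su{A}{n}\). Both come from the proof of (\ref{prop:kernel_loc_induct}) combined with (\ref{lemm:invert_s}), and there appears to be no serious obstacle.
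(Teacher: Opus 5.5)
Your proof is correct and is essentially the paper's argument. The paper cites (\ref{prop:kernel_loc_induct}.\ref{prop:kernel_loc_induct:2}) to see that the kernel of \(\su{A}{n}\to T^{-1}\su{A}{n}\) dies in \(\su{R}{n+1}\); you re-derive the same fact via \(B_n/tB_n=\su{R}{n+1}\). In (ii) you cancel \(p\) in \(B_0\) using (\ref{lemm:B_un_nzd}), where the paper cancels it in \(T^{-1}A\) using Landweber exactness.

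One small remark: since \(N\) is the \(Z\)-degree of \(P\), you automatically have \(Q_0=P_N\neq 0\). So \(j_0=0\), and the cancellation of \(t^{j_0}\) (and with it the nonzerodivisor property of \(t\)) is unnecessary.
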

\begin{proof}
	\eqref{lemm:alg_indep_induct:1}: Assume that \(P\in \Fp[Y_1,\ldots,Y_m,T]\) is a nonzero polynomial such that \(P(y_1,\ldots,y_m,t^{-1}) =0\) in \(T^{-1}\su{A}{n}\).
	Write \(P= P_0 +\cdots +T^kP_k\), where \(P_i \in \Fp[Y_1,\ldots,Y_m]\) for each \(i=0,\ldots ,k\), and \(P_k \neq 0\).
	Then \(P_k(y_1,\ldots,y_m) + \cdots + t^k P_0(y_1,\ldots,y_m)\) belongs to the kernel of \(\su{A}{n} \to T^{-1}\su{A}{n}\), hence maps to zero in \(\su{R}{n+1}\) by (\ref{prop:kernel_loc_induct}.\ref{prop:kernel_loc_induct:2}).
	Thus \(P_k(y_1,\ldots,y_m)\) is mapped to zero in \(\su{R}{n+1}\).
	The assumption of algebraic independence then implies that \(P_k=0\), a contradiction.

	\eqref{lemm:alg_indep_induct:2}: Let \(P \in \Zz[Y_1,\ldots,Y_m]\) be a nonzero polynomial such that \(P(y_1,\ldots,y_m) =0\) in \(T^{-1}A\).
	By (\ref{prop:kernel_loc_induct}.\ref{prop:kernel_loc_induct:1}), the element \(p=v_0\) is a nonzerodivisor in \(T^{-1}A\).
	Thus we may assume that \(P\) is not divisible by \(p\) in \(\Zz[Y_1,\ldots,Y_m]\).
	Then \(P(y_1,\ldots,y_m)\) belongs to the kernel of \(A \to T^{-1}A\), hence maps to zero in \(\su{R}{1}\) by (\ref{prop:kernel_loc_induct}.\ref{prop:kernel_loc_induct:2}).
	The assumption of algebraic independence then implies that the image of \(P\) vanishes in \(\Fp[Y_1,\ldots,Y_m]\), a contradiction.
\end{proof}

\subsection{The equivariant cobordism ring}
\label{sect:H_j}
We assume given integers \(q_1,\ldots,q_r > 1\), which are powers of a common prime number \(p\).
We let \(\La=\Laz[c^{-1}]\), where \(c\in \Zz\smallsetminus \{0\}\).

\begin{para}
	\label{p:H_j}
	For every \(j\in \{0,\ldots,r\}\), we consider the graded \(\La\)-module
	\[
		H_j = \La[[t_1,\ldots,t_j]]/(\fglr{q_1}(t_1),\ldots,\fglr{q_j}(t_j)).
	\]
	Let \(S_j\) be the multiplicative subset of \(\Laz[[t_1,\ldots,t_j]]\) generated by the elements
	\[
		\fglr{m_1}(t_1) \fgl \cdots \fgl \fglr{m_j}(t_j)
	\]
	with \(m_i\in \Zz\), where at least one of the integers \(m_i\) is not divisible by \(q_i\).
\end{para}

\begin{para}
	\label{p:C_j_n}
	For \(j\in \{0,\ldots,r\}\), we set
	\[
		C_j = \im \big(\su{(H_j)}{r-j} \to S_j^{-1} \su{(H_j)}{r-j}\big).
	\]
\end{para}

\begin{para}
	\label{p:R_A_T}
	We will use the results of \S\ref{sect:formal_p} in this setting, so let us make the connection explicit.
	Write \(R_j=S_j^{-1}H_j\) for every \(j\in \{0,\ldots,r\}\). For \(j \ge 1\) we set
	\[
		A_j=S_{j-1}^{-1}H_j = R_{j-1}[[t_j]]/\fglr{q_j}(t_j).
	\]
	Let \(T_j \subset \Laz[[t_j]]\) be the multiplicative subset generated by the power series \(\fglr{m}(t_j)\) for \(m \in \Zz\smallsetminus q_j\Zz\).
	When \(j\ge 1\), we are thus in the situation of \S\ref{sect:formal_p}, with
	\[
		q=q_j, R=R_{j-1}, A=A_j, t=t_j, T=T_j,
	\]
	(except for the fact that \(R_{j-1}\) is Landweber exact, which will be proved shortly).
	By (\ref{prop:kernel_loc_induct}.\ref{prop:kernel_loc_induct:3}) we also have, using the notation of \eqref{eq:def_B}
	\[
		B_{r-j} = \im \big(S_{j-1}^{-1}\su{(H_j)}{r-j} \to S_j^{-1} \su{(H_j)}{r-j}\big),
	\]
	and therefore
	\begin{equation}
		\label{eq:C_in_B}
		C_j \subset S_{j-1}^{-1}C_j= B_{r-j} \subset S_j^{-1}C_j=S_j^{-1} \su{(H_j)}{r-j}.
	\end{equation}
\end{para}

\begin{lemma}
	\label{prop:Hj}
	\begin{enumerate}[label=(\roman*), ref=\roman*]
		\item \label{prop:Hj:1}
			For every \(j\in \{0,\ldots,r\}\), the \(\Laz\)-module \(S_j^{-1}H_j\) is Landweber exact.

		\item \label{prop:Hj:2}
			For every \(j\in \{1,\ldots,r\}\) and \(n\in \Nn\), the morphism \(H_j \to H_{j-1}\) given by \(t_j \mapsto 0\) descends to \(C_j \to C_{j-1}\).

		\item \label{prop:Hj:3}
			Let \(j\in \{1,\ldots,r\}\) and \(n \in \Nn \smallsetminus \{0\}\).
			Assume that \(y_1,\ldots,y_m \in S_{j-1}^{-1}\su{(H_j)}{n}\) map in \(S_{j-1}^{-1}\su{(H_{j-1})}{n+1}\) to algebraically independent elements over \(\Fp\).
			Then
			\[
				y_1,\ldots,y_m,t_j^{-1} \in S_j^{-1}\su{(H_j)}{n}
			\]
			are algebraically independent over \(\Fp\).

		\item \label{prop:Hj:4}
			Let \(j\in \{1,\ldots,r\}\). Assume that \(y_1,\ldots,y_m \in S_{j-1}^{-1}H_j\) map in \(S_{j-1}^{-1}\su{(H_{j-1})}{1}\) to algebraically independent elements over \(\Fp\).
			Then
			\[
				y_1,\ldots,y_m \in S_j^{-1}H_j
			\]
			are algebraically independent over \(\Zz\).
	\end{enumerate}
\end{lemma}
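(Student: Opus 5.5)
The plan is to prove (i) by induction on \(j\), and to obtain (ii)--(iv) as direct translations of (\ref{prop:kernel_loc_induct}) and (\ref{lemm:alg_indep_induct}) through the dictionary of (\ref{p:R_A_T}). The key input is that, for \(j\ge 1\), the multiplicative subset \(S_j\) is generated by \(S_{j-1}\) together with \(T_j\) and elements involving \(t_j\) together with earlier variables. We first show that inverting \(S_j\) in \(A_j=S_{j-1}^{-1}H_j\) has the same effect as inverting \(T_j\). Take a generator \(f=\fglr{m_1}(t_1)\fgl\cdots\fgl\fglr{m_j}(t_j)\). If \(q_j\mid m_j\), then \(\fglr{m_j}(t_j)=0\) in \(H_j\), so \(f\) reduces to a generator of \(S_{j-1}\). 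If \(q_j\nmid m_j\), we use the argument of (\ref{lemm:invert_s}). Multiplying by a suitable integer \(w\) gives \(\fglr{w}(f)=\fglr{w m_1}(t_1)\fgl\cdots\fgl s_j\) with \(s_j=\fglr{q_j/p}(t_j)\). Since \(s_j\) becomes invertible after inverting \(T_j\), and \(x\fgl y = x+y+xy(\cdots)\), it remains to check that a formal sum of an invertible \(s_j\) and topologically nilpotent terms in the other \(t_i\) is invertible. The cleaner route is to note that \(f\) divides \(\fglr{w}(f)\), and that \(\fglr{w}(f)\) is itself a unit once \(s_j\) and the elements of \(S_{j-1}\) are inverted. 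For this, write \(\fglr{w}(f)=s_j\fgl g\) with \(g\) a sum of \(\fglr{a_i}(t_i)\), \(i<j\). Either \(g\in S_{j-1}\), or \(g=0\) in \(H_{j-1}\) because all \(q_i\mid a_i\). In the second case \(\fglr{w}(f)=s_j\), which is invertible. In the first case we can adjust the integer \(w\), or multiply by a further integer \(p^{e}\) killing the \(t_j\) term while keeping control, so that we reduce to a unit. This bookkeeping is the main technical obstacle, and we would settle it by choosing \(w\) coprime to \(p\) when possible. Once it is done, \(S_j^{-1}H_j=T_j^{-1}A_j\).

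With this identification, (i) follows by induction. The base case is \(S_0^{-1}H_0=\La\), localized by the generators \(\fglr{}\)-expressions of the empty family, which is Landweber exact by (\ref{p:Laz_exact}). For the inductive step, assuming \(R_{j-1}\) is Landweber exact, the section \S\ref{sect:formal_p} applies, and (\ref{prop:kernel_loc_induct}.\ref{prop:kernel_loc_induct:1}) gives that \(T_j^{-1}A_j=S_j^{-1}H_j\) is Landweber exact.

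For (ii), take \(x\in\su{(H_j)}{r-j}\) vanishing in \(S_j^{-1}\su{(H_j)}{r-j}\). Its image in \(\su{(A_j)}{r-j}\) lies in the kernel of localization at \(T_j\). By (\ref{prop:kernel_loc_induct}.\ref{prop:kernel_loc_induct:2}), with \(n=r-j\), it therefore maps to zero in \(\su{(R_{j-1})}{r-j+1}=S_{j-1}^{-1}\su{(H_{j-1})}{r-(j-1)}\) under \(t_j\mapsto 0\). Hence \(t_j\mapsto0\) induces a well-defined map between the images, \(C_j\to C_{j-1}\); this is compatible since setting \(t_j=0\) sends \(S_j\) into \(S_{j-1}\cup\{\text{units}\}\). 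Statements (iii) and (iv) are (\ref{lemm:alg_indep_induct}.\ref{lemm:alg_indep_induct:1}) and (\ref{lemm:alg_indep_induct}.\ref{lemm:alg_indep_induct:2}) applied with \(R=R_{j-1}\) and \(A=A_j\), after the identification \(T_j^{-1}\su{(A_j)}{n}=S_j^{-1}\su{(H_j)}{n}\). The hypotheses of that lemma hold by (i) for \(j-1\).
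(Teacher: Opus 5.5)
Your architecture matches the paper's. You identify \(S_j^{-1}H_j\) with \(T_j^{-1}A_j\). You then deduce (i) by induction from (\ref{prop:kernel_loc_induct}.\ref{prop:kernel_loc_induct:1}), (ii) from (\ref{prop:kernel_loc_induct}.\ref{prop:kernel_loc_induct:2}), and (iii), (iv) from (\ref{lemm:alg_indep_induct}). Those deductions are fine.

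The gap is in the identification itself, on which all four parts rest. Write a generator of \(S_j\) as \(f=a\fgl b\), with \(a=\fglr{m_1}(t_1)\fgl\cdots\fgl\fglr{m_{j-1}}(t_{j-1})\) and \(b=\fglr{m_j}(t_j)\). You handle the case \(q_j\mid m_j\), and the subcase where the \(S_{j-1}\)-part vanishes. The case \(q_j\nmid m_j\) with \(a\) (or your \(g\)) in \(S_{j-1}\) is left open, and both remedies you suggest fail:
\begin{itemize}
\item Multiplying by \(p^e\) to kill the \(t_j\)-term can kill everything. Take \(q_1=q_2=p\) and \(f=t_1\fgl t_2\); this is already of your form \(g\fgl s_2\) with \(g=t_1\in S_1\), and \(\fglr{p}(f)=\fglr{p}(t_1)\fgl\fglr{p}(t_2)=0\).
\item Choosing \(w\) prime to \(p\) does not change which \(m_i\) are divisible by \(q_i\), so it gains nothing.
\end{itemize}
Your heuristic also has the roles reversed. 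The \(t_i\) with \(i<j\) are units in \(R_{j-1}\), whereas it is \(t_j\) that is topologically nilpotent in \(R_{j-1}[[t_j]]\).

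The missing observation takes one line. Since \(x\fgl y\equiv x \bmod y\) and \(b\in t_jR_{j-1}[[t_j]]\), the power series \(a\fgl b\in R_{j-1}[[t_j]]\) has constant term \(a\). This is a unit of \(R_{j-1}\) as soon as some \(m_i\) with \(i<j\) is not divisible by \(q_i\). Hence \(f\) is already a unit in \(R_{j-1}[[t_j]]\), and so in \(A_j\), with no need to invert \(T_j\) or to multiply by \(w\).

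So the right case split is on the earlier exponents, not on \(m_j\):
\begin{itemize}
\item If \(q_i\mid m_i\) for all \(i<j\) (and then necessarily \(q_j\nmid m_j\)), then \(a=0\) and \(f=b\in T_j\).
\item Otherwise the constant-term argument above applies.
\end{itemize}
This is the paper's argument.

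Separately, your remark in (ii) is false: \(t_j\mapsto 0\) does not send \(S_j\) into \(S_{j-1}\) up to units, since \(t_j\in S_j\) itself goes to \(0\). That is precisely why one must pass to the images \(C_j\). The remark is not needed, though: well-definedness of \(C_j\to C_{j-1}\) is exactly the kernel statement you correctly extracted from (\ref{prop:kernel_loc_induct}.\ref{prop:kernel_loc_induct:2}).
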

\begin{proof}
	With the notation of (\ref{p:R_A_T}), we claim that the morphism \(T_j^{-1}A_j \to R_j\) is bijective.
	To see this, let \(m_1,\ldots,m_j\in \Zz\) be such that \(m_i \not \in q_i \Zz\) for some \(i\in \{1,\ldots,j\}\), and consider the elements \(a=\fglr{m_1}(t_1) \fgl \cdots \fgl \fglr{m_{j-1}}(t_{j-1})\) and \(b=\fglr{m_j}(t_j)\) in \(R_{j-1}[[t_j]]\). To establish the claim, we prove that \(a\fgl b\) is invertible in \(T_j^{-1}A_j\).

	Assume first that there exists \(i\in \{1,\ldots,j-1\}\) such that \(m_i\) is not divisible by \(q_i\).
	There exists a homogeneous power series \(f \in R_{j-1}[[t]]\) such that
	\[
		a \fgl b = a+b f(b) \in R_{j-1}[[t_j]].
	\]
	Since \(b\) is divisible by \(t_j\) and \(a\) is invertible in \(R_{j-1}\), it follows that \(a\fgl b\) invertible in the ring \(R_{j-1}[[t_j]]\), hence so is its image in the quotient \(A_j\).

	If now instead \(m_i\) is divisible by \(q_i\) for each \(i\in \{1,\ldots,j-1\}\), then \(a=0\) in \(R_{j-1}\), and so \(a \fgl b=b\).
	As \(m_j\) is not divisible by \(q_j\), the element \(b\) is invertible in \(T_j^{-1}A_j\).
	We have proved the claim.

	\eqref{prop:Hj:1}:
	We proceed by induction on \(j\).
	The case \(j=0\) follows from (\ref{p:Laz_exact}), since \(S_0^{-1}H_0=L\).
	When \(j\ge 1\), we have just seen that \(R_j \simeq T_j^{-1}A_j\).
	Thus \eqref{prop:Hj:1} follows from the induction hypothesis and (\ref{prop:kernel_loc_induct}.\ref{prop:kernel_loc_induct:1}).

	\eqref{prop:Hj:2}:
	It follows from (\ref{prop:kernel_loc_induct}.\ref{prop:kernel_loc_induct:2}) and (\ref{prop:kernel_loc_induct}.\ref{prop:kernel_loc_induct:3}) that the composite morphism
	\[
		\su{(A_j)}{r-j} \xrightarrow{t_j\mapsto 0} \su{(R_{j-1})}{r-j} \to \su{(R_{j-1})}{r+1-j}
	\]
	factors through the subring
	\[
		B_{r-j} = \im \big(\su{(A_j)}{r-j} \to T_j^{-1}\su{(A_j)}{r-j} =\su{(R_j)}{r-j}\big).
	\]
	So we have a commutative diagram
	\[
		\begin{tikzcd}
			H_j \ar[r] \ar[d] &\su{(H_j)}{r-j} \ar[r] \ar[d] &\su{(A_j)}{r-j} \ar[r] \ar[d] & B_{r-j} \ar[d] \ar[r, hook]& \su{(R_{j})}{r-j}\\
			H_{j-1} \ar[r] & \su{(H_{j-1})}{r-j} \ar[r] & \su{(R_{j-1})}{r-j} \ar[r] &\su{(R_{j-1})}{r+1-j}
		\end{tikzcd}
	\]
	Thus the image of the upper horizontal composite, namely \(C_j\), gets mapped into the image of the lower horizontal composite, namely \(C_{j-1}\).

	\eqref{prop:Hj:3} and \eqref{prop:Hj:4}: These follow from (\ref{lemm:alg_indep_induct}).
\end{proof}

\begin{para}
	Let us consider the ring morphism
	\begin{equation}
		\varepsilon \colon H_r \to H_0=\La, \quad t_i \mapsto 0 \quad \text{ for all \(i=1,\ldots,r\)}.
	\end{equation}
\end{para}

\begin{proposition}
	\label{prop:H_ker}
	The kernel of \(H_r \to S_r^{-1}H_r\) maps to \(\Ipn{r}L\) under \(\varepsilon\).
\end{proposition}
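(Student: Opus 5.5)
We argue by descending the chain of rings \(C_r \to C_{r-1} \to \cdots \to C_0\) provided by (\ref{prop:Hj}.\ref{prop:Hj:2}). Recall \(C_j = \im\big(\su{(H_j)}{r-j} \to S_j^{-1}\su{(H_j)}{r-j}\big)\). For \(j=0\), we have \(H_0=\La\) and \(S_0\) contains only the empty products, so \(S_0=\{1\}\) and \(C_0 = \su{\La}{r}\). For \(j=r\), the ring \(C_r\) is the image of \(\su{(H_r)}{0}=H_r\) in \(S_r^{-1}H_r\), because \(\Ipn{0}=0\).

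Let \(x\) lie in the kernel of \(H_r \to S_r^{-1}H_r\). Then \(x\) maps to \(0\) in \(C_r\). Composing the morphisms \(C_r \to C_{r-1} \to \cdots \to C_0\) of (\ref{prop:Hj}.\ref{prop:Hj:2}), we obtain a morphism \(C_r \to C_0=\su{\La}{r}\). It is compatible with the composite \(H_r \xrightarrow{\varepsilon} \La \to \su{\La}{r}\), since at each stage the map \(C_j\to C_{j-1}\) is induced by \(t_j\mapsto 0\) followed by reduction modulo \(\Ipn{r-j+1}\). Hence \(\varepsilon(x)\) maps to \(0\) in \(\su{\La}{r}\), which means \(\varepsilon(x)\in \Ipn{r}\La\).

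The only point requiring care is this compatibility. Concretely, we must check that the square with top row \(H_j \to C_j\), bottom row \(H_{j-1}\to C_{j-1}\), left vertical map \(t_j\mapsto 0\), and right vertical map the descended morphism, commutes. This is exactly the commutative diagram written in the proof of (\ref{prop:Hj}.\ref{prop:Hj:2}). Stacking these squares for \(j=r,\ldots,1\) gives that \(H_r \to C_r \to C_0=\su{\La}{r}\) equals \(H_r\xrightarrow{\varepsilon}\La\to\su{\La}{r}\), which concludes the argument. We do not expect any serious obstacle beyond correctly identifying \(C_0\) and \(C_r\).
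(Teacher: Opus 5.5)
Your proposal is correct and follows the paper's argument: the paper shows by induction on \(j\), using (\ref{prop:Hj}.\ref{prop:Hj:2}), that the composite \(H_j \to \La \to \su{\La}{r}\) factors through \(C_j\). That is exactly your stacking of commutative squares, combined with your identifications \(C_0=\su{\La}{r}\) (since \(S_0=\{1\}\)) and \(C_r=\im(H_r\to S_r^{-1}H_r)\) (since \(\Ipn{0}=0\)).
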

\begin{proof}
	It follows by induction on \(j\in \{0,\ldots,r\}\) from (\ref{prop:Hj}.\ref{prop:Hj:2}) that the composite morphism \(H_j \to \La \to \su{\La}{r}\) factors through \(C_j\).
	For \(j=r\) this is precisely the proposition.
\end{proof}

\section{Diagonalizable \texorpdfstring{\(p\)}{p}-groups}
\label{sect:fixed_locus}
This section establishes the main results of the paper. We begin in \S\ref{sect:fpf_actions} by describing the cobordism classes of varieties admitting fixed-point-free actions, which is achieved by combining the results obtained so far with the concentration theorem.

Next we prove in \S\ref{sect:Mcc_e} the ``injectivity theorem'' (\ref{th:inj_M}), which complements the concentration theorem in describing the relation between the equivariant cobordism class of a variety and its fixed data.

In \S\ref{sect:lin_indep}, we show how to lift relations of linear dependence from the nonequivariant cobordism ring to the equivariant one.
This method is then used in \S\ref{sect:filtering} to prove the main theorem on the dimension of the fixed locus.

\begin{para}
	\label{p:p_rank}
	Let \(p\) be a prime number.
	A \emph{finite diagonalizable \(p\)-group} over \(k\) is a diagonalizable group \(G\) over \(k\) such that \(|\widehat{G}|\) is a (finite) power of \(p\).
	The \emph{rank} of \(G\) is the minimal number of generators of the abelian group \(\widehat{G}\).
	So \(G\) is a finite diagonalizable group of rank \(r\) if and only if it is isomorphic to \(\mu_{q_1} \times \cdots \times \mu_{q_r}\), where each \(q_i\) is a nontrivial power of \(p\).
\end{para}
Let us now fix the setting for this section.
Let \(p\) be prime number different from the characteristic of the field \(k\).
We fix a finite diagonalizable \(p\)-group \(G\) over \(k\), set \(q=|\widehat{G}|\), and let \(r\) be the rank of \(G\).

\begin{para}
	\label{p:S}
	We consider the multiplicative subset \(S \subset \Omega_G(\Speck)\) generated by the classes \(c_1(\character{g})\) for \(g \in \widehat{G} \smallsetminus \{0\}\).
\end{para}

\begin{para}
	\label{p:G_H}
	We use the notation of \S\ref{sect:H_j} with \(\La=\Laz[c^{-1}]=\Omega(\Speck)\), where \(c\) is the exponent characteristic of \(k\).
	Let us fix \(r\) characters \(g_1, \ldots, g_r \in \widehat{G}\) inducing an isomorphism \(G \simeq \mu_{q_1} \times \cdots \times \mu_{q_r}\), where each \(q_i\) is a nontrivial power of \(p\).
	We have seen in (\ref{cor:Omega_diag}) that \(t_i \mapsto c_1(\character{g_i})\) induces an isomorphism
	\[
		H_r = \La[[t_1,\ldots,t_j]]/(\fglr{q_1}(t_1),\ldots,\fglr{q_j}(t_j)) \simeq \Omega_G(\Speck)
	\]
	Under this isomorphism the subset \(S_r \subset H_r\) is mapped to \(S \subset \Omega_G(\Speck)\).
	We will often implicitly make that identification, and for instance write \(t_i \in \Omega_G(\Speck)\) to denote \(c_1(\character{g_i})\).
\end{para}

\subsection{Fixed-point-free actions}
\label{sect:fpf_actions}
\begin{para}
	\label{p:lc_equiv}
	When \(X\) is a smooth projective \(k\)-variety with a \(G\)-action, we consider the elements in \(\Omega_G(\Speck)\)
	\[
		\lc X \rc_G = p^G_*(1) \quad \text{and} \quad \lc u \rc_G = p^G_*(u) \text{ for \(u\in \Omega_G(X)\)},
	\]
	where \(p^G_* \colon \Omega_G(X) \to \Omega_G(\Speck)\) is the equivariant push-forward map along the structural morphism \(p\colon X\to \Speck\).
	We use the same notation for their images in \(S^{-1}\Omega_G(\Speck)\).
\end{para}

\begin{para}
	\label{p:lc_rc_G}
	Let \(X\) be a smooth projective \(k\)-variety with a \(G\)-action.
	It follows from (\ref{prop:Chern_numbers}) that the image of the class \(\lc X \rc_G\) under the natural morphism \(\Omega_G(\Speck) \to \Omega(\Speck)\) coincides with the image of the class \(\lc X \rc \in \Laz\) of (\ref{def:lc_rc}) under the morphism \(\Laz \to \Omega(\Speck)\).
\end{para}

\begin{theorem}
	\label{th:empty_ipnr}
	Let \(X\) be a smooth projective \(k\)-variety with a \(G\)-action such that \(X^G=\varnothing\).
	Then the class \(\lc X \rc\in \Laz\) (see (\ref{def:lc_rc})) belongs to \(\Ipn{r}\).
\end{theorem}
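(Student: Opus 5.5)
We combine the concentration theorem with (\ref{prop:H_ker}).

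First, the concentration theorem \cite[(4.5.4)]{conc}, applied to the oriented theory \(\Omega\) and the diagonalizable group \(G\), says that the push-forward along the closed immersion \(X^G \to X\) induces an isomorphism \(S^{-1}\Omega_G(X^G) \simeq S^{-1}\Omega_G(X)\), where \(S\) is the multiplicative subset of (\ref{p:S}). Since \(X^G=\varnothing\), the localization \(S^{-1}\Omega_G(X)\) vanishes. We have \(\lc X \rc_G = p^G_*(1)\), and \(p^G_*\) is \(\Omega_G(\Speck)\)-linear, so it induces a map \(S^{-1}\Omega_G(X)\to S^{-1}\Omega_G(\Speck)\). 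Hence the image of \(\lc X\rc_G\) in \(S^{-1}\Omega_G(\Speck)\) is the image of \(1 \in S^{-1}\Omega_G(X)=0\), which is zero. So \(\lc X \rc_G\) lies in the kernel of \(\Omega_G(\Speck) \to S^{-1}\Omega_G(\Speck)\).

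Second, we identify \(\Omega_G(\Speck)\) with \(H_r\) as in (\ref{p:G_H}). Under this identification \(S\) corresponds to \(S_r\). Indeed, each nontrivial character is \(g=\sum m_i g_i\) with some \(m_i\notin q_i\Zz\), and \(c_1(\character{g}) = \fglr{m_1}(t_1)\fgl\cdots\fgl\fglr{m_r}(t_r)\). By (\ref{prop:H_ker}), the kernel of \(H_r\to S_r^{-1}H_r\) maps into \(\Ipn{r}\La\) under \(\varepsilon\). The map \(\varepsilon\) is the forgetful map \(\Omega_G(\Speck)\to\Omega(\Speck)=\La\), since it kills the \(t_i\), which are the first Chern classes of the \(\character{g_i}\) and become trivial nonequivariantly. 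By (\ref{p:lc_rc_G}), \(\varepsilon(\lc X\rc_G)\) is the image of \(\lc X\rc\in\Laz\) in \(\La\). Therefore \(\lc X\rc\) maps into \(\Ipn{r}\La\).

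Third, we descend from \(\La\) to \(\Laz\). We need \(\Laz\cap \Ipn{r}\La = \Ipn{r}\), where \(\La=\Laz[c^{-1}]\) and \(c\) is prime to \(p\). For \(r\ge1\), \(\Laz/\Ipn{r}\) is an \(\Fp\)-algebra on which \(c\) acts invertibly, so \(\Laz/\Ipn{r}\to \La/\Ipn{r}\La\) is bijective, as in (\ref{p:Laz_La}). This gives the equality. For \(r=0\) we have \(X^G=X\) since \(G\) is trivial, so \(X=\varnothing\) and \(\lc X\rc=0\in\Ipn{0}\).

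The main point to check is the applicability of the concentration theorem with exactly this multiplicative set \(S\), in our \(\MGL\)-based setting over a field of characteristic different from \(p\). The theorem is stated in \cite{conc} for general oriented theories and linearly reductive diagonalizable groups, and this covers our situation. The identification of the forgetful map with \(\varepsilon\) also needs care. It holds because pulling back along \(\Speck\to[\Speck/G]\) is a ring map sending \(c_1(\character{g_i})\) to \(c_1\) of a trivial bundle, which is zero, and because it is continuous for the \(t\)-adic limits.
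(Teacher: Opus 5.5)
Your proposal is correct and follows the paper's proof. The concentration theorem gives \(S^{-1}\Omega_G(X)=0\), so \(\lc X \rc_G\) vanishes in \(S^{-1}\Omega_G(\Speck)\), and one concludes with (\ref{prop:H_ker}), (\ref{p:lc_rc_G}) and (\ref{p:Laz_La}); you just spell out what the paper leaves implicit (the identifications \(S\leftrightarrow S_r\) and \(\varepsilon\leftrightarrow\) forgetful map, the descent from \(\La\) to \(\Laz\), and the trivial case \(r=0\)).
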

\begin{proof}
	We may assume that \(G\) is nontrivial, i.e.\ that \(r\ge 1\).
	The concentration theorem \cite[(4.5.4)]{conc} asserts that
	\[
		S^{-1} \Omega_G(X) \simeq S^{-1} \Omega_G(X^G).
	\]
	Since \(\Omega_G(X^G)=\Omega_G(\varnothing)=0\), we deduce that \(S^{-1} \Omega_G(X)=0\).
	This implies that \(\lc X \rc_G\) vanishes in \(S^{-1}\Omega_G(\Speck)\).
	We conclude using (\ref{prop:H_ker}), considering (\ref{p:Laz_La}) and (\ref{p:lc_rc_G}).
\end{proof}

\begin{remark}
	\label{rem:fpt}
	If \(X\) possesses a Chern number prime to \(p\), then \(\lc X \rc \not \in \Ipn{r}\) by (\ref{rem:Ipn_Chern_numbers}).
	Therefore any \(G\)-action on \(X\) must fix a point;
	we thus recover the fixed-point theorem \cite[(1.1.2)]{fpt}.
\end{remark}

\begin{remark}
	We have seen that the varieties \(Y_s\) of (\ref{prop:no_fixed_on_gen}) admit a fixed-point-free \(G\)-action, for \(s=0,\ldots,r-1\).
	On the other hand, they do not admit such action for \(s \ge r\).
	Indeed \(\Ipn{n} \ne \Ipn{n+1}\) for every \(n\in \Nn\) by (\ref{prop:coeff_I}), and (\ref{prop:no_fixed_on_gen}) implies that \(\Ipn{n+1}\) is generated by \(\lc Y_{n+1} \rc\) and \(\Ipn{n}\).
	Therefore \(\lc Y_s \rc \not \in \Ipn{r}\) for all \(s \ge r\), and so the claim follows from (\ref{th:empty_ipnr}).

	Note that every Chern number of \(Y_s\) is divisible by \(p\), so that (\ref{rem:fpt}) does not apply to \(X=Y_s\).
\end{remark}

\subsection{The injectivity theorem}
\label{sect:Mcc_e}
\begin{para}
	\label{p:Euler}
	Let \(X\) be a smooth \(k\)-variety with a \(G\)-action, and \(p\colon E\to X\) a \(G\)-equivariant vector bundle.
	Denote by \(s\colon X \to E\) its zero section.
	The \emph{Euler class} of \(E\) is defined as
	\[
		e(E) = (p^*)^{-1} \circ s_*(1) \in \Omega_G(X).
	\]
	Note that \(e(E)=c_1(E)\) when \(E\) is a line bundle.
\end{para}

\begin{para}
	\label{p:euler_mult}
	If \(0\to E_1 \to E_2 \to E_3 \to 0\) is an exact sequence of \(G\)-equivariant vector bundles over \(X\) then \(e(E_2)=e(E_1) e(E_3)\).
	This follows from the nonequivariant version of this statement, which is itself a consequence of the transversality axiom \cite[(2.1.3.v)]{inv} (see e.g.\ the proof of \cite[Proposition~53.1]{EKM}).
\end{para}

\begin{para}
	\label{p:Euler_inverse}
	Let \(E\to X\) be a \(G\)-equivariant vector bundle, where \(X\) is a smooth \(k\)-variety with trivial \(G\)-action.
	If \(E^G=0\), it is proved in \cite[(4.5.6)]{conc} that the class \(e(E)\) becomes invertible in \(S^{-1}\Omega_G(X)\).
	We denote by \(e(-E)\) its inverse.
\end{para}

\begin{proposition}
	\label{prop:euler_morphism}
	There is a morphism of \(\La\)-algebras
	\[
		\Mcc \to S^{-1}\Omega_G(\Speck), \quad \quad \lc E \to X \rc \mapsto \lc e(-E) \rc_G.
	\]
\end{proposition}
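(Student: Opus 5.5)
Since \(\Mcc=\La[\ag]\) is a polynomial \(\La\)-algebra in the variables \(a_{i,g}\), the plan is to define the morphism on these generators first and only afterwards verify that it sends every class \(\lc E\to X\rc\) to \(\lc e(-E)\rc_G\). For \(g\in\widehat{G}\smallsetminus\{0\}\), the line bundle \(\character{g}\) over \(\Speck\) satisfies \(\character{g}^G=0\), so by (\ref{p:Euler_inverse}) the element \(c_1(\character{g})=e(\character{g})\) is invertible in \(S^{-1}\Omega_G(\Speck)\). This is also clear directly, since \(c_1(\character{g})\in S\) by (\ref{p:S}). Inspired by (\ref{p:def_Q}.\ref{p:def_Q:2}), we want the \(\La\)-algebra morphism \(\phi\) characterized by the following property: for every smooth \(X\) with trivial action, every line bundle \(L\) on \(X\) and every \(g\ne 0\),
\[
	\textstyle\sum_i c_1(L)^i\,\phi(a_{i,g}) = e(-(L\otimes\character{g})) = \big(c_1(L)\fgl c_1(\character{g})\big)^{-1}.
\]
The right-hand side lives in \(S^{-1}\Omega_G(X)\), and \(\Omega_G(X)=\Omega(X)\otimes\)-completed \(\Omega_G(\Speck)\) because the action on \(X\) is trivial. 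We therefore expand \((x\fgl t_g)^{-1}\) as a power series in \(x\) with coefficients in \(S^{-1}\Omega_G(\Speck)\), writing \(t_g=c_1(\character{g})\). Concretely, \(x\fgl t_g=t_g+x\,u(x)\), where \(u(x)\) is a power series whose coefficients lie in \(\Omega_G(\Speck)\); inverting gives \(t_g^{-1}(1+t_g^{-1}x u(x))^{-1}\), and this is a well-defined power series in \(x\). We then define \(\phi(a_{i,g})\) to be its \(x^i\)-coefficient. Because \(c_1(L)\) is nilpotent, only finitely many terms contribute when we substitute \(x=c_1(L)\).

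With \(\phi\) in hand, the next step is to extend it to \(\phi_X\colon\Omega(X)[\ag]\to S^{-1}\Omega_G(X)\), acting as \(\Omega(X)\)-linear and defined on the variables as above, where we use the map \(\Omega(X)\to\Omega_G(X)\to S^{-1}\Omega_G(X)\) coming from the trivial action. The key claim is that \(\phi_X(\T(E))=e(-E)\) for every \(G\)-equivariant bundle \(E\) over \(X\) with \(E^G=0\). Both sides are multiplicative in exact sequences: for \(\T\) this is (\ref{p:def_Q}), and for \(e(-)\) it follows from (\ref{p:euler_mult}) after inverting. Both sides are also compatible with pull-backs. The splitting principle therefore reduces the claim to the case \(E=L\otimes\character{g}\), using the weight decomposition \eqref{eq:decomp_weights} and the fact that projective bundles give injective pullbacks on \(S^{-1}\Omega_G\). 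In that case the identity holds by the very construction of \(\phi\).

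It remains to pass to the point, so let \(p\colon X\to\Speck\) be projective. The equivariant push-forward \(p^G_*\) commutes with \(\phi\) in the sense that \(p^G_*\circ\phi_X=\phi\circ p_*\) on \(\Omega(X)[\ag]\). The reason is that \(\phi_X\) is \(\Omega(X)\)-linear with coefficients pulled back from the point, so the projection formula applies. Also, for trivial actions the equivariant push-forward \(\Omega_G(X)=\lim\Omega(X\times U_m/G)\to\Omega_G(\Speck)\) restricts on \(\Omega(X)\) to \(p_*\) followed by inclusion; this uses compatibility of push-forward with the flat base change \(X\times U_m/G\to U_m/G\). Combining this with the key claim gives \(\phi(\lc E\to X\rc)=p^G_*e(-E)=\lc e(-E)\rc_G\). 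The main obstacle is expected to be the technical justification that \(S^{-1}\Omega_G(X)\) receives \(\Omega(X)\otimes S^{-1}\Omega_G(\Speck)\) compatibly with push-forward and with the splitting principle after localization. To handle it, I would first work in \(\Omega_G(X)\) before localizing: the elements \(e(E)\) and \(\phi_X(\T(E))^{-1}\) are both defined there, and I would prove \(e(E)\cdot\phi_X(\T(E))=1\) directly, localizing only at the end.
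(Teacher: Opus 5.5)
Your proposal is correct and follows the paper's proof. The paper likewise writes \((x\fgl y)^{-1}=\sum_i f_i(x)y^i\) with \(f_i\in \Laz[[x]][x^{-1}]\), and defines the \(\Omega(X)\)-algebra map \(\Omega(X)[\ag]\to S^{-1}\Omega_G(X)\) by \(a_{i,g}\mapsto f_i(c_1(\character{g}))\); by symmetry of the formal group law these are exactly your coefficients. It then checks \(\T(E)\mapsto e(-E)\) on line bundles \(L\otimes\character{g}\), extends this by multiplicativity of Euler classes and the splitting-principle construction of \(\T\), and pushes forward to the point. The only differences are that you expand directly in \(S^{-1}\Omega_G(\Speck)[[x]]\) rather than universally over \(\Laz\), and that you spell out the push-forward and splitting-principle compatibilities the paper leaves implicit; the K\"unneth-type description of \(\Omega_G(X)\) you invoke is not actually needed.
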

\begin{proof}
	The power series \(x \fgl y\in \Laz[[x,y]]=(\Laz[[x]])[[y]]\) has constant term \(x\), when viewed as a power series in the variable \(y\) with coefficients in \(\Laz[[x]]\), and thus becomes invertible in the ring \((\Laz[[x]][x^{-1}])[[y]]\).
	Write
	\[
		(x \fgl y)^{-1}= \sum_{i\in \Nn} f_i(x) y^i, \quad \text{with \(f_i(x)\in \Laz[[x]][x^{-1}]\)}.
	\]
	For every smooth \(k\)-variety \(X\) with trivial \(G\)-action, consider the morphism of \(\Omega(X)\)-algebras
	\[
		\eta \colon \Omega(X)[\ag] \to S^{-1}\Omega_G(X), \quad
		a_{i,g} \mapsto f_i(c_1(\character{g})) \quad \text{ (for \(i \in \Nn\) and \(g \in \widehat{G}\smallsetminus \{0\}\))}.
	\]
	Then \(\eta\) commutes with pull-backs and push-forwards along morphisms between smooth \(k\)-varieties with trivial \(G\)-action.
	For every vector bundle \(E \to X\), where \(X\) is a smooth \(k\)-variety with trivial \(G\)-action, and \(E^G=0\), we have in \(S^{-1}\Omega_G(X)\)
	\begin{equation}
		\label{eq:Q_E}
		\eta(\T(E)) = e(-E).
	\end{equation}
	Indeed this is clear when \(E\) is a line bundle on which \(G\) acts by a single character (using the formula (\ref{p:def_Q}.\ref{p:def_Q:2})), and in general follows by multiplicativity of the Euler class (\ref{p:euler_mult}), in view of the construction of the class \(\T(E)\) given in (\ref{p:def_Q}).
	When the \(k\)-variety \(X\) is additionally projective, pushing forward the formula \eqref{eq:Q_E} along the morphism \(X\to \Speck\), we obtain in \(S^{-1}\Omega_G(\Speck)\)
	\[
		\eta(\lc E\to X \rc) = \lc e(-E) \rc_G.\qedhere
	\]
\end{proof}

\begin{para}
	\label{p:concentration}
	Let \(X\) be a smooth projective \(k\)-variety with a \(G\)-action.
	Let \(N\to X^G\) be the normal bundle to the closed immersion of the fixed locus \(X^G\to X\).
	Recall from (\ref{prop:fixed_smooth}) that \(N^G=0\), and so we have a class \(\lc N \to X^G\rc \in \Mcc\) (see \S\ref{sect:Mcc}).
	The concentration theorem \cite[(4.5.7)]{conc} asserts that, under the map of (\ref{prop:euler_morphism}), the element \(\lc N \to X^G\rc \in \Mcc\) is sent to the class \(\lc X \rc_G\) in \(S^{-1}\Omega_G(\Speck)\).
\end{para}

\begin{para}
	\label{p:x_i}
	Let us pick smooth projective \(k\)-varieties \(X_i^+,X_i^-\) with a \(G\)-action, for \(i \in \Nn \smallsetminus \{0\}\), verifying the conditions of (\ref{prop:actions}).
	We denote by \(x_i\) the classes \(\lc X_i^+ \rc - \lc X_i^-\rc\) in \(S^{-1}\Omega_G(\Speck)\).
	We also denote by \(n_i\in \Mcc\) the classes \(\lc N_i^+ \to (X_i^+)^G \rc - \lc N_i^- \to (X_i^-)^G \rc\), where \(N_i^+\), resp.\ \(N_i^-\), is the normal bundle to the immersion of the fixed locus \((X_i^+)^G\to X_i^+\), resp.\ \((X_i^-)^G \to (X_i)^-\).
\end{para}

\begin{para}	
	\label{p:n_i_x_i}
	As noted in (\ref{p:concentration}), the element \(n_i\) is mapped to \(x_i\) under the map of (\ref{prop:euler_morphism}).
\end{para}

\begin{para}
	\label{p:Mcc_ge_q}
	Recall from (\ref{p:grading_Mcc}) that the \(\Mcc(0)\)-algebra \(\Mcc\) is graded.
	For \(d \in \Nn\), we denote by \(\Mcd{d}\) the subgroup of \(\Mcc\) generated by the homogeneous elements of degrees \(\ge -d\).
\end{para}

\begin{para}
	\label{p:Mcc_dim}
	Let \(E \to X\) be a \(G\)-equivariant vector bundle, where \(X\) is a smooth projective \(k\)-variety with trivial \(G\)-action, and \(E^G=0\).
	If \(\dim X \leq d\), then \(\lc E \to X \rc \in \Mcd{d}\).
\end{para}

\begin{para}
	\label{p:y_Mcc_d}
	\label{p:deg_q}
	It follows from (\ref{prop:actions}) and (\ref{p:Mcc_dim}) that \(n_i \in \Mcd{\lfloor i/q \rfloor}\) for every \(i\in \Nn\smallsetminus \{0\}\).
	Thus for any nonzero \(P \in (\Zz[c^{-1}])[Y_i, i \in \Nn\smallsetminus \{0\}]\), we have \(P(n_1,\ldots) \in \Mcd{\deg_q P}\) (see (\ref{p:grading_pol})).
\end{para}

\begin{lemma}
	\label{lemm:alg_indep_S_Omega}
	For \(j\in \{0,\ldots,r-1\}\), the elements
	\[
		x_1,\ldots,x_m,t_1^{-1},\ldots,t_j^{-1} \in S_j^{-1}\su{(H_j)}{r-j}
	\]
	are algebraically independent over \(\Fp\).
\end{lemma}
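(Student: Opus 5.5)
The plan is an induction on \(j\), repeatedly feeding the output of (\ref{prop:Hj}.\ref{prop:Hj:3}) back in as its input. Two interpretive points come first.

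First, the elements \(x_i\) are defined in \(\Omega_G(\Speck)\simeq H_r\), see (\ref{p:G_H}). In \(S_j^{-1}\su{(H_j)}{r-j}\) I read \(x_i\) as the image of \(x_i\) under the ring morphism \(H_r\to H_j\) given by \(t_{j+1},\dots,t_r\mapsto 0\), followed by reduction and localization. These maps are compatible with the maps \(H_j\to H_{j-1}\), \(t_j\mapsto 0\), used in (\ref{prop:Hj}). For \(j=0\), the image of \(x_i\) in \(H_0=\La\) is \(\ell_i:=\lc X_i^+\rc-\lc X_i^-\rc\), by (\ref{p:lc_rc_G}).

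Second, for \(j=0\) the claim concerns the images of \(\ell_1,\dots,\ell_m\) in \(\su{\La}{r}\simeq\su{\Laz}{r}\), using (\ref{p:Laz_La}). By (\ref{lemm:pol_gen_L}), the \(\Fp\)-algebra \(\su{\Laz}{r}(m)\) is polynomial in the \(\ell_i\) with \(i\in\su{N}{r}(m)\). The generators \(\ell_{p^s-1}\) with \(1\le s\le r-1\) are not free modulo \(\Ipn{r}\). So I will prove the statement with \(x_1,\dots,x_m\) meaning the \(x_i\) for \(i\in\su{N}{r}(m)\), which is the form in which the base case is literally (\ref{lemm:pol_gen_L}). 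If the intended \(\ell_i\) were chosen as in (\ref{p:gen_in_I}) this is the only meaningful reading. With this convention the base case \(j=0\) holds: \(S_0^{-1}\su{(H_0)}{r}=\su{\La}{r}\), and there are no \(t\)'s.

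For the inductive step, let \(1\le j\le r-1\) and assume the statement for \(j-1\): the elements \(x_i\) and \(t_1^{-1},\dots,t_{j-1}^{-1}\) are algebraically independent over \(\Fp\) in \(S_{j-1}^{-1}\su{(H_{j-1})}{r-j+1}\). Consider the same symbols in \(S_{j-1}^{-1}\su{(H_j)}{r-j}\). The \(t_k\) with \(k<j\) lie in \(S_{j-1}\) (take \(m_k=1\) and the other \(m_i=0\)), so \(t_1^{-1},\dots,t_{j-1}^{-1}\) make sense there. Under \(t_j\mapsto 0\) and reduction modulo \(\Ipn{r-j+1}\), these elements map to the elements of the induction hypothesis, which are algebraically independent. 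This uses two facts: the image of \(S_{j-1}\) under \(H_j\to H_{j-1}\) is \(S_{j-1}\) itself, since \(t_j\) does not occur in its generators; and the \(x_i\) are compatible by the first remark. Now apply (\ref{prop:Hj}.\ref{prop:Hj:3}) with \(n=r-j\), which is \(\ge 1\) because \(j\le r-1\). It shows that these elements together with \(t_j^{-1}\) are algebraically independent over \(\Fp\) in \(S_j^{-1}\su{(H_j)}{r-j}\), which is the statement for \(j\).

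I do not expect a genuine mathematical obstacle, since the real work is done in (\ref{lemm:alg_indep_induct}) and (\ref{prop:Hj}). The step needing care is the bookkeeping. One must check that the localizations \(S_{j-1}^{-1}\) are compatible with \(t_j\mapsto 0\), so that the images lie in \(S_{j-1}^{-1}\su{(H_{j-1})}{n+1}\). One must also check that the index \(n=r-j\) stays \(\ge 1\), which is exactly why \(j\) stops at \(r-1\). Finally, the base case must be stated with the correct index set \(\su{N}{r}(m)\), so that it really is (\ref{lemm:pol_gen_L}).
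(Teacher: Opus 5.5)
Your proof is correct and is essentially the paper's argument. The base case \(j=0\) comes from (\ref{prop:actions}.\ref{prop:actions:1}) and (\ref{lemm:pol_gen_L}) for the indices \(i\in\su{N}{r}\), and the paper's own proof makes exactly the restriction you identify. The cases \(0<j<r\) then follow by induction from (\ref{prop:Hj}.\ref{prop:Hj:3}) with \(n=r-j\ge 1\). The one point you leave implicit is that the elements \(\lc X_i^+\rc-\lc X_i^-\rc\) are polynomial generators of \(\Laz\); this is (\ref{prop:actions}.\ref{prop:actions:1}), and it is what makes (\ref{lemm:pol_gen_L}) applicable.
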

\begin{proof}
	The images in \(\su{L}{r}=S_0^{-1}\su{(H_0)}{r}\) of the elements \(x_i\) for \(i \in N_{(r)}\) are the classes \(\lc X_i^+\rc - \lc X_i^- \rc\), and so are algebraically independent over \(\Fp\), by (\ref{prop:actions}.\ref{prop:actions:1}) and (\ref{lemm:pol_gen_L}).
	This proves the case \(j=0\) of the lemma.
	The case \(0<j<r\) follows by induction using (\ref{prop:Hj}.\ref{prop:Hj:3}).
\end{proof}

\begin{lemma}
	\label{lemm:alg_indep_S_Omega_r}
	The elements
	\[
		x_1,\ldots,x_m,t_1^{-1},\ldots,t_{r-1}^{-1} \in S_r^{-1}H_r
	\]
	are algebraically independent over \(\Zz\).
\end{lemma}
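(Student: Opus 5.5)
The plan is to apply (\ref{prop:Hj}.\ref{prop:Hj:4}) with \(j=r\), using the \(\Fp\)-algebraic independence at level \(j=r-1\) supplied by (\ref{lemm:alg_indep_S_Omega}). The hypotheses of (\ref{prop:Hj}.\ref{prop:Hj:4}) concern elements of \(S_{r-1}^{-1}H_r\) whose images in \(S_{r-1}^{-1}\su{(H_{r-1})}{1}\) are algebraically independent over \(\Fp\). So I first observe that all the elements involved live in \(S_{r-1}^{-1}H_r\).

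For the \(x_i\) this is automatic: they are images of classes in \(H_r=\Omega_G(\Speck)\). For \(t_1^{-1},\ldots,t_{r-1}^{-1}\), note that \(t_i=\fglr{1}(t_i)\) (with the other \(m\)'s zero) lies in \(S_{r-1}\) for \(i\le r-1\), since \(1\) is not divisible by \(q_i>1\). Hence these inverses already exist in \(S_{r-1}^{-1}H_r\).

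Next I compute the images under \(t_r\mapsto 0\) in \(S_{r-1}^{-1}\su{(H_{r-1})}{1}\). Each \(x_i\) maps to the corresponding \(x_i\), which is the class of \(\lc X_i^+\rc-\lc X_i^-\rc\) computed \(G'\)-equivariantly for the quotient. More simply, \(x_i\) is its image under the map \(H_r\to H_{r-1}\) induced by \(t_r\mapsto 0\), so it has the same nonequivariant image in \(\La\). Each \(t_i^{-1}\) maps to \(t_i^{-1}\).

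Now apply (\ref{lemm:alg_indep_S_Omega}) with \(j=r-1\), which gives \(r-j=1\). This lemma is stated for elements of \(S_j^{-1}\su{(H_j)}{r-j}\) built only from the ring structure. The algebraic independence of \(x_1,\ldots,x_m,t_1^{-1},\ldots,t_{r-1}^{-1}\) in \(S_{r-1}^{-1}\su{(H_{r-1})}{1}\) is exactly the needed hypothesis, provided the "\(x_i\)" there is the image of our \(x_i\). I expect the only point needing care to be this identification. Its proof in (\ref{lemm:alg_indep_S_Omega}) uses only that the \(x_i\) reduce to \(\lc X_i^+\rc-\lc X_i^-\rc\) in \(\su{\La}{r}\), and the base case plus induction via (\ref{prop:Hj}.\ref{prop:Hj:3}) apply verbatim to any lifts with those reductions. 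In particular they apply to the images of our \(x_i\), since \(\varepsilon\) factors through \(H_r\to H_{r-1}\).

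Then (\ref{prop:Hj}.\ref{prop:Hj:4}) yields algebraic independence over \(\Zz\) in \(S_r^{-1}H_r\). When \(r=0\) there is nothing about the \(t\)'s to prove, and the statement reduces to the \(x_i\) being algebraically independent in \(\La\); this holds by (\ref{prop:actions}.\ref{prop:actions:1}). So I treat the case \(r=0\) separately and assume \(r\ge1\) above.
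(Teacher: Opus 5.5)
This is the paper's proof: the case \(r=0\) comes from (\ref{prop:actions}.\ref{prop:actions:1}), and for \(r\ge 1\) one combines (\ref{lemm:alg_indep_S_Omega}) at \(j=r-1\) with (\ref{prop:Hj}.\ref{prop:Hj:4}) at \(j=r\). Your observations that \(t_1,\ldots,t_{r-1}\in S_{r-1}\) and that the \(x_i\) there are the images of yours under \(t_r\mapsto 0\) are exactly the bookkeeping the paper leaves implicit. One caveat, which the paper shares: the \(x_i\) must tacitly be indexed by \(i\in\su{N}{r}\), because the base case of (\ref{lemm:alg_indep_S_Omega}) needs the reductions \(\lc X_i^+\rc-\lc X_i^-\rc\) to be algebraically independent in \(\su{\La}{r}\), and this fails once some \(i=p^s-1\) with \(1\le s\le r-1\) is included. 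Also, \(t_r\mapsto 0\) is restriction of the action to the subgroup \(\ker g_r\), not passage to a quotient, though your argument does not use that aside.
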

\begin{proof}
	When \(r=0\) this follows from (\ref{prop:actions}.\ref{prop:actions:1}).
	When \(r>0\), this follows from (\ref{lemm:alg_indep_S_Omega}) and (\ref{prop:Hj}.\ref{prop:Hj:4}).
\end{proof}

\begin{para}
	\label{p:p_g_t_i}
	For every \(i=1,\ldots,r\), the element \(p_{0,g_i} \in \Mcc\) of \eqref{eq:def_p_ig} is mapped to \(t_i^{-1} \in S^{-1}\Omega_G(\Speck)\) under the morphism of (\ref{prop:euler_morphism}).
\end{para}

\begin{lemma}
	\label{lemm:alg_inj}
	Let \(d\in \Nn\).
	Let \(\Pc(d) \subset \Mcc\) be the subring generated by the elements \(n_i\) (see (\ref{p:x_i})) for \(i \in N_{(r)}(qd+q-1)\) (see (\ref{p:N})) and \(p_{0,g_1},\ldots ,p_{0,g_{r-1}}\).
	\begin{enumerate}[label=(\roman*), ref=\roman*]
		\item
			\label{lemm:alg_inj:1}
			We have \(\Pc(d) \subset \Mcc(d)\).

		\item
			\label{lemm:alg_inj:2}
			The ring \(\Mcc(d)\) is algebraic over \(\Pc(d)\).

		\item
			\label{lemm:alg_inj:3}
			The morphism \(\Pc(d) \to S^{-1}\Omega_G(\Speck)\) induced by (\ref{prop:euler_morphism}) is injective.
	\end{enumerate}
\end{lemma}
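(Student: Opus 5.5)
The plan is to prove \eqref{lemm:alg_inj:1} by a degree count, then \eqref{lemm:alg_inj:3}, and to deduce \eqref{lemm:alg_inj:2} from \eqref{lemm:alg_inj:3} by comparing transcendence degrees.

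For \eqref{lemm:alg_inj:1}: by (\ref{p:y_Mcc_d}) we have \(n_i \in \Mcd{\lfloor i/q \rfloor}\). For \(i \le qd+q-1\) this gives \(\lfloor i/q\rfloor \le d\). So \(n_i\) is a sum of homogeneous elements of degrees \(\ge -d\), hence lies in \(\Mcc(d)\). The elements \(p_{0,g_j}\) are homogeneous of degree \(0\), so they also lie in \(\Mcc(d)\).

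For \eqref{lemm:alg_inj:3}: every element of \(\Pc(d)\) has the form \(P(n_i, p_{0,g_1},\ldots,p_{0,g_{r-1}})\) for a polynomial \(P\) with coefficients in \(\Zz[c^{-1}]\). By (\ref{p:n_i_x_i}) and (\ref{p:p_g_t_i}), the morphism of (\ref{prop:euler_morphism}) sends it to \(P(x_i, t_1^{-1},\ldots,t_{r-1}^{-1})\). Suppose this image vanishes. Multiply \(P\) by a power of \(c\) to obtain a polynomial with integer coefficients that still vanishes at these elements. Then (\ref{lemm:alg_indep_S_Omega_r}) (applied to the finitely many \(x_i\) with \(i\in N_{(r)}(qd+q-1)\), via the identification \(S_r^{-1}H_r = S^{-1}\Omega_G(\Speck)\) of (\ref{p:G_H})) forces that polynomial to be zero, hence \(P=0\). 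This shows more: the generators \(n_i\) (\(i \in N_{(r)}(qd+q-1)\)) and \(p_{0,g_1},\ldots,p_{0,g_{r-1}}\) are algebraically independent over \(\Zz[c^{-1}]\). Hence \(\Pc(d)\) is a polynomial ring in them, and the map to \(S^{-1}\Omega_G(\Speck)\) is injective.

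For \eqref{lemm:alg_inj:2}: by (\ref{prop:ell_pol}), \(\Mcc(d)\) is a polynomial \(\Zz[c^{-1}]\)-algebra in \(d + (d+1)(q-1) = qd+q-1\) variables, since \(|\widehat G\smallsetminus\{0\}| = q-1\). On the other side, \(|N_{(r)}(qd+q-1)| = (qd+q-1)-(r-1)\), because the excluded integers \(p-1,\ldots,p^{r-1}-1\) are all \(\le q-1 \le qd+q-1\) (as \(p^{r-1}\le q\)). Adding the \(r-1\) elements \(p_{0,g_j}\), the polynomial ring \(\Pc(d)\) also has \(qd+q-1\) variables. After tensoring with \(\Qq\), \(\Pc(d)_\Qq \subset \Mcc(d)_\Qq\) are domains of equal finite transcendence degree over \(\Qq\). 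So every element of \(\Mcc(d)\) satisfies a nonzero polynomial over \(\Pc(d)_\Qq\), and clearing denominators gives one over \(\Pc(d)\).

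The main delicate point is the count of elements of \(N_{(r)}(qd+q-1)\), namely checking that all \(r-1\) excluded values fall in the range \(\{1,\ldots,qd+q-1\}\). This uses \(p^{r-1} \le q\), which holds since \(G\) has rank \(r\) and order \(q\). The rest is formal once (\ref{lemm:alg_indep_S_Omega_r}) is in hand.
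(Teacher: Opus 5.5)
Your proposal is correct and follows the paper's proof essentially step for step: (i) comes from the bound $n_i \in \Mcd{\lfloor i/q \rfloor}$ of (\ref{p:y_Mcc_d}); (iii), together with the polynomiality of $\Pc(d)$, comes from the algebraic independence of the $qd+q-1$ generators given by (\ref{lemm:alg_indep_S_Omega_r}); and (ii) follows by comparing transcendence degrees with the polynomial ring $\Mcc(d)$ of (\ref{prop:ell_pol}). The one slip is that your count $|N_{(r)}(qd+q-1)|=(qd+q-1)-(r-1)$ needs $r\ge 1$, which the paper states explicitly; when $G$ is trivial there are simply $d=qd+q-1$ elements $n_i$ and no $p_{0,g_j}$, so the total of $qd+q-1$ generators still holds.
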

\begin{proof}
	We have \(p_{0,g_1},\ldots ,p_{0,g_{r-1}} \in \Mcc(0) \subset \Mcc(d)\).
	In addition, for \(i\in N_{(r)}(qd+q-1)\) we have \(\lfloor i/q \rfloor\le d\), and thus by (\ref{p:y_Mcc_d}) the element \(n_i\) belongs to \(\Mcd{d} \subset \Mcc(d)\), proving \eqref{lemm:alg_inj:1}.

	Note that, when \(r \ge 1\)
	\[
		|N_{(r)}(qd+q-1)|=(qd+q-1)-(r-1)=qd+q-r.
	\]
	It follows from (\ref{lemm:alg_indep_S_Omega_r}) (given (\ref{p:n_i_x_i}) and (\ref{p:p_g_t_i})) that the \(qd+q-1\) elements \(p_{0,g_1},\ldots ,p_{0,g_{r-1}}\) and \(n_i\) for \(i\in N_{(r)}(qd+q-1)\) have algebraically independent images in \(S^{-1}\Omega_G(\Speck)\) over \(\Zz\) (we include the case \(r=0\)).
	This implies \eqref{lemm:alg_inj:3}, and also that the \(\Zz\)-algebra \(\Pc(d)\) is polynomial in \(qd+q-1\) variables.
	So is the \(\Zz[c^{-1}]\)-algebra \(\Mcc(d)\) by (\ref{prop:ell_pol}).
	Therefore the fraction fields of the domains \(\Pc(d)\) and \(\Mcc(d)\) have the same transcendence degree over \(\Qq\), hence the latter is algebraic over the former, which yields \eqref{lemm:alg_inj:2}.
\end{proof}

\begin{theorem}
	\label{th:inj_M}
	The morphism \(\Mcc\to S^{-1}\Omega_G(\Speck)\) of (\ref{prop:euler_morphism}) is injective.
\end{theorem}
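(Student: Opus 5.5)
We will deduce injectivity from (\ref{lemm:alg_inj}) by the standard argument that a ring morphism from a domain which is injective on a subring over which the domain is algebraic must itself be injective. Write \(\eta\colon \Mcc \to S^{-1}\Omega_G(\Speck)\) for the morphism of (\ref{prop:euler_morphism}). Since \(\Mcc=\bigcup_{d\in\Nn}\Mcc(d)\), it suffices to prove that the restriction of \(\eta\) to \(\Mcc(d)\) is injective for every \(d\in \Nn\). So fix \(d\). Let \(y\in \Mcc(d)\) be nonzero with \(\eta(y)=0\); we aim for a contradiction.

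By (\ref{lemm:alg_inj}.\ref{lemm:alg_inj:2}), \(y\) is algebraic over \(\Pc(d)\). Choose a nonzero polynomial \(F(Y)=\sum_{k=0}^m c_kY^k\) with \(c_k\in \Pc(d)\) and \(F(y)=0\), of minimal degree \(m\ge 1\). The ring \(\Mcc(d)\) is a domain: it is a polynomial \(\Zz[c^{-1}]\)-algebra by (\ref{prop:ell_pol}). Hence we may assume \(c_0\ne 0\). Otherwise \(y\cdot\sum_{k\ge1}c_ky^{k-1}=0\) with \(y\neq 0\), and this yields an annihilating polynomial of smaller degree; when \(m=1\) it would force \(c_1=0\), contradicting \(F\ne 0\). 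Applying \(\eta\), which is a ring morphism, gives
\[
	0=\eta(F(y))=\sum_k \eta(c_k)\eta(y)^k=\eta(c_0).
\]
Now (\ref{lemm:alg_inj}.\ref{lemm:alg_inj:3}) says that \(\eta\) is injective on \(\Pc(d)\), so \(c_0=0\), a contradiction. Hence \(\ker\eta\cap \Mcc(d)=0\) for all \(d\), and \(\eta\) is injective.

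There is no real obstacle here. The substantive inputs, namely the algebraic independence in \(S^{-1}\Omega_G(\Speck)\) and the transcendence degree count, were already carried out in (\ref{lemm:alg_inj}). The only point to check is that \(\Mcc(d)\) is a domain, so that the constant term of a minimal annihilating polynomial is nonzero, and this is immediate from (\ref{prop:ell_pol}). Note that the target \(S^{-1}\Omega_G(\Speck)\) need not be a domain, but the argument never uses any property of the target beyond \(\eta\) being a ring morphism.
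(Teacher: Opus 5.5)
Your proposal is correct and follows the paper's proof essentially verbatim: reduce to \(\Mcc(d)\), use that \(\Mcc(d)\) is a domain algebraic over \(\Pc(d)\) to get an annihilating polynomial with nonzero constant term, and conclude from the injectivity of \(\Pc(d) \to S^{-1}\Omega_G(\Speck)\). Your minimal-degree argument for making the constant term nonzero is a slightly more explicit version of the paper's ``we may assume \(Q\) is not divisible by \(X\)'' (and it tacitly uses \(\Pc(d)\subset\Mcc(d)\) from (\ref{lemm:alg_inj}.\ref{lemm:alg_inj:1}), which is fine).
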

\begin{proof}
	Assume that a nonzero element \(b \in \Mcc\) maps to zero in \(S^{-1}\Omega(\Speck)\).
	Then \(b\in \Mcc(d)\) for some \(d \in \Nn\).
	By (\ref{lemm:alg_inj}.\ref{lemm:alg_inj:2}), we may find a nonzero polynomial \(Q\in \Pc(d)[X]\) such that \(Q(b)=0\).
	Since \(\Mcc(d)\) is a domain and \(b \ne 0\), we may assume that \(Q\) is not divisible by \(X\), i.e.\ that \(Q(0)\ne 0\) in \(\Pc(d)\).
	But \(Q(0)\in \Pc(d)\) is mapped in \(S^{-1}\Omega(\Speck)\) to \(Q(b)=0\) (as \(b\) is mapped to \(0\)).
	Hence by (\ref{lemm:alg_inj}.\ref{lemm:alg_inj:3}) we have \(Q(0)=0\) in \(\Pc(d)\), a contradiction.
\end{proof}

We will thus view the elements of \(\Mcc\) as elements of \(S^{-1}\Omega_G(\Speck)\), and so in particular will identify the elements \(x_i\) and \(n_i\), or the elements \(p_{0,g_j}\) and \(t_j^{-1}\).
\begin{remark}
	The proof of (\ref{th:inj_M}) follows the arguments used in topology by tom Dieck in \cite[\S2]{tomDieck-Characteristic-I}.
\end{remark}

\subsection{Linear independence}
\label{sect:lin_indep}
This section is the technical heart of the paper.
The aim is to prove (\ref{lemm:lin_indep:base}), which will be the central tool used to prove the main theorem in the next section.

\begin{para}
	Recall from (\ref{p:G_H}) that mapping \(t_i\) to \(c_1(\character{g_i}) \in \Omega_G(\Speck)\), for \(i=1,\ldots,r\), induces an isomorphism of graded \(L\)-algebras
	\begin{equation}
		\label{eq:H_Omega}
		S_r^{-1}H_r \simeq S^{-1}\Omega_G(\Speck).
	\end{equation}
	From (\ref{prop:Hj}.\ref{prop:Hj:2}) we deduce that, for \(j=1,\ldots,r\), the morphism \(H_j \to H_{j-1}\) given by \(t_j \mapsto 0\) induces a morphism \(C_j \to C_{j-1}\) (the subring \(C_j \subset S_j^{-1} \su{(H_j)}{r-j}\) is defined in (\ref{p:C_j_n})), whose kernel is the ideal generated by \(t_j\).
	We denote its localization at \(S_{j-1}\) by
	\[
		\rho_j \colon S_{j-1}^{-1}C_j \to S_{j-1}^{-1}C_{j-1}.
	\]
\end{para}

\begin{para}
	Let us construct inductively subrings, for \(j=r,\ldots,0\)
	\[
		M_j \subset S_j^{-1}C_j= S_j^{-1}\su{(H_j)}{r-j}.
	\]
	To start, let \(M_r\) correspond to \(\Mcc(0)\) under the isomorphism (\ref{eq:H_Omega}).
	Next, assuming \(M_j\) constructed for some \(j\in \{1,\ldots,r\}\), set
	\begin{equation}
		\label{eq:def_M_j}
		M_{j-1} = \rho_j\big(M_j \cap S_{j-1}^{-1}C_j\big) \subset S_{j-1}^{-1}C_{j-1}.
	\end{equation}
\end{para}

\begin{remark}
	We may think of \(M_j\) as the set of images of ``sufficiently integral'' elements of \(\Mcc(0)\), interpolating between \(M_r\simeq \Mcc(0) \subset S^{-1}\Omega_G(\Speck)\) and \(M_0 \subset \su{\Laz}{r}\).
\end{remark}

\begin{para}
	\label{p:M_C}
	Since \(\rho_j(C_j)\subset C_{j-1}\) for every \(j\in \{1,\ldots,r\}\), the morphism \(\rho_j\) induces a morphism
	\begin{equation}
		\label{eq:M_C}
		M_j \cap C_j \to M_{j-1} \cap C_{j-1}.
	\end{equation}
\end{para}

\begin{lemma}
	\label{lemm:t_in_M}
	For every \(j \in \{1,\ldots,r\}\) the elements \(t_1^{-1}, \ldots , t_j^{-1} \in S_j^{-1}C_j\) belong to \(M_j\).
\end{lemma}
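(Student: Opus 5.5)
We argue by descending induction on \(j\), from \(j=r\) down to \(j=1\).

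\emph{Base case \(j=r\).} By (\ref{p:p_g_t_i}), the element \(p_{0,g_i}\in \Mcc(0)\) is sent to \(t_i^{-1}\) under the map of (\ref{prop:euler_morphism}), for every \(i=1,\ldots,r\). Since \(M_r\) corresponds to \(\Mcc(0)\) under (\ref{eq:H_Omega}), it follows that \(t_1^{-1},\ldots,t_r^{-1}\in M_r\). We use here that the isomorphism (\ref{eq:H_Omega}) followed by reduction to \(\su{(H_r)}{0}=H_r\) gives \(S_r^{-1}C_r=S_r^{-1}H_r\), because \(\Ipn{0}=0\).

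\emph{Inductive step.} Let \(j\in\{2,\ldots,r\}\), and assume \(t_1^{-1},\ldots,t_j^{-1}\in M_j\). By (\ref{eq:def_M_j}) it suffices to show two things for each \(i\le j-1\):
\begin{enumerate}[label=(\alph*)]
	\item \(t_i^{-1}\) lies in the subring \(S_{j-1}^{-1}C_j\) of \(S_j^{-1}C_j\);
	\item \(\rho_j(t_i^{-1})=t_i^{-1}\).
\end{enumerate}

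For (a), recall that \(t_i\) for \(i\le j-1\) belongs to the multiplicative set \(S_{j-1}\): take \(m_i=1\), which is not divisible by \(q_i>1\), and all other \(m\)'s zero. The element \(t_i\in H_j\) maps into \(C_j\), since \(C_j\) is the image of \(\su{(H_j)}{r-j}\). Hence \(t_i\) is invertible in \(S_{j-1}^{-1}C_j\), and its inverse maps to \(t_i^{-1}\in S_j^{-1}C_j\). This uses the inclusions (\ref{eq:C_in_B}), which give that \(S_{j-1}^{-1}C_j\) sits inside \(S_j^{-1}C_j\) compatibly.

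For (b), the map \(\rho_j\) is the localization at \(S_{j-1}\) of \(C_j\to C_{j-1}\), which is induced by \(t_j\mapsto 0\). This map fixes \(t_i\) for \(i<j\), so \(\rho_j(t_i^{-1})=t_i^{-1}\) in \(S_{j-1}^{-1}C_{j-1}\). Therefore \(t_i^{-1}=\rho_j(t_i^{-1})\in M_{j-1}\) for \(i=1,\ldots,j-1\), which completes the induction.

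The only delicate point is (a): we must confirm that \(t_i^{-1}\), viewed in \(S_j^{-1}C_j\), actually comes from \(S_{j-1}^{-1}C_j\), and not merely from the larger localization. This holds because \(t_i\in S_{j-1}\) for \(i<j\), together with the compatibility of the inclusions in (\ref{eq:C_in_B}). Note that \(t_j\) itself is not in \(S_{j-1}\); this is why the statement at level \(j-1\) only involves \(t_1^{-1},\ldots,t_{j-1}^{-1}\).
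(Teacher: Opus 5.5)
Your proposal is correct and follows the paper's proof: descending induction with the base case \(j=r\) coming from (\ref{p:p_g_t_i}), and an inductive step that notes that \(t_i^{-1}\) for \(i<j\) already lies in \(S_{j-1}^{-1}C_j\) and is fixed by \(\rho_j\), so that it lands in \(M_{j-1}\) by \eqref{eq:def_M_j}. You just spell out the details the paper leaves implicit, namely that \(t_i\in S_{j-1}\) and that \(t_j\mapsto 0\) fixes \(t_i\).
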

\begin{proof}
	We proceed by descending induction on \(j\), the case \(j=r\) being clear from (\ref{p:p_g_t_i}).
	If \(j<r\) we know by induction that \(t_1^{-1}, \ldots , t_j^{-1} \in M_{j+1}\), and those elements belong to \(S_j^{-1}C_{j+1}\) as well.
	So by the defining formula \eqref{eq:def_M_j} their images under \(\rho_{j+1}\) belong to \(M_j\).
\end{proof}

\begin{lemma}
	\label{lemm:lin_indep_rec}
	Let \(j \in \{1,\ldots,r\}\).
	Let \(y_1,\dots,y_m \in C_j\) be such that the elements \(\rho_j(y_1),\dots,\rho_j(y_m) \in C_{j-1}\) are linearly independent over \(M_{j-1} \cap C_{j-1}\).
	Then \(y_1,\dots,y_m \in C_j\) are linearly independent over \(M_j \cap C_j\).
\end{lemma}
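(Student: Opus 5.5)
The plan is to argue by contradiction, using a "divide by \(t_j\)" descent. Suppose \(\sum_{i=1}^m a_i y_i = 0\) in \(C_j\), with coefficients \(a_i \in M_j \cap C_j\) not all zero. The goal is to replace this relation by one whose coefficients still lie in \(M_j \cap C_j\), but with at least one coefficient outside the ideal \(t_j C_j\). Recall that \(t_j C_j\) is the kernel of \(C_j \to C_{j-1}\).

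Once such a relation is available, the conclusion is immediate. The map \(\rho_j\) is a ring morphism, so applying it gives \(\sum_i \rho_j(a_i)\rho_j(y_i) = 0\) in \(C_{j-1}\). By \eqref{eq:M_C}, each \(\rho_j(a_i)\) lies in \(M_{j-1}\cap C_{j-1}\). At least one \(\rho_j(a_i)\) is nonzero, since the corresponding \(a_i\) is not in \(t_jC_j\). This contradicts the assumed linear independence of the \(\rho_j(y_i)\) over \(M_{j-1}\cap C_{j-1}\).

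To produce such a relation, suppose every \(a_i\) lies in \(t_j C_j\), and write \(a_i = t_j a_i'\) with \(a_i' \in C_j\). The element \(t_j\) is invertible in the ambient ring \(S_j^{-1}\su{(H_j)}{r-j}\). Hence \(a_i' = t_j^{-1}a_i\), and by (\ref{lemm:t_in_M}) \(t_j^{-1}\in M_j\). Since \(M_j\) is a subring, it follows that \(a_i' \in M_j \cap C_j\). Also \(\sum_i a_i' y_i = t_j^{-1}\sum_i a_i y_i = 0\). So we get a new relation of the same type, with coefficients \(t_j^{-1}a_i\), and these are still not all zero.

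It remains to see that this process must stop. If it never stopped, then some nonzero coefficient \(a_i\) would lie in \(t_j^k C_j\) for every \(k\). By \eqref{eq:C_in_B} we have \(C_j \subset B_{r-j}\), where \(B_{r-j}\) is built with \(t = t_j\) as in (\ref{p:R_A_T}). Hence \(a_i \in \bigcap_k t_j^k B_{r-j}\), and this intersection is zero by (\ref{lemm:B_complete}). This is a contradiction, so after finitely many divisions some coefficient lies outside \(t_jC_j\), as required.

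The step needing care is this termination argument. One has to check that the inclusion \(C_j \subset B_{r-j}\) is compatible with multiplication by \(t_j\), so that \(t_j^kC_j \subset t_j^kB_{r-j}\). This holds because the inclusion is a ring map sending \(t_j\) to \(t\). One also has to check that (\ref{lemm:B_complete}) applies to \(R = R_{j-1}\), which uses the Landweber exactness given in (\ref{prop:Hj}.\ref{prop:Hj:1}).
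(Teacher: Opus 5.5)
Your proof is correct and follows the paper's argument. The paper phrases it as an induction on \(k\) showing that every \(a_i\) lies in \(t_j^k(M_j\cap C_j)\); it uses the same ingredients you do (the kernel of \(\rho_j\) on \(C_j\) is \(t_jC_j\), (\ref{p:M_C}), (\ref{lemm:t_in_M}), and (\ref{lemm:B_complete}) via \(C_j\subset B_{r-j}\)), whereas you arrange these steps as a descent by contradiction.
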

\begin{proof}
	Assume that \(a_1,\dots,a_m \in M_j \cap C_j\) are such that
	\[
		\sum_{i=1}^{m} a_i y_i=0 \in C_j.
	\]
	We prove that \(a_i=0\) for each \(i=1,\ldots,m\).
	To do so, we prove by induction on \(k \in \Nn\) that \(a_i \in t_j^k(M_j \cap C_j)\) for all \(i\).
	By (\ref{lemm:B_complete}) (recall from (\ref{eq:C_in_B}) that \(C_j \subset B_{r-j}\)), this will suffice to conclude the proof of the lemma.

	So let \(k\ge 0\), and assume that \(a_i = t_j^k b_i\) with \(b_i \in M_j \cap C_j\) for all \(i=1,\ldots,m\).
	Since \(C_j \subset S_j^{-1}\su{(H_j)}{r-j}\) and \(t_j\in S_j\), it follows that \(t_j^k\) is a nonzerodivisor in \(C_j\), and thus
	\[
		\sum_{i=1}^{m} b_i y_i=0\in C_j, \text{ and so } \sum_{i=1}^{m} \rho_j(b_i) \rho_j(y_i)=0\in C_{j-1}.
	\]
	Note that \(\rho_j(b_i) \in \rho_j(M_j \cap C_j) \subset M_{j-1} \cap C_{j-1}\) (see (\ref{p:M_C})), and thus the assumption of the lemma implies that \(\rho_j(b_i)=0\) for all \(i\).
	This means that \(b_i=t_jc_i\) with \(c_i \in C_j\) for all \(i=1,\ldots,m\).
	As \(t_j^{-1}M_j \subset M_j\) by (\ref{lemm:t_in_M}), we have \(c_i \in M_j \cap C_j\).
	This concludes the inductive proof.
\end{proof}

\begin{para}
	Recall from (\ref{lemm:alg_inj}.\ref{lemm:alg_inj:1}) that the element \(n_i=x_i\) of \(\Mcc \subset S^{-1}\Omega_G(\Speck)\) belongs to \(\Mcc(0)\) when \(i \in \su{N}{r}(q-1)\).
	It is thus mapped to \(M_r\) under the morphism \eqref{eq:H_Omega}.

	On the other hand, the element \(x_i\) belongs to \(\Omega_G(\Speck) \simeq S^{-1}\Omega_G(\Speck)\), hence is mapped to \(C_r=\im(H_r \to S_r^{-1}H_r)\) under the morphism \eqref{eq:H_Omega}.

	Using the maps \eqref{eq:M_C}, we may thus consider the image of \(x_i\) in \(M_j \cap C_j\) for every \(j=0,\ldots,r\) and \(i \in \su{N}{r}(q-1)\).
\end{para}

\begin{lemma}
	\label{lemm:M_alg_induc}
	For \(j\in \{0,\ldots,r-1\}\), the ring \(M_j\) is algebraic over the subring generated by the elements \(t_1^{-1},\ldots,t_j^{-1}\) and the images of the elements \(x_i\) for \(i \in \su{N}{r}(q-1)\).
\end{lemma}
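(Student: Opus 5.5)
We argue by descending induction on \(j\), starting from \(j=r\), where we prove the analogous statement with \(t_1^{-1},\ldots,t_{r-1}^{-1}\) only. In that case \(M_r\) corresponds to \(\Mcc(0)\), which by (\ref{lemm:alg_inj}.\ref{lemm:alg_inj:2}) with \(d=0\) is algebraic over \(\Pc(0)\). The ring \(\Pc(0)\) is generated by the \(n_i=x_i\) for \(i\in \su{N}{r}(q-1)\) together with \(p_{0,g_1},\ldots,p_{0,g_{r-1}}\), and the latter are identified with \(t_1^{-1},\ldots,t_{r-1}^{-1}\) by (\ref{p:p_g_t_i}). So \(M_r\) is algebraic over the subring generated by \(t_1^{-1},\ldots,t_{r-1}^{-1}\) and the \(x_i\).

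For the inductive step, let \(j\in\{1,\ldots,r\}\) and suppose \(M_j\) is algebraic over the subring \(P_j\) generated by \(t_1^{-1},\ldots,t_{\min(j,r-1)}^{-1}\) and the images of the \(x_i\). Take \(z\in M_{j-1}\), and write \(z=\rho_j(y)\) with \(y\in M_j\cap S_{j-1}^{-1}C_j\) using (\ref{eq:def_M_j}). There is a nonzero polynomial \(Q\in P_j[X]\) with \(Q(y)=0\). The aim is to push this relation through \(\rho_j\). The difficulty is that \(t_j^{-1}\) may appear in the coefficients of \(Q\), and \(t_j^{-1}\) does not lie in \(S_{j-1}^{-1}C_j\), so \(\rho_j\) cannot be applied to it.

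To deal with this, view \(Q\) as a polynomial in \(X\) and an extra variable \(T=t_j^{-1}\), with coefficients in the subring \(P_{j-1}\) generated by \(t_1^{-1},\ldots,t_{j-1}^{-1}\) and the \(x_i\). Multiply by a suitable power of \(t_j\) to clear denominators. This gives a polynomial identity \(\sum_k t_j^{k}\,Q_k(y)=0\), with each \(Q_k\in P_{j-1}[X]\). These elements lie in \(S_{j-1}^{-1}C_j\): the \(x_i\) lie in \(C_j\), and \(t_1^{-1},\ldots,t_{j-1}^{-1}\) become invertible after inverting \(S_{j-1}\). Applying \(\rho_j\), whose kernel is generated by \(t_j\), kills every term except \(k=0\), and we obtain \(\rho_j(Q_0)(z)=0\).

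It remains to ensure that \(\rho_j(Q_0)\) is a nonzero polynomial; this is the main obstacle. We plan to use the algebraic independence results. We choose \(Q\) so that its lowest-order part \(Q_0\) is nonzero as a polynomial in \(X\) with coefficients in \(P_{j-1}\); this is possible after dividing by a power of \(T\), since \(T\) is invertible. We then need the map \(P_{j-1}[X]\to \rho_j(P_{j-1})[X]\) to be injective. This holds because, by (\ref{lemm:alg_indep_S_Omega}), the images of \(t_1^{-1},\ldots,t_{j-1}^{-1}\) and the \(x_i\) for \(i\in\su{N}{r}(q-1)\) in \(S_{j-1}^{-1}\su{(H_{j-1})}{r-j+1}\) are algebraically independent over \(\Fp\), while \(P_{j-1}\) is itself a polynomial ring on these generators over \(\Fp\) (or over \(\Zz\) when \(j=r\), which needs extra care with the reduction mod \(p\)). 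So the coefficients of \(Q_0\) are not all mapped to zero, and \(z\) is algebraic over \(\rho_j(P_{j-1})\).

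At \(j=r\) there is an additional subtlety. The relation holds over \(\Zz[c^{-1}]\), and before applying \(\rho_r\) we must divide it by the appropriate power of \(p\) so that it stays nonzero modulo \(p\). Here we use that \(p\) is a nonzerodivisor in the domain \(\Mcc(0)\).
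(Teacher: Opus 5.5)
Your argument is correct, but it runs in the opposite direction from the paper's proof. The paper argues by contradiction and lifts transcendence upward. If \(y\in M_j\) were not algebraic over the subring \(D_j\) generated by \(t_1^{-1},\ldots,t_j^{-1}\) and the \(x_i\), then (\ref{lemm:alg_indep_S_Omega}) makes \(y\) and these generators algebraically independent over \(\Fp\). The lifting statements (\ref{prop:Hj}.\ref{prop:Hj:3}) and (\ref{prop:Hj}.\ref{prop:Hj:4}) then show that any preimage \(x\) of \(y\) under \(\rho_{j+1}\) is transcendental over \(D_{j+1}\). For \(j=r-1\), this means \(x\in M_r\simeq \Mcc(0)\) is transcendental over \(\Pc(0)\). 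Either way this contradicts the induction hypothesis, respectively (\ref{lemm:alg_inj}).

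You instead push an explicit relation downward through \(\rho_j\). You use that \(\ker\rho_j\) is generated by \(t_j\), together with algebraic independence at level \(j-1\), to see that the top \(t_j^{-1}\)-coefficient survives. At \(j=r\) you add \(p\)-torsion-freeness. In effect you redo inline the bookkeeping of (\ref{lemm:alg_indep_induct}): extracting the leading coefficient and dividing out powers of \(p\). The description of \(\ker\rho_j\) takes over the role that (\ref{prop:kernel_loc_induct}.\ref{prop:kernel_loc_induct:2}) plays in the paper. The paper's version is shorter because that bookkeeping is already packaged in (\ref{prop:Hj}). Yours is constructive, yielding an explicit nonzero relation over the smaller subring.

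Two small points to tighten:
\begin{enumerate}
\item The cleared relation \(\sum_k t_j^k Q_k(y)=0\) a priori holds in \(S_j^{-1}C_j\). You should note that \(S_{j-1}^{-1}C_j=B_{r-j}\to S_j^{-1}C_j\) is injective by (\ref{lemm:s_nzd_B}), so the relation holds in the source of \(\rho_j\).
\item Rather than ``dividing by a power of \(T\)'', just let \(K\) be the largest power of \(t_j^{-1}\) whose coefficient is nonzero as an abstract polynomial, and multiply by \(t_j^{K}\).
\end{enumerate}
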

\begin{proof}
	We denote again by \(x_i\in M_j\) the images of the elements \(x_i\).
	We proceed by descending induction on \(j\).
	Denote by \(D_j\) the subring of \(M_j\) generated by the elements \(t_1^{-1},\ldots,t_j^{-1}\) and \(x_i\) for \(i \in \su{N}{r}(q-1)\).
	Assume that \(M_j\) contains an element \(y\) which is not algebraic over \(D_j\).
	By (\ref{lemm:alg_indep_S_Omega}), we deduce that the elements \(y,t_1^{-1},\ldots,t_j^{-1}\) and \(x_i\) for \(i \in \su{N}{r}(q-1)\) are algebraically independent in \(M_j\) over \(\Fp\).
	Write \(y=\rho_{j+1}(x)\) with \(x \in M_{j+1}\), and let us seek a contradiction.

	Assume first that \(j=r-1\).
	Then by (\ref{prop:Hj}.\ref{prop:Hj:4}) the elements \(x,t_1^{-1},\ldots,t_{r-1}^{-1}\) and \(x_i\) for \(i \in \su{N}{r}(q-1)\) are algebraically independent in \(M_r\) over \(\Zz\).
	In particular \(x\in M_r \simeq \Mcc(0)\) is not algebraic over \(D_r \simeq \Pc(0)\), a contradiction with (\ref{lemm:alg_inj}).

	Next assume instead that \(j<r-1\).
	Then by (\ref{prop:Hj}.\ref{prop:Hj:3}) the elements \(x,t_1^{-1},\ldots,t_{j+1}^{-1}\) and \(x_i\) for \(i \in \su{N}{r}(q-1)\) are algebraically independent in \(M_{j+1}\) over \(\Fp\).
	In particular \(x \in M_{j+1}\) is not algebraic over \(D_{j+1}\), a contradiction with the induction hypothesis.
\end{proof}

\begin{para}
	For \(d\in \Nn\), we denote by \(\su{\La}{r}(d)\) the subring of \(\su{\La}{r}\) generated by the homogeneous elements of degree \(\ge -d\).
\end{para}

\begin{proposition}
	\label{lemm:M_0}
	We have \(M_0=\su{\La}{r}(q-1)\).
\end{proposition}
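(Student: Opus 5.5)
We prove the two inclusions separately. The inclusion \(\su{\La}{r}(q-1)\subset M_0\) comes from explicit generators. The reverse inclusion comes from (\ref{lemm:M_alg_induc}) together with the fact that a polynomial ring is "algebraically closed" over its coefficient ring.

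First we note that each \(M_j\) is a subring. Indeed \(M_r\) is a subring, \(S_{j-1}^{-1}C_j\) is a subring of \(S_j^{-1}C_j\), and \(\rho_j\) is a ring morphism, so the subring property passes down the induction \eqref{eq:def_M_j}. Also \(S_0\) is trivial and \(H_0=\La\), so \(S_0^{-1}C_0=C_0=\su{\La}{r}\). Thus \(M_0\) is a subring of \(\su{\La}{r}\), and it contains the image of \(\Zz\), i.e.\ all of \(\Fp\) when \(r\ge1\).

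The degenerate case \(r=0\) must be handled separately. Then \(q=1\) and \(M_0=M_r\simeq\Mcc(0)\). Since \(\widehat G\) is trivial, (\ref{p:Mcc_1}) gives \(\Mcc(0)=\La(0)=\Zz[c^{-1}]=\su{\La}{0}(0)\), which is the claim. From now on we assume \(r\ge1\).

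For \(\supset\): by the paragraph preceding (\ref{lemm:M_alg_induc}), for each \(i\in\su{N}{r}(q-1)\) the element \(x_i\) has an image in \(M_0\cap C_0\), obtained through the maps \eqref{eq:M_C}. These maps are induced by \(t_j\mapsto0\), so this image is the nonequivariant class \(\varepsilon(x_i)\). By (\ref{p:lc_rc_G}) that class is \(\ell_i=\lc X_i^+\rc-\lc X_i^-\rc\) modulo \(\Ipn{r}\). By (\ref{prop:actions}.\ref{prop:actions:1}) and (\ref{lemm:pol_gen_L}), these \(\ell_i\) for \(i\in\su{N}{r}(q-1)\) are polynomial generators of the \(\Fp\)-algebra \(\su{\Laz}{r}(q-1)=\su{\La}{r}(q-1)\), using (\ref{p:Laz_La}). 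Since \(M_0\) is a subring containing \(\Fp\), we obtain \(\su{\La}{r}(q-1)\subset M_0\).

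For \(\subset\): applying (\ref{lemm:M_alg_induc}) with \(j=0\), every element of \(M_0\) is algebraic over the subring generated by the \(x_i\), that is, over \(\su{\La}{r}(q-1)\). By (\ref{lemm:pol_gen_L}), \(\su{\La}{r}\) is a polynomial algebra over \(A=\su{\La}{r}(q-1)\) in the variables \(\ell_i\) with \(i\in\su{N}{r}\) and \(i\ge q\). It therefore remains to show that an element \(y\in A[T_\lambda]\) which is algebraic over \(A\) lies in \(A\). Suppose \(y\notin A\), and fix a monomial order on the variables \(T_\lambda\). Then the leading monomial of \(y\) is nonconstant. Because \(A\) is a domain, the leading term of \(y^k\) is the \(k\)-th power of the leading term of \(y\). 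Hence in a relation \(\sum_k a_ky^k=0\) with some \(a_k\neq0\), the largest \(k\) with \(a_k\ne0\) contributes a leading monomial that no other term can cancel, a contradiction. Thus \(M_0\subset A\), which completes the proof.

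The main point requiring care is the identification of the image of \(x_i\) in \(M_0\subset\su{\La}{r}\) with \(\ell_i\) modulo \(\Ipn{r}\). This needs the composite of the maps \eqref{eq:M_C} to agree with \(\varepsilon\) followed by reduction, which holds because each \(\rho_j\) is induced by \(t_j\mapsto0\). The rest is formal.
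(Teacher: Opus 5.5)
Your proof is correct and follows essentially the same route as the paper's proof: the case \(r=0\) via (\ref{p:Mcc_1}), then (\ref{lemm:M_alg_induc}) with \(j=0\) together with the fact that \(\su{\La}{r}(q-1)\) is algebraically closed in the polynomial ring \(\su{\La}{r}\). The only difference is that you spell out two points the paper leaves implicit: that \(M_0\supset \su{\La}{r}(q-1)\), via the images of the \(x_i\) and the compatibility of the maps \eqref{eq:M_C} with \(\varepsilon\), and the leading-monomial argument for algebraic closedness.
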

\begin{proof}
	When \(r=0\), we have \(q=1\), and \(\Mcc(0)=\La(0)=\Zz[c^{-1}]\) by (\ref{p:Mcc_1}), which implies that \(M_0 = \Zz[c^{-1}]\), as required.
	So we assume that \(r>0\), so that \(\su{\Laz}{r}=\su{\La}{r}\) by (\ref{p:Laz_La}).
	By (\ref{lemm:M_alg_induc}) the subring \(M_0 \subset \su{\La}{r}\) is algebraic over the \(\Fp\)-subalgebra generated by the images in \(\su{\La}{r}\) of the elements \(x_i\) for \(i\in \su{N}{r}(q-1)\), namely the subring \(\su{\La}{r}(q-1)\) (by (\ref{prop:actions}) and (\ref{lemm:pol_gen_L})).
	But \(\su{\La}{r}(q-1)\) is algebraically closed in \(\su{\La}{r}\) (the latter being a polynomial ring over the former by (\ref{lemm:pol_gen_L})), whence the statement.
\end{proof}

\begin{para}
	\label{p:C}
	We denote by \(C\) the image of \(\Omega_G(\Speck) \to S^{-1}\Omega_G(\Speck)\).
\end{para}

\begin{lemma}
	\label{lemm:p_power_in_B}
	The cokernel of the inclusion \(\Mcc(0) \cap C\subset \Mcc(0)\) is of \(p\)-primary torsion.
\end{lemma}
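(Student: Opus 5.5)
We must show that every element of \(\Mcc(0)\), viewed inside \(S^{-1}\Omega_G(\Speck)\) via (\ref{th:inj_M}), becomes integral (lies in \(C\)) after multiplication by a power of \(p\). Since \(\Mcc(0)\cap C\) is a subring of \(\Mcc(0)\) (as \(C\) is a ring), it suffices to treat a set of ring generators of \(\Mcc(0)\). By (\ref{prop:ell_pol}), \(\Mcc(0)=\Zz[c^{-1}][p_{0,g},\ g\in\widehat{G}\smallsetminus\{0\}]\), and \(p_{0,g}=a_{0,g}\) is mapped to \(c_1(\character{g})^{-1}\) by (\ref{prop:euler_morphism}). Also \(c^{-1}\in C\) trivially. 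So the claim reduces to: for each nonzero character \(g\), some \(p^N c_1(\character{g})^{-1}\) lies in \(C\). A cokernel generated as a module by monomials in elements each killed by a power of \(p\) modulo \(\Mcc(0)\cap C\) is then \(p\)-primary torsion; more precisely, a product of elements \(p^{N_i}u_i\in C\) shows \(p^{\sum N_i}\prod u_i\in C\), so every monomial, hence every element, is killed by a power of \(p\) modulo \(C\).

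For the key step, fix \(g\ne 0\) of order \(m\), a power of \(p\), and write \(t=c_1(\character{g})\in\Omega_G(\Speck)\). Then \(\fglr{m}(t)=c_1(\character{mg})=0\) in \(\Omega_G(\Speck)\). Write \(\fglr{m}(t)=mt+t^2f(t)\) with \(f\in\La[[t]]\); more usefully \(\fglr{m}(t)=t\cdot\phi(t)\) with \(\phi(t)=m+t\,f(t)\). Then \(t\phi(t)=0\) in \(\Omega_G(\Speck)\), and since \(t\) is invertible in \(S^{-1}\Omega_G(\Speck)\), we get \(\phi(t)=0\) there, i.e.\ \(m=-tf(t)\) in \(S^{-1}\Omega_G(\Speck)\). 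Hence \(m\,t^{-1}=-f(t)\), and \(f(t)\) is the image of an element of \(\Omega_G(\Speck)\) (the graded power series converges in the complete ring \(\Omega_G(\Speck)\cong H_r\)). Thus \(m\,c_1(\character{g})^{-1}\in C\), with \(m\) a power of \(p\).

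The only point needing care is this convergence/identification: that \(f(t)\) makes sense in \(\Omega_G(\Speck)\) and its image in the localization agrees with \(-m t^{-1}\). This follows from the description \(\Omega_G(\Speck)\simeq H_r\) as a quotient of a graded power series ring (\ref{cor:Omega_diag}), where substituting \(t=c_1(\character{g})\) (a combination of the \(t_i\) via the formal group law) into power series is well defined, and the identity \(\fglr{m}(t)=t\phi(t)\) holds in \(\La[[t]]\). With this, combining with the reduction of the first paragraph concludes the proof.
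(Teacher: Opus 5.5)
Your proposal is correct and follows essentially the same route as the paper. There too one writes \(\fglr{p^m}(x)=x\cdot(p^m+x f(x))=0\) for \(x=c_1(\character{g})\), uses that \(x\in S\) is invertible to get \(p^m x^{-1}=p^m p_{0,g}\in \Mcc(0)\cap C\), and concludes because the elements \(p_{0,g}\) generate the ring \(\Mcc(0)\); your extra care about substituting \(x\) into power series in \(\Omega_G(\Speck)\simeq H_r\), and about passing from generators to all of \(\Mcc(0)\), only makes explicit what the paper leaves implicit.
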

\begin{proof}
	Let \(g\in \widehat{G} \smallsetminus \{0\}\), and let \(x=c_1(\character{g}) \in \Omega_G(\Speck)\).
	Then for \(m\) large enough, we have \(p^m g=0\) in \(\widehat{G}\), and so
	\[
		\fglr{p^m}(x) =0 \in \Omega_G(\Speck).
	\]
	Recall that the leading coefficient of the power series \(\fglr{p^m}(t) \in \Laz[[t]]\) is \(p^m\). Since \(x\in S\), we deduce that there exists \(a \in C\) such that
	\[
		p^m= xa \in C \subset S^{-1}\Omega_G(\Speck).
	\]
	As \(p_{0,g}=x^{-1} \in \Mcc(0)\), we conclude that \(p^m p_{0,g}=a \in \Mcc(0) \cap C\).
	Since the ring \(\Mcc(0)\) is generated by the elements \(p_{0,g}\) for \(g\in \widehat{G} \smallsetminus \{0\}\), the lemma follows.
\end{proof}

\begin{proposition}
	\label{lemm:lin_indep:base}
	Let \(y_1,\dots,y_m \in \Omega_G(\Speck)\) be elements whose images in \(\su{\La}{r}\) are linearly independent over \(\su{\La}{r}(q-1)\).
	Then \(y_1,\dots,y_m \in S^{-1}\Omega_G(\Speck)\) are linearly independent over \(\Mcc(0)\).
\end{proposition}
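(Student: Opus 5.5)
The plan is to run the descending induction on \(j\) through the chain of rings \(C_j\) and \(M_j\), using (\ref{lemm:lin_indep_rec}) at each step. Start at \(j=0\). There we have \(C_0 \subset \su{\La}{r}\), and \(M_0=\su{\La}{r}(q-1)\) by (\ref{lemm:M_0}). The images of the \(y_i\) in \(C_0\) are, by construction of the maps \(C_j\to C_{j-1}\) (induced by \(t_j\mapsto 0\)), exactly their images under \(\varepsilon\) in \(\su{\La}{r}\). By hypothesis these are linearly independent over \(\su{\La}{r}(q-1)=M_0\), hence over \(M_0\cap C_0\). Applying (\ref{lemm:lin_indep_rec}) successively for \(j=1,\ldots,r\), we conclude that the images of \(y_1,\ldots,y_m\) in \(C_r=C\) are linearly independent over \(M_r\cap C_r\), which corresponds to \(\Mcc(0)\cap C\) under (\ref{eq:H_Omega}). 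At each stage the element considered is the image of \(y_i\) in \(C_j\), and \(\rho_j\) of it is the image in \(C_{j-1}\); this is consistent because the maps \(C_j\to C_{j-1}\) are compatible with the maps \(H_j\to H_{j-1}\).

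It remains to pass from \(\Mcc(0)\cap C\) to \(\Mcc(0)\). Suppose \(\sum a_i y_i=0\) in \(S^{-1}\Omega_G(\Speck)\) with \(a_i\in\Mcc(0)\). By (\ref{lemm:p_power_in_B}) there is \(N\) with \(p^N a_i\in \Mcc(0)\cap C\) for all \(i\). Then \(\sum (p^Na_i)y_i=0\), so the previous step gives \(p^Na_i=0\). Since \(\Mcc(0)\) is a polynomial ring over \(\Zz[c^{-1}]\) by (\ref{prop:ell_pol}), it is torsion-free (\(p\ne c\)), so \(a_i=0\). The case \(r=0\) is trivial: \(\Mcc(0)=\Zz[c^{-1}]\) and there is nothing to localize.

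The main point to check is the base case. We need linear independence over \(M_0\cap C_0\) as a hypothesis of (\ref{lemm:lin_indep_rec}) at \(j=1\), and this follows immediately from independence over the larger ring \(M_0\). We also need the identification of \(C_0\) with a subring of \(\su{\La}{r}\): \(S_0\) is trivial, so \(C_0\) is the image of \(\su{(H_0)}{r}=\su{\La}{r}\), which is all of it. Beyond that, the argument is bookkeeping of which ring each image lives in.
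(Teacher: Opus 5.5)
Your proof is correct and is essentially the paper's argument. You propagate linear independence up the chain via (\ref{lemm:lin_indep_rec}), starting from \(M_0=\su{\La}{r}(q-1)\) given by (\ref{lemm:M_0}), to get independence over \(M_r\cap C_r \simeq \Mcc(0)\cap C\). You then remove the intersection with \(C\) using (\ref{lemm:p_power_in_B}) and the absence of \(p\)-torsion in \(\Mcc(0)\). The parenthetical \(p\ne c\) is not needed: \(\Mcc(0)\) is a polynomial ring over \(\Zz[c^{-1}]\subset\Qq\), hence torsion-free in any case.
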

\begin{proof}
	Note that \(C\) corresponds to \(C_r\) under the isomorphism \eqref{eq:H_Omega}, and that \(M_0=M_0\cap C_0\).
	Given (\ref{lemm:M_0}), it follows inductively from (\ref{lemm:lin_indep_rec}) that \(y_1,\dots,y_m \in S^{-1}\Omega_G(\Speck)\) are linearly independent over \(\Mcc(0)\cap C\).
	Since \(\Mcc(0)\) has no nonzero \(p\)-primary torsion (see (\ref{prop:ell_pol})), we deduce from (\ref{lemm:p_power_in_B}) that those elements are linearly independent over \(\Mcc(0)\).
\end{proof}

\subsection{Filtering by fixed locus dimension}
\label{sect:filtering}

It follows from (\ref{prop:H_ker}) that the forgetful morphism \(\Omega_G(\Speck) \to \Omega(\Speck)=\La\) descends to a morphism (see (\ref{p:C}))
\[
	\epsilon\colon C \to \su{\La}{r}.
\]

Let us consider the filtration \(F_q^{\bullet}\su{\La}{n}\) defined in (\ref{def:F_q_L}), and the subgroup \(\Mcd{d} \subset \Mcc\) defined in (\ref{p:Mcc_ge_q}).
\begin{proposition}
	\label{prop:main}
	Let \(d \in \Nn\). The morphism \(\epsilon \colon C\to \su{\La}{r}\) maps \(\Mcd{d} \cap C\) into \(F_q^d\su{\La}{r}\).
\end{proposition}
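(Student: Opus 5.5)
The plan is to argue by contradiction. The idea is to compare the rank of \(\Mcd{d}\) as an \(\Mcc(0)\)-module with an explicit maximal linearly independent family built from the classes \(x_i=n_i\) of (\ref{p:x_i}), and to use (\ref{lemm:lin_indep:base}) to transport linear independence from \(\su{\La}{r}\) back to \(\Mcc\). We may assume \(r\ge 1\), since for \(r=0\) everything reduces to the nonequivariant case.

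\emph{Step 1: the target filtration.} By (\ref{prop:actions}.\ref{prop:actions:1}) and (\ref{lemm:pol_gen_L}), \(\su{\La}{r}\) is a polynomial \(\Fp\)-algebra in the images of \(x_i\) for \(i\in\su{N}{r}\). The variables with \(i<q\) have \(q\)-degree \(0\) and generate \(\su{\La}{r}(q-1)\). Hence \(\su{\La}{r}\) is a free \(\su{\La}{r}(q-1)\)-module with basis the monomials \(x^\mu\) in the \(x_i\) with \(i\ge q\), \(i\in \su{N}{r}\). By (\ref{prop:L_q_grading}), \(F^d_q\su{\La}{r}\) is the \(\su{\La}{r}(q-1)\)-span of those \(x^\mu\) with \(\deg_q\mu\le d\). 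Since \(q\ge p^r>p^{r-1}-1\), no index \(i\ge q\) is excluded from \(\su{N}{r}\). So for each \(k\ge1\) there are exactly \(q\) variables of \(q\)-degree \(k\), namely \(x_{kq},\dots,x_{kq+q-1}\).

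\emph{Step 2: reduction.} Let \(b\in\Mcd{d}\cap C\) and suppose \(\epsilon(b)\notin F^d_q\su{\La}{r}\). Write \(\epsilon(b)=\sum_\mu \bar a_\mu x^\mu\) with \(\bar a_\mu\in\su{\La}{r}(q-1)\). Lift each coefficient \(\bar a_\mu\) with \(\deg_q\mu\le d\) to a polynomial \(a_\mu\) in the \(x_i\) with \(i\in\su{N}{r}(q-1)\). These \(x_i\) lie in \(\Mcc(0)\cap C\) by (\ref{lemm:alg_inj}.\ref{lemm:alg_inj:1}), and the \(x^\mu\) lie in \(\Mcd{d}\cap C\) by (\ref{p:y_Mcc_d}). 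Since \(\Mcd{d}\) is an \(\Mcc(0)\)-module, the element \(b'=b-\sum_{\deg_q\mu\le d}a_\mu x^\mu\) still lies in \(\Mcd{d}\cap C\). Its image \(\epsilon(b')\) is a nonzero combination of monomials of \(q\)-degree \(>d\). Therefore the images in \(\su{\La}{r}\) of \(b'\) and of the \(x^\mu\) with \(\deg_q\mu\le d\) are linearly independent over \(\su{\La}{r}(q-1)\). By (\ref{lemm:lin_indep:base}), these elements are then linearly independent over \(\Mcc(0)\) inside \(\Mcd{d}\).

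\emph{Step 3: counting.} By (\ref{prop:ell_pol}), \(\Mcc\) is polynomial over \(\Mcc(0)=\Zz[c^{-1}][p_{0,g}]\) in the \(\ell_i\) and \(p_{i,g}\) for \(i\ge1\). Both kinds of variable have degree \(-i\), giving \(1+(q-1)=q\) variables in each degree \(i\ge1\). So \(\Mcd{d}\) is free over \(\Mcc(0)\) with basis the monomials of weight \(\le d\). This is exactly the same count as the monomials \(x^\mu\) with \(\deg_q\mu\le d\) from Step 1, since both index sets have \(q\) variables per weight \(k\ge1\). Tensoring with the fraction field of the domain \(\Mcc(0)\), we would get \(N+1\) linearly independent vectors in a vector space of dimension \(N\), which is a contradiction.

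The main obstacle is the bookkeeping in Step 2. It relies on \(\Mcc(0)\cap C\) containing lifts of \(\su{\La}{r}(q-1)\), which holds because \(n_i\in\Mcd{0}\) for \(i<q\). It also relies on the two gradings being compatible, namely the grading of \(\Mcc\), in which \(n_i\) has degree minus the dimension of the fixed locus, and the \(q\)-grading on \(\su{\La}{r}\). The exact rank equality in Step 3 must also be checked carefully.
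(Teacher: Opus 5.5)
Your proposal is correct and is essentially the paper's proof: you lift an \(F^0_q\su{\La}{r}\)-basis of \(F^d_q\su{\La}{r}\) to \(\Mcd{d}\cap C\) via polynomials in the \(x_i\), transport linear independence with (\ref{lemm:lin_indep:base}), and contradict the equality of the free ranks of \(\Mcd{d}\) over \(\Mcc(0)\) and of \(F^d_q\su{\La}{r}\) over \(F^0_q\su{\La}{r}=\su{\La}{r}(q-1)\). The only difference is cosmetic: your subtraction step \(b'=b-\sum a_\mu x^\mu\) replaces the paper's remark that \(\su{\La}{r}/F^d_q\su{\La}{r}\) is torsion-free, and it is not needed, since \(\epsilon(b)\) already has a nonzero coefficient on some monomial of \(q\)-degree \(>d\), which gives the required linear independence directly.
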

\begin{proof}
	First recall that \(F_q^0\su{\La}{r}\) is a subring of \(\su{\La}{r}\), indeed, as noted in (\ref{p:F0_L_q-1}), we have \(F_q^0\su{\La}{r}=\su{\La}{r}(q-1)\).

	Let us fix a family of polynomial generators \(\ell_i\in \Laz^{-i}\) of \(\Laz\) (which is possible by (\ref{p:pol_gen_laz})).
	It follows from (\ref{prop:ell_pol}) that the graded \(\Mcc(0)\)-algebra \(\Mcc\) is polynomial, with \(q\) generators in each negative degree:
	namely in degree \(-e<0\) those are \(p_{e,g}\) for \(g \in \widehat{G}\smallsetminus \{0\}\), together with \(\ell_e\).

	By (\ref{prop:L_q_grading}), (\ref{p:F_loc}), and (\ref{lemm:pol_gen_L}), the \(F_q^0\su{\La}{r}\)-algebra \(\su{\La}{r}\) is also polynomial, with the same number \(q\) of generators in each positive degree with respect to the grading of (\ref{prop:L_q_grading}):
	namely in degree \(e>0\) those are \(\ell_{qe},\dots,\ell_{qe+q-1}\); note that \(\{qe,\ldots ,qe+q-1\} \subset \su{N}{r}\) because \(qe\ge q\ge p^r> p^{r-1}-1\).
	In particular the \(\Mcc(0)\)-module \(\Mcd{d}\) and the \(F_q^0\su{\La}{r}\)-module \(F_q^d\su{\La}{r}\) are both free of the same rank \(r(d)\).

	Let \(y_1,\dots,y_{r(d)} \in F_q^d\su{\La}{r}\) be a linearly independent family over \(F_q^0\su{\La}{r}\).
	For each \(i \in \{1,\ldots,r(d)\}\), we may then write \(y_i=\epsilon(z_i)\) with \(z_i \in \Mcd{d} \cap C\):
	indeed by (\ref{prop:L_q_grading}), we have \(y_i=P_i(\ell_1,\dots)\) for some polynomial \(P_i \in (\Zz[c^{-1}])[Y_1,\dots]\) with \(\deg_q P_i\le d\); then by \eqref{p:deg_q} we may take \(z_i = P_i(x_1,\dots)\).

	Now assume that \(x \in \Mcc \cap C\) is such that \(\epsilon(x) \not \in F_q^d\su{\La}{r}\).
	Recall that \(F_q^d\su{\La}{r}\) is the set of elements of degrees \(\le d\) with respect to a certain grading of the \(F_q^0\su{\La}{r}\)-algebra \(\su{\La}{r}\).
	Therefore the quotient \(\su{\La}{r}/F_q^d\su{\La}{r}\) has no \(F_q^0\su{\La}{r}\)-torsion, being a free module.
	It follows that the elements \(\epsilon(x),y_1,\dots,y_{r(d)}\) of \(\su{\La}{r}\) are linearly independent over \(F_q^0\su{\La}{r}=\su{\La}{r}(q-1)\).
	Thus by (\ref{lemm:lin_indep:base}), the \(r(d)+1\) elements \(x,z_1,\dots,z_{r(d)}\) are linearly independent over \(\Mcc(0)\).
	Those elements cannot all belong to \(\Mcd{d}\), which is free of rank \(r(d)\) as an \(\Mcc(0)\)-module (this can be seen for instance by tensoring with the function field of the domain \(\Mcc(0)\)).
	Therefore \(x \not \in \Mcd{d}\), concluding the proof.
\end{proof}

We can now prove the main theorem, which establishes one of the inclusions in Theorem~\ref{th:main} in the introduction.
We write \(F_q^{-\infty}\su{\Laz}{r}=0\).
\begin{theorem}
	\label{th:fixed_Fd}
	Let \(X\) be a smooth projective \(k\)-variety with a \(G\)-action, and let \(d=\dim X^G\).
	Then \(\lc X \rc \in F_q^d\su{\Laz}{r}\).
\end{theorem}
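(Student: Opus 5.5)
The plan is to reduce everything to (\ref{prop:main}), applied to the equivariant class of \(X\). The case \(d=-\infty\), that is \(X^G=\varnothing\), is exactly (\ref{th:empty_ipnr}), since \(F_q^{-\infty}\su{\Laz}{r}=0\) means the statement asks that \(\lc X\rc\in\Ipn{r}\). So I assume \(X^G\neq\varnothing\) and \(d=\dim X^G\ge 0\).

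First I consider the class \(\lc X\rc_G\in\Omega_G(\Speck)\) and its image in \(S^{-1}\Omega_G(\Speck)\); call that image \(x\). By definition of \(C\) in (\ref{p:C}), \(x\in C\). Next, by the concentration theorem (\ref{p:concentration}), \(x\) is the image under the morphism of (\ref{prop:euler_morphism}) of the class \(\lc N\to X^G\rc\in\Mcc\), where \(N\) is the normal bundle of \(X^G\to X\). This requires \(N^G=0\), which holds by (\ref{prop:fixed_smooth}), and \(X^G\) smooth projective with trivial action. Using the injectivity theorem (\ref{th:inj_M}), I identify \(\Mcc\) with its image, so \(x\in\Mcc\).

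To place \(x\) in \(\Mcd{d}\), I decompose \(X^G\) into connected components, each of dimension \(\le d\). The class \(\lc N\to X^G\rc\) is the sum of the classes of the restrictions of \(N\) to these components. By (\ref{p:Mcc_dim}) each summand lies in \(\Mcd{d}\), hence \(x\in\Mcd{d}\cap C\).

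Now (\ref{prop:main}) gives \(\epsilon(x)\in F_q^d\su{\La}{r}\). It remains to identify \(\epsilon(x)\). By construction \(\epsilon\) is induced by the forgetful map \(\Omega_G(\Speck)\to\Omega(\Speck)=\La\) followed by reduction modulo \(\Ipn{r}\); it is well defined on \(C\) by (\ref{prop:H_ker}). Using (\ref{p:lc_rc_G}), \(\epsilon(x)\) is the image of \(\lc X\rc\) in \(\su{\La}{r}\), and this equals \(\su{\Laz}{r}\) when \(r\ge1\) by (\ref{p:Laz_La}). The case \(r=0\) is trivial, since then \(G\) is trivial and \(q=1\), so every class of dimension \(d\) lies in \(F_1^d\Laz\).

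The only delicate point is bookkeeping: checking that the element of \(\Mcc\) produced by concentration is literally the element of \(C\) to which \(\epsilon\) applies. This is exactly what the injectivity identification guarantees. The real difficulty is hidden in (\ref{prop:main}), which is already available.
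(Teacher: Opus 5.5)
Your proposal is correct and follows the paper's proof essentially step for step. Like the paper, you handle the empty and trivial-group cases separately, place the image of \(\lc X \rc_G\) in \(\Mcd{d}\cap C\) via concentration and the injectivity identification, apply (\ref{prop:main}), and identify the image with \(\lc X \rc\) via (\ref{p:lc_rc_G}) and (\ref{p:Laz_La}). The only difference is cosmetic: splitting \(X^G\) into connected components is unnecessary, since (\ref{p:Mcc_dim}) applies to \(N \to X^G\) directly because \(\dim X^G \le d\).
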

\begin{proof}
	Since the case \(X^G= \varnothing\) is covered by (\ref{th:empty_ipnr}), we assume that \(d\ne -\infty\).
	We may also assume that \(G\) is nontrivial, i.e.\ \(r \ge 1\).
	This implies that \(\su{\Laz}{r}=\su{\La}{r}\) by (\ref{p:Laz_La}).
	Then the class \(\lc X \rc_G\in S^{-1}\Omega_G(\Speck)\) belongs to the subgroup \(\Mcd{d}\cap C\), and therefore it image in \(\su{\Laz}{r}\) belongs to the subring \(F^d_q\su{\Laz}{r}\), by (\ref{prop:main}).
	That image is \(\lc X \rc\) by (\ref{p:lc_rc_G}).
\end{proof}

\section{Conclusion}
\label{sect:conclusion}
In this section \(G\) is an algebraic group of finite type over \(k\).

\subsection{Fixed-point-free actions}
\begin{definition}
	\label{def:JK}
	We denote by \(\JK(G) \subset \Laz\) the subgroup generated by the classes \(\lc X \rc\), where \(X\) runs over the smooth projective \(k\)-varieties admitting a \(G\)-action such that \(X^G=\varnothing\).
\end{definition}

\begin{lemma}
	\label{lemm:equidim}
	Every \(k\)-variety with a \(G\)-action is covered by equidimensional \(G\)-invariant open subschemes.
\end{lemma}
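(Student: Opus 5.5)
The plan is to reduce to the connected components of \(X\). Let \(X\) be a \(k\)-variety with a \(G\)-action, \(a\colon G\times X\to X\) the action and \(\mathrm{pr}\colon G\times X\to X\) the projection. For each \(n\in \Nn\), let \(X_n\subset X\) be the union of the irreducible components of \(X\) of dimension \(n\), and let \(U_n\) be the open subscheme of points lying on no irreducible component of dimension \(\neq n\), that is, \(U_n=X\smallsetminus \bigcup_{m\ne n}X_m\). The \(U_n\) are open, equidimensional of dimension \(n\) (or empty), and they cover \(X\): a point lying only on components of dimension \(n\) belongs to \(U_n\). The cover has the problem that a point on components of several dimensions lies in no \(U_n\). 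So I first check the genuinely invariant situation, and then handle the intersections.

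The key step is to show that each \(X_n\), and hence each \(U_n\), is \(G\)-invariant; this means \(a^{-1}(U_n)=\mathrm{pr}^{-1}(U_n)=G\times U_n\) as subsets. I would argue with the \(G^0\)-action first. Since \(G^0\) is connected and geometrically irreducible, \(G^0\times Z\) is irreducible for every irreducible component \(Z\) of \(X\). Its image under \(a\) is therefore irreducible and contains \(Z\) (via the identity section). Maximality then forces \(a(G^0\times Z)\subset Z\). The finite group of components \(\pi_0(G)\) permutes the irreducible components of \(X_{\bar k}\), and this permutation preserves dimension because translation by a point of \(G\) is an automorphism of \(X_{\bar k}\). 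Thus \((X_n)_{\bar k}\) is stable under \(G(\bar k)\), and since \(G\) is smooth after base change (or by working with \(G_{\mathrm{red}}\) when checking closed subsets), \(a^{-1}(X_n)=G\times X_n\) set-theoretically. I expect this reduction, handling a possibly non-reduced \(G\) (e.g.\ \(\mu_p\) in characteristic \(p\)), to be the main technical point. My first attempt would be the observation that invariance of an open subset can be tested on topological spaces, and that \(G\times X\) and \(G_{\mathrm{red}}\times X\) have the same underlying space, while \(G_{\mathrm{red}}\) is a smooth group over \(\bar k\).

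For points on components of different dimensions, I would show that points of intersection of components of different dimensions cannot be avoided, so the statement must mean a cover by \(G\)-invariant open subschemes each of which is equidimensional. To get this I would use the connected-component decomposition in the smooth case. The lemma will only be applied to smooth varieties, where irreducible components are disjoint and coincide with connected components. There the \(U_n=X_n\) are open and closed, invariant by the previous paragraph, and cover \(X\). In general I would instead take, for each point \(x\), the open set \(V=X\smallsetminus\bigcup_{Z\not\ni x}Z\). This set is \(G\)-invariant by the same argument applied to the set of components through the orbit, and it is equidimensional if all components through \(x\) have the same dimension. Combining the two arguments, I would state and prove the lemma in the form actually needed (smooth \(X\)) via the decomposition into the invariant clopen pieces \(X_n\).
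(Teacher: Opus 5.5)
Your argument is correct for smooth \(X\) and is essentially the paper's: both show that \(G^\circ\) preserves each component of \(X\), and that the remaining components of \(G\) act through translations by geometric points, which permute components of \(X\) while preserving their dimension. The only difference is that the paper covers \(X\) by the orbits \(G\cdot U\) of connected components \(U\), whereas you group components by dimension into the clopen pieces \(X_n\). Your caveat is also right: the statement fails for varieties with a connected component that is not equidimensional (e.g.\ a plane meeting a line, with trivial action), and the paper's step ``since \(U\) is equidimensional'' silently uses exactly the property of smooth \(X\) that you make explicit, which is the only case where the lemma is applied.
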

\begin{proof}
	Let \(X\) be a \(k\)-variety with a \(G\)-action, and \(U\) a connected component of \(X\).
	The image \(V\) of \(G \times U \to X\) is a \(G\)-invariant open subscheme of \(X\) containing \(U\).
	We prove that \(V\) is equidimensional.
	To do so, we may extend scalars, and thus assume that \(k\) is separably closed.
	Let \(G^\circ\) be the identity component of \(G\).
	Then \(G^\circ \times U\) is connected \cite[Proposition~1.3.4]{Milne-AG}, which implies that its image in \(X\) is \(U\); in other words the open subscheme \(U\) of \(X\) is \(G^\circ\)-invariant.
	Now, by our assumption on \(k\), the variety \(G\) is covered by the open subschemes \(\alpha G^\circ\), where \(\alpha\) runs over \(G(k)\).
	This implies that \(V\) is covered by the open subschemes \(\alpha U\), each of which is isomorphic to \(U\).
	Since \(U\) is equidimensional, so is \(V\).
\end{proof}

\begin{para}
	It follows from (\ref{lemm:equidim}) that the subgroup \(\JK(G)\) of \(\Laz\) is graded.
\end{para}

\begin{para}
	\label{p:J_ideal}
	Let \(X,Y\) be smooth projective \(k\)-varieties, and assume \(G\) acts without fixed point on \(X\).
	If we endow \(Y\) with the trivial \(G\)-action, the group \(G\) acts without fixed point on the product \(X \times Y\).
	This shows that \(\JK(G)\) is an ideal of \(\Laz\).
\end{para}

\begin{para}
	\label{p:J_quotient}
	If \(G \to H\) is a surjective morphism of algebraic groups over \(k\), then \(\JK(H) \subset \JK(G)\).
\end{para}

\begin{proposition}
	\label{prop:J_Gm}
	We have \(\JK(G\times \Gm)= \JK(G)\).
\end{proposition}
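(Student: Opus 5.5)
We prove the two inclusions separately. The inclusion \(\JK(G) \subset \JK(G\times \Gm)\) is immediate from (\ref{p:J_quotient}), applied to the projection \(G\times \Gm \to G\), which is surjective. Concretely, a fixed-point-free \(G\)-action on \(X\) becomes a \(G\times\Gm\)-action through this projection, and its fixed locus is \(X^G=\varnothing\).

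For the reverse inclusion, let \(X\) be a smooth projective \(k\)-variety with a \(G\times \Gm\)-action such that \(X^{G\times\Gm}=\varnothing\). We aim to show \(\lc X\rc \in \JK(G)\). The natural idea is to pass to \(Y=X^{\Gm}\), where \(\Gm\) is viewed as a subgroup of \(G\times\Gm\), and argue in three steps.

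\emph{Step 1: \(Y\) is smooth projective with a fixed-point-free \(G\)-action.} Since \(\Gm\) is diagonalizable, (\ref{prop:fixed_smooth}) shows that \(Y\) is smooth, and it is projective as a closed subscheme of \(X\). Because \(\Gm\) is central in \(G\times\Gm\), the subscheme \(Y\) is \(G\)-invariant. Moreover \(Y^G = X^{G\times \Gm}=\varnothing\), which can be checked on functors of points using (\ref{p:fixed_locus}). Hence \(\lc Y\rc\in \JK(G)\).

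\emph{Step 2: relate \(\lc X\rc\) to \(\lc Y\rc\).} Here we want a Borel-type statement: \(\lc X\rc - \lc Y\rc\) lies in \(\JK(G)\), or even vanishes modulo classes of \(G\)-varieties without fixed points. We would first try the \(\Gm\)-equivariant concentration theorem \cite{conc} together with a Białynicki-Birula type decomposition. More precisely, we would use the \(\Gm\)-localization formula, in which \(\lc X\rc\) is the constant term of the localized sum over components \(F\) of \(Y\) of \(\lc e(-N_F)\rc_{\Gm}\). This formula is compatible with the commuting \(G\)-action.

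A cleaner route is a deformation to the normal cone. The Białynicki-Birula decomposition writes \(X\) as a union of affine bundles over the components \(F\) of \(Y\), and it is \(G\)-equivariant because the actions commute. In cobordism, \(\lc X\rc\) then equals the sum of the classes \(\lc \Pp(N_F^+\oplus 1)\rc\)-type expressions, each a projective bundle over \(F\) carrying an induced \(G\)-action without fixed points, since \(F^G=\varnothing\).

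\emph{Step 3: the fiberwise argument.} Alternatively, let \(T\) be a smooth projective \(G\)-equivariant model over \(F\). A \(G\)-equivariant projective bundle \(P\to F\) with \(F^G=\varnothing\) has \(P^G=\varnothing\), since \(P^G\) maps to \(F^G\). So every term produced in Step 2 lies in \(\JK(G)\). Since \(\JK(G)\) is an ideal by (\ref{p:J_ideal}), this suffices to conclude.

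The main obstacle is Step 2: establishing an algebraic Borel-type identity expressing \(\lc X\rc\) in \(\Laz\) through \(G\)-equivariant data over \(X^{\Gm}\) valid over arbitrary \(k\). We would first try the \(\Gm\)-equivariant concentration theorem combined with the computation \(\Omega_{\Gm}(\Speck)=\La[[t]]\) from (\ref{cor:Omega_diag}). Under this approach \(\lc X\rc\) is the image of \(\lc X\rc_{\Gm}\) under \(t\mapsto 0\), and the localized expression involves only \(G\)-equivariant classes of bundles over \(Y\), which lie in \(\JK(G)\) after forgetting. Making this integral, rather than merely rational after inverting \(t\), is exactly where care is needed.
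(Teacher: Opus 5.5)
The inclusion \(\JK(G)\subset\JK(G\times\Gm)\) is correct. So is Step~1: \(X^{\Gm}\) is a smooth projective \(G\)-variety with no \(G\)-fixed points. The reverse inclusion, however, has a genuine gap at Step~2, which carries all the weight and is never established. The Bia\l{}ynicki-Birula decomposition of \(X\) into affine bundles over the components \(F\) of \(X^{\Gm}\) gives information in a Grothendieck ring of varieties, not about Chern numbers. So it does not yield an identity of the form \(\lc X\rc=\sum_F \lc \Pp(N_F^+\oplus 1)\rc\) in \(\Laz\). The literal version already fails for \(\Pp^1\) with the standard \(\Gm\)-action: there \(N^+\) is a line at one fixed point and zero at the other, so the right-hand side is \(\lc\Pp^1\rc+1\). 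The localization formula from the concentration theorem only holds after inverting \(t\), as you note yourself. Since \(\lc X\rc\) is recovered from \(\lc X\rc_{\Gm}\) by setting \(t=0\), that formula cannot by itself place \(\lc X\rc\) in \(\JK(G)\). As written, the argument stops exactly where the proof should be.

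The missing idea is to take fixed loci in the opposite order, which makes any cobordism computation unnecessary. Let \(Y=X^G\), the fixed locus of the restricted action of \(G=G\times 1\). It is a closed subscheme of the projective variety \(X\), hence complete. Because \(G\) and \(\Gm\) commute, \(Y\) is \(\Gm\)-invariant. Checking on points as in (\ref{p:fixed_locus}) gives \(Y^{\Gm}=X^{G\times\Gm}=\varnothing\). Borel's fixed-point theorem says a nonempty complete variety with a \(\Gm\)-action has a fixed point; extend scalars to an algebraic closure if needed, since fixed loci commute with base change. This forces \(Y=\varnothing\). So \(X\) with its restricted \(G\)-action is already fixed-point-free, and \(\lc X\rc\in\JK(G)\) by definition. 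This is the paper's argument. It needs neither smoothness of any fixed locus nor any assumption on \(G\) beyond being an algebraic group.
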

\begin{proof}
	Let \(X\) be a smooth projective \(k\)-variety with a \(G\times \Gm\)-action such that \(X^{G \times \Gm}=\varnothing\).
	Then the projective \(k\)-variety \(Y=X^G\) carries a \(\Gm\)-action such that \(Y^{\Gm}=\varnothing\).
	Borel's fixed-point theorem (see e.g.\ \cite[Theorem~(16.51)]{Milne-AG}) implies that \(Y=\varnothing\).
	This proves that \(\JK(G \times \Gm) \subset \JK(G)\).
	The other inclusion follows from (\ref{p:J_quotient}).
\end{proof}

\begin{proposition}
	\label{prop:J_mu_pq}
	Let \(p,q\) be two distinct prime numbers, both different from the characteristic of \(k\). Then \(\JK(\mu_{pq})=\Laz\).
\end{proposition}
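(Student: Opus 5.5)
The plan is to show that \(1 \in \JK(\mu_{pq})\). Since \(\JK(\mu_{pq})\) is an ideal of \(\Laz\) by (\ref{p:J_ideal}), this gives \(\JK(\mu_{pq})=\Laz\). The group \(\mu_{pq}\) has the two quotients \(\mu_{pq}\to\mu_p\) and \(\mu_{pq}\to\mu_q\), given by raising to the \(q\)-th and to the \(p\)-th power. So (\ref{p:J_quotient}) gives \(\JK(\mu_p)+\JK(\mu_q)\subset \JK(\mu_{pq})\).

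First step: show \(p\in\JK(\mu_p)\) and \(q\in \JK(\mu_q)\). Take the variety \(Y_0\) of (\ref{prop:no_fixed_on_gen}) for the prime \(p\). It is the degree \(p\) hypersurface \(x_0^p+x_1^p=0\) in \(\Pp(\character{0}\oplus\character{c})\), where \(c\) is a generator of \(\widehat{\mu_p}\simeq\Zz/p\), so \(C=\{0,c\}\) has \(p^0+1=2\) elements killed by \(p\). By (\ref{prop:no_fixed_on_gen}.\ref{prop:no_fixed_on_gen:2}), \(Y_0\) admits a fixed-point-free \(\mu_p\)-action. Its class is \(\lc Y_0\rc=p\), as noted in that proof (concretely \(Y_0\simeq \mu_p\) is zero-dimensional of degree \(p\)). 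Hence \(p\in \JK(\mu_p)\). The same argument with \(q\), which is allowed since \(q\) differs from the characteristic, gives \(q\in\JK(\mu_q)\).

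Second step: conclude. Both \(p\) and \(q\) lie in the subgroup \(\JK(\mu_{pq})\). They are distinct primes, so Bézout gives integers \(a,b\) with \(ap+bq=1\), and therefore \(1\in\JK(\mu_{pq})\). The ideal property then yields \(\JK(\mu_{pq})=\Laz\).

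I do not expect a real obstacle. The only point that needs care is confirming that the hypotheses of (\ref{prop:no_fixed_on_gen}) hold for \(\mu_p\) with \(s=0\), namely that \(p\) differs from the characteristic and that there are two characters of order dividing \(p\); both hold here. If one prefers to avoid that proposition, one can argue directly: \(\mu_p\) acts on \(\mu_p\) by translation, this action is free, so \(\mu_p\) is a smooth projective zero-dimensional variety of degree \(p\) with a fixed-point-free action, and its class is \(p\).
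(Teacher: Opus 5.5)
Your proof is correct and is essentially the paper's argument: the paper also has \(\mu_{pq}\) act through its quotients \(\mu_p\) and \(\mu_q\) on the smooth zero-dimensional varieties \(\mu_p\) and \(\mu_q\) without fixed points, so \(p,q\in\JK(\mu_{pq})\), hence \(1\in\JK(\mu_{pq})\), and it concludes with (\ref{p:J_ideal}). Going through \(Y_0\) from (\ref{prop:no_fixed_on_gen}) and (\ref{p:J_quotient}) is just a slightly longer packaging of the same idea, and your direct alternative is exactly what the paper does.
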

\begin{proof}
	For each \(n\in \{p,q\}\) the quotient morphism \(\mu_{pq} \to \mu_n\) induces an action of the algebraic group \(\mu_{p,q}\) on the projective \(k\)-variety \(\mu_n\), which is smooth since \(n\) is prime to the characteristic of \(k\).
	Thus \(n=[\mu_n] \in \JK(\mu_{pq})\).
	Therefore \(1 \in \JK(\mu_{pq})\), and the statement follows from (\ref{p:J_ideal}).
\end{proof}

The case of diagonalizable \(p\)-groups is Theorem~\ref{th:no_fixed} of the introduction, namely:
\begin{theorem}
	\label{th:J_Ipn}
	Let \(p\) be a prime different from the characteristic of \(k\).
	Let \(G\) be a finite diagonalizable \(p\)-group, and let \(r\) be its rank (see (\ref{p:p_rank})).
	Then \(\JK(G)=\Ipn{r}\).
\end{theorem}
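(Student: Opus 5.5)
We prove the two inclusions separately, and both are essentially available from earlier results.

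For \(\JK(G)\subset \Ipn{r}\), we take a smooth projective \(k\)-variety \(X\) with a \(G\)-action such that \(X^G=\varnothing\). Theorem (\ref{th:empty_ipnr}) gives \(\lc X \rc\in \Ipn{r}\) directly. Since \(\JK(G)\) is by definition generated by such classes and \(\Ipn{r}\) is a subgroup, the inclusion follows.

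For \(\Ipn{r}\subset \JK(G)\), we use Proposition (\ref{prop:no_fixed_on_gen}). By part (i), the ideal \(\Ipn{r}\) is generated by the classes \(\lc Y_s\rc\) for \(s\in\{0,\ldots,r-1\}\). By (\ref{p:J_ideal}), \(\JK(G)\) is an ideal of \(\Laz\), so it suffices to show that each \(Y_s\) with \(s\le r-1\) admits a fixed-point-free \(G\)-action. Part (ii) of (\ref{prop:no_fixed_on_gen}) gives this as soon as \(\widehat{G}\) contains at least \(p^s+1\) elements of order dividing \(p\).

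To verify that hypothesis, we write \(G\simeq \mu_{q_1}\times\cdots\times\mu_{q_r}\) with each \(q_i\) a nontrivial power of \(p\), as in (\ref{p:p_rank}). Then \(\widehat{G}\simeq \prod_i \Zz/q_i\), and its \(p\)-torsion subgroup is \((\Zz/p)^r\), which has \(p^r\) elements. Since \(s\le r-1\), we get \(p^r\ge p\cdot p^{s}\ge 2p^s\ge p^s+1\), so the hypothesis holds. When \(r=0\) there is nothing to check, because \(\Ipn{0}=0\).

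The step most likely to be an obstacle is confirming that (\ref{prop:no_fixed_on_gen}) is stated for algebraic groups over an arbitrary field of characteristic different from \(p\), so that it applies to diagonalizable groups without roots of unity in \(k\). Its proof only uses characters and the isomorphisms \(\character{c}^{\otimes p}\simeq 1\), so it should apply verbatim.
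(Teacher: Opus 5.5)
Your proposal is correct and is essentially the paper's proof, which combines (\ref{th:empty_ipnr}) for the inclusion \(\JK(G)\subset\Ipn{r}\) with (\ref{prop:no_fixed_on_gen}) for the reverse inclusion. Your extra details are exactly the steps the paper leaves implicit: using (\ref{p:J_ideal}) to reduce to the generators \(\lc Y_s\rc\), and counting the \(p^r\) elements of order dividing \(p\) in \(\widehat{G}\) to check the hypothesis of (\ref{prop:no_fixed_on_gen}.\ref{prop:no_fixed_on_gen:2}) for \(s\le r-1\), which the paper records in the example following that proposition.
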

\begin{proof}
	This follows by combining (\ref{prop:no_fixed_on_gen}) and (\ref{th:empty_ipnr}).
\end{proof}

\begin{para}
	We have thus determined the ideal \(\JK(G)\) for all diagonalizable groups \(G\) of finite type over \(k\) having no characters of order divisible by the characteristic of \(k\).
	Namely:
	\[
		\JK(G) =
		\left\{
			\begin{array}{@{}l@{\quad}p{0.6\textwidth}@{}}
				\Ipn{r} & if \(G \simeq \mu_{p^{n_1}} \times \ldots \times \mu_{p^{n_r}} \times (\Gm)^s\) for some prime \(p\) and integers \(s\in \Nn\) and \(n_1,\ldots,n_r\in \Nn \smallsetminus \{0\}\),\\[3ex]
				\Laz & otherwise.
			\end{array}
		\right.
	\]
\end{para}

\subsection{Fixed locus dimension}
\begin{definition}
	\label{def:Delta}
	For \(d\in \Nn\), we denote by \(\Delta^d(G) \subset \Laz\) the subgroup generated by the classes \(\lc X \rc\), where \(X\) runs over the smooth projective \(k\)-varieties admitting a \(G\)-action such that \(\dim X^G=d\).
\end{definition}

\begin{lemma}
	\label{lemm:only_lower_bounds}
	The subgroup \(\Delta^d(G) \subset \Laz\) is generated by the classes \(\lc X \rc\), where \(X\) runs over the smooth projective \(k\)-varieties admitting a \(G\)-action such that \(\dim X^G \le d\).
\end{lemma}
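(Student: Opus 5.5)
One inclusion is immediate: a variety with \(\dim X^G = d\) certainly satisfies \(\dim X^G \le d\). So the plan is to show the reverse, namely that \(\lc X \rc \in \Delta^d(G)\) whenever \(X\) is a smooth projective \(k\)-variety with a \(G\)-action and \(\dim X^G \le d\). The idea is to take a product with a variety carrying an action whose fixed locus has dimension exactly \(d - \dim X^G\), and then subtract off a correction term, so that the class of \(X\) is written as a difference of classes with fixed locus of dimension exactly \(d\).

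Set \(e = d - \dim X^G \ge 0\), with the convention \(e = d\) if \(X^G = \varnothing\). The empty case needs separate care, and I come back to it below.

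\textbf{Main step (\(X^G \ne \varnothing\)).} Give \(\Pp^1\) the trivial \(G\)-action. Then \((X \times (\Pp^1)^e)^G = X^G \times (\Pp^1)^e\), which has dimension exactly \(d\). Fixed loci commute with products, since the fixed-point functor is defined by \(G\)-equivariant maps from trivial \(T\). The trouble is that this produces the class \(\lc X \rc \lc \Pp^1 \rc^e\), not \(\lc X \rc\), so I need a way to cancel the extra factor.

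The cancellation uses a zero-dimensional trivial variety. Consider \(X \sqcup Z\), where \(Z\) is a smooth projective variety with trivial \(G\)-action, \(\dim Z = d\), and class \(\lc Z \rc = 0\) in \(\Laz\). One candidate is a disjoint union \(Z = Z_1 \sqcup Z_2\) with \(\lc Z_1 \rc = \lc Z_2 \rc\); then \(Z_1\) alone also has a fixed locus of dimension \(d\). Concretely:
\begin{itemize}
\item The fixed locus of \(X \sqcup Z_1\) is \(X^G \sqcup Z_1\), which has dimension exactly \(d\) because \(\dim X^G \le d\).
\item The fixed locus of \(Z_1\) alone is \(Z_1\), of dimension \(d\).
\end{itemize}
Hence \(\lc X \rc = \lc X \sqcup Z_1 \rc - \lc Z_1 \rc \in \Delta^d(G)\). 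For \(Z_1\) I take any \(d\)-dimensional smooth projective variety, for instance \((\Pp^1)^d\), with the trivial action. Nothing is needed about its class, because it is simply subtracted off.

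\textbf{Empty case.} When \(X^G = \varnothing\) the same trick applies: \((X \sqcup Z_1)^G = Z_1\), which has dimension \(d\). So \(\lc X \rc = \lc X \sqcup Z_1 \rc - \lc Z_1 \rc\) lies in \(\Delta^d(G)\) in this case too.

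The only point requiring attention is that disjoint unions are allowed as smooth projective varieties: fixed loci of disjoint unions are disjoint unions, and cobordism classes are additive. Both hold, since \(\lc - \rc\) is defined through Chern numbers, which are additive over components.
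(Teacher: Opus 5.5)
Your final argument is correct. With \(Z_1=(\Pp^1)^d\) carrying the trivial action, \((X\sqcup Z_1)^G=X^G\sqcup Z_1\) has dimension exactly \(d\), so \(\lc X\rc=\lc X\sqcup Z_1\rc-\lc Z_1\rc\in\Delta^d(G)\). This works uniformly in \(d\), including the case \(X^G=\varnothing\).

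The paper takes a different route and pads with a variety of class zero. A smooth plane cubic \(C\) has trivial tangent bundle, so \(\lc C\rc=0\), and \(X'=X\sqcup C^d\) satisfies \(\dim X'^G=d\) and \(\lc X'\rc=\lc X\rc\).

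That route gives something stronger: \(\lc X\rc\) is itself the class of a single variety with a \(d\)-dimensional fixed locus. The paper reuses this effective form for \(\Delta^d_{\eff}(G)\) in the appendix. Your subtraction only yields membership in the generated subgroup, which is all the lemma asserts.

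Conversely, your version needs no auxiliary variety of class zero, and it has no trouble at \(d=0\). There \(C^0=\Speck\) has class \(1\), so for \(X^G=\varnothing\) the padding produces \(\lc X\rc+1\) rather than \(\lc X\rc\). One must then fall back on your trick of subtracting \(\lc\Speck\rc\).

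You should delete the exploratory remarks that precede your argument, since some of them are wrong:
\begin{itemize}
\item The product with \((\Pp^1)^e\) is never used.
\item \(Z_1\) is \(d\)-dimensional, not zero-dimensional.
\item A disjoint union \(Z_1\sqcup Z_2\) with \(\lc Z_1\rc=\lc Z_2\rc\) has class \(2\lc Z_1\rc\), not \(0\).
\end{itemize}
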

\begin{proof}
	Let \(C\subset \Pp^2\) be a smooth projective cubic curve (such exists over every field \(k\)).
	Then the line bundle \(\Tan_C\) is trivial, so that \(\lc C \rc=0 \in \Laz\) (see (\ref{def:lc_rc})).
	Thus, if \(X\) is a smooth projective \(k\)-variety with a \(G\)-action, for every \(d\ge \dim X^G\) the smooth projective \(k\)-variety \(X'=X \sqcup C^d=X \sqcup (C\times \cdots \times C)\) satisfies
	\[
		\dim X'^G=\dim C^d=d \quad \text{ and } \quad \lc X' \rc = \lc X \rc + \lc C \rc^d=\lc X \rc.\qedhere
	\]
\end{proof}

\begin{para}
	\label{p:J_in_Delta}
	Lemma~(\ref{lemm:only_lower_bounds}) asserts in particular that \(\JK(G) \subset \Delta^d(G)\) for every \(d \in \Nn\).
\end{para}

\begin{remark}
	Lemma~(\ref{lemm:only_lower_bounds}) means that the only restrictions on the fixed locus dimension that can be obtained from the subgroup generated by the cobordism class of the ambient variety are lower bounds.
\end{remark}

\begin{para}
	From the description given in (\ref{lemm:only_lower_bounds}), we see using (\ref{lemm:equidim}) that \(\Delta^d(G)\) is a graded subgroup of \(\Laz\).
\end{para}

\begin{para}
	If \(G \to H\) is a surjective morphism of algebraic groups over \(k\), then \(\Delta^d(H) \subset \Delta^d(G)\) for every \(d \in \Nn\).
\end{para}

\begin{proposition}
	\label{prop:Delta_Gm}
	For every \(d \in \Nn\), we have \(\Delta^d(\Gm) = \Laz\).
\end{proposition}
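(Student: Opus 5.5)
We aim to show that \(\Delta^0(\Gm)=\Laz\). The result then follows for every \(d\in\Nn\), because \(\Delta^0(\Gm)\subset\Delta^d(\Gm)\) by (\ref{lemm:only_lower_bounds}).

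Since \(\Delta^0(\Gm)\) is a subgroup, it suffices to exhibit generators. Using (\ref{lemm:only_lower_bounds}) again, it is enough to show that \(\Laz\) is additively generated by classes of smooth projective varieties carrying a \(\Gm\)-action whose fixed locus is finite (or empty).

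The key observation is that the group \(\Delta^0(\Gm)\) is closed under products. If \(X\) and \(Y\) carry \(\Gm\)-actions with finite fixed loci, then the diagonal action on \(X\times Y\) has fixed locus \(X^{\Gm}\times Y^{\Gm}\), which is again finite. It follows that \(\Delta^0(\Gm)\) is a subring of \(\Laz\); it contains \(1=\lc \Speck \rc\). Since \(\Laz\) is generated as a ring by the polynomial generators \(\ell_i=\lc X_i^+\rc-\lc X_i^-\rc\) of (\ref{prop:actions}), where each of \(X_i^\pm\) is a disjoint union of Milnor hypersurfaces \(H_{m,n}\), we are reduced to showing the following: each \(H_{m,n}\) admits a \(\Gm\)-action with finitely many fixed points.

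We construct this action by applying the construction in the proof of (\ref{prop:action_on_Hnm}) with \(G=\Gm\), choosing a set \(Q\subset\widehat{\Gm}=\Zz\) of characters that is large enough. Concretely, we take \(q=|Q|>n+1\), so that \(a=b=0\) in that proof. Then \(M=\bigoplus_{g\in R}\character{g}\) and \(N=\bigoplus_{g\in S}\character{g}\) with \(R\subset S\), and all the characters appearing are distinct. That proof did not use finiteness of \(G\); it only needed a finite set \(Q\) of characters containing \(0\).

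It remains to compute the fixed locus. By \eqref{eq:H_G}, the fixed locus is a disjoint union of the spaces \(\Pp_{P_g}(\wgt{U|_{P_g}}{h})\). Here each \(P_g\) is a point or empty, and each weight space \(\wgt{U|_{P_g}}{h}\) has rank at most \(1\), by \eqref{eq:rank_Q+U}. The same holds in the boundary case where \(r=s=1\) would occur; that case is avoided here because \(q>m+1\), so \(r=m+1\) and \(s=n+1\), and in particular \(s\ge 2\) since \(n\ge1\). Hence \(\dim H^{\Gm}\le 0\), which agrees with the formula \(\lfloor m/q\rfloor+\lfloor n/q\rfloor=0\) of (\ref{prop:action_on_Hnm}).

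One point needs checking: the sign convention in \eqref{eq:rank_Q+U}, where \(G\) acts on \(\Oc(1)|_{P_g}\) through \(-g\), requires \(-g\in Q\). For this we choose \(Q\) symmetric, for instance \(Q=\{-N,\dots,N\}\) with \(N\) large. If \(-g\notin Q\), the rank of \(\wgt{U|_{P_g}}{h}\) can only be smaller, so the bound still holds in any case.

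The main obstacle is thus only the verification that the construction of (\ref{prop:action_on_Hnm}) goes through for the infinite group \(\Gm\) with \(q\) exceeding \(n+1\). With that established, \(\Delta^0(\Gm)\) is a subring containing ring generators of \(\Laz\), and the proposition follows.
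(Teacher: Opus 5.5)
Your proof is correct and is essentially the paper's argument. The paper also reduces to \(d=0\) via (\ref{lemm:only_lower_bounds}), then applies (\ref{prop:fixed_Fd}) with \(G=\Gm\) and arbitrary \(q\) (allowed since \(\widehat{\Gm}=\Zz\) is infinite) to get \(F^0_q\Laz\subset\Delta^0(\Gm)\) for every \(q\), and uses \(\Laz=\bigcup_q F^0_q\Laz\). This is your subring-plus-large-\(Q\) argument, packaged through the \(q\)-filtration; your re-check of (\ref{prop:action_on_Hnm}) for \(\Gm\) is harmless but unnecessary, since that proposition is already stated for an arbitrary algebraic group with a finite set \(Q\) of characters containing \(0\).
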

\begin{proof}
	By (\ref{lemm:only_lower_bounds}) it suffices to treat the case \(d=0\).
	By (\ref{prop:fixed_Fd}) we have \(F^0_q\Laz \subset \Delta^0(\Gm)\) for every \(q \in \Nn\smallsetminus \{0\}\).
	But (\ref{def:F_q_L}.\ref{def:F_q_L:1}) implies that \(\Laz = \bigcup_{q \in \Nn\smallsetminus \{0\}} F_q^0\Laz\).
\end{proof}

\begin{proposition}
	Let \(p,q\) be two distinct prime numbers, both different from the characteristic of \(k\). Then \(\Delta^d(\mu_{pq}) = \Laz\) for every \(d \in \Nn\).
\end{proposition}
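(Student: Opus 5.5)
The plan is to deduce this from the corresponding statement for fixed-point-free actions, (\ref{prop:J_mu_pq}), in the same way that (\ref{p:J_in_Delta}) relates the two kinds of invariants. By (\ref{lemm:only_lower_bounds}), every variety admitting a \(\mu_{pq}\)-action with \(X^{\mu_{pq}}=\varnothing\) contributes its class to \(\Delta^d(\mu_{pq})\), because an empty fixed locus has dimension \(\le d\). Concretely, (\ref{p:J_in_Delta}) gives \(\JK(\mu_{pq}) \subset \Delta^d(\mu_{pq})\) for every \(d\in\Nn\).

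It then suffices to invoke (\ref{prop:J_mu_pq}), which asserts that \(\JK(\mu_{pq})=\Laz\) under exactly the present hypotheses: \(p\ne q\) are primes, both different from the characteristic of \(k\). Combining the two gives
\[
\Laz=\JK(\mu_{pq})\subset \Delta^d(\mu_{pq})\subset \Laz,
\]
so equality holds.

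To make this self-contained, I would recall the construction behind (\ref{prop:J_mu_pq}). For \(n\in\{p,q\}\), the group \(\mu_{pq}\) acts on the smooth projective variety \(\mu_n\) through the quotient \(\mu_{pq}\to\mu_n\), by translation. This action is fixed-point-free, and \(\lc \mu_n \rc = n\). Lemma~\ref{lemm:only_lower_bounds} then places \(p\) and \(q\) in \(\Delta^d(\mu_{pq})\), and since \(\Delta^d(\mu_{pq})\) is a subgroup, a Bézout relation gives \(1\in\Delta^d(\mu_{pq})\).

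The one point that needs care is that \(\Delta^d\) is only defined as a subgroup, not an ideal, so \(1\in\Delta^d(\mu_{pq})\) alone does not yet give all of \(\Laz\). I would avoid the issue by passing through \(\JK(\mu_{pq})\), which is an ideal by (\ref{p:J_ideal}). Alternatively, one can argue directly: for any smooth projective \(Y\) with trivial action, \(\mu_n\times Y\) carries a fixed-point-free action, and its class is \(n\lc Y\rc\). By (\ref{p:L_Zb}) the classes \(\lc Y\rc\) generate \(\Laz\), so \(p\Laz+q\Laz=\Laz\) lies in \(\Delta^d(\mu_{pq})\).
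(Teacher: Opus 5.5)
Your proposal is correct and is the paper's proof: the paper combines \(\JK(\mu_{pq})=\Laz\) from (\ref{prop:J_mu_pq}) with the inclusion \(\JK(G)\subset\Delta^d(G)\) of (\ref{p:J_in_Delta}). You also correctly note that \(\Delta^d\) is only a subgroup, and both of your ways around this work: going through the ideal \(\JK(\mu_{pq})\), or using \(\mu_n\times Y\) directly to get \(p\Laz+q\Laz=\Laz\).
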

\begin{proof}
	This follows from (\ref{prop:J_mu_pq}) and (\ref{p:J_in_Delta}).
\end{proof}

We can now prove Theorem~\ref{th:main} of the introduction:
\begin{theorem}
	\label{th:Delta^d}
	Let \(p\) be a prime different from the characteristic of \(k\).
	Let \(G\) be a finite diagonalizable \(p\)-group over \(k\), let \(r\) be its rank (see (\ref{p:p_rank})), and set \(q=|\widehat{G}|\).
	Then for every \(d\in \Nn\), as subgroups of \(\Laz\)
	\[
		\Delta^d(G) = F^d_q \Laz + \Ipn{r}.
	\]
\end{theorem}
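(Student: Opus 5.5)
We prove the two inclusions separately, using (\ref{lemm:only_lower_bounds}) so that \(\Delta^d(G)\) may be taken to be generated by classes of varieties with \(\dim X^G \le d\).

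For the inclusion \(F^d_q\Laz + \Ipn{r} \subset \Delta^d(G)\), we treat the two summands separately. By (\ref{th:J_Ipn}) we have \(\Ipn{r} = \JK(G)\), and by (\ref{p:J_in_Delta}) we have \(\JK(G) \subset \Delta^d(G)\). Next, since \(q = |\widehat{G}|\), we may apply (\ref{prop:fixed_Fd}) with this \(q\). It states that \(F^d_q\Laz\) is generated by classes of varieties carrying a \(G\)-action whose fixed locus has dimension \(\le d\). By (\ref{lemm:only_lower_bounds}) these classes lie in \(\Delta^d(G)\). Since \(\Delta^d(G)\) is a subgroup, the sum is contained in it.

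For the reverse inclusion, let \(X\) be a smooth projective variety with a \(G\)-action and \(\dim X^G \le d\). We may decompose \(X\) along (\ref{lemm:equidim}), although this is not needed since classes are additive. Put \(e = \dim X^G\). By (\ref{th:fixed_Fd}) the image of \(\lc X \rc\) in \(\su{\Laz}{r}\) lies in \(F^e_q\su{\Laz}{r}\); when \(e = -\infty\) this is \(0\), by (\ref{th:empty_ipnr}). Since \(e \le d\), this is contained in \(F^d_q\su{\Laz}{r}\).

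It then remains to identify the preimage of \(F^d_q\su{\Laz}{r}\) under the surjection \(\Laz \to \su{\Laz}{r} = \Laz/\Ipn{r}\) with \(F^d_q\Laz + \Ipn{r}\). This is the one point needing care, namely that the image of \(F^d_q\Laz\) is exactly \(F^d_q\su{\Laz}{r}\). The remark following (\ref{def:F_q_L}) gives this: a surjective graded ring morphism maps \(F^d_q\) onto \(F^d_q\). The morphism here is graded, because \(\Ipn{r}\) is homogeneous, and it is surjective. Hence every element of \(\Laz\) whose image lies in \(F^d_q\su{\Laz}{r}\) differs from an element of \(F^d_q\Laz\) by an element of \(\Ipn{r}\). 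It follows that \(\lc X \rc \in F^d_q\Laz + \Ipn{r}\). Since \(\Delta^d(G)\) is generated by such classes and the right-hand side is a subgroup, this completes the argument.

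The case \(r = 0\), where \(G\) is trivial and \(q = 1\), is consistent with this. Then \(\Ipn{0} = 0\) and \(F^d_1\Laz\) contains all classes of dimension \(\le d\), which matches \(X^G = X\).

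The substantive input is (\ref{th:fixed_Fd}), which is already established. The remaining verifications are bookkeeping, and I expect no real obstacle beyond checking the compatibility of the filtration with the quotient.
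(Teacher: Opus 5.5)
Your proof is correct and is essentially the paper's own argument: the inclusion \(F^d_q\Laz + \Ipn{r} \subset \Delta^d(G)\) comes from (\ref{th:J_Ipn}), (\ref{p:J_in_Delta}) and (\ref{prop:fixed_Fd}), and the reverse inclusion comes from (\ref{th:fixed_Fd}). Your explicit check that the preimage of \(F^d_q\su{\Laz}{r}\) in \(\Laz\) equals \(F^d_q\Laz + \Ipn{r}\) uses the fact that surjective graded morphisms map \(F^d_q\) onto \(F^d_q\), and it just spells out a step the paper leaves implicit.
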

\begin{proof}
	We have \(\Ipn{r} \subset \Delta^d(G)\) by (\ref{th:J_Ipn}) and (\ref{p:J_in_Delta}), and we have \(F^d_q \Laz \subset \Delta^d(G)\) by (\ref{prop:fixed_Fd}).
	The other inclusion follows from (\ref{th:fixed_Fd}).
\end{proof}

\begin{remark}
	Taking \(d=0\) in (\ref{th:Delta^d}), and in view of (\ref{p:F0_L_q-1}) we obtain Corollary~\ref{cor:isolated} of the introduction.
\end{remark}

\begin{para}
	We have thus determined the subgroup \(\Delta^d(G) \subset \Laz\) for all \(d \in \Nn\) and all diagonalizable groups \(G\) of finite type over \(k\) having no characters of order divisible by the characteristic of \(k\).
	Namely:
	\[
		\Delta^d(G) =
		\begin{cases}
			F^d_q \Laz + \Ipn{r} & \text{ if \(G\) is a finite diagonalizable \(p\)-group of rank \(r\) and \(q = |\widehat{G}|\),} \\
			\Laz & \text{ otherwise.}
		\end{cases}
	\]
\end{para}

\subsection{Chern numbers}
Let us now draw a few concrete consequences of (\ref{th:Delta^d}).
Let \(p\) be a prime number different from the characteristic of the field \(k\).
Let \(G\) be a finite diagonalizable \(p\)-group over \(k\) of rank \(r\) (see (\ref{p:p_rank})), and let \(q=|\widehat{G}|\).
We will use the notation \(\deg_q\) of (\ref{p:grading_pol}).

\begin{theorem}
	Let \(\ell_i \in \Laz^{-i}\) be polynomial generators of the ring \(\Laz\).
	Let \(X\) be a smooth projective \(k\)-variety with a \(G\)-action.
	Let \(A=\Zz\) if \(G=1\) and \(A=\Fp\) otherwise.
	Let \(P\in A[T_i, i \in \su{N}{r}]\) be the unique polynomial such that \(\lc X \rc = P(\ell_1,\ldots)\) in \(\su{\Laz}{r}\).
	Then
	\[
		\dim X^G \ge \deg_qP.
	\]
\end{theorem}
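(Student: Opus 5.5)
This statement should follow directly from (\ref{th:fixed_Fd}) combined with the explicit description of the filtration in (\ref{prop:L_q_grading}). Set \(d=\dim X^G\). If \(X^G=\varnothing\) then \(d=-\infty\), and (\ref{th:empty_ipnr}) gives \(\lc X \rc\in\Ipn{r}\). In that case \(\lc X \rc\) vanishes in \(\su{\Laz}{r}\), so \(P=0\), and with the convention \(\deg_q 0=-\infty\) the inequality holds trivially. The case \(G=1\) needs separate attention: there \(r=0\), \(q=1\), and \(X^G=X\). The statement then reads \(\dim X\ge \deg P\). This holds because \(\lc X \rc\) is a sum of homogeneous components of degrees \(-\dim X_j\), one for each component \(X_j\), and each \(T_i\) has weight \(i\). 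This case is also covered formally by the general argument below.

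For the main case, assume \(d\in\Nn\). By (\ref{th:fixed_Fd}) we have \(\lc X \rc\in F^d_q\su{\Laz}{r}\). By (\ref{prop:L_q_grading}) there is a polynomial \(P'\in A[T_i,i\in\su{N}{r}]\) with \(\deg_q P'\le d\) and \(\lc X \rc=P'(\ell_1,\ldots)\) in \(\su{\Laz}{r}\). Here we use (\ref{p:Laz_La}) to identify \(\su{\Laz}{r}\) with \(\su{\La}{r}\) when \(r\ge1\); when \(r=0\) we take \(A=\Zz\).

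The remaining step is uniqueness. By (\ref{lemm:pol_gen_L}), and by (\ref{p:pol_gen_laz}) when \(r=0\), the ring \(\su{\Laz}{r}\) is a polynomial \(A\)-algebra in the \(\ell_i\) for \(i\in\su{N}{r}\). Hence the evaluation map \(A[T_i,i\in\su{N}{r}]\to\su{\Laz}{r}\) is an isomorphism. This forces \(P=P'\), and therefore \(\deg_q P\le d=\dim X^G\).

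There is no real obstacle here. The only points to check are that the convention for \(\deg_q\) of the zero polynomial matches the empty fixed locus, and that the coefficient ring \(A\) matches the one used in (\ref{prop:L_q_grading}). Both are bookkeeping.
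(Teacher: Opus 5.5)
Your proposal is correct and follows the paper's own proof: you take \(d=\dim X^G\), use (\ref{th:fixed_Fd}) to place \(\lc X \rc\) in \(F^d_q\su{\Laz}{r}\), use (\ref{prop:L_q_grading}) to get a representing polynomial of \(q\)-degree at most \(d\), and use uniqueness from (\ref{lemm:pol_gen_L}) to identify it with \(P\). Your extra bookkeeping for the empty fixed locus and for \(G=1\) is consistent with the paper's conventions (\(F_q^{-\infty}\su{\Laz}{r}=0\), and \(r=0\), \(q=1\)).
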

\begin{proof}
	Such \(P\) exists and is unique by (\ref{lemm:pol_gen_L}).
	Let \(d=\dim X^G\).
	Then \(\lc X \rc \in F^d_q\su{\Laz}{r}\) by (\ref{th:fixed_Fd}), hence by (\ref{prop:L_q_grading}) we have \(\deg_qP \le d\).
\end{proof}

\begin{para}
	For a partition \(\alpha=(\alpha_1, \ldots, \alpha_n)\), we consider the integer
	\[
		\pi_q(\alpha) = \big \lfloor \frac{\alpha_1}{q} \big \rfloor + \cdots + \big \lfloor \frac{\alpha_n}{q} \big \rfloor.
	\]
\end{para}

\begin{para}
	\label{p:pi_q_deg_q}
	Let us note that:
	\begin{enumerate}[label=(\roman*), ref=\roman*]
		\item
			\label{p:pi_q_deg_q:1}
			\(\pi_q(\alpha) = \deg_q(T_\alpha)\) for every partition \(\alpha\) (see (\ref{p:grading_pol})),
		\item
			\label{p:pi_q_deg_q:2}
			\(\pi_q(\alpha) \le \lfloor |\alpha|/q \rfloor\) for every partition \(\alpha\) (see (\ref{p:partitions})),
		\item
			\label{p:pi_q_deg_q:3}
			\(\pi_q(\alpha \cup \beta)=\pi_q(\alpha)+ \pi_q(\beta)\) for every partitions \(\alpha,\beta\).
	\end{enumerate}
\end{para}

We recall from (\ref{p:def_succ}) that \(\succeq\) refers to the refinement relation between partitions.
\begin{lemma}
	\label{lemm:pi_succ}
	Let \(\alpha,\beta\) be partitions such that \(\alpha \succeq \beta\). Then \(\pi_q(\alpha) \le \pi_q(\beta)\).
\end{lemma}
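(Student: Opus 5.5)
The plan is to unwind the definition of refinement (\ref{p:def_succ}) and combine the two properties of \(\pi_q\) recorded in (\ref{p:pi_q_deg_q}). Write \(\beta=(\beta_1,\ldots,\beta_m)\). Since \(\alpha\succeq\beta\), there are partitions \(\alpha^1,\ldots,\alpha^m\) with \(|\alpha^i|=\beta_i\) for each \(i\) and \(\alpha=\alpha^1\cup\cdots\cup\alpha^m\).

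First, additivity (\ref{p:pi_q_deg_q}.\ref{p:pi_q_deg_q:3}), applied repeatedly (by induction on \(m\)), gives
\[
\pi_q(\alpha)=\pi_q(\alpha^1)+\cdots+\pi_q(\alpha^m).
\]
Next, each summand is bounded using (\ref{p:pi_q_deg_q}.\ref{p:pi_q_deg_q:2}):
\[
\pi_q(\alpha^i)\le \lfloor |\alpha^i|/q\rfloor=\lfloor \beta_i/q\rfloor .
\]
Summing over \(i\) yields \(\pi_q(\alpha)\le \sum_i\lfloor\beta_i/q\rfloor=\pi_q(\beta)\).

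There is no real obstacle. The only point needing any care is (\ref{p:pi_q_deg_q}.\ref{p:pi_q_deg_q:2}) itself. It follows from the superadditivity \(\lfloor a/q\rfloor+\lfloor b/q\rfloor\le\lfloor (a+b)/q\rfloor\) used in (\ref{p:deg_q_deg}), iterated over the parts of a partition. The empty-partition case (\(m=0\)) is trivial, since then \(\alpha\) is also empty.
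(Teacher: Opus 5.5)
Your proposal is correct and is essentially the paper's own proof: decompose \(\alpha=\alpha^1\cup\cdots\cup\alpha^m\) with \(|\alpha^i|=\beta_i\), use additivity (\ref{p:pi_q_deg_q}.\ref{p:pi_q_deg_q:3}) to get \(\pi_q(\alpha)=\sum_i\pi_q(\alpha^i)\), and bound each summand by \(\lfloor\beta_i/q\rfloor\) via (\ref{p:pi_q_deg_q}.\ref{p:pi_q_deg_q:2}). Your remarks on superadditivity of the floor and on the empty partition are accurate but not needed beyond what the paper already records.
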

\begin{proof}
	If \(\alpha^1,\ldots,\alpha^m\) are partitions such that \(\beta=(|\alpha^1|,\ldots,|\alpha^m|)\) and \(\alpha = \alpha^1 \cup \cdots \cup \alpha^m\), we have by (\ref{p:pi_q_deg_q}.\ref{p:pi_q_deg_q:2}) and (\ref{p:pi_q_deg_q}.\ref{p:pi_q_deg_q:3})
	\[
		\pi_q(\beta)
		= \big \lfloor \frac{|\alpha^1|}{q} \big \rfloor + \cdots + \big \lfloor \frac{|\alpha^m|}{q} \big \rfloor
		\ge \pi_q(\alpha^1) + \cdots + \pi_q(\alpha^m)
		=\pi_q(\alpha).\qedhere
	\]
\end{proof}

Recall from (\ref{eq:c_alpha}) that to each partition \(\alpha\) corresponds a morphism \(c_\alpha\colon\Laz \to \Zz\).

\begin{lemma}
	\label{lemm:c_F_pi}
	Let \(\alpha\) be a partition.
	Then \(c_\alpha(F^d_q\Laz)=0\) for any \(d<\pi_q(\alpha)\).
\end{lemma}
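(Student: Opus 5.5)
We fix polynomial generators \(\ell_i\in\Laz^{-i}\) of \(\Laz\) (by (\ref{p:pol_gen_laz})) and reduce everything to monomials \(\ell_\beta\). Since the \(\Zz\)-algebra \(\Laz\) is polynomial in the \(\ell_i\), the argument of (\ref{prop:L_q_grading}) in the case \(n=0\) (where \(A=\Zz\)) shows that \(F^d_q\Laz\) is the subgroup generated by the monomials \(\ell_\beta\) with \(\deg_q(T_\beta)\le d\). By (\ref{p:pi_q_deg_q}.\ref{p:pi_q_deg_q:1}) this condition reads \(\pi_q(\beta)\le d\). Indeed, each \(\ell_\beta\) with \(\pi_q(\beta)\le d\) lies in \(F^d_q\Laz\) by the ring filtration property. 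Conversely, a product \(y_1\cdots y_k\) with \(y_j\in\Laz^{-d_j}\) and \(\sum\lfloor d_j/q\rfloor\le d\) expands into monomials \(\ell_\beta\), and the bound \(\deg_q\le\lfloor\deg/q\rfloor\) of (\ref{p:deg_q_deg}) shows that each such \(\beta\) satisfies \(\pi_q(\beta)\le d\).

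Since \(c_\alpha\) is additive, it then suffices to show that \(c_\alpha(\ell_\beta)=0\) whenever \(\pi_q(\beta)\le d<\pi_q(\alpha)\). Suppose instead that \(c_\alpha(\ell_\beta)\ne 0\). Then Lemma~(\ref{lemm:succ}) gives \(\alpha\succeq\beta\), and Lemma~(\ref{lemm:pi_succ}) then gives \(\pi_q(\alpha)\le\pi_q(\beta)\le d\). This contradicts \(d<\pi_q(\alpha)\).

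The only step that needs care is the first one, namely the description of \(F^d_q\Laz\) by monomials. That description is essentially the content of (\ref{prop:L_q_grading}) with \(n=0\), so no genuine obstacle is expected.
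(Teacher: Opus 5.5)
Your proof is correct and follows the paper's own argument. The paper likewise uses (\ref{prop:L_q_grading}) to see that \(F^d_q\Laz\) is generated by the monomials \(\ell_\beta\) with \(\pi_q(\beta)\le d\), and concludes via (\ref{lemm:pi_succ}) and (\ref{lemm:succ}); your extra check of the monomial description only spells out what that proposition already gives in the case \(n=0\).
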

\begin{proof}
	Let \(\ell_i \in \Laz^{-i}\) be a family of polynomial generators of \(\Laz\).
	Then by (\ref{prop:L_q_grading}) (and (\ref{p:pi_q_deg_q}.\ref{p:pi_q_deg_q:1})) the group \(F^d_q\Laz\) is generated by the elements \(\ell_\beta\), where \(\beta\) is a partition such that \(\pi_q(\beta) \le d\).
	If \(d<\pi_q(\alpha)\), such \(\beta\) cannot verify \(\alpha \succeq \beta\) by (\ref{lemm:pi_succ}), hence \(c_\alpha(\ell_\beta)=0\) by (\ref{lemm:succ}).
\end{proof}

\begin{proposition}
	\label{prop:dim_c}
	Let \(X\) be a smooth projective \(k\)-variety with a \(G\)-action.
	Let \(d\in \Nn\), and let \(c\) be a \(\Zz\)-linear combination of the morphisms \(c_\beta\colon \Laz \to \Zz\), where \(\beta=(\beta_1,\ldots,\beta_m)\) are partitions satisfying
	\[
		\big \lfloor \frac{\beta_1}{q} \big \rfloor + \cdots +\big \lfloor \frac{\beta_m}{q}\big\rfloor\ge d.
	\]
	If \(c(\lc X \rc)\not \in c(\Ipn{r})\), then \(\dim X^G \ge d\).
\end{proposition}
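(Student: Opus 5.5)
We argue by contraposition: assume \(e=\dim X^G<d\) and show that \(c(\lc X \rc)\in c(\Ipn{r})\). The case \(X^G=\varnothing\) is included, because then \(\lc X \rc\in \Ipn{r}\) by (\ref{th:empty_ipnr}) and the conclusion is immediate. So we may assume \(e\in \Nn\) with \(e\le d-1\).

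By (\ref{th:fixed_Fd}), the class \(\lc X \rc\) lies in \(F^e_q\su{\Laz}{r}\). We lift this to \(\Laz\). The map \(\Laz\to\su{\Laz}{r}\) is a surjective graded ring morphism, so \(F^e_q\Laz\to F^e_q\su{\Laz}{r}\) is surjective. Hence we can write
\[
\lc X \rc = y + z \quad\text{with } y\in F^e_q\Laz \text{ and } z\in \Ipn{r}.
\]
When \(r=0\) we have \(\Ipn{0}=0\), so in that case \(\lc X\rc\in F^e_q\Laz\) directly.

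It remains to see that \(c(y)=0\). We write \(c=\sum_\beta n_\beta c_\beta\), where each partition \(\beta\) in the sum satisfies \(\pi_q(\beta)\ge d>e\). By (\ref{lemm:c_F_pi}), every such \(c_\beta\) vanishes on \(F^e_q\Laz\), so \(c(y)=0\). Therefore \(c(\lc X\rc)=c(z)\in c(\Ipn{r})\), which contradicts the hypothesis and proves \(\dim X^G\ge d\).

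The only step requiring care is the lifting of the filtration from \(\su{\Laz}{r}\) to \(\Laz\). This is the surjectivity remark made just after (\ref{def:F_q_L}). Everything else is a direct application of (\ref{th:fixed_Fd}) and (\ref{lemm:c_F_pi}).
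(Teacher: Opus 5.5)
Your proof is correct and is essentially the paper's argument. The paper invokes (\ref{th:Delta^d}) to get \(\lc X \rc \in F^{d-1}_q\Laz + \Ipn{r}\) and then kills the first summand with (\ref{lemm:c_F_pi}). You instead unpack that inclusion directly from (\ref{th:fixed_Fd}) and the surjectivity of \(F^e_q\Laz \to F^e_q\su{\Laz}{r}\), which is exactly how the relevant inclusion of (\ref{th:Delta^d}) is obtained. Your separate treatment of \(X^G=\varnothing\) also handles the case \(d=0\) cleanly, where the paper implicitly reads \(F^{-1}_q\Laz\) as \(0\).
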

\begin{proof}
	Assume the contrary.
	Then \(\lc X \rc \in F^{d-1}_q\Laz + \Ipn{r}\) by (\ref{th:Delta^d}).
	Since \(c(F^{d-1}_q\Laz)=0\) by (\ref{lemm:c_F_pi}), it follows that \(c(\lc X \rc) \in c(\Ipn{r})\), a contradiction.
\end{proof}

The next remark shows that the bound of (\ref{prop:dim_c}) is sharp:
\begin{remark}
	Let \(X\) be a smooth projective \(k\)-variety with a \(G\)-action.
	Assume that \(\lc X \rc \not \in \Delta^{d-1}(G)\), with \(d\in \Nn\smallsetminus \{0\}\).
	Let us also assume that \(G\ne 1\), i.e.\ \(r\ge 1\).
	Let \(\ell_i \in \Laz^{-i}\) be a family of polynomial generators of the ring \(\Laz\), such that the ideal \(\Ipn{r}\) is generated by \(p\) and \(\ell_{p^i-1}\) for \(i=1,\ldots,r-1\) (see (\ref{p:gen_in_I})).
	Write \(\lc X\rc = P(\ell_1,\ldots)\) with \(P \in \Zz[T_i, i\in \Nn]\).
	It follows from (\ref{th:Delta^d}) and (\ref{prop:L_q_grading}) that the polynomial \(P\) contains a monomial of the form \(\lambda T_\alpha\) with \(\lambda \in \Zz\smallsetminus p \Zz\), where \(\alpha\) is a partition verifying \(\pi_q(\alpha) \ge d\) and containing no elements of the form \(p^i-1\) with \(i\in \{1,\ldots,r-1\}\).
	The morphism \(d_\alpha\) given by (\ref{lemm:d_alpha}) verifies \(c(\lc X \rc) = \lambda d_\alpha(\ell_\alpha)\).
	On the other hand the group \(\Ipn{r}\) is generated by \(p\Laz\) and the elements \(\ell_\beta\), where \(\beta\) runs over a set of partitions distinct from \(\alpha\).
	Consequently \(d_\alpha(\Ipn{r})= p c(\Laz) = pd_\alpha(\ell_\alpha)\).
	By (\ref{lemm:pi_succ}), the morphism \(c=d_\alpha\) thus satisfies the conditions of (\ref{prop:dim_c}).
\end{remark}

\begin{lemma}
	\label{lemm:d_alpha}
	Let \(\ell_i \in \Laz^{-i}\) be a family of polynomial generators of \(\Laz\), and let \(\alpha\) be a partition.
	Then there exists a map \(d_\alpha \colon \Laz \to \Zz\) such that
	\begin{enumerate}[label=(\roman*), ref=\roman*]
		\item \(d_\alpha(\ell_\alpha) \ne 0\),
		\item \(d_\alpha(\ell_\beta) = 0\) for all \(\beta \ne \alpha\),
		\item \(d_\alpha\) is a \(\Zz\)-linear combination of the morphisms \(c_\beta\) for \(\alpha \succeq \beta\).
	\end{enumerate}
\end{lemma}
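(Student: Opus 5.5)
We work in \(\Laz\otimes\Qq=\Qq[\ell_1,\ldots]\) and build \(d_\alpha\) by triangularity with respect to the refinement order. By (\ref{lemm:succ}), \(c_\gamma(\ell_\beta)\neq 0\) forces \(\gamma\succeq\beta\). Hence the matrix \((c_\gamma(\ell_\beta))\), with \(\gamma,\beta\) running over partitions of a fixed weight \(n=|\alpha|\), is triangular with respect to \(\succeq\), which is a partial order by (\ref{p:succ_length}).

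The diagonal entries are nonzero: by (\ref{p:c_product}), \(c_\beta(\ell_\beta)\) equals the sum over ways of writing \(\beta=\alpha^1\cup\cdots\cup\alpha^m\) with \(|\alpha^i|=\beta_i\) and \(c_{\alpha^i}(\ell_{\beta_i})\neq0\). Lengths force each \(\alpha^i=(\beta_i)\), so \(c_\beta(\ell_\beta)\) is a positive integer (a multiplicity count) times \(\prod_i c_{(\beta_i)}(\ell_{\beta_i})\), which is nonzero by \eqref{eq:ci_li}.

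Let \(U\) be the finite set of partitions \(\beta\) with \(\alpha\succeq\beta\). We seek integers \(\lambda_\beta\), \(\beta\in U\), and set \(d_\alpha=\sum_{\beta\in U}\lambda_\beta c_\beta\). Condition (iii) then holds automatically. Condition (ii) only needs checking for \(\ell_\delta\) with \(|\delta|=n\), by degree reasons, since each \(c_\beta\) with \(|\beta|=n\) vanishes on other degrees.

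For \(\delta\) of weight \(n\), we have \(d_\alpha(\ell_\delta)=\sum_{\beta\in U}\lambda_\beta c_\beta(\ell_\delta)\), and a term is nonzero only if \(\alpha\succeq\beta\succeq\delta\). So if \(\delta\notin U\), then \(d_\alpha(\ell_\delta)=0\) automatically. It therefore suffices to solve the square system
\[
\sum_{\beta\in U}\lambda_\beta\, c_\beta(\ell_\delta)=\begin{cases} N & \delta=\alpha,\\ 0 & \delta\in U\smallsetminus\{\alpha\}\end{cases}
\]
for a suitable nonzero integer \(N\). The matrix \((c_\beta(\ell_\delta))_{\beta,\delta\in U}\) is triangular for the order \(\succeq\) with nonzero diagonal, hence invertible over \(\Qq\). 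This gives a rational solution for \(N=1\), and clearing denominators gives an integer solution with \(N\neq0\).

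The only delicate point is the nonvanishing of the diagonal \(c_\beta(\ell_\beta)\), handled above via length counting. The rest is linear algebra.
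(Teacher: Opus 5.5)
Your proof is correct and is essentially the paper's argument. Both rest on the triangularity of \(\big(c_\beta(\ell_\delta)\big)\) with respect to refinement (Lemma~\ref{lemm:succ}) together with a nonzero diagonal. The paper performs the forward substitution over \(\Zz\) by induction on length: it normalizes all \(d_\beta(\ell_\beta)\) to a common \(u\) and sets \(d_\alpha = u c_\alpha - \sum c_\alpha(\ell_\beta) d_\beta\). You instead invert the matrix over \(\Qq\) and clear denominators.

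Your length-counting argument that \(c_\beta(\ell_\beta)=\prod_i c_{(\beta_i)}(\ell_{\beta_i})\neq 0\) fills in a step the paper uses without comment. The multiplicity there is in fact exactly \(1\).
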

\begin{proof}
	We proceed by induction on the length of \(\alpha\).
	If \(\alpha= \varnothing\), then we set \(d_\alpha= c_\varnothing \colon \Laz \to \Zz\), the projection to the homogeneous component of degree \(0\).
	Now suppose that \(\length(\alpha) >0\).
	By induction and (\ref{p:succ_length}), we may assume \(d_\beta\) constructed for \(\beta \ne \alpha\) such that \(\alpha \succeq \beta\).
	We may also assume that the integer \(d_\beta(\ell_\beta) = u \in \Zz\smallsetminus \{0\}\) is independent of \(\beta\) (upon multiplying each \(d_\beta\) by an appropriate integer).
	Set
	\[
		d_\alpha = u c_\alpha- \sum_{\substack{\alpha \succeq \beta \\ \alpha \ne \beta}} c_\alpha(\ell_\beta) d_\beta.
	\]
	Then \(d_\alpha(\ell_\alpha)=uc_\alpha(\ell_\alpha) \ne 0\), and \(d_\alpha(\ell_\beta)=0\) for all \(\beta \ne \alpha\) such that \(\alpha \succeq \beta\).
	On the other hand \(d_\alpha(\ell_\beta)= 0\) when \(\alpha \not \succeq \beta\) by (\ref{lemm:succ}) (and transitivity of the relation \(\succeq\)).
\end{proof}

We now present a few concrete ways of finding morphisms \(c\) satisfying the conditions of (\ref{prop:dim_c}).
The most straightforward is to use Chern numbers modulo \(p\):

\begin{corollary}[of (\ref{prop:dim_c})]
	\label{prop:dim_pi}
	Let \(X\) be a smooth projective \(k\)-variety with a \(G\)-action.
	If \(\alpha\) is a partition such that the Chern number \(c_\alpha(X)\) is not divisible by \(p\), then
	\[
		\dim X^G \ge \big \lfloor \frac{\alpha_1}{q} \big \rfloor + \cdots + \big \lfloor \frac{\alpha_n}{q} \big \rfloor.
	\]
\end{corollary}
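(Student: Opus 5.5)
This follows by applying (\ref{prop:dim_c}) with \(c=c_\alpha\) and \(d=\pi_q(\alpha)=\lfloor \alpha_1/q\rfloor+\cdots+\lfloor\alpha_n/q\rfloor\). The partition \(\alpha\) trivially satisfies the hypothesis on the partitions \(\beta\) appearing in \(c\), since \(c\) is the single morphism \(c_\alpha\) and \(\pi_q(\alpha)\ge d\) holds with equality. By (\ref{def:lc_rc}) we have \(c_\alpha(\lc X\rc)=c_\alpha(X)\). So what remains is to show that \(c_\alpha(X)\notin c_\alpha(\Ipn{r})\), and for this it suffices to show \(c_\alpha(\Ipn{r})\subset p\Zz\), since \(c_\alpha(X)\) is prime to \(p\) by assumption.

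For this inclusion, first take \(r\ge 1\). Then \(\Ipn{r}\subset \Ipn{\infty}\): by (\ref{def:Ipn}), \(\Ipn{r}\) is generated by elements of \(\Ipn{\infty}\), and \(\Ipn{\infty}\) is an ideal, being a kernel of a ring map. By the definition of \(\Ipn{\infty}\), every element of it maps to zero in \(\Fp[\bb]\). Its \(b_\alpha\)-coefficient is therefore divisible by \(p\), which means \(c_\alpha(\Ipn{\infty})\subset p\Zz\). This gives \(c_\alpha(\Ipn{r})\subset p\Zz\).

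The case \(r=0\), i.e.\ \(G=1\), is degenerate. Then \(q=1\), \(\Ipn{0}=0\), and \(X^G=X\). The bound to prove reads \(\dim X\ge|\alpha|\), and it holds because \(c_\alpha(X)\neq 0\) forces \(|\alpha|=\dim X\) for degree reasons (one can also apply (\ref{prop:dim_c}) directly, using \(c(\Ipn{0})=0\)). If \(X\) is not equidimensional, we first restrict to the component where the Chern number lives, using additivity of Chern numbers over disjoint unions.

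No step here is a real obstacle: the substance is contained in (\ref{prop:dim_c}) and hence in (\ref{th:Delta^d}). The only point needing care is the inclusion \(c_\alpha(\Ipn{r})\subset p\Zz\), which comes straight from the definition of \(\Ipn{\infty}\) as the kernel of reduction modulo \(p\) in \(\Fp[\bb]\).
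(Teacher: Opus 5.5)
Your proposal is correct and matches the paper's proof, which is exactly the application of (\ref{prop:dim_c}) with \(c=c_\alpha\), justified by the inclusion \(c_\alpha(\Ipn{r})\subset p\Zz\); deriving that inclusion from \(\Ipn{r}\subset\Ipn{\infty}\) and the definition of \(\Ipn{\infty}\) as a kernel is the intended justification. Your separate treatment of \(r=0\) is unnecessary, since \(c_\alpha(\Ipn{0})=0\subset p\Zz\) already covers that case uniformly.
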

\begin{proof}
	Since \(c_\alpha(\Ipn{r}) \subset p \Zz\), we may apply (\ref{prop:dim_c}) with \(c=c_\alpha\).
\end{proof}

In order to use Chern numbers modulo higher powers of \(p\), we will need to exclude certain partitions.
\begin{para}
	For \(n\in \Nn\), we denote by \(\Apar{n}\) the set of partitions that are not a refinement (see (\ref{p:def_succ})) of a partition containing an element of \(\{p-1,\ldots,p^{n-1}-1\}\) (in particular \(\Apar{0}=\Apar{1}\) is the set of all partitions).
	Equivalently, a partition \(\alpha=(\alpha_1,\ldots,\alpha_m)\) belongs to \(\Apar{n}\) if and only if for every subset \(I \subset \{1,\ldots,m\}\)
	\[
		\sum_{i\in I} \alpha_i \not \in \{p-1,\ldots,p^{n-1}-1\}.
	\]
\end{para}

\begin{remark}
	\label{rem:refine}
	A partition \(\alpha=(\alpha_1,\ldots ,\alpha_m)\) belongs to \(\Apar{n}\) under any of the following assumptions:
	\begin{enumerate}[label=(\roman*), ref=\roman*]
		\item
			\(n=1\),
		\item
			\(\alpha_m\ge p^{n-1}\),
		\item
			every \(\alpha_i\) is divisible by \(p\),
		\item
			\label{rem:refine:3}
			\(m=1\) and \(\alpha_1 \neq p^i-1\) for all \(i \in \{1,\ldots,n-1\}\).
	\end{enumerate}
\end{remark}

\begin{lemma}
	\label{lemm:c_Ipn}
	Let \(n\in \Nn\).
	If \(\alpha \in \Apar{n}\), then \(c_{\alpha}(\Ipn{n}) \subset p c_{\alpha}(\Laz)\).
\end{lemma}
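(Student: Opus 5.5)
The plan is to choose polynomial generators of \(\Laz\) adapted to \(\Ipn{n}\). Then \(\Ipn{n}\) has an explicit additive basis of monomials, and we test \(c_\alpha\) on each basis monomial using (\ref{lemm:succ}).

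First dispose of the trivial cases. If \(n=0\), then \(\Ipn{0}=0\) and there is nothing to prove. If \(n=1\), then \(\Ipn{1}=p\Laz\), so \(c_\alpha(\Ipn{1})=p\,c_\alpha(\Laz)\) by additivity of \(c_\alpha\).

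Now assume \(n\ge 1\) and fix polynomial generators \(\ell_i\in\Laz^{-i}\) as in (\ref{p:gen_in_I}). Thus \(\Ipn{n}\) is the ideal generated by \(p\) and \(\ell_{p^i-1}\) for \(i=1,\ldots,n-1\). Since \(\Laz\) is additively spanned by the monomials \(\ell_\beta\), the group \(\Ipn{n}\) is additively generated by two kinds of elements:
\begin{enumerate}[label=(\alph*)]
	\item the elements \(p\,\ell_\beta\), for \(\beta\) an arbitrary partition;
	\item the monomials \(\ell_\beta\), where \(\beta\) contains some entry \(p^i-1\) with \(1\le i\le n-1\).
\end{enumerate}
By additivity it suffices to check each kind. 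For kind (a), we have \(c_\alpha(p\ell_\beta)=p\,c_\alpha(\ell_\beta)\in p\,c_\alpha(\Laz)\). For kind (b), suppose \(c_\alpha(\ell_\beta)\ne 0\). Then (\ref{lemm:succ}) gives \(\alpha\succeq\beta\). Hence \(\alpha\) is a refinement of a partition containing an element of \(\{p-1,\ldots,p^{n-1}-1\}\), contradicting \(\alpha\in\Apar{n}\). So \(c_\alpha\) vanishes on all generators of kind (b), and therefore \(c_\alpha(\Ipn{n})\subset p\,c_\alpha(\Laz)\).

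There is no serious obstacle. The one point needing care is that (\ref{p:gen_in_I}) really allows the family \(\ell_i\) to remain a set of polynomial generators of \(\Laz\) after the replacement \(\ell_{p^i-1}\mapsto v_i+pa_i\). This holds because the replacement changes \(c_{(p^i-1)}(\ell_{p^i-1})\) by a decomposable correction, so that value is unchanged and (\ref{p:pol_gen_laz}) still applies. Granting this, the monomial description of \(\Ipn{n}\) above is immediate, and the rest of the argument is formal.
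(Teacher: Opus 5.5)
Your proof is correct and is essentially the paper's argument: choose generators as in (\ref{p:gen_in_I}) so that \(\Ipn{n}\) is generated by \(p\) and the \(\ell_{p^i-1}\), observe that modulo \(p\Laz\) the ideal is spanned by the monomials \(\ell_\beta\) with \(\beta\) containing some \(p^i-1\), and kill these with (\ref{lemm:succ}) and the definition of \(\Apar{n}\). Your remark on why the modified family is still polynomial is the justification the paper leaves implicit; strictly speaking, for odd \(p\) one may first have to replace \(\ell_{p^i-1}\) by \(-\ell_{p^i-1}\), but this is harmless, since all that matters is \(c_{(p^i-1)}(v_i+pa_i)=\pm p\).
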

\begin{proof}
	We may assume that \(n\ge 1\), so that \(p\Laz \subset \Ipn{n}\).
	Let \(\ell_i \in \Laz^{-i}\), for \(i\in \Nn\smallsetminus \{0\}\), be a family of polynomial generators of the ring \(\Laz\), such that the ideal \(\Ipn{n}\) is generated by \(p\) and \(\ell_{p^i-1}\) for \(i=1,\ldots,n-1\) (see (\ref{p:gen_in_I})).
	The group \(\Ipn{n}/(p\Laz)\) is then generated by the elements \(\ell_\beta\), where \(\beta\) contains an element of \(\{p-1,\ldots,p^{n-1}-1\}\).
	Since \(\alpha \in \Apar{n}\), such a partition \(\beta\) cannot verify \(\alpha \succeq \beta\), and thus \(c_\alpha(\ell_\beta)=0\) by (\ref{lemm:succ}).
\end{proof}

\begin{corollary}[of (\ref{prop:dim_c})]
	\label{cor:dim_pi}
	Let \(X\) be a smooth projective \(k\)-variety with a \(G\)-action.
	Let \(\alpha=(\alpha_1,\ldots,\alpha_n) \in \Apar{r}\) be such that \(c_\alpha(X)\not \in p c_\alpha(\Laz)\).
	Then
	\[
		\dim X^G \ge \big \lfloor \frac{\alpha_1}{q} \big \rfloor + \cdots +\big \lfloor \frac{\alpha_n}{q}\big\rfloor.
	\]
\end{corollary}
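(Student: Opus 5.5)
The plan is to apply (\ref{prop:dim_c}) with the single morphism \(c=c_\alpha\) and \(d=\pi_q(\alpha)=\lfloor \alpha_1/q\rfloor+\cdots+\lfloor\alpha_n/q\rfloor\). The partition condition in (\ref{prop:dim_c}) is then satisfied trivially, since \(c\) involves only the partition \(\beta=\alpha\). Applying (\ref{prop:dim_c}) requires one further check, namely that \(c_\alpha(\lc X\rc)\not\in c_\alpha(\Ipn{r})\). Once this is known, (\ref{prop:dim_c}) yields \(\dim X^G\ge \pi_q(\alpha)\), which is the claim.

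To verify the hypothesis, I would use (\ref{lemm:c_Ipn}) with \(n=r\). Since \(\alpha\in\Apar{r}\), it gives \(c_\alpha(\Ipn{r})\subset p\,c_\alpha(\Laz)\). By (\ref{def:lc_rc}) we have \(c_\alpha(\lc X\rc)=c_\alpha(X)\), and by assumption \(c_\alpha(X)\notin p\,c_\alpha(\Laz)\). Hence \(c_\alpha(\lc X\rc)\notin c_\alpha(\Ipn{r})\), as required.

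The case \(r=0\) (i.e.\ \(G=1\), \(q=1\)) needs a remark: here \(\Ipn{0}=0\), and (\ref{lemm:c_Ipn}) holds trivially. The inequality then reads \(\dim X\ge|\alpha|\). This follows because \(c_\alpha(X)\ne0\) forces \(|\alpha|=\dim X\) on the relevant component, and it is also covered by (\ref{prop:dim_c}) directly. I see no real obstacle. The only points needing care are that \(c_\alpha\) is indeed an admissible choice of \(c\) in (\ref{prop:dim_c}), and the identification \(c_\alpha(\lc X\rc)=c_\alpha(X)\).
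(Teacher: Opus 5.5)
Your proof is correct and is exactly the paper's argument. Applying (\ref{lemm:c_Ipn}) with \(n=r\) gives \(c_\alpha(\Ipn{r})\subset p\,c_\alpha(\Laz)\), so \(c=c_\alpha\) with \(d=\pi_q(\alpha)\) satisfies the hypotheses of (\ref{prop:dim_c}); your separate remark on \(r=0\) is harmless but unnecessary, since both cited results already cover that case.
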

\begin{proof}
	By (\ref{lemm:c_Ipn}) the morphism \(c=c_\alpha\) satisfies the conditions of (\ref{prop:dim_c}).
\end{proof}

\begin{remark}
	Let \(\alpha=(\alpha_1,\ldots,\alpha_n)\) be a partition such that  \(\alpha_n \ge p^r-1\), and \(\alpha_i+1\) is a power of \(p\) for every \(i=1,\ldots,n\).
	Then \(\alpha \in \Apar{r}\) by (\ref{rem:refine}), and \(c_\alpha(\Laz)\subset p \Zz\) by \cite[(7.3.3)]{inv}.
	Thus, if \(X\) is a smooth projective \(k\)-variety with a \(G\)-action such that \(c_\alpha(X)\ne 0 \mod p^2\), then \(\dim X^G \ge \pi_q(\alpha)\) by (\ref{cor:dim_pi}).
\end{remark}

\begin{corollary}
	\label{cor:indec:}
	Let \(X\) be a smooth projective \(k\)-variety of pure dimension \(n\not \in \{p-1,\ldots,p^{r-1}-1\}\).
	Assume that \(\lc X \rc\) is indecomposable in the graded ring \(\Laz/p\).
	Then for any \(G\)-action on \(X\), we have
	\[
		\dim X^G \ge \big \lfloor \frac{n}{q} \big \rfloor.
	\]
\end{corollary}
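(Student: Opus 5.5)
The natural route is to apply (\ref{cor:dim_pi}) to the length-one partition \(\alpha=(n)\). First, \(\alpha\in\Apar{r}\): by (\ref{rem:refine}.\ref{rem:refine:3}), a partition of length one lies in \(\Apar{r}\) as soon as \(n\neq p^i-1\) for \(i\in\{1,\ldots,r-1\}\), which is exactly the hypothesis on \(n\). In that case \(\pi_q(\alpha)=\lfloor n/q\rfloor\), which is the bound we want. So it remains to show that \(c_{(n)}(X)\not\in p\,c_{(n)}(\Laz)\).

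For this we use the description of \(c_{(n)}(\Laz^{-n})\) in (\ref{p:pol_gen_laz}). Pick polynomial generators \(\ell_i\). Every element of \(\Laz^{-n}\) is \(\lambda\ell_n\) plus a decomposable element, and \(c_{(n)}\) vanishes on decomposables by (\ref{p:cn_dec}). Hence \(c_{(n)}(\Laz)=c_{(n)}(\Laz^{-n})\) is generated by \(c_{(n)}(\ell_n)\), which is \(\pm1\) or \(\pm p\) according to \eqref{eq:ci_li}. By (\ref{p:indec_cn}), the indecomposability of \(\lc X\rc\) in \(\Laz/p\) means that \(c_{(n)}(X)\) is prime to \(p\) if \(n+1\) is not a power of \(p\), and not divisible by \(p^2\) if it is. In either case this says precisely that \(c_{(n)}(X)\not\in p\,c_{(n)}(\Laz)\). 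Then (\ref{cor:dim_pi}) gives \(\dim X^G\ge\lfloor n/q\rfloor\).

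Two edge cases need checking. If \(G=1\) (so \(r=0\) and \(q=1\)), then \(X^G=X\) has dimension \(n\) and the claim is trivial. If \(n=0\), the bound is \(0\) and there is nothing to prove, although a nonempty fixed locus is really needed for \(\dim X^G\ge0\) to make sense. Indecomposability here should be read in positive degree, since for \(n=0\) the class \(\lc X\rc\) is an integer; this case is degenerate and can be set aside.

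The only step requiring care is identifying \(c_{(n)}(\Laz)\) with the subgroup generated by \(c_{(n)}(\ell_n)\), together with the matching use of (\ref{p:indec_cn}). Both follow directly from (\ref{p:pol_gen_laz}) and (\ref{p:cn_dec}), so I do not expect a genuine obstacle.
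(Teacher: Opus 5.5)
Your proposal is correct and is essentially the paper's argument: (\ref{p:indec_cn}) gives \(c_{(n)}(X)\notin p\,c_{(n)}(\Laz)\), (\ref{rem:refine}.\ref{rem:refine:3}) gives \((n)\in\Apar{r}\) (the paper's proof misprints this as \(\notin\)), and (\ref{cor:dim_pi}) then yields the bound. One harmless imprecision: when \(n+1\) is a power of a prime \(\ell\neq p\), \(c_{(n)}(\ell_n)=\pm\ell\) rather than \(\pm1\) or \(\pm p\), but your conclusion \(c_{(n)}(X)\notin p\,c_{(n)}(\Laz)=p\ell\Zz\) still holds because \(c_{(n)}(X)\) is prime to \(p\).
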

\begin{proof}
	The indecomposability assumption implies that \(c_{(n)}(X) \not \in p c_{(n)}(\Laz)\), by (\ref{p:indec_cn}).
	As \((n) \not \in \Apar{r}\) by (\ref{rem:refine}.\ref{rem:refine:3}), the statement follows from (\ref{cor:dim_pi}).
\end{proof}

\begin{para}
	\label{p:inter_hyper}
	Let \(X\) be the intersection of \(c\) hypersurfaces of degrees \(d_1,\ldots,d_c\) in \(\Pp^{n+c}\).
	Assume that \(X\) is smooth of pure dimension \(n\), and that \(n\ge 1\).
	Then
	\[
		c_{(n)}(X) = d_1\cdots d_c (d_1^n + \cdots + d_c^n -n -c -1).
	\]
	By (\ref{p:indec_cn}), the class \(\lc X \rc\) is indecomposable in \(\Laz/p\) if this integer is prime to \(p\), or if lies in \(p \Zz \smallsetminus p^2 \Zz\) when \(n+1\) is a power of \(p\).
	Let us list a few explicit cases where this happens:
	\begin{enumerate}[label=(\roman*), ref=\roman*]
		\item \(n+c=-1 \mod p\), \(c\ne 0 \mod p\), \(d_1=\cdots =d_c\ne 0 \mod p\),
		\item \(n\ne -1 \mod p\), \(c=0 \mod p\), \(d_1=\cdots =d_c\ne 0 \mod p\),
		\item \(n=(p-1)s\) with \(s\ne 1 \mod p\), and \(d_1,\ldots,d_c\) prime to \(p\).
		\item \(n+1\) a power of \(p\), \(d_1 \in p \Zz\smallsetminus p^2 \Zz\), and \(d_2,\ldots,d_c\) prime to \(p\).
	\end{enumerate}
\end{para}

\begin{example}
	Let \(X\) be as in any of the cases listed in (\ref{ex:c_n_hypersurface}) or (\ref{p:inter_hyper}).
	If the dimension \(n\) of \(X\) differs from \(p-1,\ldots,p^{r-1}-1\), and \(G\) acts on \(X\), then \(\dim X^G \ge \lfloor n/q \rfloor\) by (\ref{cor:indec:}).
\end{example}

\begin{example}
	Let \(X\) be a smooth hypersurface of degree \(d\) in \(\Pp^{p^{s+1}+1}\), with \(p^s\ge q\) and \(d\) prime to \(p\).
	Then one may verify that the Chern number \(c_{(p^s,\ldots,p^s)}(X)\) is prime to \(p\).
	Thus if \(G\) acts on \(X\), then (\ref{prop:dim_pi}) implies that \(\dim X^G \ge p^{s+1}/q\).
	(Note that when \(p\ne 2\) and \(d\) is congruent to \(2\) modulo \(p\), the class \(\lc X\rc\) is decomposable in \(\Laz/p\), by (\ref{lemm:c_n_hyp}).)
\end{example}

\begin{example}
	Let \(c,d,n \in \Nn\) be such that \(n=cd\ge 1\) and that \(n+c+1\) is a power of \(p\).
	Let \(X\) be the intersection of \(c\) hypersurfaces of degrees \(d_1,\ldots,d_c\) in \(\Pp^{n+c}\), with \(d_1,\ldots,d_c\) prime to \(p\).
	Assume that \(X\) is smooth of pure dimension \(n\).
	Then one may verify that the Chern number \(c_{(d,\ldots,d)}(X)\) is prime to \(p\).
	Thus if \(G\) acts on \(X\), then (\ref{prop:dim_pi}) implies that \(\dim X^G \ge c\lfloor d/q \rfloor\).
\end{example}

\appendix
\section{Effectivity}
\label{app:eff}
\numberwithin{theorem}{section}
\numberwithin{lemma}{section}
\numberwithin{proposition}{section}
\numberwithin{corollary}{section}
\numberwithin{example}{section}
\numberwithin{definition}{section}
\numberwithin{remark}{section}

In this section \(G\) is an algebraic group over \(k\).

\begin{para}
	Let us set \(\Laz^{<0}= \bigoplus_{i>0} \Laz^{-i} \subset \Laz\).
\end{para}

\begin{lemma}[{\cite[\S5]{Thom-Bourbaki}}]
	\label{lemm:opposite_L}
	Every element of \(\Laz^{<0}\) is of the form \(\lc X \rc\) where \(X\) is a smooth projective \(k\)-variety.
\end{lemma}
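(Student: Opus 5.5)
The plan is to combine the polynomial structure of \(\Laz\) with the additive structure of the cobordism classes of smooth projective varieties. The target is that every element of \(\Laz^{<0}\) is a genuine class, not merely a difference of two classes.

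\textbf{Step 1: reduce to homogeneous elements.} Let \(y\in\Laz^{<0}\) and write \(y=\sum_{i>0} y_i\) with \(y_i\in\Laz^{-i}\). If each nonzero \(y_i\) equals \(\lc X_i\rc\) with \(X_i\) of pure dimension \(i\), then \(y=\lc \bigsqcup_i X_i\rc\), since classes are additive on disjoint unions. For \(y_i=0\) I simply omit the summand. So it suffices to treat a homogeneous \(y\in\Laz^{-n}\) with \(n>0\).

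\textbf{Step 2: produce a variety with class \(-\lc Z\rc\).} By (\ref{p:L_Zb}), \(y=\lc A\rc-\lc B\rc\) for some smooth projective \(A,B\) of pure dimension \(n\). Since \(\lc A\rc-\lc B\rc=\lc A\sqcup W\rc\) whenever \(\lc W\rc=-\lc B\rc\), it is enough to find, for every \(n\)-dimensional \(B\), a smooth projective \(W\) with \(\lc W\rc=-\lc B\rc\). Two observations make this possible:
\begin{itemize}
\item[---] there exists a smooth projective curve \(E\) with \(\lc E\rc=-\lc \Pp^1\rc\); a curve of genus \(3\) works, since its Chern number is \(-4=-2\cdot 2\) and \(\Laz^{-1}\) is detected by \(c_{(1)}\), which is the Euler number;
\item[---] there exists a variety \(D\) of dimension \(0\) with \(\lc D\rc=-\lc\Pp^0\rc\), which is impossible since degree-zero classes are point counts.
\end{itemize}
So dimension \(0\) must be avoided, which is exactly why the statement restricts to \(\Laz^{<0}\). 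The key trick is to write
\[
-\lc B\rc = \lc B\rc\cdot\big(\lc E\rc + \lc\Pp^1\rc - \lc\Pp^1\rc\big)/\lc\Pp^1\rc\ldots,
\]
but division is not available in \(\Laz\), so instead I use the following. Every \(y\in\Laz^{-n}\) with \(n>0\) is a \(\Zz\)-combination of products of positive-dimensional generators \(\ell_i\), by (\ref{p:pol_gen_laz}). By (\ref{prop:actions}), each \(\ell_i=\lc X_i^+\rc-\lc X_i^-\rc\) with genuine \(i\)-dimensional varieties. So it suffices to show that for each \(i>0\) the element \(-\lc V\rc\) is effective for \(V\) ranging over products of the \(X_j^\pm\). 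Such a product has the form \(V=V_1\times V'\) with \(\dim V_1=i>0\), so it reduces to showing that \(-\lc V_1\rc\) is effective for each \(i\)-dimensional generator piece, and then multiplying by \(V'\).

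\textbf{Step 3: the main obstacle.} The key claim, which I expect to be the hardest step, is that for each \(i>0\) the subset \(\{\lc X\rc : \dim X=i\}\) of \(\Laz^{-i}\) is a subgroup; equivalently, it contains the negative of the class of some variety \(N_i\) with \(\lc N_i\rc\equiv 0\) modulo nothing, that is, \(-\lc N_i\rc\) is effective for a set of \(N_i\) generating \(\Laz^{-i}\). Since the effective classes form a submonoid of the finitely generated free group \(\Laz^{-i}\), I would show it is a subgroup by exhibiting an effective element lying in the interior of every direction. My approach is to find effective classes whose Chern numbers take both signs freely: hypersurfaces of varying degree in \(\Pp^{i+1}\) (see (\ref{lemm:c_n_hyp})), and products of them with curves of arbitrary genus (Euler number \(2-2g\)), which yield classes of the form \(\lc C_g\rc\cdot z\) with arbitrarily negative multiples. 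Concretely, for \(z=\lc Z\rc\) with \(\dim Z=i-1\ge 0\), the class \(\lc C_g\times Z\rc=(1-g)\lc\Pp^1\rc z\) shows that \(-m\lc\Pp^1\times Z\rc\) is effective for all \(m\ge 0\). Combining this with the fact that a submonoid of a lattice which contains a finite-index subgroup and generates the lattice is a group, and with a Thom-type argument (as in \cite[\S5]{Thom-Bourbaki}) that \(\lc\Pp^1\rc\Laz^{-(i-1)}\) together with effective generators of \(\Laz^{-i}\) positively spans, I would conclude that the effective classes in \(\Laz^{-i}\) form the whole group. The delicate point is the positive-spanning verification; if it fails I would fall back on Thom's original argument via hypersurfaces of negative-degree twists.
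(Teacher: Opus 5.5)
Your reductions in Steps 1 and 2 are valid. The identity \(\lc C_g\rc=(1-g)\lc\Pp^1\rc\) also correctly shows that every element of \(\lc\Pp^1\rc\cdot\Laz^{-(i-1)}\) is effective. (There is a minor slip: by your own Euler number count, a genus \(3\) curve has class \(-2\lc\Pp^1\rc\); it is genus \(2\) that gives \(-\lc\Pp^1\rc\).)

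However, Step 3 just goes back to the full statement in degree \(-i\), and the argument stops exactly where the content lies. The only subgroup you place inside the effective monoid is \(\lc\Pp^1\rc\cdot\Laz^{-(i-1)}\). Its rank is the number of partitions of \(i-1\), which is smaller than the rank of \(\Laz^{-i}\) as soon as \(i\ge 2\); in degree \(-2\), for instance, the \(\ell_2\)-direction is missed. So it is not of finite index, and your lattice lemma does not apply. You would need the effective classes to positively span \(\Laz^{-i}\otimes\Qq\). Beyond that subspace, your sketch supplies only the hypersurface classes, with no argument that they cover the remaining directions with both signs. The deferred ``positive-spanning verification'' is therefore the whole problem, and the fallback to Thom is not an argument.

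The missing idea is an induction on \(n\) that reduces everything to a rank-one question. It uses the fact that the kernel of \(c_{(n)}\colon\Laz^{-n}\to\Zz\) consists exactly of the decomposable elements (\ref{p:pol_gen_laz}). The argument runs as follows.

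\begin{itemize}
\item[---] Let \(V\) be the set of classes of smooth projective varieties; it is stable under sums and products. If \(\Laz^{-m}\subset V\) for \(0<m<n\), then every decomposable element of degree \(-n\) lies in \(V\).
\item[---] The set \(c_{(n)}(V)\) is a submonoid of \(\Zz\) generating \(c_{(n)}(\Laz^{-n})\). It contains \(c_{(n)}(\Pp^n)=-n-1<0\), and \(c_{(n)}(H)=d(d^n-n-2)>0\) for a smooth hypersurface \(H\subset\Pp^{n+1}\) of degree \(d\ge 4\) (\ref{lemm:c_n_hyp}).
\item[---] A submonoid of \(\Zz\) with elements of both signs is a group, so \(c_{(n)}(V)=c_{(n)}(\Laz^{-n})\).
\item[---] Given \(x\in\Laz^{-n}\), choose \(y\in V\) with \(c_{(n)}(y)=c_{(n)}(x)\). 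Then \(x-y\) is decomposable, hence in \(V\), and so \(x=y+(x-y)\in V\).
\end{itemize}

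This is the paper's proof. It makes both your curve trick and the reduction to products of the \(X_i^{\pm}\) unnecessary.
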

\begin{proof}
	Let \(V\subset \Laz\) be the set of classes \(\lc X \rc\) with \(X\) a smooth projective \(k\)-variety.
	Since the cobordism class of the disjoint union, resp.\ product, of smooth projective \(k\)-varieties is the sum, resp.\ product, of their cobordism classes, it follows that \(V\) is stable under addition and multiplication.

	We prove by induction on \(n\) that \(\Laz^{-n} \subset V\) when \(n>0\).
	Let \(H\subset \Pp^{n+1}\) be a smooth hypersurface of degree \(d\ge 4\) (such exist over any field \(k\)).
	Then \(c_{(n)}(H)=d(d^n-n-2)>0\).
	On the other hand \(c_{(n)}(\Pp^n)=-n-1<0\).
	So \(c_{(n)}(V)\subset \Zz\) is a subset stable under addition, which generates the subgroup \(c_{(n)}(\Laz^{-n})\) by (\ref{p:L_Zb}), and contains both a positive and a negative integer.
	This easily implies that \(c_{(n)}(V)=c_{(n)}(\Laz^{-n})\).
	Since the kernel of \(c_{(n)}\colon \Laz^{-n} \to \Zz\) consists of decomposable elements by (\ref{p:pol_gen_laz}), and since \(V\) is stable under addition, it will suffice to prove that \(V\) contains all decomposable elements of degree \(-n\).
	This is certainly true when \(n=1\), as zero is the only such element.
	If \(n>1\), this follows by induction on \(n\), as \(V\) is stable under addition and multiplication.
\end{proof}

\begin{definition}
	\label{def:JKe}
	We denote by \(\JKe(G) \subset \Laz\) the subset of those classes \(\lc X \rc\), where \(X\) is a smooth projective \(k\)-variety admitting a \(G\)-action such that \(X^G=\varnothing\).
\end{definition}

\begin{para}
	\label{p:J_eff_sum}
	The subset \(\JKe(G)\) is stable under addition.
\end{para}

\begin{para}
	\label{p:J_eff_component}
	It follows from (\ref{lemm:equidim}) that, for every \(i \in \Zz\), the homogeneous component of degree \(i\) of an element of \(\JKe(G)\subset \Laz\) is again in \(\JKe(G)\).
\end{para}

\begin{para}
	\label{p:J_eff_ideal}
	The argument of (\ref{p:J_ideal}), together with (\ref{lemm:opposite_L}), shows that the subset \(\JKe(G)\) is stable under multiplication by elements of \(\Laz^{<0}\).
\end{para}

\begin{proposition}
	\label{prop:J_eff_J}
	Let \(G\) be a finite diagonalizable group over \(k\), and assume that \(|\widehat{G}|\) is not a power of the characteristic of \(k\). Then \(\JKe(G) \cap \Laz^{<0}= \JK(G)\cap \Laz^{<0}\).
\end{proposition}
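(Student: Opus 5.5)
The inclusion \(\JKe(G) \cap \Laz^{<0} \subset \JK(G)\cap \Laz^{<0}\) is immediate from the definitions. For the converse, I would first show that \(\JKe(G)\) contains a nonzero integer \(m\), viewed in degree \(0\). Then I would use this integer to absorb arbitrary \(\Zz\)-linear combinations.

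\emph{Step 1: a positive integer in \(\JKe(G)\).} Since \(|\widehat{G}|\) is not a power of the characteristic, pick a nontrivial character \(g\colon G\to \mu_n\) whose image \(\mu_n\) has \(n>1\) prime to the characteristic. This is possible by choosing a prime \(\ell \ne \operatorname{char} k\) dividing \(|\widehat{G}|\) and an element of order \(\ell\) in \(\widehat{G}\). Then \(G\) acts on the finite étale \(k\)-scheme \(\mu_\ell\) by translation through \(G \to \mu_\ell\), with empty fixed locus. Hence \(\ell = \lc \mu_\ell \rc \in \JKe(G)\), and by (\ref{p:J_eff_sum}) every positive multiple of \(\ell\) lies in \(\JKe(G)\) as well.

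\emph{Step 2: reduction to generators.} The group \(\JK(G)\) is graded, so by (\ref{p:J_eff_sum}) and (\ref{p:J_eff_component}) it suffices to treat a homogeneous element \(x\in \JK(G)\cap\Laz^{-n}\) with \(n>0\). Write \(x=\sum_i \lc X_i\rc - \sum_j \lc Y_j\rc\), where \(X_i,Y_j\) are of pure dimension \(n\) and carry fixed-point-free \(G\)-actions. The sum \(\sum_i\lc X_i\rc\) is effective, so the task is to absorb the negative terms. For each \(j\), I would use (\ref{lemm:opposite_L}) to write \(-\lc Y_j\rc\) in a form involving \(\ell\). Concretely, \(\lc Y_j\rc\cdot(\ell-1)\) is effective, but the coefficient is not quite right. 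The cleaner route is to choose a smooth projective \(Z\) with \(\lc Z\rc = -\ell^{-1}\cdots\); this fails integrally, so I would instead argue as follows.

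\emph{Step 3, the key step.} Since \(\ell \in \JKe(G)\) and \(\JKe(G)\) is stable under multiplication by \(\Laz^{<0}\) (\ref{p:J_eff_ideal}), every element of \(\ell\Laz^{<0}\) lies in \(\JKe(G)\). Now \(-\lc Y_j\rc = (\ell-1)\lc Y_j\rc - \ell\lc Y_j\rc\). The first term \((\ell-1)\lc Y_j\rc\) is a sum of copies of \(Y_j\) and is therefore in \(\JKe(G)\). The second term \(-\ell\lc Y_j\rc = \ell\cdot(-\lc Y_j\rc)\) lies in \(\ell\Laz^{<0}\subset \JKe(G)\). Adding everything with (\ref{p:J_eff_sum}) gives \(x\in \JKe(G)\).

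The main obstacle is Step 1. One must be careful that \(\mu_\ell\) is a smooth projective variety over \(k\) with a fixed-point-free \(G\)-action when \(k\) lacks \(\ell\)-th roots of unity. This follows from \(\ell\) being prime to the characteristic, and from the fact that the fixed locus of a translation action of \(G\) through a nontrivial quotient \(\mu_\ell\) is empty. It is also clear that \(\lc\mu_\ell\rc=\ell\), as in the proof of (\ref{prop:no_fixed_on_gen}).
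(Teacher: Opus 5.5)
Your proof is correct and follows the paper's argument. The paper also obtains \(n=\lc \mu_n\rc\in\JKe(G)\) from a quotient \(G\to\mu_n\), uses (\ref{p:J_eff_ideal}) to get \(-n\lc X\rc\in\JKe(G)\), and then uses (\ref{p:J_eff_sum}) to conclude that the whole subgroup generated by \(\lc X\rc\) lies in \(\JKe(G)\); this is exactly your identity \(-\lc Y_j\rc=(\ell-1)\lc Y_j\rc+\ell\cdot(-\lc Y_j\rc)\), and the abandoned detour in Step~2 (the attempt with \(\ell^{-1}\)) can simply be deleted.
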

\begin{proof}
	By the assumption, the group \(G\) admits a quotient of the form \(\mu_n\), with \(n>1\) an integer prime to the characteristic of \(k\).
	Thus \(G\) acts without fixed points on the smooth projective \(k\)-variety \(\mu_n\), and so \([\mu_n]=n \in \JKe(G)\).

	Let \(X\) be a smooth projective \(k\)-variety with a \(G\)-action such that \(X^G =\varnothing\).
	If \(\lc X \rc \in \Laz^{<0}\), then \(-\lc X \rc \in \Laz^{<0}\), hence it follows from (\ref{p:J_eff_ideal}) that \(-n\lc X \rc \in \JKe(G)\).
	Since we also have \(\lc X \rc \in \JKe(G)\), we deduce using (\ref{p:J_eff_sum}) that \(\JKe(G)\) contains the subgroup generated by \(\lc X \rc\), which proves the proposition.
\end{proof}

\begin{para}
	\label{p:J_eff_Gm}
	The proof of (\ref{prop:J_Gm}) shows that \(\JKe(G \times \Gm)=\JKe(G)\).
\end{para}

\begin{remark}
	When \(G\) is a finite diagonalizable group, and \(p_1,\ldots,p_n\) are the prime divisors of the integer \(|\widehat{G}|\) that differ from the characteristic of \(k\), we have
	\[
		\JKe(G) \cap \Laz^0 = \Big\{ \sum_{i=1}^n a_i p_i | a_1,\ldots,a_n \in \Nn\Big\}.
	\]

	Together with (\ref{p:J_eff_Gm}), (\ref{prop:J_eff_J}), (\ref{th:J_Ipn}), and in view of (\ref{p:J_eff_component}), this completely determines \(\JKe(G)\) when \(G\) is a diagonalizable group of finite type over \(k\) having no character of order divisible by the characteristic of \(k\).
\end{remark}

\begin{definition}
	\label{def:Delta_eff}
	Let \(d\in \Nn\). We denote by \(\Delta_{\eff}^d(G) \subset \Laz\) the subset of those classes \(\lc X \rc\), where \(X\) is a smooth projective \(k\)-variety admitting a \(G\)-action such that \(\dim X^G=d\).
\end{definition}

\begin{para}
	For any \(d \in \Nn\), we have \(\Delta_{\eff}^d(G) \cap \Laz^0 = \Nn \subset \Zz = \Laz^0\).
\end{para}

\begin{para}
	\label{p:Delta_eff_stable}
	The set \(\Delta^d_{\eff}(G)\) is stable under addition, and \(\Delta^d_{\eff}(G) \cdot \Delta^e_{\eff}(G) \subset \Delta^{d+e}_{\eff}(G)\), for every \(e,f \in \Nn\).
\end{para}

\begin{para}
	\label{p:Delta_eff_only_lower_bounds}
	The proof of (\ref{lemm:only_lower_bounds}) shows that \(\Delta_{\eff}^d(G)\) is the subset of those classes \(\lc X \rc\), where \(X\) is a smooth projective \(k\)-variety admitting a \(G\)-action such that \(\dim X^G \le d\).
\end{para}

\begin{para}
	\label{p:J_eff_in_Delta}
	By (\ref{p:Delta_eff_only_lower_bounds}), we have in particular \(\JKe(G) \subset \Delta^d_{\eff}(G)\) for every \(d \in \Nn\).
\end{para}

\begin{para}
	It follows from (\ref{p:Delta_eff_only_lower_bounds}) and (\ref{lemm:equidim}) that, for every \(i \in \Zz\), the homogeneous component of degree \(i\) of an element of \(\Delta^d_{\eff}(G)\subset \Laz\) lies again in \(\Delta^d_{\eff}(G)\).
\end{para}

\begin{lemma}
	\label{lemm:Delta_eff_pq}
	Let \(p,q\) be two distinct prime numbers, both different from the characteristic of \(k\). Then \(\Delta^d_{\eff}(\mu_{pq}) \cap \Laz^{<0}= \Laz^{<0}\) for every \(d \in \Nn\).
\end{lemma}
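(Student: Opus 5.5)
Since \(\JKe(\mu_{pq}) \subset \Delta^d_{\eff}(\mu_{pq})\) by (\ref{p:J_eff_in_Delta}), it suffices to prove that \(\JKe(\mu_{pq}) \supset \Laz^{<0}\). This reduces to showing that \(\JKe(\mu_{pq})\cap \Laz^{<0}=\JK(\mu_{pq})\cap\Laz^{<0}\), and then using \(\JK(\mu_{pq})=\Laz\) from (\ref{prop:J_mu_pq}).

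The plan is to adapt the argument of (\ref{prop:J_eff_J}) directly.

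The varieties \(\mu_p\) and \(\mu_q\) are smooth, because \(p,q\) differ from the characteristic. Through the quotients \(\mu_{pq}\to\mu_p\) and \(\mu_{pq}\to \mu_q\), the group \(\mu_{pq}\) acts on each of them without fixed points. Hence \(p,q\in \JKe(\mu_{pq})\).

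Now let \(y\in \Laz^{<0}\) be arbitrary. Choose integers \(a,b\in\Nn\) with \(ap-bq=1\), which is possible since \(p\) and \(q\) are coprime. Both \(ay\) and \(-by\) lie in \(\Laz^{<0}\), which is a group. By (\ref{p:J_eff_ideal}), the set \(\JKe(\mu_{pq})\) is stable under multiplication by \(\Laz^{<0}\), so \(p\cdot(ay)\in\JKe\) and \(q\cdot(-by)\in \JKe\). By (\ref{p:J_eff_sum}), their sum
\[
apy-bqy=y
\]
lies in \(\JKe(\mu_{pq})\).

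Concretely, each element of \(\Laz^{<0}\) is the class of a smooth projective variety by (\ref{lemm:opposite_L}). Write \(ay=\lc Y\rc\) and \(-by=\lc Y'\rc\), and take \(X=(\mu_p\times Y)\sqcup(\mu_q\times Y')\), letting \(\mu_{pq}\) act on the first factors and trivially on \(Y,Y'\). This action has no fixed points. Then (\ref{p:J_eff_in_Delta}) gives \(y\in\Delta^d_{\eff}(\mu_{pq})\) for every \(d\).

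One small point needs attention: we need \(ay\) and \(-by\) to be nonzero, or at least representable, when invoking (\ref{lemm:opposite_L}). That lemma covers all of \(\Laz^{<0}\), including \(0\), which is the class of the empty variety or of a cubic curve. No real obstacle is expected; the step requiring the most care is checking that the multiplication stability in (\ref{p:J_eff_ideal}) applies to possibly negative multiples. It does, because \(\Laz^{<0}\) is closed under negation. The reverse inclusion \(\Delta^d_{\eff}\cap\Laz^{<0}\subset\Laz^{<0}\) is trivial.
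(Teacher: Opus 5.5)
Your argument is correct and is essentially the paper's proof, which just cites (\ref{prop:J_mu_pq}), (\ref{prop:J_eff_J}) and (\ref{p:J_eff_in_Delta}); you have unpacked those citations into one direct argument with the same ingredients: $p,q\in\JKe(\mu_{pq})$ via $\mu_p,\mu_q$, stability of $\JKe(\mu_{pq})$ under addition and under multiplication by $\Laz^{<0}$, and $\JKe\subset\Delta^d_{\eff}$. Your B\'ezout identity $apy-bqy=y$ combines the ``$1\in\JK(\mu_{pq})$'' step with the negation trick of (\ref{prop:J_eff_J}). It has the minor advantage of giving an explicit fixed-point-free variety of class $y$, without passing through the subgroup $\JK(\mu_{pq})$.
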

\begin{proof}
	This follows from (\ref{prop:J_mu_pq}), (\ref{prop:J_eff_J}) and (\ref{p:J_eff_in_Delta}).
\end{proof}

\begin{lemma}
	\label{lemm:c_i_neg}
	Assume that \(G\) is a finite diagonalizable group over \(k\), and let \(q=|\widehat{G}|\).
	For every \(i\in \Nn \smallsetminus \{0\}\) there exists a smooth projective \(k\)-variety \(Y_i\) of pure dimension \(i\), with a \(G\)-action such that \(c_{(i)}(Y_i) >0\) and \(\dim (Y_i)^G \le \lfloor i/q \rfloor\).
\end{lemma}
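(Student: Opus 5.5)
The plan is to realise $Y_i$ as an equivariant Fermat hypersurface, on which $c_{(i)}$ can be computed by (\ref{lemm:c_n_hyp}). The positivity of $c_{(i)}$ then comes from a choice of degree, and the bound on the fixed locus from spreading the characters evenly over the coordinates, in the spirit of the proof of (\ref{prop:action_on_Hnm}).

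\emph{Construction.} Let $e$ be the exponent of $\widehat{G}$, and choose a degree $d$ divisible by $e$ with $d\ge 4$. Write $i+2=qa+s$ with $1\le s\le q$. As in the proof of (\ref{prop:action_on_Hnm}), consider the $G$-equivariant vector bundle over $\Speck$
\[
	V=\bigoplus_{g\in \widehat{G}} (\character{g})^{\oplus n_g},
\]
where each $n_g\in\{a,a+1\}$ and $\sum_g n_g=i+2$. Then $\Pp(V)\simeq\Pp^{i+1}$ carries a $G$-action. Each coordinate $x_j$ is a $G$-equivariant section of $\Oc(1)\otimes\character{g_j}$. Because $d\,g_j=0$, the section $\sum_j x_j^{\otimes d}$ of $\Oc(d)$ is $G$-equivariant, exactly as in the proof of (\ref{prop:no_fixed_on_gen}). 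Let $Y_i$ be its zero locus. It is the Fermat hypersurface of degree $d$, so it is smooth of pure dimension $i$ provided $d$ is invertible in $k$.

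\emph{Chern number.} By (\ref{lemm:c_n_hyp}) we have $c_{(i)}(Y_i)=d(d^i-i-2)$. This is positive because $d\ge 4>2$ gives $d^i>i+2$ for every $i\ge1$.

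\emph{Fixed locus.} By (\ref{eq:P^G}) we have $\Pp(V)^G=\coprod_g \Pp(\wgt{V}{g})$, so $Y_i^G\subset \coprod_g \big(Y_i\cap\Pp(\wgt{V}{g})\big)$. On $\Pp(\wgt{V}{g})$ the Fermat equation restricts to the Fermat polynomial in the $n_g$ coordinates of that weight, which is nonzero whenever $n_g\ge 1$. Hence each intersection is a hypersurface in $\Pp^{n_g-1}$, or empty, and so
\[
	\dim Y_i^G\le \max_g n_g-2 \le \big\lceil (i+2)/q\big\rceil-2=\big\lfloor (i+1)/q\big\rfloor-1\le \lfloor i/q\rfloor.
\]
One should check the cases $q\ge i+2$ separately. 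There some $n_g$ may be $0$ or $1$, and the corresponding contributions are empty, so the bound still holds.

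\emph{Main obstacle.} The delicate point is choosing $d$ so that it is simultaneously a multiple of $e$ (for equivariance), invertible in $k$ (for smoothness), and at least $4$.

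If $|\widehat{G}|$ is prime to the characteristic, as in all applications in the paper, take $d=4e$, or simply $d=e$ when $e\ge 4$.

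If $e$ is divisible by the characteristic, I would fall back on the Milnor hypersurfaces with the actions of (\ref{prop:action_on_Hnm}), taking $Q=\widehat{G}$. For $i\ge 3$, the variety $H_{2,i-1}$ has $c_{(i)}=\binom{i+1}{2}>0$ by (\ref{p:c_H_m_n}), and its fixed locus has dimension at most $\lfloor i/q\rfloor$. The cases $i=1,2$ would then need an ad hoc argument: replace $G$ by its largest quotient whose character group has order prime to the characteristic and apply the Fermat construction to that quotient. This step is where I expect to spend most of the effort.
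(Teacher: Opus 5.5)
Your construction is the paper's own argument. You spread the characters over the $i+2$ coordinates of $\Pp^{i+1}$, so that every weight space has dimension at most $\lfloor (i+1)/q\rfloor+1$. You then take an equivariant Fermat hypersurface whose degree kills $\widehat{G}$, get $c_{(i)}(Y_i)>0$ from (\ref{lemm:c_n_hyp}), and obtain $\dim Y_i^G\le\lfloor (i+1)/q\rfloor-1\le\lfloor i/q\rfloor$ because $Y_i$ meets each $\Pp(\wgt{V}{g})$ in a hypersurface. Using a multiple of the exponent, rather than the paper's $m=p^2q$, even covers every $G$ with $|\widehat{G}|$ invertible in $k$, not only $p$-groups. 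Two points need correcting, however.

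\textbf{The choice of degree.} Your concrete degree $d=4e$ is not invertible in $k$ when $\mathrm{char}\,k=2$, although $|\widehat{G}|$ may well be odd in that case. For $G=\mu_3$ you get $d=12$, and $\sum_j x_j^{12}=(\sum_j x_j^{6})^2$ defines a nonreduced hypersurface. Take instead $d=e^2$ when $e\ge 2$, and any integer $\ge 4$ invertible in $k$ when $e=1$. The paper's $m=p^2q$, a power of the prime $p\neq\mathrm{char}\,k$, is chosen for exactly this reason.

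\textbf{The fallback when $\mathrm{char}\,k$ divides $e$.} This cannot be completed. Passing to the largest quotient $G'$ of order prime to the characteristic only gives $\dim Y_i^G\le \lfloor (i+1)/q'\rfloor-1$ with $q'=|\widehat{G'}|$, which is useless when $q'=1$. In that situation the statement itself fails. Take $k$ algebraically closed of characteristic $\ell>0$, $G=\mu_\ell$ and $i=1$:
\begin{itemize}
\item Since $G$ is infinitesimal, it stabilizes every open subset. So an action on a smooth projective curve $C$ amounts to a $\Zz/\ell$-grading of $\Oc_C$.
\item Multiplication by $j$ on the degree-$j$ part is then a global vector field on $C$, which vanishes only if the action is trivial.
\item Now $c_{(1)}(Y_1)=\sum(2g-2)$ over the components of $Y_1$, so $c_{(1)}(Y_1)>0$ forces a component of genus $g\ge 2$.
\item Such a component has no nonzero vector fields, so $G$ acts trivially on it and it lies in $Y_1^G$. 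Hence $\dim Y_1^G=1>0=\lfloor 1/\ell\rfloor$.
\end{itemize}
The paper's proof tacitly assumes that $G$ is a $p$-group with $p\neq\mathrm{char}\,k$, which is the only case needed in (\ref{prop:Delta_eff_p_group}). You should likewise restrict to $|\widehat{G}|$ prime to the characteristic, rather than attempt the remaining case.
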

\begin{proof}
	Let \(a = \lfloor (i+1)/q \rfloor\).
	Pick a map \(\varphi \colon \{1,\ldots,i+2\} \to \widehat{G}\) such that
	\begin{equation}
		\label{eq:varphi_cond}
		\text{\(|\varphi^{-1}\{g\}| \leq a+1\) for every \(g \in \widehat{G}\)}.
	\end{equation}
	This is possible, because
	\[
		q(a+1) = q \lfloor (i+1)/q \rfloor +q \ge i+1-(q-1) +q=i+2.
	\]
	For \(j\in \{1,\ldots,i+2\}\), set \(L_j = \character{\varphi(j)}\).
	Consider the \(G\)-equivariant vector bundle over \(\Speck\)
	\[
		V = \bigoplus_{j=1}^{i+2} L_j,
	\]
	and the projective space \(\Pp(V)=\Pp^{i+1}\) with its induced \(G\)-action.
	The fixed locus \(\Pp(V)^G\) is the disjoint union of the projective spaces
	\[
		P_g = \Pp\Big(\bigoplus_{\varphi(j)=g} L_j\Big)
	\]
	for \(g \in \widehat{G}\) (we write \(\Pp(0)= \varnothing\)).
	Condition \eqref{eq:varphi_cond} implies that \(\dim \Pp(V)^G \le a\).

	Now let \(m =p^2q\) (we need \(m\) to be a power of \(p\) divisible by \(q\) and \(m\ge 4\)).
	For each \(j\in \{1,\ldots,i+2\}\), the projection \(V\to L_j = \character{\varphi(j)}\) induces a \(G\)-equivariant section \(x_j\) of \(\Oc(1)\otimes \character{\varphi(j)}\) over \(\Pp(V)\).
	Using the canonical isomorphisms \((\character{\varphi(j)})^{\otimes m}= \character{m\varphi(j)}=\character{0}=1\), we construct a \(G\)-equivariant section
	\[
		\sigma = \sum_{j=1}^{i+2} (x_j)^{\otimes m}
	\]
	of the \(G\)-equivariant line bundle \(\Oc(m)\) over \(\Pp(V)\).
	This section is nonzero, and its zero locus is a smooth \(G\)-invariant hypersurface \(Y_i \subset \Pp(V)\) of degree \(m\) (by our assumption on the characteristic of \(k\)).
	For every \(g \in \widehat{G}\), the section \(\sigma\) restricts over \(P_g\) to a nonzero section of \(\Oc(m)|_{P_g}\), and thus \(Y_i \cap P_g\) is a (degree \(m\)) hypersurface in \(P_g\).
	In particular \((Y_i)^G = Y_i \cap \Pp(V)^G\) is nowhere dense in \(\Pp(V)^G\), and thus
	\[
		\dim (Y_i)^G \leq \dim \Pp(V)^G -1 \le a -1 = \Big\lfloor \frac{i+1}{q} \Big\rfloor - 1 \le \Big\lfloor \frac{i}{q} \Big\rfloor.
	\]
	Finally, as \(m \geq 4\) and \(i \geq 1\), it follows from (\ref{ex:c_n_hypersurface}) that \(c_{(i)}(Y_i)>0 \).
\end{proof}

\begin{remark}
	The crucial cases in (\ref{lemm:c_i_neg}) are \(i = 1,2\).
	Indeed, let \(G\) be an arbitrary algebraic group, and \(Q \subset \widehat{G}\) a finite set of characters containing \(0\).
	Set \(q=|Q|\).
	For \(i\ge 3\), the Milnor hypersurface \(Y_i=H_{2,i-1}\) (see (\ref{p:Milnor})) has pure dimension \(i\), satisfies \(c_{(i)}(Y_i)>0\) by (\ref{p:c_H_m_n}), and carries a \(G\)-action such that \(\dim (Y_i)^G \le \lfloor i/q \rfloor\) by (\ref{prop:action_on_Hnm}).
	On the other hand, we will see in (\ref{rem:Gm_curves_0_dim}) that it is not possible to construct such \(Y_i\) for \(i=1\), when \(G=\Gm\) and \(q\ge 2\).
\end{remark}

\begin{proposition}
	\label{prop:Delta_eff_p_group}
	Let \(G\) be a finite diagonalizable group over \(k\), and assume that \(|\widehat{G}|\) is not a power of the characteristic of \(k\). Then for every \(d\in \Nn\)
	\[
		\Delta^d_{\eff}(G) \cap \Laz^{<0}= \Delta^d(G) \cap \Laz^{<0}.
	\]
\end{proposition}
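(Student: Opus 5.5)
Since \(\Delta^d_{\eff}(G) \subset \Delta^d(G)\) by definition, the content of the statement is the reverse inclusion on \(\Laz^{<0}\). I would run the same mechanism as in (\ref{prop:J_eff_J}): show that \(\Delta^d_{\eff}(G)\cap \Laz^{<0}\) is a subgroup containing a set of generators of \(\Delta^d(G)\cap\Laz^{<0}\). By (\ref{p:Delta_eff_stable}), \(\Delta^d_{\eff}(G)\) is stable under addition. By the argument of (\ref{lemm:only_lower_bounds}) and (\ref{p:Delta_eff_only_lower_bounds}), disjoint union with varieties whose fixed locus has dimension \(\le d\) keeps us inside \(\Delta^d_{\eff}(G)\). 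Closure under negation is therefore the only real issue.

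\emph{Step 1 (negation).} Let \(X\) have a \(G\)-action with \(\dim X^G\le d\) and \(\lc X\rc\in\Laz^{<0}\). Choose \(n>1\) prime to the characteristic with \(\mu_n\) a quotient of \(G\), as in (\ref{prop:J_eff_J}), so that \(n\in \JKe(G)\subset\Delta^0_{\eff}(G)\). By (\ref{lemm:opposite_L}), \(-\lc X\rc=\lc Z\rc\) for some smooth projective \(Z\). I would equip \(Z\) with the trivial action and form \(\mu_n\times Z\) with the action through the first factor. This action is fixed-point-free, and \(\lc\mu_n\times Z\rc=-n\lc X\rc\). Then
\[
(n-1)\lc X\rc+(-n\lc X\rc)=-\lc X\rc
\]
lies in \(\Delta^d_{\eff}(G)\) by additivity. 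Hence \(\Delta^d_{\eff}(G)\cap\Laz^{<0}\) contains \(\pm\lc X\rc\) for every such \(X\) of negative degree.

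\emph{Step 2 (degree-zero components).} A generator \(\lc X\rc\) of \(\Delta^d(G)\) may have a nonzero degree-\(0\) component, namely the number of zero-dimensional components of \(X\). An element of \(\Delta^d(G)\cap\Laz^{<0}\) is a \(\Zz\)-combination \(\sum a_j\lc X_j\rc\) whose degree-\(0\) parts cancel. Using (\ref{lemm:equidim}), I would decompose each \(X_j\) into equidimensional \(G\)-invariant pieces, each still having fixed locus of dimension \(\le d\). This rewrites the element as a combination of classes of equidimensional varieties of positive dimension, the degree-\(0\) terms having summed to zero. Each such class lies in \(\Laz^{<0}\), so Step 1 applies, and additivity finishes the argument.

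\emph{Main obstacle.} The main point to check is the bookkeeping in Step 2: that restricting to invariant equidimensional pieces does not raise the fixed-locus dimension. This is clear, since each piece is an open and closed invariant subvariety and its fixed locus is a union of components of \(X^G\). The remaining point is that negation in Step 1 needs only \(\lc X\rc\) in negative degree, which Step 2 guarantees. I expect no deeper input from (\ref{th:Delta^d}) is required.
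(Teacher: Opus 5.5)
Your proof is correct, and it takes a genuinely different and much more economical route than the paper's.

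The paper first reduces, via (\ref{lemm:Delta_eff_pq}), to a $p$-group. It then invokes the main theorem (\ref{th:Delta^d}) to write $\Delta^d(G)=F^d_q\Laz+\Ipn{r}$, and treats the two summands separately:
\begin{itemize}
\item The part $\Ipn{r}\cap\Laz^{<0}=\JK(G)\cap\Laz^{<0}$ is handled by (\ref{prop:J_eff_J}). That is exactly your negation trick, restricted to fixed-point-free varieties.
\item The part $F^d_q\Laz\cap\Laz^{<0}$ is handled by exhibiting, in each dimension $i$, varieties with fixed locus of dimension $\le\lfloor i/q\rfloor$ whose Chern number $c_{(i)}$ is either negative (projective spaces) or positive (the hypersurfaces of (\ref{lemm:c_i_neg})). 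An induction on $i$ modulo decomposables then gives $\Laz^{-i}\subset\Delta^{\lfloor i/q\rfloor}_{\eff}(G)$.
\end{itemize}

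You instead first use (\ref{lemm:equidim}) to discard zero-dimensional pieces. You then apply the identity $-\lc X\rc=(n-1)\lc X\rc+\lc\mu_n\times Z\rc$, with $\lc Z\rc=-\lc X\rc$ supplied by (\ref{lemm:opposite_L}), directly to every generator of $\Delta^d(G)$ of negative degree. This shows that the statement is a formal consequence of $\JKe(G)$ containing a positive integer. It needs neither the concentration theorem nor (\ref{th:Delta^d}).

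It also needs no reduction to $p$-groups. The paper's reduction, as written, does not literally cover a group for which $|\widehat{G}|$ is divisible by the characteristic of $k$ and by exactly one other prime; your argument treats this case uniformly.

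Nothing is lost by your route. Combined with (\ref{prop:fixed_Fd}) for $q=|\widehat{G}|$, it also recovers $\Laz^{-i}\subset\Delta^{\lfloor i/q\rfloor}_{\eff}(G)$. What the paper's route gains in exchange is that, for the $q$-filtration part, its effective representatives are built solely from the explicit linear actions and hypersurfaces of \S\ref{sect:optimal} and (\ref{lemm:c_i_neg}), with no fixed-point-free correction term.
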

\begin{proof}
	By (\ref{lemm:Delta_eff_pq}) we may assume that \(G\) is a finite diagonalizable \(p\)-group, where \(p\) is a prime number different from the characteristic of \(k\).
	Let \(r\) be the rank of \(G\), and set \(q = |\widehat{G}|\).
	It follows from (\ref{th:J_Ipn}), (\ref{prop:J_eff_J}) and (\ref{p:J_eff_in_Delta}) that
	\[
		\Ipn{r} \cap \Laz^{<0}= \JK(G) \cap \Laz^{<0} = \JKe(G) \cap \Laz^{<0} \subset \Delta^d_{\eff}(G).
	\]
	By (\ref{th:Delta^d}) and (\ref{p:Delta_eff_stable}), it will suffice to prove that
	\begin{equation}
		\label{eq:F_in_Delta_eff}
		(F^d_q\Laz)\cap \Laz^{<0}\subset \Delta^d_{\eff}(G).
	\end{equation}
	Let \(i \in \Nn\smallsetminus \{0\}\).
	We have seen in (\ref{prop:action_on_Hnm}) that \(\lc \Pp^i\rc=\lc H_{0,i+1}\rc \in \Delta^{\lfloor i/q \rfloor}_{\eff}(G)\).
	Recall from (\ref{p:c_H_m_n}) that \(c_{(i)}(\Pp^i)=-i-1 <0\).
	Therefore, by (\ref{lemm:c_i_neg}), the subset \(c_{(i)}(\Delta^{\lfloor i/q \rfloor}_{\eff}(G) ) \subset \Zz\) contains both a positive and negative integer.
	Being stable under addition by (\ref{p:Delta_eff_stable}), it is a subgroup of \(\Zz\).
	Since the subgroup generated by \(c_{(i)}(\Delta^{\lfloor i/q \rfloor}_{\eff}(G) )\) is \(c_{(i)}(\Delta^{\lfloor i/q \rfloor}(G))\), which coincides with \(c_{(i)}(\Laz^{-i})\) by (\ref{prop:actions}) and (\ref{p:pol_gen_laz}), we conclude that
	\begin{equation}
		\label{eq:c_i_D_i}
		c_{(i)}(\Delta^{\lfloor i/q \rfloor}_{\eff}(G) )=c_{(i)}(\Laz^{-i}).
	\end{equation}

	By (\ref{prop:L_q_grading}), every element of \((F^d_q\Laz)\cap \Laz^{-i}\) is a sum of elements of the products \(\Laz^{-i_1}\cdots \Laz^{-i_n}\), where \(\lfloor i_1/q \rfloor + \ldots+ \lfloor i_n/q \rfloor \le d\) and \(i_1 + \ldots+i_n = i\).
	In view of (\ref{p:Delta_eff_stable}), in order to prove \eqref{eq:F_in_Delta_eff} it will thus suffice to prove that \(\Laz^{-i} \subset \Delta^{\lfloor i/q \rfloor}_{\eff}(G)\) for all \(i\in \Nn\smallsetminus \{0\}\), which we do by induction on \(i\).
	Given an element \(x \in \Laz^{-i}\) with \(i>0\), we may find by \eqref{eq:c_i_D_i} an element \(y \in \Delta^{\lfloor i/q \rfloor}_{\eff}(G)\) such that \(c_{(i)}(y)=c_{(i)}(x)\).
	By (\ref{p:cn_dec}) this implies that \(x-y\in \Laz^{-i}\) is decomposable in \(\Laz\).
	If \(i>1\), by induction and (\ref{p:Delta_eff_stable}) we deduce that to \(x\in \Delta^{\lfloor i/q \rfloor}_{\eff}(G)\).
	The same conclusion holds if \(i=1\), as then \(x=y\).
	We have thus established (\ref{eq:F_in_Delta_eff}), which concludes the proof of the proposition.
\end{proof}

\begin{remark}
	\label{rem:Gm_curves_0_dim}
	The conclusion of (\ref{prop:Delta_eff_p_group}) does not hold for \(G=\Gm\).
	Indeed assume that \(X\) is a smooth projective \(k\)-variety with a \(\Gm\)-action.
	Then \(\chi(X)=\chi(X^{\Gm})\) by \cite{Iversen-tori}, where \(\chi\) is the Euler number (i.e.\ \(\chi(Y) = \deg c_d(\Tan_Y)\) when \(Y\) is a smooth projective \(k\)-variety of pure dimension \(d\)).
	Since the function \(\chi\) takes positive values on \(0\)-dimensional varieties, this implies that \(\dim X^{\Gm} >0 \) whenever \(\chi(X) <0\).
	Thus, for instance, the set \(\Delta^0_{\eff}(\Gm)\) does not contain the class \(-\lc \Pp^1 \rc\), which does belong to \(\Delta^0(\Gm)\) by (\ref{prop:Delta_Gm}).
\end{remark}

\section{Glossary of notation}
\label{app:glossary}
\begin{center}
	\renewcommand{\arraystretch}{1.1}
	\begin{xltabular}{\textwidth}{@{} l l X @{}}
		\hline
		\textbf{Symbol} & \textbf{Reference} & \textbf{Description} \\
		\hline
		\endfirsthead

		\hline
		\textbf{Symbol} & \textbf{Reference} & \textbf{Description} \\
		\hline
		\endhead
		\(\Laz\) & (\ref{p:Laz}) &Lazard ring\\
		\(\fgl\) & (\ref{p:fgl}) &Formal sum\\
		\(\fglr{n}\) & (\ref{p:fgl})&Formal multiplication by \(n\in \Zz\)\\
		\(R[\bb]\) & (\ref{p:L_in_Zb})&Graded polynomial \(R\)-algebra \(R[b_1,\ldots]\)\\
		\(\length(\alpha)\) & (\ref{p:partitions})&Length of the partition \(\alpha\)\\
		\(|\alpha|\) & (\ref{p:partitions})&Weight of the partition \(\alpha\)\\
		\(b_{\alpha}\) & (\ref{p:partitions})&Monomial in \(R[\bb]\) associated with the partition \(\alpha\)\\
		\(c_{\alpha}\) & (\ref{p:partitions})&Map \(\Laz \to \Zz\) given by the \(b_\alpha\)-coefficient\\
		\(\ell_i\) & (\ref{p:pol_gen_laz})&Chosen set of polynomial generators of \(\Laz\)\\
		\(\succeq\) & (\ref{p:def_succ})& Refinement relation between partitions\\
		\(c_{\alpha}(X)\) & (\ref{def:lc_rc})&Chern number of the variety \(X\) for the partition \(\alpha\)\\
		\(\lc X \rc\) & (\ref{def:lc_rc})&Cobordism class in \(\Laz\) of the variety \(X\)\\
		\(\Ipn{n}\) & (\ref{def:Ipn})&Landweber ideal in \(\Laz\)\\
		\(u_i\) & (\ref{def:u_n})&Coefficients of the power series \(\fglr{p}(t)\)\\
		\(v_n\) & (\ref{def:u_n})&\(=u_{p^n-1}\), generators of \(\Ipn{\infty}\)\\
		\(\su{M}{n}\) & (\ref{p:su_n})&Quotient of the \(\Laz\)-module \(M\) by the submodule \(\Ipn{n}M\)\\
		\(\su{\Laz}{n}(d)\) & (\ref{p:Laz_n_d})&Subring of \(\su{\Laz}{n}\) generated by elements of degree \(\ge -d\) \\
		\(\su{N}{n}\) & (\ref{p:N})&Set of positive integers not of the form \(p^m-1\) with \(m< n\)\\
		\(\su{N}{n}(d)\) & (\ref{p:N})&Set of integers \(\le d\) in \(\su{N}{n}\)\\
		\(X^G\) & (\ref{p:fixed_locus})& Fixed locus of the \(G\)-action on the variety \(X\) \\
		\(\widehat{G}\) & (\ref{p:L_g})& Group of characters of \(G\) \\
		\(\character{g}\) & (\ref{p:L_g})&\(G\)-equivariant line bundle over \(\Speck\) given by \(g \in \widehat{G}\)\\
		\(\wgt{E}{g}\) & (\ref{p:character})&Eigenspace for the character \(g\) of the \(G\)-equivariant vector bundle \(E\) over a base with trivial \(G\)-action\\
		\(H_{m,n}\) & (\ref{p:Milnor})&Milnor hypersurface in \(\Pp^m \times \Pp^n\), of dimension \(m+n-1\)\\
		\(X_i^{+},X_i^{-}\) & (\ref{prop:actions})& Explicit varieties with a \(G\)-action (\(i \in \Nn\smallsetminus \{0\}\))\\
		\(F^d_q\su{\Laz}{n}\) & (\ref{def:F_q_L})&\(q\)-filtration of \(\su{\Laz}{n}\)\\
		\(\deg_q P\) & (\ref{p:grading_pol}) &\(q\)-degree of the polynomial \(P\)\\
		\(\Omega\) & (\ref{p:Omega})&Algebraic cobordism (with the characteristic exponent inverted)\\
		\(\La\) & (\ref{p:Laz_Omega_k})&\(=\Laz[c^{-1}]=\Omega(\Speck)\) (where \(c\) is the characteristic exponent of \(k\))\\
		\(R[\ag]\) & (\ref{p:[ag]})&Graded polynomial \(R\)-algebra in \(a_{i,g}\) for \(i\in \Nn, g\in \widehat{G}\smallsetminus \{0\}\)\\
		\(\Mcc\) & (\ref{def:Mcc})&\(=\La[\ag]\), cobordism ring of \((\widehat{G}\smallsetminus \{0\})\)-graded vector bundles over a smooth projective base\\
		\(\Mcc(d)\) & (\ref{p:grading_Mcc})& \(\Zz[c^{-1}]\)-subalgebra of \(\Mcc\) generated by classes of vector bundles over bases of dimension \(\le d\)\\
		\(\Mcc^{\le d}\) & (\ref{p:Mcc_ge_q})& \(\Zz[c^{-1}]\)-submodule of \(\Mcc\) generated by classes of vector bundles over bases of dimension \(\le d\)\\
		\(\lc E\to X \rc\) & (\ref{def:Mcc})&Class in \(\Mcc\) of the \(G\)-equivariant vector bundle \(E \to X\), where \(G\) acts trivially on \(X\) and \(E^G=0\), with \(X\) smooth projective\\
		\(p_{i,g}\) & (\ref{p:p_i_g})&\(\lc \Oc(1) \to \Pp^i\rc \in \Mcc\), where \(G\) acts on \(\Oc(1)\) via \(g \in \widehat{G}\smallsetminus \{0\}\)\\
		\(\Omega_G\) & (\ref{def:Omega_G})&Equivariant algebraic cobordism theory (with \(c\) inverted)\\
		\(S\) & (\ref{p:S})&Multiplicative subset of \(\Omega_G(\Speck)\) generated by Euler classes of the nontrivial characters of \(G\) \\
		\(C\) & (\ref{p:C}) & Image of \(\Omega_G(\Speck) \to S^{-1}\Omega_G(\Speck)\) \\
		\(H_j\) & (\ref{p:H_j}) & \(=\La[[t_1,\ldots,t_j]]/(\fglr{q_1}(t_1),\ldots,\fglr{q_j}(t_j))\) (with \(H_r \simeq \Omega_G(\Speck)\))\\
		\(S_j\) & (\ref{p:H_j})& A multiplicative subset of \(H_j\) (with \(S_r \simeq S\))\\
		\(C_j\) & (\ref{p:C_j_n}) & Image of \(\su{(H_j)}{r-j} \to S_j^{-1}\su{(H_j)}{r-j}\) (with \(C_r \simeq C)\) \\
		\(\lc X \rc_G\) & (\ref{p:lc_equiv})&Equivariant cobordism class in \(\Omega_G(\Speck)\) of the \(G\)-variety \(X\)\\
		\(e(E)\) & (\ref{p:Euler})&Euler class in \(\Omega_G(X)\) of the \(G\)-vector bundle \(E\to X\)\\
		\(e(-E)\) & (\ref{p:Euler_inverse})&Inverse Euler class in \(S^{-1}\Omega_G(X)\) of \(E\to X\), when \(E^G=0\)\\
		\(n_i\) &(\ref{p:x_i}) & \(=\lc N_i^+ \to X_i^+\rc_G- \lc N_i^- \to X_i^-\rc_G\) in \(\Mcc\)\\
		\(x_i\) &(\ref{p:x_i}) & \(=\lc X_i^+\rc_G- \lc X_i^-\rc_G\) in \(S^{-1}\Omega_G(\Speck)\)\\
		\(\JK(G)\) & (\ref{def:JK})&Subgroup of \(\Laz\) generated by the classes of varieties admitting a fixed-point-free \(G\)-action\\
		\(\Delta^d(G)\) & (\ref{def:Delta})&Subgroup of \(\Laz\) generated by the classes of varieties admitting a \(G\)-action with a fixed locus of dimension \(d\) \\
		\(\JKe(G)\) & (\ref{def:JKe})&Effective version of \(\JK(G)\), a subset of \(\Laz\)\\
		\(\Delta^d_{\eff}(G)\) & (\ref{def:Delta_eff})&Effective version of \(\Delta^d(G)\), a subset of \(\Laz\)\\
	\end{xltabular}
\end{center}

\def\cprime{$'$}

\end{document}